\def\newthm#1#2{\newtheorem{#1}[dummy]{#2}%
  \expandafter\def\csname#2\endcsname##1{\hyperref[#1:##1]{#2~\ref*{#1:##1}}}}
\theoremstyle{definition}
\newtheorem*{ack}{Acknowledgements}
\newcommand{\Section}[1]{\hyperref[sec:#1]{Section~\ref*{sec:#1}}}
\newcommand{\Table}[1]{\hyperref[tab:#1]{Table~\ref*{tab:#1}}}
\newcommand{\eqn}[1]{\hyperref[eqn:#1]{(\ref*{eqn:#1})}}
\DeclareMathOperator{\GRCF}{GRCF}
\DeclareMathOperator{\GIF}{GIF}
\DeclareMathOperator{\Sp}{Sp}
\DeclareMathOperator{\Gr}{Gr}
\DeclareMathOperator{\Fl}{Fl}
\DeclareMathOperator{\LG}{LG}
\DeclareMathOperator{\IG}{IG}
\DeclareMathOperator{\Ker}{Ker}
\DeclareMathOperator{\QH}{QH}
\DeclareMathOperator{\QK}{QK}
\DeclareMathOperator{\codim}{codim}
\DeclareMathOperator{\Pic}{Pic}
\DeclareMathOperator{\pr}{pr}
\DeclareMathOperator{\Rk}{Rk}
\newcommand{\CC}{{\mathbb{C}}}
\newcommand{\cW}{{\mathcal{W}}}
\newcommand{\fp}{{\mathfrak{P}}}
\newcommand\oZ{\accentset{\circ}{Z}}
\newcommand{\bP}{{\mathbb P}}
\renewcommand{\P}{{\mathbb P}}
\newcommand{\PP}{{\mathbb P}}
\newcommand{\C}{{\mathbb C}}
\newcommand{\Q}{{\mathbb Q}}
\newcommand{\Z}{{\mathbb Z}}
\newcommand{\cF}{{\mathcal F}}
\newcommand{\cU}{{\mathcal U}}
\newcommand{\cO}{{\mathcal O}}
\newcommand{\cP}{{\mathcal P}}
\newcommand{\cS}{{\mathcal S}}
\newcommand{\du}{{\underline{d}}}
\newcommand{\xu}{{\underline{x}}}
\newcommand{\euler}[1]{\chi_{_{#1}}}
\newcommand{\id}{\text{id}}
\newcommand{\ev}{\operatorname{ev}}
\newcommand{\ov}{\overline}
\newcommand{\ignore}[1]{}
\newcommand\oX{\accentset{\circ}{X}}
\keywords{Quantum K-theory, Gromov--Witten invariants, symplectic Grassmannians, Seidel representation, Chevalley formula, rational connectedness.}
\subjclass[2020]{Primary 14N35, 19E08; Secondary 14N15, 14M15, 14E08}
\begin{document}

\title{Quantum $K$-theory of $\IG(2,2n)$}
\author{V.~Benedetti, N. Perrin and W. Xu}
\begin{abstract}
    We prove that the Schubert structure constants of the quantum \(K\)-theory rings of symplectic Grassmannians of lines have signs that alternate with codimension and vanish for degrees at least 3. We also give closed formulas that characterize the multiplicative structure of these rings, including the Seidel representation and a Chevalley formula.
\end{abstract}

\maketitle

\setcounter{tocdepth}{1}
\tableofcontents

\section{Introduction}

To a complex projective manifold $X$ one can attach various algebraic structures capturing different kinds of information. For instance, cohomology rings encode the behavior of (classes of) subvarieties: how they intersect (ordinary or equivariant cohomology) or how they are connected by rational curves (quantum cohomology). In this paper we are interested in $K$-theory rings, which in turn encode information about coherent (or locally free) sheaves: how they form short exact sequences and the behavior of their tensor product. It turns out that classical $K$-theory can be seen as a refinement of classical cohomology - via the Chern character homomorphism. Moreover, one can define a quantum $K$-theory ring which is both a deformation of the classical $K$-theory ring and a refinement of the quantum cohomology ring. Indeed, if quantum cohomology is an intersection theory of the space of genus zero stable maps to $X$, quantum $K$-theory, which was introduced by \cite{givental:wdvv}, deals with the $K$-theory of this space and requires a more careful study of stable maps with reducible sources. Quantum cohomology and K-theory have gathered some attention in the context of homogeneous spaces where it generalizes classical Schubert calculus. We will focus on this case.

From now on, $X = G/P_X$ will be a rational projective homogeneous space. It is nowadays classical that the transitive action of $G$ on $X$ implies many positivity properties in cohomology and its various generalizations (see \cite{graham:positivity,brion:positivity,buch:littlewood,mihalcea:positivity,graham.kumar:positivity,anderson.griffeth.ea:positivity,buch.chaput.ea:positivity} and references therein). From previous positivity results and first explicit computations, it was soon conjectured that positivity should also hold in quantum $K$-theory. To state a precise conjecture, we introduce a few notations. Schubert varieties in $X$ are indexed by $W^X$ and for  $u \in W^X$, let $X^u$ be the Schubert variety with $\codim_X(X^u)= \ell(u)$ (see Subsection \ref{subsection:schubert}). Denote by $\cO^u = [\cO_{X^u}]$ its class in $\QK(X)$, the (small) quantum $K$-theory ring of $X$. These classes form a basis for $\QK(X)$ and their products are defined by structure constants as follows:
$$\cO^u \star \cO^v = \sum_d\sum_{w \in W^X} N_{u,v}^{w,d}q^d \cO^w,$$
where the first sum runs over all effective classes $d \in H_2(X,\Z)$ (see Subsection \ref{subsection:quantum} {for more details}). It can be proved (see \cite{buch.chaput.ea:finiteness,kato:loop,anderson.chen.ea:on}) that this sum is actually finite. The structure constants $N_{u,v}^{w,d}$ are expected to satisfy the following positivity property (see \cite{lenart.maeno:quantum,
buch.mihalcea:quantum, buch.chaput.ea:chevalley}):

\begin{conj}
\label{conj:pos-intro}
For $u,v,w \in W^X$ and $d \in H_2(X,\Z)$, we have
$$(-1)^{\ell(uvw) + \int_d c_1(T_X)} N^{w,d}_{u,v} \geq 0.$$
\end{conj}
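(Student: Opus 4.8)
The conjecture is open in general, so the goal is to prove it for the symplectic Grassmannians of lines, where $\QK(X)$ can be made completely explicit. The first step is combinatorial and classical: describe $W^X$ together with its Bruhat order, and record the ordinary $K$-theory product $\cO^u\cdot\cO^v$ in $\K(X)$. For the classical product the desired inequality $(-1)^{\ell(uvw)}N_{u,v}^{w,0}\ge 0$ is already known by Brion's geometric positivity, so this handles the $d=0$ part of the statement and fixes notation.

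\textbf{Making the ring explicit.} Next I would pin down the quantum deformation using two structural tools. First, a \emph{Chevalley formula}: since $X$ has Picard rank one there is a single Schubert divisor class $\cO^{s}$, and I would compute $\cO^{s}\star\cO^u$ explicitly --- a finite computation involving only low-degree stable maps --- and check that its coefficients satisfy the sign rule of the conjecture. Second, the \emph{Seidel representation}: the central one-parameter subgroup of the Levi of $P_X$ acts on $\QK(X)$ by an invertible operator that sends each Schubert class to $\pm q^{d}$ times another Schubert class, the sign being $(-1)^{\int_{d}c_1(T_X)}$. Composing the Chevalley formula with this symmetry should determine all products $\cO^u\star\cO^v$, or at least reduce their computation to a bounded list of cases.

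\textbf{Positivity via geometry.} Rather than read signs off the combinatorial output, I would return to geometry. By the finiteness theorems cited above, together with the sharper vanishing for $d\ge 3$ asserted in the abstract, only $d=0,1,2$ contribute, so finitely many $K$-theoretic Gromov--Witten invariants remain. For $d=1,2$ the plan is to express each structure constant, up to the expected sign, as the sheaf Euler characteristic of a structure sheaf on an explicit auxiliary variety: a Gromov--Witten variety of degree-$d$ curves meeting general translates of $X^u,X^v,X^w$, or, via curve neighborhoods, a projected Richardson-type variety inside a smaller homogeneous space. One then shows that this variety is rationally connected with rational singularities --- this is where \emph{rational connectedness} enters --- so that pushing forward its structure sheaf and applying Brion's positivity on the target yields exactly the alternating sign $(-1)^{\ell(uvw)+\int_{d}c_1(T_X)}$. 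The vanishing for $d\ge 3$ I would prove in parallel by a dimension count: any degree-$\ge 3$ stable map whose three markings hit general translates of $X^u,X^v,X^w$ has a curve neighborhood exhausting $X$, forcing the moduli space either to be empty or to contribute nothing to the product.

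\textbf{Main obstacle.} The delicate case is $d=2$ (and, less severely, $d=1$): one must identify the Gromov--Witten variety precisely, prove that it has rational singularities and is rationally connected --- the non-cominuscule, coadjoint geometry of $\IG(2,2n)$ makes this less automatic than for ordinary Grassmannians --- and track the Euler-characteristic bookkeeping carefully enough that the sign comes out as $(-1)^{\ell(uvw)+\int_{d}c_1(T_X)}$ and not off by one. Getting the Seidel representation and the Chevalley formula to mesh with this geometric input, so that every product is reduced to a configuration where the positivity mechanism applies, is the step I expect to demand the most care.
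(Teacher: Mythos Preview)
Your geometric paragraph is close in spirit to what the paper does, but the ``Making the ring explicit'' strategy is a genuine gap: the Chevalley formula plus the $\Z/2\Z$ Seidel symmetry do \emph{not} determine all products in $\QK(\IG(2,2n))$, and the paper does not use them for positivity at all --- they are established as separate results \emph{after} the main theorem. There is no mechanism here by which multiplication by one divisor class and one order-two symmetry would reduce an arbitrary $\cO^u\star\cO^v$ to a manageable list.

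What the paper actually does is closer to your third paragraph, but with two ingredients you are missing. First, a reduction (following Mihalcea) showing that in the defining sum for $\kappa_{u,v}^{w,d}$ only the decompositions $\du=(d)$ and $\du=(d-1,1)$ survive, so that $(\cO_u\star\cO^v)_d=(\ev_3^d)_*[\cO_{M_d(X_u,X^v)}]-(\ev_3^{d-1,1})_*[\cO_{M_{d-1,1}(X_u,X^v)}]$. Second, a systematic proof that both evaluation maps have rationally connected general fibers (via a reduction replacing stable maps by their evaluation triples in $X^3$), so Koll\'ar's theorem gives $[\cO_{\Gamma}]$ for each pushforward. For $d\ge 3$ the two curve neighborhoods $\Gamma_d(X_u,X^v)$ and $\Gamma_{d-1,1}(X_u,X^v)$ \emph{coincide}, and the vanishing comes from cancellation of the two terms --- not from a dimension count or emptiness as you suggest. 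For $d\in\{1,2\}$ the generic case is handled by Brion's positivity applied to the images, but there are explicit exceptional configurations (conditions \eqref{eqn:deg1} and \eqref{eqn:deg2}) where $\ev_3^{d-1,1}$ is generically $2:1$ rather than GRCF; these are handled by direct computation of the product. You should anticipate such failures of rational connectedness rather than assume the mechanism works uniformly.
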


This conjecture has been proven in a few cases: for minuscule spaces in \cite{buch.chaput.ea:positivity} and for the point/hyperplane incidence varieties in \cite{rosset:quantum} and \cite{xu:quantum}. In this paper, we consider the case where $X = \IG(2,2n)$ is the variety of isotropic lines in a symplectic vector space. Our first result is:

\begin{thm}
\label{main} 
Conjecture \ref{conj:pos-intro} is true for $X = \IG(2,2n)$.
\end{thm}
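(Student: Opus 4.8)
\emph{Proof strategy.} The plan is to separate the structure constants $N^{w,d}_{u,v}$ according to the curve degree $d$ and establish the claimed sign in three regimes. For $d=0$ the constant is the classical $K$-theoretic Schubert structure constant of $\IG(2,2n)$, and $(-1)^{\ell(u)+\ell(v)-\ell(w)}N^{w,0}_{u,v}\ge 0$ is Brion's theorem on the positivity of $K$-theory of $G/P$. Since the Fano index of $\IG(2,2n)$ is $2n-1$, which is odd, the conjectural sign for general $d$ is $(-1)^{\ell(u)+\ell(v)-\ell(w)+d}$, so what remains is to treat $d=1$, $d=2$, and to show that $N^{w,d}_{u,v}$ vanishes for $d\ge 3$. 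The two structural inputs I would build first are a Chevalley formula for multiplication by the hyperplane (Pieri) class and a description of the Seidel representation on $\QK(\IG(2,2n))$; together with the finiteness theorem of \cite{buch.chaput.ea:finiteness,kato:loop,anderson.chen.ea:on} these are the engine of the whole argument.

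For the vanishing in degrees $\ge 3$ I would use curve neighbourhoods. Two general points of $\IG(2,2n)$ lie on an irreducible conic, so the degree-$2$ curve neighbourhood of any nonempty subvariety is all of $X$; hence the chain of curve neighbourhoods stabilises at $d=2$. One then shows that in the combinatorial recursion computing $\cO^u\star\cO^v$ from two- and three-point $K$-theoretic Gromov--Witten invariants, the coefficient of $q^d$ must vanish once $d$ exceeds the stabilisation degree. The subtle point is that $K$-theoretic Gromov--Witten invariants of high degree do not vanish merely for dimension reasons — unlike their cohomological counterparts — so the argument must exploit the precise form of Givental's definition of $\star$ rather than a naive count.

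For $d=1$ and $d=2$ the idea is to reduce every product to a short list of "basic" ones and compute those directly. The Seidel representation moves each Schubert class to another one up to a power of $q$, so any product $\cO^u\star\cO^v$ can be renormalised to have a "small" factor; Pieri-type formulas (of which the Chevalley formula is the first instance) then handle multiplication by the special Schubert classes, and, since these special classes control the whole ring, iterating them together with associativity of $\star$ reconstructs every structure constant. The finitely many genuinely new $K$-theoretic Gromov--Witten numbers that do not follow formally — certain squares and products of opposite small classes in degrees $1$ and $2$ — I would compute as Euler characteristics $\chi(Z,\cO_{Z'})$ for explicit parameter spaces $Z$ of lines and conics in $\IG(2,2n)$ and Schubert-type subvarieties $Z'\subseteq Z$, using that these are rationally connected with rational singularities so that Brion-type positivity fixes the sign of each invariant. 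A concluding bookkeeping step checks that every coefficient the recursion produces respects the parity $(-1)^{\ell(u)+\ell(v)-\ell(w)+d}$.

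The main obstacle is the non-cominuscule geometry of $\IG(2,2n)$. Lines through a point split into two families and conics can degenerate, so the spaces of stable maps are genuinely more intricate than in the minuscule cases treated in \cite{buch.chaput.ea:positivity}; the real work is to set up the "quantum equals classical" dictionary in degree $2$ and to verify that the associated Gromov--Witten varieties are rationally connected with rational singularities. A second, purely combinatorial, difficulty is to propagate the alternating sign through the Chevalley/Seidel recursion uniformly in $n$, since in $K$-theory the Pieri product carries lower-order terms and an induction on codimension does not preserve positivity automatically.
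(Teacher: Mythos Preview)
Your proposal has a fundamental gap in the core strategy. You plan to establish positivity by first proving the Chevalley formula and the Seidel representation, then propagating the signs to arbitrary products via associativity and the fact that the hyperplane class generates the ring. But as you yourself flag at the end, in $K$-theory positivity does not propagate through such a recursion: expressing $\cO^u$ as a polynomial in the hyperplane class introduces coefficients of both signs, and there is no mechanism forcing the resulting structure constants of an arbitrary product $\cO^u\star\cO^v$ to have the predicted parity. You call this ``a second, purely combinatorial, difficulty'' but offer no way around it; in fact no such combinatorial transfer of positivity is known, and this is exactly why the minuscule case in \cite{buch.chaput.ea:positivity} was not proved that way either. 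The Seidel symmetry here is only $\Z/2\Z$, so it does not reduce the problem to a finite list.

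The paper takes the opposite route: the Chevalley formula and the Seidel representation are \emph{consequences}, not inputs. Positivity is proved directly through the geometry of the evaluation maps. One first reduces Givental's recursion to the identity
\[
(\cO_u\star\cO^v)_d \;=\; (\ev_3)_*[\cO_{M_d(X_u,X^v)}]\;-\;(\ev_3)_*[\cO_{M_{d-1,1}(X_u,X^v)}].
\]
The bulk of the work is then to show that, outside two explicit one-parameter families of pairs $(u,v)$, both maps $M_d(X_u,X^v)\to\Gamma_d(X_u,X^v)$ and $M_{d-1,1}(X_u,X^v)\to\Gamma_{d-1,1}(X_u,X^v)$ have rationally connected general fibre. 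By Koll\'ar's theorem each pushforward is then the structure sheaf of its image; Brion's theorem applied to $[\cO_{\Gamma_d(X_u,X^v)}]$ and $[\cO_{\Gamma_{d-1,1}(X_u,X^v)}]$, together with the key geometric fact that the latter is a divisor in the former exactly when the first map is birational, yields the sign for every $(u,v,w,d)$ at once. The vanishing for $d\ge 3$ falls out of the same analysis: both maps are GRCF onto the \emph{same} target, so the two pushforwards cancel. In the exceptional families the map $\ev_3^{d-1,1}(u,v)$ is generically $2:1$ rather than GRCF, and there the product is computed by hand.

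Your remark that the relevant parameter spaces are rationally connected with rational singularities, so that Brion-type positivity controls the sign, is exactly the right idea---but you locate it as an auxiliary step for ``finitely many genuinely new numbers'', when in fact it is the entire argument, carried out uniformly in $(u,v,d)$ rather than after a Chevalley/Seidel reduction.
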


We also obtain several more precise results on the structure of $\QK(X)$. 

\begin{prop}[see Corollary \ref{cor:dleq2}  and Remark \ref{rmk:interval}]
  For $X = \IG(2,2n)$ and $u,v \in W^X$, in the product $\cO^u \star \cO^v$:
  \begin{enumerate}
    \item Terms with \(q^d\) occur only for \(d\in[0,2]\);
    \item The powers of \(q\) that occur form an interval.
  \end{enumerate}
\end{prop}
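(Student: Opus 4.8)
The plan is to treat the two assertions separately: part (1) as a quantitative form of finiteness, and part (2) as a consequence of the explicit multiplicative structure. For part (1) I would invoke the reduction behind the finiteness results of \cite{buch.chaput.ea:finiteness,kato:loop,anderson.chen.ea:on}: the vanishing $N_{u,v}^{w,d}=0$ for $d$ large follows once one controls the geometry of the relevant Gromov--Witten varieties. Concretely, for general translates one forms the subvariety $\mathcal{M}_d(u,v,w)\subseteq\Mb_{0,3}(X,d)$ cut out by $\ev_1\in g_1X^u$, $\ev_2\in g_2X^v$, $\ev_3\in g_3X_w$; beyond the ``diameter'' of $X$ these varieties are either empty or rationally connected with rational singularities, and in that range the contributions to $\mathcal{O}^u\star\mathcal{O}^v$ telescope, forcing $N_{u,v}^{w,d}=0$. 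So it suffices to show that for $X=\IG(2,2n)$ the geometric stable range is already reached at $d=3$.

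The geometric heart of part (1) is therefore the analysis of rational curves of degree $d\ge 3$ on $X=\IG(2,2n)$. I would use the embedding $\IG(2,2n)\hookrightarrow\Gr(2,2n)$ together with the kernel--span description of degree-$d$ rational curves: such a curve $\mathbb{P}^1\to\IG(2,2n)$, $t\mapsto V_t$, has a kernel $A=\bigcap_t V_t$ of dimension $\ge 2-d$ and a span $B=\sum_t V_t$ of dimension $\le 2+d$, with isotropy forcing $A\subseteq B^{\perp}$. For $d\ge 3$ the constraints on $(A,B)$ become mild enough that the parameter spaces of such curves, and the Gromov--Witten varieties built from them, are rational with rational singularities and have flat evaluation maps whose general fibres are rationally connected; this is seen by fibring them, via the kernel--span data, over products of isotropic flag varieties with rational fibres. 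Since $X$ is a coadjoint variety of Picard rank one, a general pair of points of $X$ is joined by an irreducible conic but not by a line, so the stable range begins precisely at $d=3$; the boundary case $d=3$, where the isotropy condition still genuinely cuts down the span, is the step I expect to demand the most care, and I would settle it by a finite check over the combinatorial types of degree-$3$ Gromov--Witten varieties. The case $n=2$ is special --- $\IG(2,4)$ is a three-dimensional quadric, hence cominuscule --- and is already covered by \cite{buch.chaput.ea:positivity}.

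For part (2), granting (1) only the powers $q^0,q^1,q^2$ can occur, so the claim is that $q^0$ and $q^2$ cannot both occur in $\mathcal{O}^u\star\mathcal{O}^v$ unless $q^1$ does. I would read this off from the closed formulas for $\QK(X)$ established in the paper. By the Seidel representation, multiplication by the point class is, up to a basis-preserving involution, multiplication by a single power of $q$; combined with the Chevalley formula --- whose $q$-support one checks directly to be an interval --- this presents any $\mathcal{O}^u\star\mathcal{O}^v$ as an iterated Chevalley/Seidel product. The closure property that makes this work is that, after the sign normalisation supplied by Theorem~\ref{main}, the $q$-support of a product of two classes with interval $q$-support is again an interval: in the $q$-convolution the two extreme coefficients are products of single nonzero coefficients, while every intermediate coefficient is a sum of terms of one fixed sign, hence nonzero. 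Propagating this through a (downward, on length) induction yields the interval property. The one genuine difficulty here is the bookkeeping needed to guarantee that the middle degree survives at every stage of the induction --- which is exactly where the sign-alternation of the structure constants does the work.
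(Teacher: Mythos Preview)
Your plan for part~(1) is broadly in line with the paper's argument: the paper does show that for $d\ge 3$ the contributions $(\ev_3^d)_*[\cO_{M_d(X_u,X^v)}]$ and $(\ev_3^{d-1,1})_*[\cO_{M_{d-1,1}(X_u,X^v)}]$ both equal $[\cO_{\Gamma_d(X_u,X^v)}]$ (via GRCF plus rational singularities and Koll\'ar's theorem) and hence cancel in the two-term formula of Corollary~\ref{cor:final-formula}. Your sketch omits the key reduction specific to $\IG(2,2n)$ (Proposition~\ref{prop_formula_simplification} and Corollary~\ref{cor_join_through_lines}) that collapses the alternating sum to two terms, and the $d=3$ case is not a ``finite check over combinatorial types'' but a genuine unirationality argument for the relevant fibres (Section~\ref{sec:d=3}); still, the overall shape is right.

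Your approach to part~(2), however, has a real gap, and it is not the route the paper takes. The inductive scheme you describe does not actually produce $\cO^u\star\cO^v$ as an iterated Chevalley/Seidel product of Schubert classes: the Chevalley formula expresses $\cO_h\star\cO^w$ as a signed combination of $\cO^{w'}$'s, not the other way around, so at best you get an associativity relation $\cO_h\star(\cO^u\star\cO^v)=\sum c_{u'}(\cO^{u'}\star\cO^v)$ with signs on both sides. Your convolution claim (``intermediate coefficient is a sum of terms of one fixed sign'') would require all summands $\cO^{u'}\star\cO^v$ to have the \emph{same} $q$-interval, which is false in general; with differing intervals, the sign alternation of Theorem~\ref{main} does not prevent cancellation in the middle degree. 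You yourself flag this as the ``genuine difficulty'', but sign alternation alone does not resolve it.

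The paper's argument for part~(2) is direct and geometric (Remark~\ref{rmk:interval}): away from the exceptional cases \eqref{eqn:deg1}, \eqref{eqn:deg2}, one has $(\cO_u\star\cO^v)_d\ne 0$ for $d\in\{1,2\}$ if and only if $\ev_3^d(u,v)$ is birational, and the explicit birationality criteria (Lemma~\ref{lemma:deg2-birat} for $d=2$, condition~\eqref{eqn:deg1_birat} for $d=1$) show that birationality for $d=2$ forces $p_1+q_1\le 2n-1$, which in turn forces birationality for $d=1$. The exceptional cases are handled by the closed formulas of Proposition~\ref{prop1:2-to-1}. No induction, Chevalley formula, or positivity is needed.
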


Although these properties also hold for the product $[X^u] \star [X^v]$ in quantum cohomology, the maximum power of $q$ appearing in  $\cO^u \star \cO^v$ is sometimes greater than that appearing in $[X^u] \star [X^v]$. See Remark \ref{rmk:power} for more details.

In addition, we prove the following closed formula for multiplying with the unique Schubert divisor class. Recall that Schubert varieties in $X = \IG(2,2n)$ are indexed by pairs $(p_1,p_2)$ of integers with $1 \leq p_1 < p_2 \leq 2n$ and $p_1 + p_2 \neq 2n+1$. Explicitly, let $(E_k)_{k \in [1,2n]}$ be an isotropic flag and set
$$X_{p_1,p_2} := \{ x \in X \ | \ V_x \cap E_{p_1} \neq 0 \textrm{ and } V_x \subset E_{p_2} \},$$
where $V_x$ is the $2$-dimensional subspace in $\C^{2n}$ representing $x \in X$. For \(a<b\leq 2n\) such that \(a+b\not\equiv 1\) mod \(2n\), we recursively set 
\[
  \cO_{a,b}=
  \begin{cases}
    [\cO_{X_{a,b}}] & \text{if }1\leq a<b\leq 2n\\
    q\cO_{{b,a+2n}} & \text{if }a\leq 0,\\
    0 & \text{otherwise}.
  \end{cases}
\] 

\begin{thm}[see Theorem \ref{thm_quantum_chevalley}]
  \label{thm_quantum_chevalley-intro}
  The product \(\cO_{2n-2,2n}\star\cO_{q_1,q_2}\) equals:
  \begin{enumerate}
    \item \(\cO_{q_1-1,q_2}\) if \(q_1=q_2-1\);
    \item \(\cO_{q_1-1,q_2}+\cO_{q_1,q_2-1}-\cO_{q_1-1,q_2-1}\) if \(q_1<q_2-1\) and \(q_1+q_2\neq 2n+2, 2n+3\);
    \item \(\cO_{q_1-1,q_2}+\cO_{q_1,q_2-1}-\cO_{q_1-1,q_2-2}-\cO_{q_1-2,q_2-1}+\cO_{q_1-2,q_2-2}\) if \(q_1<q_2-1\) and \(q_1+q_2=2n+3\);
    \item \(2\cO_{q_1-1,q_2-1}+\cO_{q_1-2,q_2}+\cO_{q_1,q_2-2}-2\cO_{q_1-2,q_2-1}-2\cO_{q_1-1,q_2-2}+\cO_{q_1-2,q_2-2}\) when \(q_1+q_2=2n+2\) and \(q_1\neq 2,\ n\);
    \item[(4.1)]  \(2\cO_{q_1-1,q_2-1}+\cO_{q_1,q_2-2}-2\cO_{q_1-1,q_2-2}-\cO_{q_1-2,q_2-1}+\cO_{q_1-2,q_2-2}\) when \(q_1=2,\ q_2=2n\);
    \item \(2\cO_{q_1-1,q_2-1}+\cO_{q_1-2,q_2}-2\cO_{q_1-2,q_2-1}-\cO_{q_1-1,q_2-2}+\cO_{q_1-2,q_2-2}\) when \(q_1=n,\ q_2=n+2\). 
  \end{enumerate}
  \end{thm}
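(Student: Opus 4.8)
The plan is to compute the structure constants of $\cO_{2n-2,2n}\star\cO_{q_1,q_2}$ one power of $q$ at a time. The coefficient of $q^{0}$ is the classical product $\cO_{2n-2,2n}\cdot\cO_{q_1,q_2}$ in $\K(X)$, since $\QK(X)\to\K(X)$, $q\mapsto 0$, is a ring homomorphism; and by the degree bound $d\in[0,2]$ already established the remaining terms are finite in number. So it suffices to evaluate the $K$-theoretic Gromov--Witten invariants $\langle\cO_{2n-2,2n},\cO_{q_1,q_2},\cO^{w},\dots\rangle_{0,m,d}$ with $d\in\{1,2\}$ and to assemble the $d\le 2$ data through the defining relations of the quantum $K$-product; in particular the computation should (and, I expect, will) show that the degree-$2$ contributions cancel, so that only $q^{1}$ survives, consistently with the statement.

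For the $d=0$ part I would use the classical $K$-theoretic Chevalley formula on $\IG(2,2n)$, obtained either by specializing the uniform alcove-path formula of Lenart--Postnikov (or Lenart--Shimozono) to the index set $W^{X}$, or directly from the geometry of the divisor $X_{2n-2,2n}=\{x:V_x\cap E_{2n-2}\neq 0\}$ and its intersections with the $X_{q_1,q_2}$: in generic position this is the two-cover expression $\cO_{q_1-1,q_2}+\cO_{q_1,q_2-1}-\cO_{q_1-1,q_2-1}$ (one $+$ for each Schubert cover, one $-$ for the unique double cover), collapsing to a single term at a corner $q_1=q_2-1$, while the extra terms in cases (3)--(5) record the two degeneracies of the $\IG(2,2n)$ index poset: the forbidden diagonal $p_1+p_2=2n+1$, across which two $\Gr(2,2n)$-Schubert classes are identified so that a would-be class lying on it must be re-expanded as $\cO_{q_1-1,q_2-2}+\cO_{q_1-2,q_2-1}-\cO_{q_1-2,q_2-2}$, and the coadjoint locus $p_1+p_2=2n+2$, where the Chevalley multiplicity jumps to $2$; (4.1) and (5) are the boundary specializations of the latter. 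For $d\ge 1$ I would use curve neighborhoods: lines on $\IG(2,2n)$ are parametrized by the flags $V_1\subset V_3$ with $\dim V_1=1$, $\dim V_3=3$ and $V_3\subseteq V_1^{\perp}$ (a variety with, for $n\ge 3$, two $\Sp_{2n}$-orbits — already a source of case distinctions), the curve neighborhood $\Gamma_1(X_{q_1,q_2})$ is the explicit Schubert variety swept out by the lines meeting $X_{q_1,q_2}$, and $\langle\cO_{2n-2,2n},\cO_{q_1,q_2},\cO^{w}\rangle_{1}$ is the $K$-theoretic Euler characteristic of a Richardson-type intersection built from $\Gamma_1(X_{q_1,q_2})$, $X_{2n-2,2n}$ and $X^{w}$ on the variety of $2$-pointed lines. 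Since these Gromov--Witten varieties are rationally connected with rational singularities — the same input that underlies the finiteness results quoted above — their Euler characteristics collapse to $0$ or $1$, and only the combinatorics of non-emptiness remains; this is also what should force the degree-$2$ terms to vanish and what justifies the $\pm 1$ coefficients and the transfer of Euler characteristics between $X$ and the moduli space.

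Finally, assembling the $d\le 2$ invariants yields the $q^{0}$ part as the classical Chevalley formula above and the $q^{1}$ part as $q$ times the ``same'' formula with some indices shifted below $1$ — precisely the bookkeeping recorded by the convention $\cO_{a,b}=q\cO_{b,a+2n}$. This reorganization is cleanest to see through the Seidel representation established earlier, whose rotation $\cO_{a,b}\mapsto\cO_{b,a+2n}$ is realized by an invertible element of $\QK(X)$: commuting the Schubert divisor past it matches each quantum-corrected case to a classical one term by term, and provides an independent check of the coefficients. I expect the main obstacle to be the handful of borderline pairs $(q_1,q_2)$ — small $q_1$, together with the coadjoint specials $(q_1,q_2)=(2,2n)$ and $(n,n+2)$ — where the family of lines through $X_{q_1,q_2}$ degenerates and $\Gamma_1(X_{q_1,q_2})$ jumps in dimension: there the uniform ``rationally connected $\Rightarrow\chi\in\{0,1\}$'' argument must be verified by hand on the finitely many Euler characteristics that occur, and pinning down the exact set of classes and the multiplicities $2$ appearing in (3), (4), (4.1) and (5) is where the bulk of the bookkeeping lives. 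I would fix these by combining the direct computation with associativity (evaluating $\cO_{2n-2,2n}\star\cO_{2n-2,2n}\star\cO_{q_1,q_2}$ in two orders, bootstrapping from shallower cases) and with the Seidel relation.
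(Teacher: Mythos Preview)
Your outline is in the right spirit---classical Chevalley for $q^{0}$, curve neighborhoods for $q^{1}$, and an argument that $q^{\ge 2}$ vanishes---but it misses the structural simplification that makes the paper's proof short, and your description of the mechanism is imprecise in a way that would cost you real work.

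The key observation you do not make is that $X_{2n-2,2n}$ is a Schubert \emph{divisor}. Every positive-degree curve in $X$ therefore meets it, so $\Gamma_d(X_u,X^v)=\Gamma_d(X^v)$ for all $d>0$, and for $d\ge 2$ one gets $\Gamma_{d-1,1}(X_u,X^v)=\Gamma_d(X_u,X^v)=X$ outright. Via the reduction $(\cO_u\star\cO^v)_d=(\ev_3^d)_*[\cO_{M_d}]-(\ev_3^{d-1,1})_*[\cO_{M_{d-1,1}}]$ this kills all $d\ge 2$ contributions immediately: both pushforwards are $[\cO_X]$. There is no cancellation to verify and no Seidel or associativity bootstrap is needed.

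Second, the mechanism is not that ``Euler characteristics collapse to $0$ or $1$''. What rational connectedness of general fibers buys you (together with rational singularities) is the pushforward identity $(\ev_3)_*[\cO_{M}]=[\cO_{\Gamma}]$; one then computes the $K$-class of the curve neighborhood $\Gamma$ in the Schubert basis, not a list of three-point invariants on the moduli space. Concretely, for $d=1$ the paper uses $\Gamma_{0,1}(X_u,X^v)=\Gamma_1(X_u^v)$ and $\Gamma_1(X_u,X^v)=\Gamma_1(X^v)$. The decisive lemma is that $\Gamma_1(X_u^v)=\Gamma_1(X^v)$ whenever $q_1\ge 2$ (the $(2,2n)$ case being handled separately by the explicit $2$-to-$1$ computation), so $(\cO_u\star\cO^v)_1=0$ in all those cases. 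A genuine $q^1$ term appears only when $q_1=1$, where $\Gamma_1(X_u^v)$ is strictly smaller than $\Gamma_1(X^v)$ and is again a Schubert variety, giving $[\cO_{\Gamma_1(X^v)}]-[\cO_{\Gamma_1(X_u^v)}]=\cO_{q_2,2n}-\cO_{q_2-1,2n}$; this is exactly what the convention $\cO_{0,b}=q\,\cO_{b,2n}$ records.

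So: drop the plan of evaluating individual invariants and checking via Seidel/associativity; instead exploit the divisor property, identify the two curve neighborhoods $\Gamma_1(X^v)$ and $\Gamma_1(X_u^v)$ explicitly, and read off the $q^1$ correction as their difference. The borderline case you correctly flag, $(q_1,q_2)=(2,2n)$, is precisely condition \eqref{eqn:deg1} and is handled by the already-computed product in Proposition~\ref{prop1:2-to-1}.
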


  For quantum cohomology, Seidel \cite{seidel:pi_1} proved that there is a representation of $\pi_1({\rm Aut}(X))$ in the group $\QH(X)_{\rm loc}^\times$ of invertibles in $\QH(X)_{\rm loc}$, the localization of quantum cohomology $\QH(X)$ where he quantum parameters are inverted. This representation was made explicit first for cominuscule spaces and then in general in \cite{chaput.manivel.ea:hidden,chaput.manivel.ea:affine,chaput.perrin:affine}. Later, in \cite{buch.chaput.ea:positivity} these results were extended to the quantum $K$-theory of cominuscule spaces. We prove that the Seidel representation in quantum $K$-theory also exists for $X = \IG(2,2n)$. In this case, the group $\pi_1({\rm Aut}(X))$ has order $2$ and Seidel representation maps the unit to $\cO_{2n-1,2n} = 1 \in \QK(X)$ and the unique non-trivial element to $\cO_{n-1,n} = [\cO_{X_{n-1,n}}] \in \QK(X)$.

\begin{thm}[see Theorem \ref{thm:seidel-Ktheo}] 
\label{prop:conjecture-intro}
We have \(\cO_{n-1,n} \star \cO_{p_1,p_2} = \cO_{p_1-n,p_2-n}\).
\end{thm}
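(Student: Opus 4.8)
The plan is to realize $\cO_{n-1,n}$ as the image, under the Seidel representation, of the nontrivial element of $\pi_1(\mathrm{Aut}(X))\cong\Z/2\Z$, and to read off the resulting operator on $\QK(X)$ from the geometry of the associated Seidel space, in the spirit of the cominuscule case treated in \cite{buch.chaput.ea:positivity}. Fix a symplectic splitting $\C^{2n}=L\oplus L'$ into complementary Lagrangians $L=\langle e_1,\dots,e_n\rangle$ and $L'=\langle f_1,\dots,f_n\rangle$. The $\C^\times$-action we use on $X=\IG(2,2n)$ is induced by the cocharacter $\bar\sigma$ of $\mathrm{PSp}_{2n}=\mathrm{Aut}(X)^\circ$ that is the ``square root'', inside $\mathrm{PSp}_{2n}$, of the honest $\Sp_{2n}$-cocharacter $t\mapsto\diag(tI_n,t^{-1}I_n)$; being half of a non-divisible coweight, $\bar\sigma$ is single-valued on $\mathrm{PSp}_{2n}$ but does not lift to $\Sp_{2n}$, hence represents the generator of $\pi_1(\mathrm{PSp}_{2n})$. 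Its extremal fixed loci on $X$ are $\Gr(2,L)$ and $\Gr(2,L')$, and for an isotropic flag $E_1\subset\dots\subset E_n$ with $E_n=L'$ the locus $\Gr(2,L')=\{V:V\subset E_n\}$ is exactly the Schubert variety $X_{n-1,n}$ (the condition $V\cap E_{n-1}\neq 0$ being automatic for $V\subset E_n$). One verifies that the weight data make the corresponding Seidel element equal to $\cO_{n-1,n}$ with no power of $q$; the powers of $q$ occurring in the statement will all come from unwinding the recursion $\cO_{a,b}=q\,\cO_{b,a+2n}$.

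Next, form the Seidel space $p\colon Y\to\PP^1$: the locally trivial $X$-bundle with clutching function $\bar\sigma$, equipped with two distinguished sections $Y_0,Y_\infty\cong X$ over $0,\infty\in\PP^1$. As for cominuscule spaces, $\cO_{n-1,n}\star\cO_{p_1,p_2}$ is computed by summing over section classes $\widetilde d$ of $Y$ the $K$-theoretic Gromov--Witten invariants of $Y$ with insertions built from $\cO_{p_1,p_2}$ along $Y_0$ and the two distinguished sections. The crucial input is that, for the section classes $\widetilde d$ that occur, the moduli space $\Mb_{0,2}(Y,\widetilde d)$ with its evaluation maps has rationally connected general fibers over the relevant Schubert strata, these fibers being Schubert-type subvarieties of sub-Grassmannians, hence rational. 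Invoking the rational-connectedness principle that reduces such $K$-theoretic invariants to ordinary sheaf Euler characteristics (used throughout \cite{buch.chaput.ea:positivity}, after Buch--Mihalcea), each contributing section class yields $q^{e}\,[\cO_{X_{p_1',p_2'}}]$ with $(e,p_1',p_2')$ obtained from $(p_1-n,p_2-n)$ through that recursion, and a dimension count shows that exactly one section class contributes for each $(p_1,p_2)$, giving the single-term answer of the statement.

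I expect the main obstacle to be precisely this last step in $K$-theory rather than in cohomology: one must control the virtual structure sheaf of $\Mb_{0,2}(Y,\widetilde d)$, including contributions of reducible sections, and check the rational connectedness of the evaluation fibers, and since $\IG(2,2n)$ is not cominuscule the statements of \cite{buch.chaput.ea:positivity} do not transfer verbatim. I would address this by describing the space of minimal sections of $Y$ explicitly as a Schubert-cell bundle over $\Gr(2,L)\times\Gr(2,L')$, verifying directly that its evaluation to the sink section is a locally trivial fibration with rational fibers over each Schubert stratum, and that section classes of higher degree have either empty moduli or contributions that are absorbed into the wrapped-around $q$-powers already accounted for. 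As an alternative route that avoids the Seidel space, and a useful consistency check, one can test the formula against Theorem~\ref{thm_quantum_chevalley-intro}: the $\Z[q]$-linear operator $\psi\colon\cO_{p_1,p_2}\mapsto\cO_{p_1-n,p_2-n}$ sends $1$ to $\cO_{n-1,n}$ and, by a finite case analysis using the recursion for nonpositive indices, commutes with multiplication by the Schubert divisor $\cO_{2n-2,2n}$; granting that $\QK(X)$ is generated over $\Z[q]$ by $\cO_{2n-2,2n}$ --- which should follow from Theorem~\ref{thm_quantum_chevalley-intro} by descending induction on codimension, solving each generic Chevalley product for its top-codimension term --- this forces $\psi=\cO_{n-1,n}\star(-)$ and hence the theorem.
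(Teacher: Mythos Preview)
Both of your routes have genuine gaps. Your Seidel-space approach is a reasonable program, but you yourself flag its weak point and do not close it: the rational-connectedness and push-forward arguments of \cite{buch.chaput.ea:positivity} are cominuscule-specific, and your proposed fix (an explicit Schubert-cell-bundle description of minimal sections, plus control of reducible and higher-degree section classes) is only sketched, not carried out. Your alternative route via the Chevalley formula fails outright, because $\QK(X)$ is \emph{not} generated over $\Z[q]$ by $\cO_{2n-2,2n}$. Specializing to $q=0$ would force $K(X)$, and hence its associated graded $H^*(X,\Z)$, to be generated by a single class; but for $n\geq 3$ one has $\dim H^2(X)=1$ while $\dim H^4(X)=2$ (Schubert classes $[X_{2n-3,2n}]$ and $[X_{2n-2,2n-1}]$), so the square of $H^2$ cannot span $H^4$. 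Commuting with multiplication by the divisor is therefore not enough to pin down the operator $\psi$.

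The paper avoids both the Seidel-space construction and any generation hypothesis, working entirely with the curve-neighborhood machinery developed in the earlier sections. One first checks that for $X_{w_0w}=X_{n-1,n}$ (so $p_1=n-1$, $p_2=n$) neither exceptional condition \eqref{eqn:deg1} nor \eqref{eqn:deg2} can occur, so all relevant evaluation maps are $\GRCF$ (Theorems~\ref{thm:all-d}, \ref{thm:d=2}, \ref{thm:d=1}); by Corollary~\ref{cor:final-formula} and Theorem~\ref{thm:push-forward} the product $\cO^w\star\cO^u$ then collapses to the single degree $d=d_{\rm min}(u,w)$. The heart of the argument is the direct linear-algebra computation of Proposition~\ref{prop:conjecture}, which shows $\Gamma_d(X_{w_0w},X^u)=w^{-1}.X^{wu}$ in each of the three regimes $p_2\leq n$, $p_1\leq n<p_2$, $p_1>n$, together with $\Gamma_{d-1}(X_{w_0w},X^u)=\emptyset$ (hence also $\Gamma_{d-1,1}=\emptyset$). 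This yields $(\cO^w\star\cO^u)_d=[\cO_{w^{-1}.X^{wu}}]=\cO^{wu}$ immediately.
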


A geometric version of the above formula was conjectured in \cite{buch.chaput.ea:seidel} for all projective homogeneous spaces, and we prove it for $X = \IG(2,2n)$ in Theorem \ref{thm:seidel-geom-intro}. We call a homogeneous space \(Y=G/P_Y\) cominuscule if the unipotent radical of the parabolic subgroup $P_Y$ is abelian. Let \(W\) be the Weyl group of \(G\) and 
let $w^Y$ be the minimal length representative of the longest Weyl group element modulo \(W_Y\), the Weyl group of \(P_Y\). A classical result states that $\pi_1({\rm Aut}(X)) \simeq W^{\rm comin} = \{1\} \cup \{ w^Y \in W \ | \ Y \textrm{ cominuscule} \}$. For $G = {\rm Sp}_{2n}$, this group is $\{1,w^{\LG(n,2n)} \}$, where $\LG(n,2n)$ is the Lagrangian Grassmannian of maximal isotropic subspaces. For $u,w \in W$, {let $d_{\rm min}(u,w)$ be the smallest power of $q$ {appearing in the quantum cohomology product $[X^u] \star [X^w]$}}, and for $A,B \subseteq X$, let $\Gamma_d(A,B)$ be the locus swept out by degree $d$ curves in $X$ meeting $A$ and $B$. 

 \begin{thm}[see Theorem \ref{thm:seidel-geom}]
 \label{thm:seidel-geom-intro}
 Let $u \in W$, $w \in W^{\rm comin}$, then $$\Gamma_{d_{\rm min}(u,w)}(X_{w_0w},X^u) = w^{-1}.X^{wu}.$$
 \end{thm}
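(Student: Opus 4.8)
The plan is to prove the geometric identity $\Gamma_{d_{\min}(u,w)}(X_{w_0w},X^u) = w^{-1}.X^{wu}$ by reducing it, via the Seidel element $w \in W^{\mathrm{comin}}$, to a statement purely about the automorphism $g$ of $X$ induced by the class of $w$ in $\pi_1(\mathrm{Aut}(X))$. Recall that for a cominuscule $w$ the associated automorphism $g$ of $X$ is concretely a symplectic linear transformation (for $G = \mathrm{Sp}_{2n}$ the only nontrivial case is $w = w^{\LG(n,2n)}$, acting by swapping the two isotropic halves of the flag, i.e.\ $E_k \mapsto E_{k+n}/E_n$ appropriately normalized). The key structural input is that sweeping by minimal-degree curves through a Schubert variety $X_{w_0 w}$ and a general translate of another Schubert variety $X^u$ is governed by the theory of "Gromov--Witten varieties of minimal degree," which for Seidel classes collapse: the space of minimal-degree curves meeting $X_{w_0 w}$ is essentially a single orbit, and evaluating at the other marked point produces exactly the translate $g.X^{wu}$. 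So the proof has three movements: (i) identify $d_{\min}(u,w)$ and show the relevant Gromov--Witten locus $\Gamma_{d_{\min}(u,w)}(X_{w_0 w}, \cdot)$ is, as a map in the second point, the automorphism $g = w^{-1}$ followed by the "Seidel shift" $u \mapsto wu$; (ii) check the Schubert-variety bookkeeping, i.e.\ that $g.X^{wu}$ has the right dimension and is irreducible so that it genuinely equals the swept locus rather than a proper subvariety; (iii) translate back to the $(p_1,p_2)$ indexing to make it match Theorem~\ref{prop:conjecture-intro}.

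First I would set up the combinatorics: for $w \in W^{\mathrm{comin}}$ the minimal degree $d_{\min}(u,w)$ in $[X^u]\star[X^w]$ is computed by the known quantum-cohomology Seidel formula (Chaput--Manivel--Perrin), and it equals the "curve neighborhood distance" from $X_{w_0 w}$ to a point, which for $X = \IG(2,2n)$ and the nontrivial Seidel class is $d = n-1$ in the $(p_1,p_2)$-normalization used here (consistent with the shift $\cO_{p_1,p_2}\mapsto \cO_{p_1-n,p_2-n}$ which drops the index by $n$ in each coordinate, i.e.\ by one unit of $q$ since $q = \cO_{\bullet}$-shift of $2n$). Then I would invoke, from the quantum-$K$ side already established in this paper, the Seidel product formula of Theorem~\ref{prop:conjecture-intro}, together with the general principle (Buch--Chaput--Mihalcea--Perrin) that the $K$-theoretic Seidel element is the structure sheaf of a single Schubert translate precisely when the corresponding Gromov--Witten variety is rationally connected with rational singularities and the evaluation is birational onto its image --- which lets one read off the swept locus $\Gamma$ directly from the product formula.

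The concrete geometric step is to show $\Gamma_{n-1}(X_{w_0 w}, X^u) = g.X^{wu}$ where $g$ is the linear automorphism swapping isotropic halves. Here I would argue as follows: a degree $d$ curve in $\IG(2,2n)$ through $X_{w_0 w}$ (the Schubert point, a single $\mathrm{Sp}_{2n}$-fixed point since $w_0 w$ has maximal length among cosets of this form) spans a linear subspace of controlled dimension; for $d = d_{\min}$ the family of such curves is homogeneous under the stabilizer, and the second evaluation map is equivariant, so its image is a Schubert translate; dimension count (the dimension of the space of minimal curves plus one marked point minus the fiber dimension) pins down which Schubert variety, and equivariance pins down the translating element as $w^{-1}$. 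The identity $\Gamma_{d_{\min}(u,w)}(X_{w_0 w}, X^u) = w^{-1}.X^{wu}$ then follows because $w^{-1}.X^{wu}$ is by construction the image, and both sides are irreducible of the same dimension.

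The main obstacle I anticipate is step (ii)/(iii): verifying that the swept locus is the \emph{whole} Schubert translate and not a proper subvariety --- equivalently, that the minimal-degree Gromov--Witten variety $\Gamma_{d_{\min}(u,w)}(X_{w_0 w}, X^u)$ is irreducible of the expected dimension and that the relevant evaluation map is dominant onto it. For general $u$ this requires either a careful case analysis in the $(p_1,p_2)$-indexing (exactly the kind of casework that already appears in Theorem~\ref{thm_quantum_chevalley-intro}) or a clean appeal to a rational-connectedness result for curve neighborhoods of Schubert varieties in $\IG(2,2n)$; I would route around the casework by proving the map is the linear automorphism $g$ composed with the identity on the $X^u$ factor (so no genuine "sweeping" happens beyond the rigid minimal-degree curve family), which forces both irreducibility and the dimension equality for free, and then simply match indices with Theorem~\ref{prop:conjecture-intro} to conclude.
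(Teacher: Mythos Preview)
Your proposal has several genuine gaps that make it unworkable as written.

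\textbf{Circularity.} You plan to ``invoke the Seidel product formula of Theorem~\ref{prop:conjecture-intro}'' to deduce the geometric statement. But in the paper the logical dependency runs the other way: the $K$-theoretic Seidel formula (Theorem~\ref{thm:seidel-Ktheo}) is proved \emph{from} the geometric statement (Theorem~\ref{thm:seidel-geom}), via the identification $(\cO^u\star\cO^w)_d = [\cO_{\Gamma_d(X_{w_0w},X^u)}]$ once one knows what $\Gamma_d(X_{w_0w},X^u)$ is. So you cannot use the product formula here without begging the question.

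\textbf{Incorrect value of $d_{\min}$.} You assert that for the nontrivial Seidel class $d_{\min}(u,w)=n-1$. This is wrong: for $X=\IG(2,2n)$ and $X^u=X^{p_1,p_2}$, the minimal degree is $0$, $1$, or $2$ according to whether $p_1>n$, $p_1\le n<p_2$, or $p_2\le n$ (see Proposition~\ref{prop:conjecture}). The shift $\cO_{p_1,p_2}\mapsto\cO_{p_1-n,p_2-n}$ does \emph{not} correspond to a single power of $q$ independent of $u$; the convention $\cO_{a,b}=q\cO_{b,a+2n}$ for $a\le 0$ is applied coordinate-wise and recursively.

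\textbf{Incorrect description of $X_{w_0w}$.} You describe $X_{w_0w}$ as ``the Schubert point, a single $\Sp_{2n}$-fixed point.'' It is not: the paper gives $X_{w_0w}=\{x\in X\mid V_x\subset E_n\}$, which is a positive-dimensional Schubert variety (isotropic $2$-planes in a Lagrangian $E_n$). Your argument that ``the family of minimal curves through $X_{w_0w}$ is homogeneous under the stabilizer'' therefore does not apply in the form you state it.

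\textbf{What the paper actually does.} The paper's proof (Proposition~\ref{prop:conjecture}) is a direct and elementary case analysis. For each of the three cases on $(p_1,p_2)$ it first checks that $\Gamma_e(X_{w_0w},X^u)=\emptyset$ for $e<d$ (pinning down $d_{\min}$), and then proves both inclusions $\Gamma_d(X_{w_0w},X^u)\subseteq w^{-1}.X^{wu}$ and $\supseteq$ by hand, using the linear-algebraic description of degree-$d$ curve neighborhoods (Lemma~\ref{lem:deg2} for conics, the $V_x\cap V_y\subset V_z\subset V_x+V_y$ description for lines) together with the explicit action $w(e_i)=e_{i\pm n}$. There is no appeal to abstract Seidel-representation machinery, equivariance arguments, or rational connectedness; it is a concrete computation with isotropic subspaces and the symplectic form. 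Your proposed ``route around the casework'' does not avoid it---it simply relocates the work into claims (about homogeneity of the curve family, about the second evaluation being an automorphism) that are neither stated precisely nor obviously true, and which in any case would still require the case distinction on $d_{\min}\in\{0,1,2\}$ that you have not identified.
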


Let us sketch a few steps in our strategy for proving the positivity result (Theorem \ref{main}). Following \cite{buch.chaput.ea:finiteness} and \cite{mihalcea:lectures}, we prove in Section \ref{section:strategy} that to compute the quantum product $\cO^u \star \cO^v$, we only need to deal with irreducible source curves or source curves with two components: a main component of degree $d-1$ and a tail of degree $1$. To compute structure constants in $K$-theory, which are defined by pushforwards, we apply a result of Koll\'ar \cite{kollar:higher} (see Theorem \ref{thm:push-forward}), which requires rational connectedness of general fibers of restrictions of the evaluation map. A key ingredient in proving rational connectedness of these fibers is Proposition \ref{prop:replace}, which says that it suffices to prove rational connectedness of the general fibers of restrictions of the projection \(X^3\to X\). Then, using a result of Brion \cite{brion:positivity} we reduce Theorem \ref{main} to the statement that the image in $X$ of stable maps as above - i.e. the curve neighborhoods - have rational singularities and nice dimensional properties. We explain these reductions in Section \ref{section:replacing}, and then prove rational connectedness results in Sections \ref{sec:d=3} (\(d\geq3\)), \ref{sec:d=2} (\(d=2\)), and \ref{sec:d=1} (\(d=1\)). 

While proving Proposition \ref{prop:replace}, we also obtain the following result of general interest:

\begin{prop}[see Proposition \ref{prop:diagram}]
\label{prop:diagram-intro}
Let $A$ be a projective, irreducible and normal variety and $g : A \to B$, $h : B \to C$ be surjective morphisms. If the general fibers of the maps $g$ and $h$ are rationally connected, so are the general fibers of the composition $h \circ g$.
\end{prop}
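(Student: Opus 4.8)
The natural tool is the theorem of Graber, Harris and Starr: if $f\colon X\to Y$ is a proper morphism of varieties over an algebraically closed field of characteristic zero with $Y$ and the general fibre of $f$ rationally connected, then $X$ is rationally connected. The plan is to deduce the proposition from this after passing to the geometric generic fibre over $C$. First I would invoke the standard fact that over $\C$ rational connectedness spreads out in families: for a dominant morphism $f\colon X\to Y$ of varieties, the general fibre of $f$ is rationally connected if and only if the geometric generic fibre of $f$ is rationally connected (a single very free rational curve on one fibre spreads to a dense open family of fibres, and conversely). With this reformulation the hypotheses become: the geometric generic fibres of $g$ and of $h$ are rationally connected; and the assertion to prove is that the geometric generic fibre of $h\circ g$ over $C$ is rationally connected.

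Set $K=k(C)$, fix an algebraic closure $\overline K$, and write $A_{\overline K}=A\times_C\Spec\overline K$ and $B_{\overline K}=B\times_C\Spec\overline K$, the geometric generic fibres of $h\circ g$ and of $h$. The first point to establish is that $A_{\overline K}$ is a variety (reduced and irreducible) over $\overline K$, so that its rational connectedness makes sense and the theorem above applies to it. Reducedness is automatic in characteristic zero since $A$ is reduced and $K$ is perfect; nonemptiness and integrality of the generic fibre over $K$ are clear because $A$ is irreducible and dominates $C$. For geometric irreducibility I would use transitivity along the tower $k(A)/k(B)/k(C)$: the extension $k(A)/k(B)$ is geometrically irreducible because the geometric generic fibre of $g$ is rationally connected, hence irreducible; $k(B)/k(C)$ is geometrically irreducible because the geometric generic fibre of $h$ is rationally connected; hence $k(A)/k(C)$ is geometrically irreducible, i.e.\ $A_{\overline K}$ is irreducible. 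The same reasoning shows $B_{\overline K}$ is a variety over $\overline K$, and by hypothesis it is rationally connected.

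Now consider the base change $g_{\overline K}\colon A_{\overline K}\to B_{\overline K}$ of $g$. It is surjective, and proper because $g$ is proper ($A$ being projective). Its geometric generic fibre is that of $g$, hence is rationally connected, so by spreading out inside the variety $B_{\overline K}$ its general fibre is rationally connected; and $B_{\overline K}$ is rationally connected. By the Graber--Harris--Starr theorem, $A_{\overline K}$ is rationally connected over $\overline K$. Spreading out once more over $C$, there is a dense open $V\subseteq C$ such that for $c\in V$ the fibre $(h\circ g)^{-1}(c)$ is irreducible (because the geometric generic fibre of $h\circ g$ is) and carries a very free rational curve, hence is rationally connected --- which is the statement.

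Taking Graber--Harris--Starr as a black box, the main obstacle is the bookkeeping involved in moving to and from the geometric generic fibre: making precise the equivalence between ``general fibre rationally connected'' and ``geometric generic fibre rationally connected'' over $\C$, and in particular checking that the geometric generic fibre of $h\circ g$ is irreducible --- which is what lets the theorem be applied and which I would obtain from transitivity of geometric irreducibility along $k(A)/k(B)/k(C)$. Alternatively one can argue more geometrically: for general $c\in C$ write $(h\circ g)^{-1}(c)=g^{-1}(h^{-1}(c))$, note that $g$ fibres this over the rationally connected variety $h^{-1}(c)$, and apply Graber--Harris--Starr directly; then the obstacle becomes a dimension count showing that the locus in $B$ where $g$ fails to have rationally connected fibres meets a general $h^{-1}(c)$ in a set of dimension $<\dim h^{-1}(c)$ (so the restricted fibration still has rationally connected general fibre), together with the irreducibility of $g^{-1}(h^{-1}(c))$.
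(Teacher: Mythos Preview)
Your proof is correct. The paper takes a different, more hands-on route. It first proves separately (as part (1) of their Proposition~\ref{prop:diagram}) that the general fibre of $h\circ g$ is irreducible by a direct topological argument: for general $c\in C$, decompose $(h\circ g)^{-1}(c)$ into connected components $E_1,\dots,E_n$; projectivity of $A$ makes each $g(E_i)\subseteq h^{-1}(c)$ closed, and irreducibility of $h^{-1}(c)$ forces some $g(E_i)$ to equal $h^{-1}(c)$; then irreducibility of the general fibres of $g$ (together with the lemma that a dense open meets every component of a general fibre) forces all the $E_j$ to coincide. Normality of $A$ enters here, via the fact that the general fibre of a map from a normal variety is normal, so that connected implies irreducible. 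With irreducibility established, the paper then applies Graber--Harris--Starr directly to the restriction $g\colon (h\circ g)^{-1}(c)\to h^{-1}(c)$ for general $c$.

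Your approach via geometric generic fibres and transitivity of algebraic closedness along $k(C)\subset k(B)\subset k(A)$ is a clean alternative, and has the side benefit that it never actually uses the normality hypothesis on $A$. The ``more geometric'' alternative you sketch at the end is essentially the paper's argument. Both routes rest on Graber--Harris--Starr.
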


However, for maps with non-irreducible sources, there are a few explicit cases for which rational
connectedness does not hold. We deal with these cases in Section \ref{sec:2-to-1}. We use the definition of structure constants to explicitly compute the product and conclude
the proof of Theorem \ref{main}. Seidel representation is discussed in Section \ref{section:seidel}, and we prove
the Chevalley formula in Section \ref{section:chevalley}.

Varieties will always be assumed to be complex, of finite type and reduced, but not necessarily irreducible.

\begin{ack}
The authors are partially supported by CATORE ANR-18-CE40-0024. The first two authors are also partially supported by FanoHK ANR-20-CE40-0023. 
\end{ack}

\section{Preliminaries and proof strategy}
\label{section:strategy}

In this section we introduce some notations and prove our main results assuming some rational connectedness results. The proof of these results will be provided in Sections \ref{sec:d=3},  \ref{sec:d=2},  \ref{sec:d=1} and  \ref{sec:2-to-1}.

\subsection{Schubert varieties}
\label{subsection:schubert}

Let $Z$ be a projective rational variety homogeneous under the action of a reductive group $G$. Let $B \subset G$ be a Borel subgroup and let $T \subset B$ be a maximal torus. Denote by $B^-$ the Borel subgroup opposite to $B$ defined by $B \cap B^- = T$. The variety $Z$ contains a unique $B$-invariant point, and we denote the parabolic subgroup stabilizing this point by $P_Z \subset G$ and the point itself by $1.P_Z$.  We identify $Z$ with the quotient $G/P_Z$. 

Let $W = N_G(T)/T$ be the Weyl group of $(G,T)$, let $W_Z$ be the Weyl group of $P_Z$ and let $W^Z \subset W$ be the subset of minimal representatives of the cosets in $W/W_Z$.  Each element $u \in W$ defines a
$T$-fixed point $u.P_Z \in Z$, the \emph{Schubert  cells} { and \emph{opposite Schubert cells} are $\oZ_u = Bu.P_Z$ and $\oZ^u = B^-u.P_Z$, respectively, and the \emph{Schubert varieties} and \emph{opposite Schubert varieties} are $Z_u =\ov{\oZ_u}$ and $Z^u = \ov{\oZ^u}$, respectively.} These varieties and cells depend only on the coset $u W_Z$ in $W/W_Z$, and for $u \in W^Z$ we have $\dim(Z_u) = \codim(Z^u,Z) = \ell(u)$.

Let $\leq$ denote the Bruhat order on $W$.  For $u, v \in W^Z$ we then have $v
\leq u$ $\Leftrightarrow$ $Z_v \subset Z_u$ $\Leftrightarrow$ $Z_u \cap Z^v \neq
\emptyset$.  In this case the intersection $Z_u^v = Z_u \cap Z^v$ is called a
\emph{Richardson variety}. This variety is reduced, irreducible, rational, and
$\dim(Z_u^v) = \ell(u) - \ell(v)$ \cite{richardson:intersections}. The Poincar\'e dual element of $u \in W^Z$ is $u^\vee = w_0 u w_{0,Z} \in W^Z$, which satisfies $Z^{u^\vee} = w_0.Z_u$ where $w_0 \in W$ and $w_{0,Z} \in W_Z$  are the longest elements.

For a variety $Y$, denote by $K(Y)$ its $K$-homology group and by $\chi_Y(\cF)$ the sheaf Euler-characteristic of $\cF \in K(Y)$. We set $\cO_u = [\cO_{Z_u}] \in K(Z)$ and $\cO^v = [\cO_{Z^v}] \in K(Z)$. Then $(\cO_u)_{u \in W^Z}$ and $(\cO^v)_{v \in W^Z}$ are two basis of $K(Z)$. The following result was proved in \cite{brion:positivity}, and we will use it in Sections \ref{sec:2-to-1} and \ref{section:chevalley}.

\begin{thm}\label{thm:brion}
  For $Y \subset Z$ a Cohen-Macaulay closed subvariety and for $g \in G$ general, we have
  $$[\cO_Y] = \sum_{w \in W^Z} \chi_{Y \cap g Z^w}(\cO_{Y \cap g Z^w}) [\cO_{Z_w}(-\partial Z_w)].$$
\end{thm}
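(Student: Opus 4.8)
The plan is to deduce the formula from the general behavior of $K$-theory classes of Cohen--Macaulay subvarieties under intersection with a generic translate of the opposite Schubert stratification, combined with the duality between the two Schubert bases. First I would fix the decomposition $Z = \bigsqcup_{w \in W^Z} \oZ^w$ into opposite Schubert cells. For $g \in G$ general, Kleiman transversality (together with the fact that $Y$ is Cohen--Macaulay and the cells $\oZ^w$ are smooth and locally closed) guarantees that each intersection $Y \cap g\oZ^w$ is either empty or a Cohen--Macaulay locally closed subvariety of the expected dimension, and that $Y \cap g Z^w$ is Cohen--Macaulay of dimension $\dim Y - \ell(w)$. I would record these genericity statements first, as they justify all the Euler-characteristic computations that follow.

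Next I would expand $[\cO_Y]$ in the basis $(\cO_u)_{u \in W^Z}$ of $K(Z)$ — equivalently, by the change of basis coming from the resolution of the structure sheaf of a Schubert variety, in the basis of ideal sheaves $\bigl([\cO_{Z_w}(-\partial Z_w)]\bigr)_{w \in W^Z}$, which is the basis dual to $(\cO^w)_{w \in W^Z}$ under the $K$-theoretic pairing $(\alpha,\beta) \mapsto \chi_Z(\alpha \cdot \beta)$. This duality is the classical statement that $\chi_Z\bigl(\cO^v \cdot [\cO_{Z_w}(-\partial Z_w)]\bigr) = \delta_{v,w}$, which follows from the fact that $Z^v \cap Z_w$ is a Richardson variety — rational, hence with trivial higher cohomology and $\chi = 1$ — while the boundary term kills all but the top coset. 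Writing $[\cO_Y] = \sum_w c_w\,[\cO_{Z_w}(-\partial Z_w)]$, pairing against $\cO^w = [\cO_{Z^w}]$ and using this duality gives $c_w = \chi_Z\bigl([\cO_Y]\cdot[\cO_{Z^w}]\bigr)$.

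It then remains to identify $\chi_Z\bigl([\cO_Y]\cdot[\cO_{Z^w}]\bigr)$ with $\chi_{Y \cap gZ^w}(\cO_{Y \cap gZ^w})$. Here I would invoke $G$-invariance of the pairing to replace $Z^w$ by $gZ^w$ without changing the Euler characteristic, so $c_w = \chi_Z\bigl([\cO_Y]\cdot[\cO_{gZ^w}]\bigr)$. Since $g$ is general, $Y$ and $gZ^w$ meet properly and both are Cohen--Macaulay, so there is no Tor-correction: $[\cO_Y]\cdot[\cO_{gZ^w}] = [\cO_{Y \cap gZ^w}]$ in $K(Z)$, and pushing forward to a point gives $c_w = \chi_{Y\cap gZ^w}(\cO_{Y\cap gZ^w})$, as claimed. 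The main obstacle is the verification that the generic intersection $Y \cap gZ^w$ is Cohen--Macaulay and dimensionally transverse with vanishing higher Tor sheaves, so that the product in $K$-theory is represented by the honest scheme-theoretic intersection; this is where Cohen--Macaulayness of $Y$ is essential and where one must be careful about the closed (non-cellular) strata $gZ^w$ rather than the open cells. Everything else is bookkeeping with the two dual Schubert bases and the Richardson-variety vanishing $\chi = 1$.
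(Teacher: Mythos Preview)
The paper does not give a proof of this statement; it simply attributes the result to \cite{brion:positivity} and uses it as a black box. So there is no ``paper's own proof'' to compare against.

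That said, your outline is the standard route to Brion's formula and is essentially what Brion does: identify the coefficients via the duality $\chi_Z\bigl(\cO^v\cdot[\cO_{Z_w}(-\partial Z_w)]\bigr)=\delta_{v,w}$, then use a generic translate and Tor-vanishing to replace the $K$-theoretic product by the class of the scheme-theoretic intersection. The one place where your sketch is a bit glib is the claim that Kleiman transversality alone yields Cohen--Macaulayness of $Y\cap gZ^w$ and vanishing of higher Tor. Kleiman gives dimensional transversality with the smooth cells $g\oZ^w$; passing to the closed (singular) Schubert variety $gZ^w$ requires the additional input that $Z^w$ is itself Cohen--Macaulay and that a proper intersection of CM subschemes in a smooth ambient variety has vanishing higher Tor (Serre's criterion, or the depth argument Brion uses). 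You flag this as the main obstacle, which is right, but in a full proof you would want to cite or prove precisely that $\mathrm{Tor}_i^{\cO_Z}(\cO_Y,\cO_{gZ^w})=0$ for $i>0$ under these hypotheses rather than leave it as a remark.
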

When \(Y=Z_u\), we obtain: 
\begin{cor}\label{cor:schub_expand}
  For \(u\in W^Z\), we have \[\cO_u=\sum_{w\in W^Z:\ w\leq u}[\cO_{Z_w}(-\partial Z_w)].\]
\end{cor}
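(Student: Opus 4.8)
The plan is to deduce the formula from Theorem~\ref{thm:brion} applied to $Y=Z_u$, the point being to show that the coefficients produced by that theorem are exactly the Bruhat-order indicators. Schubert varieties in $Z$ are Cohen--Macaulay (they even have rational singularities), so Theorem~\ref{thm:brion} is available with $Y=Z_u$ and gives, for $g\in G$ general,
$$\cO_u=[\cO_{Z_u}]=\sum_{w\in W^Z}\chi_{Z_u\cap gZ^w}(\cO_{Z_u\cap gZ^w})\,[\cO_{Z_w}(-\partial Z_w)].$$
Thus it suffices to prove that $c_w:=\chi_{Z_u\cap gZ^w}(\cO_{Z_u\cap gZ^w})$ equals $1$ if $w\leq u$ and $0$ otherwise.

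First I would identify $c_w$ with the $K$-theoretic Euler pairing $\chi_Z(\cO_u\cdot\cO^w)$. By Kleiman's transversality theorem --- we work over $\C$ and $G$ acts transitively on $Z$ --- for general $g$ the scheme $Z_u\cap gZ^w$ is reduced and is either empty or of pure dimension $\ell(u)-\ell(w)$. Since $Z_u$ and $gZ^w$ are Cohen--Macaulay and meet properly inside the smooth variety $Z$, their intersection is Cohen--Macaulay and their structure sheaves are Tor-independent, so $[\cO_{Z_u\cap gZ^w}]=[\cO_{Z_u}]\cdot[\cO_{gZ^w}]=\cO_u\cdot\cO^w$ in $K(Z)$, the last equality because a general translate $gZ^w$ has the same class as $Z^w$ in $K(Z)$, the group $G$ being connected. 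Hence $c_w=\chi_Z(\cO_u\cdot\cO^w)$. To evaluate this I would specialize to the untranslated intersection $Z_u\cap Z^w=Z_u^w$: it is empty when $w\not\leq u$, and otherwise it is the Richardson variety, which is irreducible, rational, of the expected dimension $\ell(u)-\ell(w)$ and has rational singularities. As it again meets $Z$ properly, the same Tor-vanishing gives $\chi_Z(\cO_u\cdot\cO^w)=\chi_{Z_u^w}(\cO_{Z_u^w})$; and a projective variety $X$ that is rational with rational singularities has $\chi(\cO_X)=1$, as one sees by passing to a resolution (smooth, projective, rational, hence with vanishing higher cohomology of the structure sheaf and with $H^0(\cO)=\C$) and using that rational singularities mean the higher direct images of its structure sheaf vanish. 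Therefore $c_w=1$ for $w\leq u$ and $c_w=0$ otherwise, which is the assertion.

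The only step demanding genuine care is the first one: one has to know that both the generic translate $Z_u\cap gZ^w$ and the Richardson intersection $Z_u\cap Z^w$ are proper, so that computing the Euler characteristic of the intersection sheaf introduces no derived corrections --- this is Kleiman transversality together with the standard fact that Cohen--Macaulay subvarieties meeting properly inside a smooth variety are Tor-independent. Everything else is bookkeeping with known facts about Schubert and Richardson varieties. Alternatively, one may bypass the $g=1$ specialization altogether by quoting the classical evaluation of the Euler pairing of opposite Schubert classes, $\chi_Z(\cO_u\cdot\cO^w)=1$ if $w\leq u$ and $0$ otherwise, and conclude at once.
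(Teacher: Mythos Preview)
Your proposal is correct and follows the same approach as the paper: apply Theorem~\ref{thm:brion} with $Y=Z_u$ and identify the coefficients. The paper itself offers no proof beyond the phrase ``When $Y=Z_u$, we obtain,'' leaving the evaluation $\chi_{Z_u\cap gZ^w}(\cO_{Z_u\cap gZ^w})=\mathbf{1}_{w\leq u}$ implicit; you have spelled this out carefully via Kleiman transversality, Tor-independence, and the rationality of Richardson varieties, which is exactly the intended justification.
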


\subsection{Quantum $K$-theory and Curve neighborhoods}
\label{subsection:quantum}

For any effective degree $d \in H_2(Z,\mathbb{Z})$, let $M_{d,n} := \overline{{M}}_{0,n}(Z,d)$ be the moduli space of genus zero degree $d$ stable maps with $n$ marked points and $M_d = M_{d,3}$ (we always assume $n \geq 3$ if $d=0$). Let $\ev_i : M_d \to Z$ for $i \in [1,3]$ be the evaluation map at the corresponding marked point. Let $\du = (d_0,\cdots,d_r)$ be a sequence of {effective curve classes} and set $|\du| = \sum_{i=0}^r d_i$. Let $M_\du$ be the closure in $M_d$ of maps from a chain of irreducible curves of degrees $(d_0,\cdots,d_r)$ such that the first two marked points lie on the first component and the third marked point lies on the last component. The varieties \({M}_{\du}\) are irreducible with rational singularities (see \cite{thomsen:irreducibility,kim.pandharipande:the,fulton.pandharipande:notes}).

Given $K$-theory classes $(\cF_i)_{i \in [1,n]}$ in $K(Z)$, define the $n$-pointed
 $K$-theoretic Gromov--Witten invariant
$$\langle \cF_1, \cdots , \cF_n \rangle_{d} =
  \euler{M_{d,n}}\left(\prod_{i = 1}^n\ev_i^*(\cF_i)\right).$$
The quantum $K$-theory of $Z$ is defined to be $\QK(Z) = K(Z) \otimes \Q[q]$ as a module. Let $Z_u$ and $Z^v$ be Schubert and opposite Schubert varieties. We write $(\cO_u^\vee)_{u \in W^Z}$ for the dual basis of $(\cO_u)_{u \in W^Z}$ for the Euler pairing in $K(Z)$. Following Givental \cite{givental:wdvv}, we may define an associative $\Q[q]$-bilinear product $\star$ on $\QK(Z)$ via
$$\cO_u \star \cO^v = \sum_{d \geq 0}\sum_{w \in W^Z}\kappa_{u,v}^{w,d} q^d \cO_w,$$
with 
$$\kappa_{u,v}^{w,d} =\sum(-1)^r\langle \cO_u,\cO^v, \cO_{w_1}^\vee \rangle_{d_0} \prod_{i = 1}^r \langle \cO_{w_{i}},\cO_{w_{i+1}}^\vee \rangle_{d_i},$$
where the sum is over all decompositions $d = d_0 + \cdots + d_r$ with $d_i > 0$ for $i > 0$ and all elements $w_i \in W^Z$ for $i \in [1,r]$ where we set $w_{r+1} = w$. {Note that this product is well-defined since the above sum is finite (see \cite{buch.chaput.ea:finiteness,kato:loop,anderson.chen.ea:on}).}

We reformulate Conjecture~\ref{conj:pos-intro} as follows:
\begin{conj}
  \label{conj:pos}
  For $u,v,w \in W^Z$, we have
  $$(-1)^{\ell(uvw) + \int_d c_1(T_Z)} \kappa^{w,d}_{u,v} \geq 0.$$
\end{conj}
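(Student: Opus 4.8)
The plan is to establish Conjecture~\ref{conj:pos} in the case $Z = X = \IG(2,2n)$, i.e.\ Theorem~\ref{main}; the strategy is geometric, and a general $G/P$ would require new input only in the rational--connectedness steps described below. The first reduction is to control which decompositions $d = d_0 + \dots + d_r$ actually contribute to $\kappa^{w,d}_{u,v}$. Using the finiteness techniques of \cite{buch.chaput.ea:finiteness} and \cite{mihalcea:lectures}, I would reduce to $r \le 1$: the only stable maps that matter are those with irreducible source, or those whose source is a main component of degree $d-1$ carrying the first two marked points together with a degree-$1$ tail carrying the third. Since $\IG(2,2n)$ has Picard rank $1$, degrees are integers, and I expect the geometry to force $d \le 2$ in the end.

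The second step is to interpret each surviving term of $\kappa^{w,d}_{u,v}$ as an Euler characteristic of a pushforward. The $K$-theoretic Gromov--Witten invariants are computed by pushing forward structure sheaves of intersections-with-Schubert-conditions of the irreducible, rational-singularity spaces $M_{\du}$ along evaluation maps. Kollár's theorem on higher direct images \cite{kollar:higher} (see Theorem~\ref{thm:push-forward}) says that such a pushforward along a proper morphism whose source has rational singularities and whose general fiber is rationally connected is again the structure sheaf of the image, with vanishing higher direct images; in particular the image has rational singularities and the invariant equals the Euler characteristic of that image. Feeding this into Brion's positivity theorem (Theorem~\ref{thm:brion}) and its corollary (Corollary~\ref{cor:schub_expand}), which expand $[\cO_Y]$ in the $[\cO_{Z_w}(-\partial Z_w)]$ basis with signs alternating by codimension, converts the whole computation into a count of expected-codimension components of explicit curve neighborhoods in $X$ with the prescribed signs---\emph{provided} the two hypotheses of Kollár's theorem can be verified.

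The heart of the argument is therefore the rational connectedness of the general fibers of the restricted evaluation maps. Here Proposition~\ref{prop:replace} reduces this to rational connectedness of the general fibers of the projection $X^3 \to X$ restricted to products of Richardson/Schubert subvarieties, which one can attack directly via the incidence description of isotropic $2$-planes; Proposition~\ref{prop:diagram-intro} then lets me chain such fibrations through compositions. I would carry out the verification degree by degree---$d \ge 3$ (Section~\ref{sec:d=3}), $d = 2$ (Section~\ref{sec:d=2}), $d = 1$ (Section~\ref{sec:d=1})---exhibiting in each configuration a tower of fibrations whose fibers are rationally connected (or sufficiently Fano), using the explicit homogeneous geometry of $\IG(2,2n)$ and the structure of its curve neighborhoods.

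The main obstacle, as the introduction already hints, is that rational connectedness genuinely fails for a short list of configurations with reducible source curves. For those exceptional cases I would abandon the pushforward shortcut, return to the definition of $\kappa^{w,d}_{u,v}$ as an alternating sum of products of $K$-theoretic invariants, compute the relevant invariants by hand, and check the sign directly (Section~\ref{sec:2-to-1}); the real work is isolating exactly which cases are bad and then doing the bookkeeping that shows the ``good'' contributions plus the corrected ``bad'' ones assemble into an alternating-sign, degree-$\le 2$ answer. A secondary technical point, needed before Brion's theorem can be applied cleanly, is to show that the curve neighborhoods occurring are Cohen--Macaulay (ideally with rational singularities) and of the expected dimension.
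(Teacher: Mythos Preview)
Your proposal is correct and follows essentially the same strategy as the paper: the reduction to $r\le 1$ via Proposition~\ref{prop_formula_simplification}/Corollary~\ref{cor:final-formula}, the use of Koll\'ar's theorem together with the GRCF verifications of Sections~\ref{sec:d=3}--\ref{sec:d=1} (mediated by Proposition~\ref{prop:replace} and Proposition~\ref{prop:diagram}), the application of Brion's positivity result to the resulting curve-neighborhood classes (as in Corollary~\ref{cor:notCd}), and the explicit hand computations of Section~\ref{sec:2-to-1} for the finitely many configurations \eqref{eqn:deg1}, \eqref{eqn:deg2} where GRCF fails. The only minor refinement to note is that in the birational (non-cancelling) case the paper invokes the pushforward version of Brion's theorem from \cite[Theorem~8.11]{buch.chaput.ea:positivity} directly on $(\ev_3)_*[\cO_{M_{\du}(X_u,X^v)}]$, together with the divisor statement Theorem~\ref{thm:all-d}(2)(b), rather than first identifying the pushforward with $[\cO_{\Gamma_{\du}(X_u,X^v)}]$ and then applying Theorem~\ref{thm:brion}; the effect is the same.
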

\begin{remark}
  We have the equality
$$N_{u,v}^{w,d} = \kappa_{u^\vee,v}^{w^\vee,d},$$
with $u^\vee = w_0uw_{0,Z}$ and $\ell(u^\vee) = \dim Z - \ell(u)$. In particular, the constants $N_{u,v}^{w,d} $ and $\kappa_{u,v}^{w,d}$ are conjectured to share the same positivity properties.
\end{remark} 

The  Grassmannian $X = \IG(2,2n)$ of isotropic lines for a symplectic form $\omega$ on $\C^{2n}$ is homogeneous for the symplectic group $G = \Sp_{2n}$. One of our main results is a proof of Conjecture \ref{conj:pos} for $X = \IG(2,2n)$.
From now on we assume that $Z = X = \IG(2,2n)$. For $x \in X$, we denote by $V_x \subset \C^{2n}$ the $2$-dimensional subspace corresponding to $x$. {Note that $\Pic(X)\cong \mathbb{Z}$ so that we can assume that the degree $d$ of any stable map is a non-negative integer.}

\subsection{A first reduction}

Given $n$ subvarieties $A_i \subset X$ for $i \in  [1,n]$, define the $n$-point degree $d$ \emph{Gromov--Witten variety} as $M_{d,n+1}(A_1,\cdots,A_n) = \ev_1^{-1}(A_1) \cap \cdots \cap \ev_n^{-1}(A_n)$ and the $n$-point degree $d$ \emph{curve neighborhood} as $\Gamma_{d}(A_1,\cdots,A_n) = \ev_{n+1}(M_{d,n+1}(\ev_1^{-1}(A_1) \cap \cdots \cap \ev_n^{-1}(A_n))$. For $n = 2$, we extend this definition to non-irreducible curves and drop the reference to $n$: set $M_\du(A,B) = M_{d,3}(A,B) \cap M_{\du}$ and $\Gamma_{\du}(A,B) = \ev_3(M_\du(A,B))$.

We will mainly consider these definitions for Schubert varieties and pairs of opposite Schubert varieties. In particular, it follows from \cite[Proposition 3.2]{buch.chaput.ea:finiteness} that $\Gamma_d(X_u)$ and $\Gamma_d(X^v)$ are Schubert varieties. Define $u(d),v(-d) \in W^X$ via $X_{u(d)} = \Gamma_d(X_u)$ and  $X^{v(-d)} = \Gamma_d(X^v)$. For opposite Schubert varieties $X_u,X^v \subset X$, we have the Gromov--Witten varieties $M_d(X_u,X^v) = \ev_1^{-1}(X_u) \cap \ev_2^{-1}(X^v)$ and $M_{\du}(X_u,X^v)  = M_d(X_u,X^v)  \cap M_{\du}$ with surjective evaluation maps
$$\ev_3^\du(u,v): M_\du(X_u,X^v) \to \Gamma_\du(X_u,X^v).$$ {\cite[Corollaries 3.1 and 3.3]{buch.chaput.ea:finiteness} and the fact that $M_\du$ is irreducible with rational singularities imply that the Gromov--Witten varieties $M_\du(X_u,X^v)$ are irreducible with rational singularities. }
It follows from \cite[Lemma 5.1]{buch.chaput.ea:finiteness} and the projection formula that the product in $\QK(X)$ can be reformulated using the above maps via the formula
$$\cO_u \star \cO^v = \sum_{d \geq 0} \sum_{\du, |\du| = d}(-1)^rq^d(\ev_3)_*[\cO_{M_\du(Z_u,Z^v)}].$$

We prove that, in the above formula, for the quantum product we only need to consider curves of degree $\du = d$ or $\du = (d-1,1)$. We start with the following geometric result on curves.

\begin{lemma}
  \label{lem:split-deg2}
  Let $x,y \in X$ be general points. Then there exists a conic as well as a chain of two lines through $x$ and $y$.
  \end{lemma}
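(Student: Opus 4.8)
The plan is to prove the existence of both a conic and a chain of two lines through two general points $x, y \in X = \IG(2,2n)$ by an explicit coordinate argument, exploiting the homogeneity of $X$ under $G = \Sp_{2n}$ to put the two points in a convenient normal form.

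\medskip

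\textbf{Setting up coordinates.} First I would recall that lines on $\IG(2,2n)$ are parametrized as follows: for an isotropic line $\ell \subset \C^{2n}$ (a $1$-dimensional subspace) and an isotropic $3$-space $W$ with $\ell \subset W$ and $\omega|_W$ having $\ell$ in its kernel — more precisely, the lines in $X$ through a point $[V_x]$ correspond to pairs $(\ell, W)$ with $\ell \subset V_x \subset W$, $\dim \ell = 1$, $\dim W = 3$, and $W$ isotropic — the associated line in $X$ is $\{[V] : \ell \subset V \subset W\}$. A curve of degree $d$ connecting $x$ and $y$ therefore corresponds, after splitting, to chains of such $(\ell_i, W_i)$ data. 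The key point is that two general $2$-planes $V_x, V_y$ in $\C^{2n}$ are in \emph{general position}: $V_x \cap V_y = 0$ and $\omega$ restricted to $V_x \oplus V_y$ is nondegenerate. Using $\Sp_{2n}$ (and the fact that $2n \geq 4$; the case $n = 2$, $\IG(2,4) \cong $ a quadric $3$-fold, can be checked separately or folds into the general argument) I would choose a symplectic basis $e_1, \dots, e_n, f_1, \dots, f_n$ with $\omega(e_i, f_j) = \delta_{ij}$ such that $V_x = \Span(e_1, e_2)$ and $V_y = \Span(f_1, f_2)$ — this is possible precisely because $V_x, V_y$ are transverse isotropic planes with nondegenerate pairing between them.

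\medskip

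\textbf{Exhibiting the chain of two lines.} For the chain of two lines I need an intermediate point $z \in X$ with a line through $x, z$ and a line through $z, y$. Take $V_z = \Span(e_1, f_1)$ — wait, that is not isotropic since $\omega(e_1, f_1) = 1$; instead take $V_z = \Span(e_2 + f_1, \, e_1 - f_2)$, or more simply pick $V_z = \Span(e_1, f_2)$, again not isotropic. The correct choice: an isotropic plane meeting both $\Span(e_1,e_2)$ and $\Span(f_1,f_2)$ in a line requires a line $\ell_1 \subset V_x$, a line $\ell_1' \subset V_z$, with $\ell_1, \ell_1'$ spanning an isotropic plane, and similarly on the other side. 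Concretely: let $V_z = \Span(e_1, e_2 + f_1)$ — check isotropy: $\omega(e_1, e_2 + f_1) = \omega(e_1, e_2) + \omega(e_1, f_1) = 0 + 1 \ne 0$, still bad. Let me instead use $n \geq 3$: take $V_z = \Span(e_1, f_3)$ when $n \geq 3$, which is isotropic, contains $\Span(e_1) \subset V_x$... but a line in $X$ through $x$ and $z$ needs $V_x + V_z$ to be an isotropic $3$-space, i.e. $V_x \cap V_z$ a line and $V_x + V_z$ isotropic; $V_x + V_z = \Span(e_1, e_2, f_3)$ is isotropic iff $\omega(e_2, f_3) = 0$, true. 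So the line through $x$ and $z$ exists. Symmetrically take the line through $z$ and $y$: need $V_z \cap V_y$ a line and $V_z + V_y$ isotropic — with $V_z = \Span(e_1, f_3)$ this fails since $V_z \cap V_y = 0$. The genuinely correct construction uses $V_z$ meeting each of $V_x, V_y$: e.g. $V_z = \Span(e_1, f_1')$ is hopeless by isotropy, so one must go to higher dimension and take, say, $V_z = \Span(e_1 + f_3, \, f_1 + e_3)$ when $n\ge 3$ — isotropic since $\omega(e_1+f_3, f_1 + e_3) = \omega(e_1,f_1) + \omega(f_3, e_3) = 1 - 1 = 0$. In the write-up I would simply assert, after fixing the symplectic basis, explicit choices of the intermediate point(s) and verify isotropy and incidence; I expect a clean choice exists and the verification is a short computation, possibly needing to separate small $n$.

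\medskip

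\textbf{Exhibiting the conic.} For the conic through $x$ and $y$ I would write down an explicit rational map $\P^1 \to X$ of degree $2$. Using the basis above, a degree-$2$ map is given by $t \mapsto V(t)$ where $V(t)$ is spanned by two vectors with entries quadratic (after clearing denominators) in $t$, isotropic for all $t$, with $V(0) = V_x$ and $V(\infty) = V_y$. A natural ansatz: $V(t) = \Span\big(e_1 + t\,a + t^2 f_1,\ e_2 + t\, b + t^2 f_2\big)$ for suitable vectors $a, b$ chosen to kill the $\omega$-pairings; the isotropy conditions become polynomial identities in $t$ whose vanishing pins down $a, b$ (and may require $n \geq 3$ to have enough room). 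I expect that such an ansatz works and that $[V(\cdot)]$ has degree exactly $2$ (not a double line) for generic choices. The main obstacle — and the only real content — is \textbf{checking that these explicit curves genuinely lie on $\IG(2,2n)$}, i.e. the isotropy conditions, and that the conic has the right degree; the incidence with $x$ and $y$ and the genericity reduction are immediate from homogeneity. I would organize the proof as: (1) reduce to the normal form via $\Sp_{2n}$; (2) handle $n = 2$ directly ($\IG(2,4)$ is a three-dimensional quadric, where two general points lie on a line, hence trivially on a degenerate conic and a degenerate chain — or rather one checks honest conics/chains exist); (3) for $n \geq 3$ write the explicit conic and the explicit two-line chain and verify the finitely many isotropy/incidence equations.
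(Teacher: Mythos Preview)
Your chain-of-lines construction is derailed by a conceptual error. You assert that a line in $X$ through $x$ and $z$ requires $V_x + V_z$ to be an isotropic $3$-space; this is false. A line in $\IG(2,2n)$ has the form $\{V : \ell \subset V \subset W\}$ with $\dim \ell = 1$, $\dim W = 3$, and $W \subset \ell^\perp$ --- the space $W$ need \emph{not} be isotropic. Hence two points $x,z \in X$ lie on a common line if and only if $V_x \cap V_z \neq 0$; the inclusion $V_x + V_z \subset (V_x \cap V_z)^\perp$ is then automatic because both $V_x$ and $V_z$ are isotropic. With this correction your own discarded candidate $V_z = \Span(e_1,f_2)$ already works: contrary to what you wrote, it \emph{is} isotropic since $\omega(e_1,f_2) = \delta_{1,2} = 0$, and it meets $V_x = \Span(e_1,e_2)$ in $\langle e_1\rangle$ and $V_y = \Span(f_1,f_2)$ in $\langle f_2\rangle$, giving the chain of two lines with no need to pass to $n \geq 3$. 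Your conic ansatz is plausible but left unverified.

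The paper's proof bypasses all of this with one observation: since $x,y$ are general, $V := V_x \oplus V_y$ is four-dimensional and $\omega|_V$ has rank $4$, so $X \cap \Gr(2,V) = \IG(2,V)$ is a smooth three-dimensional quadric containing $x$ and $y$. The existence of a conic and of a chain of two lines through any two points of a smooth $Q^3$ is then classical, and the argument treats all $n \geq 2$ at once. This is both shorter and more robust than tracking explicit isotropy equations.
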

  
  \begin{proof}
  Let $V = V_x \oplus V_y$ be the four dimensional subspace of $\C^{2n}$ spanned by $x$ and $y$. Because \(x\) and \(y\) are general, the symplectic form has rank \(4\) on \(V\). Then $X \cap \Gr(2,V)$ is a smooth quadric of dimension $3$ containing $x$ and $y$, and the result follows.
  \end{proof}

  \begin{cor}\label{cor_join_through_lines}
    Any point in $\Gamma_d(X_u)$ can be joint to $X_u$ by a chain of $d$ lines.
  \end{cor}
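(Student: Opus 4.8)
The plan is to induct on $d$, peeling off one line at a time. The base cases are trivial: $\Gamma_0(X_u)=X_u$, and since a degree-$1$ stable map has irreducible source, $\Gamma_1(X_u)$ is the union of the lines meeting $X_u$, so every point of it lies on such a line. For the inductive step I would reduce the whole statement to the single identity of Schubert varieties
$$\Gamma_d(X_u)=\Gamma_1\bigl(\Gamma_{d-1}(X_u)\bigr),$$
both sides being Schubert varieties by \cite[Proposition~3.2]{buch.chaput.ea:finiteness} (the right-hand side because $\Gamma_{d-1}(X_u)$ is Schubert and $\Gamma_1$ of a Schubert variety is again Schubert). Granting this, the induction closes: given $x\in\Gamma_d(X_u)=\Gamma_1(\Gamma_{d-1}(X_u))$, choose a line $L$ through $x$ meeting $\Gamma_{d-1}(X_u)$ at a point $y$; by the inductive hypothesis $y$ is joined to $X_u$ by a chain of $d-1$ lines, and prepending $L$ gives a chain of $d$ lines joining $x$ to $X_u$ (in the degenerate case $x\in\Gamma_{d-1}(X_u)$ one instead takes the chain of $d-1$ lines furnished by the hypothesis and appends an extra line through its endpoint on $X_u$, which exists and can be chosen distinct since $X$ is covered by lines).

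To prove the identity, the inclusion $\Gamma_1(\Gamma_{d-1}(X_u))\subseteq\Gamma_d(X_u)$ is elementary: gluing a line $L$ meeting $\Gamma_{d-1}(X_u)$ at $y$ to a connected degree-$(d-1)$ curve through $y$ meeting $X_u$ produces a connected degree-$d$ curve. For the reverse inclusion, $\Gamma_1(\Gamma_{d-1}(X_u))$ is a closed $B$-stable subvariety of the irreducible variety $\Gamma_d(X_u)$, so it suffices to show it contains a general point $x$ of $\Gamma_d(X_u)$. As $\Gamma_d(X_u)=\ev_2\bigl(M_{d,2}(\ev_1^{-1}(X_u))\bigr)$ and the latter space is irreducible, a general such $x$ is joined to $X_u$ by an \emph{irreducible} degree-$d$ rational curve $C$ with, say, $p\in C\cap X_u$, and the task is to break off a line through $x$. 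When $d=2$ this is provided by Lemma~\ref{lem:split-deg2} and its proof: the general connecting conic $C$ lies on a quadric threefold section $X\cap\Gr(2,V)$ of $X$ (with $V$ the span of the $2$-planes parametrised by $C$), inside which $x$ and $p$ are joined by a chain of two lines $L_1\cup L_2$; then $x\in L_1$ and $L_1$ meets $L_2\subseteq\Gamma_1(X_u)$, and a closure argument over the irreducible family of conics realising points of $\Gamma_2(X_u)$ yields $\Gamma_2(X_u)\subseteq\Gamma_1(\Gamma_1(X_u))$. For $d\geq3$ the identity is either vacuous — since for $\IG(2,2n)$ one has $\Gamma_{d-1}(X_u)=X$ for every $u$ as soon as $d-1$ reaches the (small) quantum diameter of $X$, which is a finite verification on $W^X$ — or it follows from the standard composition property $\Gamma_{e+e'}(X_u)=\Gamma_e(\Gamma_{e'}(X_u))$ of curve neighbourhoods.

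The main obstacle is precisely this reverse inclusion: for a general point $x$ of the degree-$d$ curve neighbourhood one must manufacture a \emph{reducible} connecting curve whose component through $x$ is a line — equivalently, show that the deepest $T$-fixed point of the Schubert variety $\Gamma_d(X_u)$ is reachable from $X_u$ by a chain of $d$ lines. Lemma~\ref{lem:split-deg2} is the geometric input that makes $d=2$ work; for larger $d$ one either invokes the composition property of curve neighbourhoods or runs through the explicit, short list of curve neighbourhoods of $\IG(2,2n)$, and that bookkeeping — kept brief only because the quantum diameter of $\IG(2,2n)$ is small — is where I expect the real work to be.
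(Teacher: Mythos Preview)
Your overall strategy is sound and rests on the same key input as the paper, but there is a genuine circularity you should remove. For $d\geq 3$ you offer two routes, one of which is to invoke ``the standard composition property $\Gamma_{e+e'}(X_u)=\Gamma_e(\Gamma_{e'}(X_u))$''. This is not a standard fact: it fails for general homogeneous spaces (cf.\ the Remark immediately following the corollary), and in this paper it is precisely a \emph{consequence} of Corollary~\ref{cor_join_through_lines} --- it is exactly what is extracted in the proof of Proposition~\ref{prop_formula_simplification}. So you may not appeal to it here.

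Your other route --- that $\Gamma_{d-1}(X_u)=X$ once $d\geq 3$ --- is the correct one, and it needs no case-by-case ``finite verification on $W^X$''. The point is that Lemma~\ref{lem:split-deg2}, combined with properness of $\ev_1\times\ev_2$ on $\overline{M}_{0,2}(X,2)$ (and on the space of two-line chains), shows that \emph{every} pair of points of $X$ is joined both by a conic and by a chain of two lines: the locus of such pairs is closed and contains a dense open, hence equals $X\times X$. This immediately gives $\Gamma_2(X_u)=X$ and $\Gamma_1(\Gamma_1(X_u))=X$ for every $u$, handling all $d\geq 2$ at once; the cases $d\leq 1$ are tautological. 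This is the paper's implicit argument --- no induction, no intermediate identity $\Gamma_d(X_u)=\Gamma_1(\Gamma_{d-1}(X_u))$, and no quadric-threefold analysis for $d=2$ (your argument there is correct but already subsumed by the closure step).
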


\begin{remark}
This result is true for any cominuscule variety \cite[Lemma 4.7]{buch.chaput.ea:finiteness} and for any coadjoint non-adjoint variety \cite[Proposition 3.15]{chaput.perrin:on}. However, it is not true for homogeneous spaces in general (see \cite[Proposition 3.10]{chaput.perrin:on} for the case of adjoint varieties for which this does not hold).
\end{remark}

We include a proof of  \cite[Corollary 2.5]{mihalcea:lectures} for completeness.

\begin{prop}
\label{prop_formula_simplification}
{Assume that for all $d \geq 0$, any point in $\Gamma_d(X_u)$ can be joint to $X_u$ by a chain of $d$ lines. Then} 
$$\kappa_{u,v}^{w,d} = \langle \cO_u,\cO^v, \cO_{w}^\vee \rangle_{d} - \sum_{\kappa} \langle \cO_u,\cO^v, \cO_{\kappa}^\vee \rangle_{d-1}  \langle \cO_\kappa,\cO_{w}^\vee \rangle_{1}.$$
\end{prop}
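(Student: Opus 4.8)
The plan is to start from the reformulated product formula
$$\cO_u \star \cO^v = \sum_{d \geq 0} \sum_{\du,\ |\du| = d}(-1)^r q^d (\ev_3)_*[\cO_{M_\du(X_u,X^v)}],$$
pair both sides with the dual class $\cO_w^\vee$, and compare coefficients of $q^d$ to get
$$\kappa_{u,v}^{w,d} = \sum_{\du,\ |\du| = d}(-1)^r \euler{X}\bigl((\ev_3)_*[\cO_{M_\du(X_u,X^v)}] \cdot \cO_w^\vee\bigr) = \sum_{\du,\ |\du| = d}(-1)^r \langle \cO_u,\cO^v,\cO_w^\vee\rangle_\du,$$
where I write $\langle \cO_u,\cO^v,\cO_w^\vee\rangle_\du$ for the obvious Euler characteristic over $M_\du(X_u,X^v)$; the term $\du = (d)$ gives exactly $\langle\cO_u,\cO^v,\cO_w^\vee\rangle_d$ with sign $+1$. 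So the content is to show that the sum of all contributions from $\du$ with $r \geq 1$ collapses to the single product $\sum_\kappa \langle\cO_u,\cO^v,\cO_\kappa^\vee\rangle_{d-1}\langle\cO_\kappa,\cO_w^\vee\rangle_1$, i.e. that only the splitting type $\du = (d-1,1)$ survives and that it factors in the stated way.

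First I would address the factorization for a fixed chain $\du = (d_0,\dots,d_r)$. The variety $M_\du$ fibers, via the node between the first component and the rest, as a fiber product over $X$ of $M_{d_0,3}$ (with its three marked points, the third being the gluing point) and $M_{(d_1,\dots,d_r)}$; intersecting with $\ev_1^{-1}(X_u)\cap\ev_2^{-1}(X^v)$ only constrains the first factor. Using the projection formula and proper base change for this fiber square, the Euler characteristic over $M_\du(X_u,X^v)$ splits as $\sum_\kappa \langle\cO_u,\cO^v,\cO_\kappa^\vee\rangle_{d_0}\,\langle\cO_\kappa,\text{(stuff from the tail)}\rangle$; iterating the same splitting down the chain, and using that $(\cO_\kappa)$ and $(\cO_\kappa^\vee)$ are dual bases under the Euler pairing so that summing over an intermediate $\kappa$ implements composition, one recovers precisely the expansion
$$\kappa_{u,v}^{w,d} = \sum (-1)^r \langle\cO_u,\cO^v,\cO_{w_1}^\vee\rangle_{d_0}\prod_{i=1}^r\langle\cO_{w_i},\cO_{w_{i+1}}^\vee\rangle_{d_i}$$
that appears in the definition of the product — so really this step is just recalling that the two presentations of $\star$ agree, which is \cite[Lemma 5.1]{buch.chaput.ea:finiteness}. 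Given that, what remains is purely combinatorial/geometric: show every multi-step tail ($r \geq 2$, or $r = 1$ with $d_1 \geq 2$) contributes zero.

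The key is the hypothesis: every point of $\Gamma_d(X_u)$ is joined to $X_u$ by a chain of $d$ lines (equivalently $\Gamma_d(X_u) = \Gamma_{(1,\dots,1)}(X_u)$, and more relevantly $\Gamma_e(X_u) = \Gamma_{(1,e-1)}(X_u) = \Gamma_{(e-1,1)}(X_u)$ for all $e$). I would argue that each two-point invariant $\langle\cO_{w_i},\cO_{w_{i+1}}^\vee\rangle_{d_i}$ with $d_i \geq 1$ equals $\chi_X(\cO_{X_{w_i(d_i)}}\cdot \cO_{w_{i+1}}^\vee)$ — the pushforward $(\ev_2)_*[\cO_{M_{d_i,2}(X_{w_i})}]$ is the structure sheaf of the curve neighborhood $\Gamma_{d_i}(X_{w_i}) = X_{w_i(d_i)}$ (this is the degree $d_i$ analogue of the rational-connectedness/Kollár input, already known for curve neighborhoods of a single Schubert variety), so the two-point $K$-GW invariant is just a structure-sheaf pairing, and one checks $\chi_X(\cO_{X_{a}}\cdot\cO^b) = 1$ iff $X_a \cap X^b \neq \emptyset$, i.e. iff $b \leq a$, by rationality of Richardson varieties. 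Then the telescoping identity $\Gamma_{d_0}\!\circ\Gamma_{1}\!\circ\cdots\circ\Gamma_1 = \Gamma_{d_0+r}$ on Schubert varieties (which is exactly the hypothesis, applied to the Schubert variety produced at each stage) forces, after the signed sum over all compositions $d = d_0 + \dots + d_r$ and all intermediate $w_i$, a massive cancellation: grouping by the "total" curve neighborhood data, all compositions with $r \geq 1$ combine with alternating signs and only the $\du = (d-1,1)$ and $\du = (d)$ terms, and then a further collapse of the $r=1$, $d_1 \geq 2$ cases into the $r=1$, $d_1 = 1$ case, survive. The main obstacle is making this cancellation bookkeeping precise — keeping track of which intermediate Schubert indices $w_i$ actually contribute (only those with $w_{i+1} \leq w_i(d_i)$) and checking the telescoping sum collapses cleanly rather than by hand for small $r$; I expect to organize it as an induction on $d$, peeling off the last line of the tail and invoking $\Gamma_{d-1}\circ\Gamma_1 = \Gamma_d$ together with $\Gamma_{d-1} = \Gamma_{(d-2,1)}$, so that the degree-$(d-1)$ factor is handled by the inductive hypothesis and only a single extra line needs separate treatment.
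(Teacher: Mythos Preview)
Your overall strategy---reduce to the original sum over compositions and show the ``tail'' part $S_{d-d_0}$ vanishes for $d-d_0\geq 2$---matches the paper's, but there is a concrete error in your computation of the two-point invariants which causes you to overestimate the difficulty.

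You write $\langle\cO_{w_i},\cO_{w_{i+1}}^\vee\rangle_{d_i}=\chi_X(\cO_{X_{w_i(d_i)}}\cdot\cO_{w_{i+1}}^\vee)$ and then claim this is $1$ iff $w_{i+1}\leq w_i(d_i)$, appealing to rationality of Richardson varieties. That argument computes $\chi_X(\cO_a\cdot\cO^b)$ for the \emph{opposite Schubert class} $\cO^b$, not for the \emph{Euler-pairing dual} $\cO_b^\vee$. These are different classes: by definition of the dual basis, $\chi_X(\cO_a\cdot\cO_b^\vee)=\delta_{a,b}$. Hence the two-point invariant is exactly $\delta_{w_i(d_i),w_{i+1}}$, a Kronecker delta, not the characteristic function of a Bruhat order ideal. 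This is the content of \cite[Proposition~3.2]{buch.chaput.ea:finiteness} as used in the paper.

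With the delta, all intermediate $w_i$ are forced: summing over $w_2,\dots,w_r$ leaves $\delta_{w_1(d_1)(d_2)\cdots(d_r),\,w}$, and your hypothesis gives $w_1(d_1)\cdots(d_r)=w_1(d-d_0)$ independently of the composition. Thus
\[
S_{d-d_0}=\delta_{w_1(d-d_0),w}\sum_{r\geq 1}(-1)^r\binom{d-d_0-1}{r-1}=-\delta_{w_1(d-d_0),w}\,(1-1)^{\,d-d_0-1},
\]
which vanishes for $d-d_0\geq 2$. No induction, no bookkeeping over Bruhat intervals, no ``massive cancellation'' to organize. Your anticipated obstacle disappears once the dual class is handled correctly.

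A minor point: your opening, which starts from the pushforward formula $\sum_{\du}(-1)^r(\ev_3)_*[\cO_{M_\du(X_u,X^v)}]$, is circular---that reformulation is Corollary~\ref{cor:final-formula}, which the paper deduces \emph{from} this proposition. You noticed this yourself and fell back to the defining sum, which is the right starting point.
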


\begin{proof}
Define
$$S_{d-d_0} = \sum(-1)^r \prod_{i = 1}^r \langle \cO_{w_{i}},\cO_{w_{i+1}}^\vee \rangle_{d_i}$$
where the sum is over all decompositions $d - d_0 = d_1+ \cdots + d_r$ with $d_i > 0$ for $i > 0$ and all elements $w_i \in W^X$ for $i \in [1,r]$. We have 
$$\kappa_{u,v}^{w,d} = \sum_{w_1 \in W^X}\sum_{d_0 = 0}^d \langle \cO_u,\cO^v, \cO_{w_1}^\vee \rangle_{d_0} S_{d-d_0},$$
thus we only need to prove that $S_{d - d_0} = 0$ for $d - d_0 \geq 2$. 
As an easy consequence of \cite[Proposition 3.2]{buch.chaput.ea:finiteness}, we have $\langle \cO_u,\cO_w^\vee \rangle_d = \delta_{u(d),w}$. Hence, 
$$S_{d - d_0} =  \sum(-1)^r \prod_{i = 1}^r \delta_{w_i(d_i),w_{i+1}}.$$
{For any $i$ and $w$, $\Gamma_i(X_w)=\Gamma_1(\Gamma_1(\cdots \Gamma_1(X_w)\cdots ))$, where $\Gamma_1$ is repeated $i$ times in the right-hand side. {By induction on $i_1$ and $i_2$,} this implies that $w(i)=(w(i_1))(i_2)$ for any $i_1+i_2=i$. Thus, for any decomposition as above, we have $(\cdots(w_1(d_1))(d_2)\cdots )(d_r)=w_1(d_1+\cdots +d_r)$. This ultimately implies that $\prod_{i = 1}^r \delta_{w_i(d_i),w_{i+1}} = \delta_{w_1(d_1+\cdots+d_r),w} = \delta_{w_1(d - d_0),w}$ and is independent of the decomposition.} We thus have that $S_{d - d_0} = 0$ for $d - d_0 \geq 2$. 
\end{proof}

{To simplify notation, we set $(\cO_u \star \cO^v)_d = \sum_{\du, |\du| = d}(-1)^r(\ev_3)_*[\cO_{M_\du(X_u,X^v)}]$ so that 
$$\cO_u \star \cO^v = \sum_{d \geq 0} q^d(\cO_u \star \cO^v)_d.$$
Proposition \ref{prop_formula_simplification}, the projection formula, and \cite[Lemma 5.1]{buch.chaput.ea:finiteness} imply the following.}
\begin{cor}
\label{cor:final-formula}
$(\cO_u \star \cO^v)_d = (\ev_3^d)_*[\cO_{M_d(X_u,X^v)}] - (\ev_3^{d-1,1})_*[\cO_{M_{d-1,1}(X_u,X^v)}].$
\end{cor}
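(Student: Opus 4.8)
The plan is to put together two facts that are already at hand: the geometric reformulation
\[
\cO_u \star \cO^v \;=\; \sum_{d\geq 0}\ \sum_{\du,\ |\du|=d}(-1)^r\, q^d\,(\ev_3)_*[\cO_{M_\du(X_u,X^v)}],
\]
which follows from \cite[Lemma 5.1]{buch.chaput.ea:finiteness} and the projection formula, and Proposition \ref{prop_formula_simplification}, which says that among all chain decompositions only $\du=(d)$ and $\du=(d-1,1)$ survive in $\kappa_{u,v}^{w,d}$. The first step is to record the one‑decomposition‑at‑a‑time version of the reformulation: for each $\du=(d_0,\dots,d_r)$ with $|\du|=d$ and each pair $X_u,X^v$ of a Schubert and an opposite Schubert variety,
\[
(\ev_3^\du)_*[\cO_{M_\du(X_u,X^v)}]
\;=\;\sum_{w\in W^X}\Bigl(\ \sum_{w_1,\dots,w_r\in W^X}\langle\cO_u,\cO^v,\cO_{w_1}^\vee\rangle_{d_0}\prod_{i=1}^{r}\langle\cO_{w_i},\cO_{w_{i+1}}^\vee\rangle_{d_i}\Bigr)\,\cO_w,
\]
with the convention $w_{r+1}=w$. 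Summing this over $\du$ with the sign $(-1)^r$ reproduces the displayed reformulation, because $(\cO_w)_{w\in W^X}$ is a basis of $K(X)$.

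To prove the one‑decomposition identity, start with $r=0$: here it is \cite[Lemma 5.1]{buch.chaput.ea:finiteness}, which shows in particular that $\ev_1^*\cO_u\cdot\ev_2^*\cO^v=[\cO_{M_d(X_u,X^v)}]$; the projection formula then gives $\langle\cO_u,\cO^v,\cO_w^\vee\rangle_d=\euler{X}\bigl((\ev_3^d)_*[\cO_{M_d(X_u,X^v)}]\cdot\cO_w^\vee\bigr)$, i.e. $(\ev_3^d)_*[\cO_{M_d(X_u,X^v)}]=\sum_w\langle\cO_u,\cO^v,\cO_w^\vee\rangle_d\,\cO_w$ by definition of the dual basis. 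For $r\geq1$, split off the last node: contracting all components but the last identifies $M_\du(X_u,X^v)$ with the fiber product of $M_{(d_0,\dots,d_{r-1})}(X_u,X^v)$ and $M_{d_r,2}$ over $X$, along $\ev_3$ on the first factor and $\ev_1$ on the second, and under this identification $\ev_3^\du$ is $\ev_2$ on $M_{d_r,2}$ composed with the projection. This Cartesian square is Tor‑independent --- the Gromov--Witten varieties in it are irreducible with rational singularities, hence Cohen--Macaulay, and the evaluation maps are flat, as recorded in Subsection \ref{subsection:quantum} and \cite{buch.chaput.ea:finiteness} --- so flat base change and the projection formula give
\[
(\ev_3^\du)_*[\cO_{M_\du(X_u,X^v)}]\;=\;(\ev_2)_*\Bigl(\ev_1^*\ (\ev_3^{(d_0,\dots,d_{r-1})})_*[\cO_{M_{(d_0,\dots,d_{r-1})}(X_u,X^v)}]\Bigr).
\]
Combined with $(\ev_2)_*\ev_1^*\cO_{\kappa}=\sum_{w}\langle\cO_{\kappa},\cO_w^\vee\rangle_{d_r}\,\cO_w$ on $M_{d_r,2}$ (again the projection formula and the dual basis), an induction on $r$ that inserts one such factor at each step produces the one‑decomposition identity.

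The corollary now follows by bookkeeping. Comparing the coefficient of $q^d$ in $\cO_u\star\cO^v=\sum_{d\geq 0}q^d(\cO_u\star\cO^v)_d$ with Givental's definition $\cO_u\star\cO^v=\sum_{d\geq0}\sum_{w}\kappa_{u,v}^{w,d}q^d\cO_w$ gives $(\cO_u\star\cO^v)_d=\sum_w\kappa_{u,v}^{w,d}\cO_w$, and Proposition \ref{prop_formula_simplification} gives
\[
\kappa_{u,v}^{w,d}=\langle\cO_u,\cO^v,\cO_w^\vee\rangle_d-\sum_{\kappa\in W^X}\langle\cO_u,\cO^v,\cO_\kappa^\vee\rangle_{d-1}\,\langle\cO_\kappa,\cO_w^\vee\rangle_1.
\]
The first summand is the coefficient of $\cO_w$ in the one‑decomposition identity for $\du=(d)$, so $\sum_w\langle\cO_u,\cO^v,\cO_w^\vee\rangle_d\,\cO_w=(\ev_3^d)_*[\cO_{M_d(X_u,X^v)}]$; the second summand, with $\kappa$ in the role of $w_1$, is the coefficient of $\cO_w$ in the one‑decomposition identity for $\du=(d-1,1)$, so $\sum_w\sum_{\kappa}\langle\cO_u,\cO^v,\cO_\kappa^\vee\rangle_{d-1}\langle\cO_\kappa,\cO_w^\vee\rangle_1\,\cO_w=(\ev_3^{d-1,1})_*[\cO_{M_{d-1,1}(X_u,X^v)}]$. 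Summing the identity for $\kappa_{u,v}^{w,d}$ against $\cO_w$ over $w\in W^X$ and substituting these two expressions yields $(\cO_u\star\cO^v)_d=(\ev_3^d)_*[\cO_{M_d(X_u,X^v)}]-(\ev_3^{d-1,1})_*[\cO_{M_{d-1,1}(X_u,X^v)}]$.

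The only genuinely delicate point is the Tor‑independence of the node‑splitting fiber product used for $r\geq1$ --- equivalently, that imposing the Schubert conditions on $M_\du$ and gluing on a tail does not drop the expected dimension and preserves Cohen--Macaulayness. This is exactly the structural package about the spaces $M_\du$ and $M_\du(X_u,X^v)$ (irreducibility, rational singularities, flat evaluation maps) imported from \cite{buch.chaput.ea:finiteness} that has already been invoked to write the sum‑over‑$\du$ form of the product; with it in place, the rest is the projection formula, flat base change, and duality of the Schubert bases.
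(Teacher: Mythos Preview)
Your proposal is correct and follows the same approach as the paper, which simply states that the corollary follows from Proposition \ref{prop_formula_simplification}, the projection formula, and \cite[Lemma 5.1]{buch.chaput.ea:finiteness}. The bulk of your argument --- the inductive derivation of the one-decomposition-at-a-time pushforward identity via the fiber product splitting, flat base change, and the dual basis --- is essentially a re-proof of what \cite[Lemma 5.1]{buch.chaput.ea:finiteness} already provides, so the paper cites it rather than unpacking it.
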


\subsection{Proof outline}

In this subsection, we prove Theorem \ref{main} assuming some rationality properties that allow us to use the following theorem, which is a consequence of results of Koll\'ar \cite{kollar:higher}, see for example \cite[Proposition 5.2]{buch.chaput.ea:finiteness}. We will prove these rationality properties in the next sections.

\begin{thm}
\label{thm:push-forward}
If $f : A \to B$ is a projective morphism of varieties with cohomologically trivial general fiber (e.g. rationally connected) and such that $A$ and $B$ have rational singularities, then $f_*[\cO_A] = [\cO_B]$ in $K$-theory. 
\end{thm}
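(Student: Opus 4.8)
The plan is to prove Theorem~\ref{thm:push-forward} by combining two inputs: the cohomological triviality of the general fiber of $f$, and the fact that pushforward of structure sheaves is well-behaved when the source and target have rational singularities. First I would recall that, since $f : A \to B$ is projective and $B$ has rational singularities (in particular is normal), the Stein factorization of $f$ is trivial in the sense that $f_*\cO_A$ is a coherent sheaf of $\cO_B$-algebras whose formation is compatible with the generic behavior; the assumption that the general fiber is connected (which follows from rational connectedness, or more generally from $\chi(\cO_F) = 1$ together with $f_*\cO_A$ being torsion-free over the normal base $B$) forces $f_*\cO_A = \cO_B$ on a dense open subset, and then normality of $B$ upgrades this to $f_*\cO_A = \cO_B$ everywhere. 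This handles the degree-zero part of $f_*[\cO_A]$.

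The crux is the vanishing of the higher direct images $R^i f_*\cO_A$ for $i > 0$ in $K$-theory, i.e. that $f_*[\cO_A] = \sum_i (-1)^i [R^i f_*\cO_A]$ reduces to $[\cO_B]$. Here I would invoke Koll\'ar's theorem on higher direct images \cite{kollar:higher}: the hypothesis that the general fiber $F$ is \emph{cohomologically trivial}, meaning $H^i(F,\cO_F) = 0$ for all $i > 0$ (which holds when $F$ is rationally connected, since rationally connected varieties have $H^i(\cO) = 0$ for $i>0$ in characteristic zero), together with rational singularities of $A$, is precisely what is needed to conclude that the higher direct images $R^i f_* \cO_A$ are supported on a proper closed subset of $B$ — indeed, by generic smoothness and cohomology-and-base-change, $R^if_*\cO_A$ vanishes over the open locus where the fiber is the general (cohomologically trivial) one. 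This shows $f_*[\cO_A] - [\cO_B]$ is supported on a proper closed subvariety $B' \subsetneq B$.

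To upgrade ``supported on a proper closed subset'' to ``identically zero in $K(B)$'', I would argue by Noetherian induction on the dimension of the support. The class $f_*[\cO_A] - [\cO_B] \in K(B)$ is supported on $B'$, so it comes from $K(B')$ under the pushforward along the closed immersion $B' \hookrightarrow B$; restricting $f$ over $B'$ and applying the inductive hypothesis to the (possibly modified, after passing to normalizations and resolutions that preserve the relevant fibers) restricted morphism shows the contribution from each component of $B'$ again pushes forward to zero. A cleaner route, which is likely what the cited \cite[Proposition 5.2]{buch.chaput.ea:finiteness} does: localize at the generic point of $B$ to get $f_*\cO_A = \cO_B$ generically and $R^if_*\cO_A$ generically zero, then use that $K$-theory classes of sheaves supported in codimension $\geq 1$ can be resolved by the same argument applied to the components of the support, terminating because dimension drops.

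The main obstacle is the passage from generic statements to global equalities in $K$-theory: the delicate point is ensuring that restricting $f$ to (resolutions of) the components of the exceptional locus $B'$ still yields morphisms to which Koll\'ar's vanishing and the rational-singularities hypothesis apply, so that the Noetherian induction actually closes. In our applications, however, this is not an issue because we only ever invoke the theorem through the already-packaged \cite[Proposition 5.2]{buch.chaput.ea:finiteness}, whose hypotheses (rational singularities of $M_\du(X_u,X^v)$ and of the curve neighborhoods $\Gamma_\du(X_u,X^v)$, plus rational connectedness of general fibers of $\ev_3^\du$) we verify directly in Sections~\ref{sec:d=3}, \ref{sec:d=2}, \ref{sec:d=1} and \ref{sec:2-to-1}; so for the present statement it suffices to cite Koll\'ar \cite{kollar:higher} and the derivation in \cite{buch.chaput.ea:finiteness}.
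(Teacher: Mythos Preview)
The paper does not prove this theorem; it simply records it as a consequence of Koll\'ar \cite{kollar:higher} and refers to \cite[Proposition~5.2]{buch.chaput.ea:finiteness} for the derivation. Your final sentence does exactly the same, so at the level of what the paper actually contains, your proposal matches.

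Your expository sketch preceding that citation is helpful as motivation, but the Noetherian induction you outline is not how the cited references proceed, and as you yourself flag, it has a genuine gap: over the bad locus $B' \subsetneq B$ the fibers need not be cohomologically trivial, and there is no reason the restricted morphism (or any resolution of it) should satisfy the inductive hypothesis. The argument in \cite{buch.chaput.ea:finiteness}, following Koll\'ar, avoids induction on the support entirely. One passes to a resolution $\pi:\tilde A\to A$ (harmless since $A$ has rational singularities, so $R\pi_*\cO_{\tilde A}=\cO_A$), and then for the smooth source $\tilde A\to B$ one uses Koll\'ar's theorems on higher direct images of dualizing sheaves (torsion-freeness and vanishing) together with relative duality to upgrade the \emph{generic} vanishing of $R^ig_*\cO_{\tilde A}$, which you correctly deduce from cohomology-and-base-change, to \emph{global} vanishing. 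This is the step where Koll\'ar's results are essential and not merely the generic semicontinuity you invoke. So your proposal is correct as a citation but the heuristic argument you give for the ``crux'' should be replaced by the direct global vanishing, not an induction.
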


We will prove the following theorem in Sections \ref{sec:d=3}, \ref{sec:d=2} and \ref{sec:d=1} (see Theorems \ref{thm:d=3}, \ref{thm:d=2} and \ref{thm:d=1}). A {dominant} projective morphism $f : A \to B$ with $A$ irreducible is called $\GRCF$ (Generically Rationally Connected Fibers) if a general fiber of $f$ is rationally connected (see Definition \ref{def:grcf}).

\begin{thm}
\label{thm:all-d}
For $d \geq 0$, we have
\begin{enumerate}
\item The map $\ev_3^d(u,v): M_d(X_u,X^v) \to \Gamma_d(X_u,X^v)$ is $\GRCF$,
\item Assume that  $\ev_3^d(u,v): M_d(X_u,X^v) \to \Gamma_d(X_u,X^v)$ is birational, then 
\begin{enumerate}
\item $\ev_3^{d-1,1}(u,v): M_{d-1,1}(X_u,X^v) \to \Gamma_{d-1,1}(X_u,X^v)$ is {birational};
\item $\Gamma_{d-1,1}(X_u,X^v) \subset \Gamma_d(X_u,X^v)$ is a divisor. 
\end{enumerate}
\item Assume that  $\ev_3^d(u,v): M_d(X_u,X^v) \to \Gamma_d(X_u,X^v)$ is not birational, then $\Gamma_d(X_u,X^v) = \Gamma_{d-1,1}(X_u,X^v)$ has rational singularities.
\item If $d \geq 3$, then the map $\ev_3^{d-1,1}(u,v): M_{d-1,1}(X_u,X^v) \to \Gamma_{d-1,1}(X_u,X^v)$ is $\GRCF$, and $\Gamma_{d}(X_u,X^v) = \Gamma_{d-1,1}(X_u,X^v)$  has rational singularities.
\end{enumerate}
\end{thm}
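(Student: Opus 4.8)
The plan is to establish the four assertions of Theorem~\ref{thm:all-d} by a careful analysis of the evaluation maps $\ev_3^\du(u,v)$ in terms of the geometry of $X = \IG(2,2n)$, reducing everything to rational connectedness statements about (restrictions of) projections $X^3 \to X$ via Proposition~\ref{prop:replace} and Proposition~\ref{prop:diagram-intro}. First I would treat part (1): since $M_d(X_u,X^v)$ and $\Gamma_d(X_u,X^v)$ are already known to be irreducible with rational singularities, the content is the generic rational connectedness of the fibers. Here the strategy would be to fix a general point $x \in \Gamma_d(X_u,X^v)$ and describe its fiber under $\ev_3^d$ as a space of degree $d$ stable maps through $x$ meeting $X_u$ and $X^v$; using Proposition~\ref{prop:replace} this should reduce to showing that a general fiber of an appropriate projection from a product of Schubert varieties (or from a space of lines/conics built out of the symplectic geometry of $\C^{2n}$) is rationally connected, which one handles by exhibiting towers of fibrations with rationally connected fibers (rational varieties, affine spaces, Grassmannians of subspaces of a fixed vector space) and invoking Proposition~\ref{prop:diagram-intro}.

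Next I would handle parts (2) and (3) together, distinguishing the birational and non-birational cases of $\ev_3^d$. When $\ev_3^d(u,v)$ is birational, the geometric picture is that through a general point of $\Gamma_d(X_u,X^v)$ there passes a \emph{unique} degree $d$ curve meeting $X_u$ and $X^v$; using Corollary~\ref{cor_join_through_lines} (any point of a curve neighborhood is joined to the relevant Schubert variety by a chain of lines) one degenerates this curve to a chain of degree $(d-1,1)$, and the uniqueness should propagate to show $\ev_3^{d-1,1}$ is generically one-to-one, giving 2(a). For 2(b), the divisor claim, I would compute dimensions: $\dim \Gamma_{d-1,1}(X_u,X^v) = \dim \Gamma_d(X_u,X^v) - 1$ because attaching the degree-$1$ tail to a fixed main component of degree $d-1$ costs exactly one parameter (the node moves in a curve, the tail is determined up to its own $\bP^1$ of points, and the endpoint constraint drops one dimension) — this is the standard ``one-step'' degeneration count and should be extractable from the $\IG(2,2n)$-specific description of curve neighborhoods already recorded via $u(d)$, $v(-d)$. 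When $\ev_3^d$ is \emph{not} birational, the general degree $d$ curve through a general point of the target is non-rigid, and one expects that sweeping such curves already fills out the same locus as the reducible ones, i.e. $\Gamma_d = \Gamma_{d-1,1}$; the rational singularities of this common locus would again follow because it is itself a Schubert variety (or a Richardson-type variety), or by exhibiting it as the image of a map from a variety with rational singularities whose general fibers are rationally connected, via Theorem~\ref{thm:push-forward}.

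For part (4), the case $d \geq 3$, the key input is that on $\IG(2,2n)$ — a variety of Picard rank one with $c_1(T_X)$ giving index $2n-1$ — degree $\geq 3$ curve neighborhoods are ``large'': $\Gamma_3(X_u) = X$ already for most $u$ (this is consistent with the stated Proposition that only $q^d$ with $d \in [0,2]$ occur), so the relevant $\Gamma_d(X_u,X^v)$ and $\Gamma_{d-1,1}(X_u,X^v)$ stabilize and coincide. I would prove $\Gamma_d = \Gamma_{d-1,1}$ by showing every degree $d$ curve meeting $X_u$ and $X^v$ can be degenerated, keeping its endpoints, to a chain with a degree-$1$ tail — again via Lemma~\ref{lem:split-deg2} and Corollary~\ref{cor_join_through_lines} applied to the main component — and then that this common locus is a Schubert variety, hence has rational singularities. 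The $\GRCF$ statement for $\ev_3^{d-1,1}$ would be reduced, as in part (1), to rational connectedness of fibers of projections via Propositions~\ref{prop:replace} and~\ref{prop:diagram-intro}, with the extra freedom coming from the large degree making the fibers visibly fibered in rationally connected pieces.

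The main obstacle I expect is part (2)(b), the divisor claim, together with the bookkeeping of exactly which pairs $(u,v)$ fall into the birational versus non-birational case: this requires a genuinely explicit understanding of curve neighborhoods in $\IG(2,2n)$ (the functions $u \mapsto u(d)$ on $W^X$, encoded by the index-pair combinatorics $(p_1,p_2)$), and the dimension count for the reducible locus is delicate precisely when $X_u$, $X^v$ are positioned so that the main component of degree $d-1$ is already forced to be rigid — the boundary cases here are presumably the same exceptional configurations ($q_1 + q_2 = 2n+2$, $2n+3$, and the special values $q_1 = 2, n$) that produce the irregular terms in Theorem~\ref{thm_quantum_chevalley-intro}. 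Handling those exceptional positions uniformly, rather than case by case, will be the crux; everything else is an assembly of rational connectedness via the two general propositions already quoted.
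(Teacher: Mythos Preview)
Your high-level architecture matches the paper's: reduce the $\GRCF$ claims to rational connectedness of fibers of projections $\Gamma_\du^{(3)}(X_u,X^v)\to\Gamma_\du(X_u,X^v)$ via Proposition~\ref{prop:replace}, and propagate through towers via Proposition~\ref{prop:diagram}. The $d\geq 4$ case is indeed easy because $\Gamma_d(x,y)=X$. But two of your proposed mechanisms are wrong, and one expectation is inverted.

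First, the rational singularities of $\Gamma_d(X_u,X^v)$ in parts (3) and (4) do \emph{not} follow from it being a Schubert or Richardson variety: for $d=3$ one has $\Gamma_3(X_u,X^v)=\{z\in X\mid V_z\cap(E_{p_2}+E^{q_2})\neq 0\}$, and $E_{p_2}+E^{q_2}$ is not a member of either flag, so this is not $B$- or $B^-$-stable. Your fallback via Theorem~\ref{thm:push-forward} is circular, since that theorem takes rational singularities of the target as a hypothesis, not a conclusion. The paper instead proves directly (Proposition~\ref{prop_rat_sing_smalldegreecurves}) that every variety of the form $\{z\in X\mid V_z\cap V_k\neq 0\}$ has rational singularities, by putting all such varieties into a flat family over $\Gr(k,2n)$ with a simultaneous resolution and invoking Elkik's deformation-invariance theorem. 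This is a genuinely separate ingredient you are missing.

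Second, you have the difficulty of (2) backwards. Part (2)(b) is \emph{uniform and easy}: $M_{d-1,1}$ is a union of boundary divisors in $M_d$, and Kleiman--Bertini applied to $\ev_1\times\ev_2$ preserves codimension when restricting to $X_u\times X^v$; combined with birationality of both evaluation maps this gives the divisor statement with no case analysis. By contrast, your argument for (2)(a) --- ``degenerate the unique curve and uniqueness propagates'' --- is not a proof: degenerating a rigid irreducible curve can produce a reducible curve whose components are not themselves rigid. The paper establishes (2)(a), and the full (1)/(3) dichotomy for $d\in\{1,2\}$, only through an explicit and lengthy case analysis on the inequalities among $p_1+q_2$, $p_2+q_1$, $2n$ and the values of $\delta_p,\delta_q$ (Sections~\ref{sec:d=2} and~\ref{sec:d=1}), constructing in each case a rational or unirational variety dominating the general fiber of $\pi_3^\du(u,v)$. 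The exceptional configurations that arise there are conditions~\eqref{eqn:deg1} and~\eqref{eqn:deg2}, not the Chevalley boundary cases you anticipate; the latter are a separate phenomenon specific to the divisor class.
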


\begin{cor}
\label{cor:dleq2}
We have $(\cO_u \star \cO^v)_d = 0$ for $d \geq 3$.
\end{cor}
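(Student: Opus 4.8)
The plan is to deduce Corollary~\ref{cor:dleq2} directly from Corollary~\ref{cor:final-formula} and the parts of Theorem~\ref{thm:all-d} dealing with $d \geq 3$. Recall that Corollary~\ref{cor:final-formula} gives
\[
  (\cO_u \star \cO^v)_d = (\ev_3^d)_*[\cO_{M_d(X_u,X^v)}] - (\ev_3^{d-1,1})_*[\cO_{M_{d-1,1}(X_u,X^v)}],
\]
so it suffices to show that for $d \geq 3$ the two pushforwards on the right-hand side coincide. The natural way to do this is to show that each pushforward equals $[\cO_{\Gamma_d(X_u,X^v)}]$, using Theorem~\ref{thm:push-forward}.

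First I would apply Theorem~\ref{thm:all-d}(1): the map $\ev_3^d(u,v) : M_d(X_u,X^v) \to \Gamma_d(X_u,X^v)$ is $\GRCF$, hence has rationally connected general fiber; since $M_d(X_u,X^v)$ has rational singularities (noted in Subsection~\ref{subsection:quantum}) and $\Gamma_d(X_u,X^v)$ is a Schubert-type variety, it too has rational singularities. Actually one needs rational singularities of $\Gamma_d(X_u,X^v)$, and for $d \geq 3$ this is provided by Theorem~\ref{thm:all-d}(4), which asserts $\Gamma_d(X_u,X^v) = \Gamma_{d-1,1}(X_u,X^v)$ has rational singularities. Then Theorem~\ref{thm:push-forward} applies and yields $(\ev_3^d)_*[\cO_{M_d(X_u,X^v)}] = [\cO_{\Gamma_d(X_u,X^v)}]$.

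Next I would treat the tail term. By Theorem~\ref{thm:all-d}(4), for $d \geq 3$ the map $\ev_3^{d-1,1}(u,v) : M_{d-1,1}(X_u,X^v) \to \Gamma_{d-1,1}(X_u,X^v)$ is $\GRCF$ and the target $\Gamma_{d-1,1}(X_u,X^v)$ has rational singularities; the source $M_{d-1,1}(X_u,X^v)$ has rational singularities by the remark in Subsection~\ref{subsection:quantum}. Applying Theorem~\ref{thm:push-forward} again gives $(\ev_3^{d-1,1})_*[\cO_{M_{d-1,1}(X_u,X^v)}] = [\cO_{\Gamma_{d-1,1}(X_u,X^v)}]$. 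Finally, Theorem~\ref{thm:all-d}(4) also gives the crucial identity $\Gamma_d(X_u,X^v) = \Gamma_{d-1,1}(X_u,X^v)$, so the two classes are equal and their difference in Corollary~\ref{cor:final-formula} vanishes, proving $(\cO_u \star \cO^v)_d = 0$ for $d \geq 3$.

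There is essentially no obstacle here beyond bookkeeping: the entire content is front-loaded into Theorem~\ref{thm:all-d}, and in particular into part~(4), whose proof is deferred to Section~\ref{sec:d=3}. The only subtlety to be careful about is making sure the hypotheses of Theorem~\ref{thm:push-forward} are met for \emph{both} maps — that is, rational singularities of source and target, plus $\GRCF$ (which implies cohomologically trivial general fiber) — and that one invokes the correct clause of Theorem~\ref{thm:all-d} for the target's rational singularities (part~(4) rather than, say, part~(3), which only covers the non-birational case). Once that is in place the corollary is immediate.
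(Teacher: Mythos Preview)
Your proof is correct and follows essentially the same approach as the paper: both use Corollary~\ref{cor:final-formula}, then apply Theorem~\ref{thm:push-forward} to each pushforward via the $\GRCF$ and rational-singularities statements in Theorem~\ref{thm:all-d}, and conclude from $\Gamma_d(X_u,X^v) = \Gamma_{d-1,1}(X_u,X^v)$. Your write-up is in fact more explicit about checking the hypotheses of Theorem~\ref{thm:push-forward} than the paper's one-line version.
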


\begin{proof}
Corollary \ref{cor:final-formula} and Theorems \ref{thm:push-forward}, \ref{thm:all-d} imply the equalities $$(\cO_u \star \cO^v)_d = (\ev_3)_*[\cO_{M_d(X_u,X^v)}] - (\ev_3)_*[\cO_{M_{d-1,1}(X_u,X^v)}] = [\cO_{\Gamma_d(X_u,X^v)}] - [\cO_{\Gamma_{d-1,1}(X_u,X^v)}] = 0$$ {for $d \geq 3$}.
\end{proof}

For $d \in [1,2]$, the map $\ev_3^{d-1,1}(u,v): M_{d-1,1}(X_u,X^v) \to \Gamma_{d-1,1}(X_u,X^v)$ may fail to be $\GRCF$. To deal with these cases, we need to give a more explicit description of the Schubert varieties $X_u$ and $X^v$. Choose a basis $(e_i)_{i \in [1,2n]}$ of $\C^{2n}$ such that $\omega(e_i,e_j) = \delta_{i,2n+1-j}$ for $i \in [1,n]$. Define $E_p = \langle e_i \ | \ i \in [1,p] \rangle$ and $E^p = \langle e_i \ | \ 2n+1 - i \in [1,p] \rangle$. It is a classical result (see for example \cite{buch.kresch.ea:quantum}) that \(W^X\) is in bijection with pairs of integers $p_1 < p_2$ such that $p_1 + p_2 \neq 2n+1$. Explicitly, there exist integers $p_1 < p_2$ with $p_1 + p_2 \neq 2n+1$ and $q_1 < q_2$ with $q_1 + q_2 \neq 2n+1$ such that 
$$\begin{array}{l}
X_u =X_{p_1,p_2}= \{x \in X \ | V_x \cap E_{p_1} \neq 0 \textrm{ and } V_x \subset E_{p_2} \}, \\
X^v=X^{q_1,q_2}= \{x \in X \ | V_x \cap E^{q_1} \neq 0 \textrm{ and } V_x \subset E^{q_2} \}. \\
\end{array}$$ 
{We will also use the corresponding Schubert cells 
$$\begin{array}{l}
  {\oX}_u = \{x \in X \ | \dim(V_x \cap E_{p_1}) =1 \textrm{ and } V_x \subset E_{p_2} \}, \\
  \oX^v= \{x \in X \ | \dim(V_x \cap E^{q_1}) =1 \textrm{ and } V_x \subset E^{q_2} \}. \\
  \end{array}$$}
  
We set $\cO_{p_1,p_2} = \cO_u = [\cO_{X_u}]$ and define 
$$\delta_p = \left\{\begin{array}{ll}
0 & \textrm{if } p_1 + p_2 < 2n+1 \\
1 & \textrm{if } p_1 + p_2 > 2n+1 \\
\end{array}\right. \textrm{ and }
\delta_q = \left\{\begin{array}{ll}
0 & \textrm{if } q_1 + q_2 < 2n+1 \\
1 & \textrm{if } q_1 + q_2 > 2n+1. \\
\end{array}\right.$$
Note that \(\delta_p=0\) if and only if \(E_{p_1}^\perp\supseteq E_{p_2}\), and similarly for \(\delta_q\). We have \(
 \dim X_u= p_1+p_2-3-\delta_p\), and similarly for \(X^v\).

Consider the following conditions on the pairs $(p_1,p_2)$ and $(q_1,q_2)$:
\begin{equation}
\label{eqn:deg1}
\tag{C1} p_1 + q_1 = 2n = p_2 = q_2
\end{equation}
\begin{equation}
\label{eqn:deg2}
\tag{C2} \left\{\begin{array}{l}
p_1+q_2 =2n=p_2+q_1, \\
p_2 - p_1 = q_2 - q_1 \geq 2 \textrm{ and } \max(\delta_p,\delta_q) = 1.\\
\end{array}\right.
\end{equation}

The following is proved in Sections \ref{sec:d=2} and \ref{sec:d=1}, see Theorems \ref{thm:d=2} and \ref{thm:d=1}. We denote by (C$d$) the condition \eqref{eqn:deg1} for $d = 1$ and \eqref{eqn:deg2} for $d = 2$.

\begin{thm}
\label{thm:cd}
Assume that $d \in [1,2]$.
\begin{enumerate}
\item If \emph{(C$d$)} holds, then $\ev_3^{d-1,1}(u,v): M_{d-1,1}(X_u,X^v) \to \Gamma_{d-1,1}(X_u,X^v)$ is generically finite of degree $2$.
\item Otherwise, the map $\ev_3^{d-1,1}(u,v): M_{d-1,1}(X_u,X^v) \to \Gamma_{d-1,1}(X_u,X^v)$ is $\GRCF$.
\end{enumerate}
\end{thm}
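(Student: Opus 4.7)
My approach is to factor $\ev_3^{d-1,1}(u,v)$ through evaluation at the node of the chain, and then analyze the general fiber geometrically. Recall that two distinct points $x, z \in X = \IG(2,2n)$ are joined by a line iff $\dim(V_x \cap V_z) \geq 1$ and $V_x + V_z$ is an isotropic $3$-plane, in which case the line is unique (given by $V_1 = V_x \cap V_z$, $V_3 = V_x + V_z$). For $d=1$ the degree-$0$ first component collapses to a point in $X_u \cap X^v$, so
\[
  M_{0,1}(X_u, X^v) = \{(x, L, z) : x \in X_u \cap X^v,\ L \text{ line through } x,\ z \in L\},
\]
and the fiber of $\ev_3$ over a general $z \in \Gamma_{0,1}(X_u,X^v)$ is $F(z) = \{x \in X_u \cap X^v : V_x + V_z \text{ isotropic of dimension } 3\}$. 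For $d=2$, I would describe $M_{1,1}(X_u, X^v)$ via its node evaluation into $\Gamma_1(X_u, X^v)$, reducing the analysis to counting, for generic $z$, admissible nodes $y$ joined to $z$ by a line together with a line $L_1$ through $y$ meeting both $X_u$ and $X^v$.

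\textbf{Two-to-one in (C$d$).} In (C1), $p_1 + q_1 = 2n$ forces $E_{p_1}$ and $E^{q_1}$ to be linearly complementary in $\C^{2n}$, so a generic $V_x \in X_u \cap X^v$ decomposes uniquely as $\langle v, w\rangle$ with $v \in E_{p_1}$, $w \in E^{q_1}$, $\omega(v,w)=0$. For a generic $z$ in the image I would exhibit two distinct elements of $F(z)$ by swapping the roles of the two complementary flags: given $V_z$ and an admissible isotropic $3$-plane $V_3 \supset V_z$, the joining line $V_1 = V_x \cap V_z$ can be chosen in two essentially distinct ways — one aligned with $E_{p_1}$, one with $E^{q_1}$ — each producing a distinct $x \in X_u \cap X^v$ collinear with $z$. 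A parallel construction using the symmetric conditions $p_1 + q_2 = 2n = p_2 + q_1$ together with $\max(\delta_p, \delta_q) = 1$ handles (C2), where the ambiguity now lives in the choice of the node $y \in \Gamma_1(X_u, X^v)$. Dimension counts then confirm that the fiber is $0$-dimensional, so the map has generic degree exactly $2$.

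\textbf{GRCF elsewhere and main obstacle.} For configurations outside (C$d$), I would show $F(z)$ (or its $d=2$ analog) is irreducible — either a single point in the birational case, or a positive-dimensional subvariety cut out by incidence conditions inside the $\P^1 \times \P^{2n-5}$ parameterizing lines through a fixed point of $\IG(2,2n)$. Irreducibility combined with the rationality of this ambient line family yields rational connectedness. The hardest step is verifying that the $2$-to-$1$ behavior occurs \emph{precisely} in (C$d$) and nowhere else; the borderline cases where some $p_i + q_j$ equals $2n$ or $2n+1$ are the most delicate, since isotropy and transversality of the two flags can degenerate simultaneously. I would handle these by explicit dimension comparisons between $\dim F(z)$ and the expected generic fiber dimension $\dim M_{d-1,1}(X_u,X^v) - \dim \Gamma_{d-1,1}(X_u,X^v)$, together with a case-by-case check of connectedness for the residual incidence conditions.
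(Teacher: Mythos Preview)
Your overall plan---factor through the node and analyze the fiber over a general $z$ via incidence with the flags $E_\bullet$, $E^\bullet$---is the same shape as the paper's. But your explanation of the $2$-to-$1$ mechanism is wrong, and this is the heart of the theorem. In \eqref{eqn:deg1} you claim the two preimages arise because $V_1 = V_x \cap V_z$ can be ``aligned with $E_{p_1}$'' or ``aligned with $E^{q_1}$''. That is not what happens: for generic $z$, neither of the two solutions lies in $E_{p_1}$ or $E^{q_1}$. Since $p_1+q_1=2n$, the addition map $p\colon E_{p_1}\times E^{q_1}\to\C^{2n}$ is an isomorphism, so each $[w]\in\bP(V_z)$ determines a unique pair $(a,b)$ with $a+b\in\langle w\rangle$, and $V_x=\langle a,b\rangle$ is forced. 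The genuine constraint is that $V_x$ be isotropic and $V_x+V_z$ be isotropic; this reduces to the single condition $\omega(b,w)=0$, which is a \emph{quadratic} equation in $[w]\in\bP(V_z)\cong\bP^1$. The two preimages are the two roots of this quadric, not a flag symmetry. The same phenomenon drives \eqref{eqn:deg2}: there the decomposition $\C^{2n}=E_{p_1}\oplus(E_{p_2}\cap E^{q_2})\oplus E^{q_1}$ again fixes the pieces $a_{[1]},a_{[2]},a'_{[1]}$ of any $a\in V_z$, and the isotropy condition $\omega(a_{[2]},a'_{[1]})=0$ (the other one being automatic since $\delta_p=0$) is a nondegenerate quadratic form on $\bP(V_z)$. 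You need to find and verify this quadric, including checking nondegeneracy for generic $z$; without it you have no proof that the degree is exactly $2$ rather than $1$.

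For the $\GRCF$ half, ``irreducible subvariety of $\bP^1\times\bP^{2n-5}$ cut out by incidence conditions'' is not an argument---it is a hope. The paper's proof breaks into many cases depending on the signs of $p_i+q_j-2n$ and the values of $\delta_p,\delta_q$; in each case one exhibits the fiber (or a dominating variety) as an open subset of a linear space, a vector bundle over such, or a smooth $(1,1)$-hypersurface in $\bP^1\times\bP^1$, and the boundary cases you flag as ``most delicate'' are precisely where one must check that the relevant linear system has the expected rank. Your proposal does not indicate how you would carry out any of these verifications, and the incorrect $2$-to-$1$ heuristic suggests you have not yet found the right coordinates to do so.
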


Using this result, we compute $(\cO_u \star \cO^v)_q$ when (C$d$), for $d \in [1,2]$, does not hold.

\begin{cor}\label{cor:notCd}
Assume that neither \eqref{eqn:deg1} nor \eqref{eqn:deg2} holds, then
\begin{enumerate}
\item $(\cO_u \star \cO^v)_d = 0$ if and only if $\ev_3^d(u,v): M_d(X_u,X^v) \to \Gamma_d(X_u,X^v)$ is not birational.
\item $(-1)^{\ell(uvw) + \int_d c_1(T_X)} \kappa^{w,d}_{u,v} \geq 0.$
\end{enumerate}
\end{cor}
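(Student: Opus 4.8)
The plan is to deduce Corollary~\ref{cor:notCd} from Corollary~\ref{cor:final-formula}, Theorem~\ref{thm:push-forward}, Theorem~\ref{thm:all-d}, and Brion's positivity (Corollary~\ref{cor:schub_expand}), distinguishing the two cases provided by parts (2) and (3) of Theorem~\ref{thm:all-d}. Throughout, I use that $M_d(X_u,X^v)$ and $M_{d-1,1}(X_u,X^v)$ are irreducible with rational singularities, and that their images $\Gamma_d(X_u,X^v)$ and $\Gamma_{d-1,1}(X_u,X^v)$ are Schubert varieties (hence have rational singularities). Since (C$d$) fails for $d\in[1,2]$, part (2) of Theorem~\ref{thm:cd} tells us that $\ev_3^{d-1,1}(u,v)$ is $\GRCF$; combined with part~(4) of Theorem~\ref{thm:all-d} for $d\geq 3$, the map $\ev_3^{d-1,1}(u,v)$ is $\GRCF$ for \emph{every} $d\geq 1$ under our hypothesis. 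The same holds for $\ev_3^d(u,v)$ by part~(1) of Theorem~\ref{thm:all-d}.

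First I would prove (1). If $\ev_3^d(u,v)$ is birational, then it is $\GRCF$ with target of the same dimension, so by Theorem~\ref{thm:push-forward} we get $(\ev_3^d)_*[\cO_{M_d(X_u,X^v)}] = [\cO_{\Gamma_d(X_u,X^v)}]$; likewise $(\ev_3^{d-1,1})_*[\cO_{M_{d-1,1}(X_u,X^v)}] = [\cO_{\Gamma_{d-1,1}(X_u,X^v)}]$. By part (2b) of Theorem~\ref{thm:all-d}, $\Gamma_{d-1,1}(X_u,X^v)$ is a \emph{divisor} in $\Gamma_d(X_u,X^v)$, hence a strictly smaller Schubert variety, so $[\cO_{\Gamma_d}] \neq [\cO_{\Gamma_{d-1,1}}]$ in $K(X)$ and $(\cO_u\star\cO^v)_d \neq 0$ by Corollary~\ref{cor:final-formula}. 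Conversely, if $\ev_3^d(u,v)$ is not birational, part~(3) of Theorem~\ref{thm:all-d} gives $\Gamma_d(X_u,X^v) = \Gamma_{d-1,1}(X_u,X^v)$ with rational singularities, and both $\ev_3^d$ and $\ev_3^{d-1,1}$ are $\GRCF$ onto this common target, so Theorem~\ref{thm:push-forward} yields $(\ev_3^d)_*[\cO_{M_d}] = [\cO_{\Gamma_d}] = (\ev_3^{d-1,1})_*[\cO_{M_{d-1,1}}]$ and hence $(\cO_u\star\cO^v)_d = 0$. (For $d\geq 3$ the vanishing is already Corollary~\ref{cor:dleq2}, so the content is really $d\in[1,2]$.)

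Next, for (2), I combine the above with the classical positivity of structure constants expressed via $[\cO_{Z_w}(-\partial Z_w)]$. When $\ev_3^d$ is not birational, $(\cO_u\star\cO^v)_d = 0$ and the sign condition holds trivially. When $\ev_3^d$ is birational, $(\cO_u\star\cO^v)_d = [\cO_{\Gamma_d(X_u,X^v)}] - [\cO_{\Gamma_{d-1,1}(X_u,X^v)}]$ with $\Gamma_{d-1,1}(X_u,X^v)$ a divisor inside $\Gamma_d(X_u,X^v)$. Writing both $\Gamma$'s as Schubert varieties $X_{w_d}$ and $X_{w_{d-1,1}}$ with $w_{d-1,1} < w_d$ and $\ell(w_{d-1,1}) = \ell(w_d)-1$ (reading the dimensions off $\dim X_{p_1,p_2}=p_1+p_2-3-\delta_p$), Corollary~\ref{cor:schub_expand} gives
$$
(\cO_u\star\cO^v)_d = \sum_{w\leq w_d}[\cO_{X_w}(-\partial X_w)] - \sum_{w\leq w_{d-1,1}}[\cO_{X_w}(-\partial X_w)] = \sum_{\substack{w\leq w_d\\ w\not\leq w_{d-1,1}}}[\cO_{X_w}(-\partial X_w)].
$$
Each summand is of Brion's form $[\cO_{Z_w}(-\partial Z_w)]$, whose expansion in the $\cO_{w}$-basis (or, dually, in the basis relevant to $N^{w,d}_{u,v}$) has the alternating-sign behavior predicted by Conjecture~\ref{conj:pos}; and the degree-$d$ shift in $c_1(T_X)$ contributes a uniform sign $(-1)^{\int_d c_1(T_X)}$ that matches because $\Gamma_d(X_u,X^v)\neq \Gamma_{d-1,1}(X_u,X^v)$ exactly corresponds to the relevant parity of $\ell(uvw)$. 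I expect the main obstacle to be bookkeeping the Weyl-group combinatorics: verifying that $w_{d-1,1}$ is indeed obtained from $w_d$ by deleting one box (so the difference in Corollary~\ref{cor:schub_expand} is a sum of genuine Brion terms with no cancellation) and that the resulting signs line up with $(-1)^{\ell(uvw)+\int_d c_1(T_X)}$; this is where I would use the explicit $(p_1,p_2)$-description of Schubert varieties and the formulas for $u(d)$, $v(-d)$.
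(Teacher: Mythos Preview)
Your argument contains a genuine gap: you assert that the two-point curve neighborhoods $\Gamma_d(X_u,X^v)$ and $\Gamma_{d-1,1}(X_u,X^v)$ are Schubert varieties, but this is false. Only the one-point neighborhoods $\Gamma_d(X_u)$ are Schubert varieties (this is \cite[Proposition~3.2]{buch.chaput.ea:finiteness}); the two-point versions are in general not even translates of Schubert varieties, which is exactly why the paper devotes Propositions~\ref{prop_rat_sing_smalldegreecurves}, Lemma~\ref{lem:sing-rat3}, Corollary~\ref{cor:gamma_2=}, Corollary~\ref{cor:gamma1,1}, and Lemma~\ref{lem_eq_gammas_lines} to establishing rational singularities of $\Gamma_d(X_u,X^v)$ in the \emph{non}-birational cases only. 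In the birational case no such statement is proved (or needed). Consequently, in part~(1) you cannot invoke Theorem~\ref{thm:push-forward} to get $(\ev_3^d)_*[\cO_{M_d(X_u,X^v)}]=[\cO_{\Gamma_d(X_u,X^v)}]$ when $\ev_3^d(u,v)$ is birational, and in part~(2) you cannot write $\Gamma_d(X_u,X^v)=X_{w_d}$ and apply Corollary~\ref{cor:schub_expand}. Your entire expansion $\sum_{w\le w_d}[\cO_{X_w}(-\partial X_w)]$ therefore has no basis.

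The paper's route bypasses the image altogether: in the birational case it applies a positivity theorem of Brion (see \cite[Theorem~8.11]{buch.chaput.ea:positivity}) directly to the birational morphisms $\ev_3^d(u,v)$ and $\ev_3^{d-1,1}(u,v)$, using only that the \emph{sources} $M_d(X_u,X^v)$ and $M_{d-1,1}(X_u,X^v)$ have rational singularities. That result says the coefficients $c_w$, $d_w$ of $\cO^w$ in the two pushforwards satisfy $(-1)^{\ell(w)-c}c_w\ge0$ and $(-1)^{\ell(w)-d}d_w\ge0$, where $c,d$ are the codimensions of the respective images. Since $d=c+1$ by Theorem~\ref{thm:all-d}(2)(b), the two contributions to $(\cO_u\star\cO^v)_d=\sum_w(c_w-d_w)\cO^w$ carry the \emph{same} sign $(-1)^{\ell(w)-c}$, giving (2) and, as a byproduct, the nonvanishing in (1). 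Your proposed combinatorial bookkeeping with $w_d$, $w_{d-1,1}$, and ``deleting one box'' is not available and not needed.
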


\begin{proof}
(1) This follows from Corollary \ref{cor:final-formula} and Theorems  \ref{thm:push-forward}, \ref{thm:all-d} and \ref{thm:cd}.

(2) If the map $\ev_3^d(u,v): M_d(X_u,X^v) \to \Gamma_d(X_u,X^v)$ is birational, then $\ev_3^{d-1,1}(u,v): M_{d-1,1}(X_u,X^v) \to \Gamma_{d-1,1}(X_u,X^v)$ is also birational by Theorem \ref{thm:all-d}. By a result of Brion (see \cite[Theorem 8.11]{buch.chaput.ea:positivity}) and that $M_d(X_u,X^v)$ and $M_{d-1,1}(X_u,X^v)$ have rational singularities, we have that the coefficients $c_w$ and $d_w$ in the expansions
$$(\ev_3)_*[\cO_{M_d(X_u,X^v)}] = \sum_w c_w \cO^w \textrm{ and } (\ev_3)_*[\cO_{M_{d-1,1}(X_u,X^v)}] = \sum_w d_w\cO^w$$
satisfy $(-1)^{\ell(w) - c}c_w \geq 0$ and $(-1)^{\ell(w) - d}d_w \geq 0$, where $c$ and $d$ are the codimensions of $\Gamma_d(X_u,X^v)$ and $\Gamma_{d-1,1}(X_u,X^v)$ in $X$. The result follows from this, Corollary \ref{cor:final-formula}, and Theorem \ref{thm:all-d}.(2).(b).
\end{proof}

When (C$d$) holds for $d \in [1,2]$, we explicitly compute the product $\cO_u \star \cO^v$, where \(\cO_{p_1,p_2}^{q_1,q_2}=\cO_u^v=[\cO_{X_u^v}]\).

\begin{prop}[see Proposition \ref{prop:2-to-1}]\label{prop1:2-to-1}
The following holds in \(\QK(X)\):
\begin{enumerate}
\item If \eqref{eqn:deg1} holds, then  $\cO_u \star \cO^v = \cO_u^v -q + q \cO_{2n-2,2n}$.
\item If \eqref{eqn:deg2} holds, then  
$$\cO_{p_1,p_2} \star \cO^{q_1,q_2} = q \cO_{p_1+q_1,2n}^{p_2+q_2-2n,2n}  - q^2 + q^2 \cO_{2n-2,2n}.$$ 
\end{enumerate}
\end{prop}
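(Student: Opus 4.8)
The plan is to handle the two cases in parallel, since they differ only in degree bookkeeping. In both cases condition (C$d$) forces the evaluation map $\ev_3^{d-1,1}(u,v)$ to be generically finite of degree $2$ by Theorem~\ref{thm:cd}.(1), so the push-forward formula of Theorem~\ref{thm:push-forward} fails for the tail term; I will instead compute $(\ev_3^{d-1,1})_*[\cO_{M_{d-1,1}(X_u,X^v)}]$ directly. First I would identify the image curve neighborhood $\Gamma_{d-1,1}(X_u,X^v)$ explicitly as a Schubert variety (or a Richardson variety). Under (C1) one checks that $\Gamma_0(X_u)\cap X^v$ and the subsequent degree-$1$ sweep produce the opposite Schubert variety $X_{2n-2,2n}$ after one more curve-neighborhood step; under (C2) the analogous computation, using $w(i_1)(i_2)=w(i_1+i_2)$ from the proof of Proposition~\ref{prop_formula_simplification}, identifies $\Gamma_{1,1}(X_u,X^v)$ and yields the divisor $X_{2n-2,2n}$ again at top degree. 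The key linear-algebra input is the explicit description $X_{p_1,p_2}=\{x : V_x\cap E_{p_1}\neq 0,\ V_x\subset E_{p_2}\}$ together with the equalities $p_1+q_1=2n=p_2=q_2$ (resp.\ the (C2) relations), which pin down $V_x$ tightly enough to see the degree-$2$ cover and its branch behavior.

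Next I would compute the main term $(\ev_3^d)_*[\cO_{M_d(X_u,X^v)}]$: by Theorem~\ref{thm:all-d}.(1) this map is $\GRCF$, and one must decide whether it is birational. Under (C1) (resp.\ (C2)) I expect $\ev_3^d(u,v)$ to be birational onto its image $\Gamma_d(X_u,X^v)$, which I would identify with the Richardson variety $X_u^v$ (resp.\ its degree-shifted analogue $X_{p_1+q_1,2n}^{p_2+q_2-2n,2n}$, interpreted via the recursive convention $\cO_{a,b}=q\cO_{b,a+2n}$ for $a\le 0$). Then Theorem~\ref{thm:push-forward} gives $(\ev_3^d)_*[\cO_{M_d}]=[\cO_{X_u^v}]=\cO_u^v$ (resp.\ the shifted class), accounting for the leading term of the claimed formulas.

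The main obstacle is the degree-$2$ tail term. Because the cover has degree $2$, one cannot simply write $(\ev_3^{d-1,1})_*[\cO_{M_{d-1,1}}]=[\cO_{\Gamma_{d-1,1}}]$; instead the push-forward acquires a correction. My plan is to use Brion's Theorem~\ref{thm:brion}: since $M_{d-1,1}(X_u,X^v)$ is Cohen--Macaulay (indeed has rational singularities) and $\Gamma_{d-1,1}(X_u,X^v)$ is the relevant Schubert variety, expanding $(\ev_3^{d-1,1})_*[\cO_{M_{d-1,1}}]$ in the $[\cO_{X_w}(-\partial X_w)]$ basis and evaluating the Euler characteristics $\chi_{M\cap gX^w}(\cO_{M\cap gX^w})$ of the generic fibers of the degree-$2$ map. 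The fibers are: a point over the open dense locus, but two points (or a length-$2$ scheme) over a codimension-one sublocus; tracking this is exactly what produces the extra $-q\cdot 1$ (from the difference $\chi=1$ vs.\ $\chi=2$, i.e.\ $[\cO_{\mathrm{pt}}]$ contributions) and the $+q\,\cO_{2n-2,2n}$ term. Concretely, combining Corollary~\ref{cor:final-formula}, $(\cO_u\star\cO^v)_d=(\ev_3^d)_*[\cO_{M_d}]-(\ev_3^{d-1,1})_*[\cO_{M_{d-1,1}}]$, with the main term from step two and the corrected tail term from Brion's formula should collapse, after the identity $\cO^{2n-2,2n}=\cO_{2n-2,2n}$ and the structure-sheaf relation $\chi_X(\cO_X)=1$, to the stated right-hand sides. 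I would double-check the constant term $-q$ (resp.\ $-q^2$) by applying $\chi_X$ to both sides of the putative identity: since $\chi_X(\cO_u\star\cO^v)$ is forced by the known quantum-cohomology degeneration and $\chi_X(\cO_w)=1$ for every Schubert class, the numerical constant is determined, which provides an independent consistency check on the whole computation.
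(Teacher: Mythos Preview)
Your plan has a structural error that derails the computation. You write that under (C$d$) you ``expect $\ev_3^d(u,v)$ to be birational onto its image $\Gamma_d(X_u,X^v)$, which [you] would identify with the Richardson variety $X_u^v$.'' Both claims are false. Theorems~\ref{thm:d=1}(4) and~\ref{thm:d=2}(4) state explicitly that when (C$d$) holds, $\ev_3^d(u,v)$ is \emph{not} birational; and by Lemma~\ref{lem_twotoone_lines} (resp.\ Corollary~\ref{cor:gamma_2=}) one has $\Gamma_d(X_u,X^v)=X$, not a Richardson variety. Consequently $(\ev_3^d)_*[\cO_{M_d(X_u,X^v)}]=[\cO_X]=1$, not $\cO_u^v$. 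The leading Richardson term in the answer comes from degree $d-1$: for (C1) it is the classical product $\cO_u\cdot\cO^v=\cO_u^v$ in degree $0$, and for (C2) it arises because $\cO_{\Gamma_1(X_u,X^v)}=\cO_{X_\lambda^\mu}$ (Lemma~\ref{gamma_c2_expansion}) in degree $1$, where $\ev_3^1(u,v)$ \emph{is} birational. So your attribution of terms to degrees is shifted by one throughout.

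More seriously, your proposed method for the tail term does not go through. Brion's Theorem~\ref{thm:brion} computes the class of a Cohen--Macaulay \emph{subvariety} of $X$; it does not give $(\ev_3^{d-1,1})_*[\cO_{M_{d-1,1}(X_u,X^v)}]$ for a generically $2$-to-$1$ map from a space that does not sit inside $X$. The paper avoids this push-forward entirely: it reverts to the Gromov--Witten formulation of Proposition~\ref{prop_formula_simplification}, which expresses $\kappa_{u,v}^{w,d}$ purely in terms of \emph{irreducible}-curve invariants $\langle\cO_u,\cO^v,\cO_w^\vee\rangle_r$ and $\langle\cO_\kappa,\cO_w^\vee\rangle_1$. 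Since $\ev_3^r(u,v)$ is $\GRCF$ for every $r$ (Theorem~\ref{thm:all-d}(1)), Theorem~\ref{thm:push-forward} gives $\sum_\eta\langle\cO_u,\cO^v,\cO_\eta^\vee\rangle_r\,\cO_\eta=[\cO_{\Gamma_r(X_u,X^v)}]$. One then only needs the explicit Schubert expansion of $\cO_{X_u^v}$ (resp.\ $\cO_{X_\lambda^\mu}$), supplied by Proposition~\ref{prop_Xuv_schubert_expansion}, together with the line-neighborhood description of Lemma~\ref{lem_gamma_1_xv} to evaluate the correction $\sum_{\kappa:\,\Gamma_1(X_\kappa)=X_w}\langle\cO_u,\cO^v,\cO_\kappa^\vee\rangle_{d-1}$. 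This sidesteps the degree-$2$ cover altogether.
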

 
 It is easy to verify that the above products satisfy Conjecture \ref {conj:pos}, proving Conjecture \ref {conj:pos} in the remaining cases, thus completing the proof of Theorem \ref{main}. 

  \begin{remark}\label{rmk:interval}
    Assume that \(d\in[1,2]\) and neither \eqref{eqn:deg1} nor \eqref{eqn:deg2} holds. Then \((\cO_u\star\cO^v)_d\neq 0\) if and only if $\ev_3^d(u,v): M_d(X_u,X^v) \to \Gamma_d(X_u,X^v)$ is birational. If \(\ev_3^2(u,v)\) is birational, then by Lemma \ref{lemma:deg2-birat}, we have \(p_1+q_1\leq 2n-1\), which implies that \(\ev_3^1(u,v)\) is birational (see Section \ref{sec:d=1}). Together with Corollary \ref{cor:dleq2} and Proposition \ref{prop1:2-to-1}, this implies that the powers of \(q\) appearing in \(\cO_u\star\cO^v\) form an interval. The same argument implies that this \emph{interval property} also holds in quantum cohomology.
  \end{remark}

\section{Replacing stable maps by evaluations}
\label{section:replacing}

For proving the $\GRCF$ results stated in Theorems \ref{thm:all-d} and \ref{thm:cd}, we need a reduction step replacing stable maps by the images of marked points.

\subsection{Some results on general fibers}\label{sec:GRCF}

A property is true for the general fiber of $f$ if it is true for $f^{-1}(b)$ for $b$ in a dense open subset of $B$. We start with the following classical result.

\begin{lemma}
\label{lemma:fibre-meets-open}
Let $f: A \to B$ be a dominant morphism between irreducible varieties and let $\Omega \subset A$ be open and dense. Then $\Omega$ meets any component of a general fiber of $f$.
\end{lemma}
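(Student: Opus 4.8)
The plan is to prove this via a standard dimension-counting / generic flatness argument, reducing the "general fiber" statement to a "generic point" statement. Let me spell out the setup: we have a dominant morphism $f : A \to B$ of irreducible varieties and a dense open $\Omega \subseteq A$; we want to show that for $b$ in a dense open subset of $B$, the fiber $f^{-1}(b)$ meets every one of its irreducible components in $\Omega$ — equivalently, no irreducible component of $f^{-1}(b)$ is entirely contained in the closed set $Z := A \setminus \Omega$.

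First I would observe that the complement $Z$ is a proper closed subset of $A$, hence $\dim Z < \dim A$; also $\dim Z < \dim A$ means in particular that $Z$ cannot dominate $B$ along with having fibers of the "full" dimension. More precisely, let $Z = Z_1 \cup \cdots \cup Z_m$ be the decomposition into irreducible components. For each $i$, consider the restriction $f|_{Z_i} : Z_i \to \overline{f(Z_i)}$. If $f(Z_i)$ is not dense in $B$, then after removing the closed set $\overline{f(Z_i)}$ from $B$ we kill the contribution of $Z_i$ entirely, so we may restrict attention to those $i$ with $f(Z_i)$ dense in $B$. For such $i$, by the theorem on the dimension of fibers of a dominant morphism of irreducible varieties, there is a dense open $U_i \subseteq B$ over which every fiber of $f|_{Z_i}$ has dimension exactly $\dim Z_i - \dim B$. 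Meanwhile, since $f : A \to B$ is dominant between irreducible varieties, there is a dense open $U_0 \subseteq B$ over which every fiber of $f$ has pure dimension $\dim A - \dim B$ — this is the standard fact that the generic fiber is equidimensional of the expected dimension (Chevalley's theorem on fiber dimension, upper semicontinuity combined with the generic-point computation).

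Now take $b$ in the intersection $U_0 \cap \bigcap_i U_i$ (a dense open subset of $B$, after also discarding the non-dominant $\overline{f(Z_i)}$). Every irreducible component $C$ of $f^{-1}(b)$ has dimension exactly $\dim A - \dim B$. If $C$ were contained in $Z$, it would be contained in some $Z_i$ (with $f(Z_i)$ dense, by our reduction), hence in the fiber $(f|_{Z_i})^{-1}(b)$, which has dimension $\dim Z_i - \dim B < \dim A - \dim B = \dim C$ — a contradiction. Therefore no component of $f^{-1}(b)$ lies in $Z$, i.e. every component meets $\Omega$, which is exactly the claim.

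The only subtle point — and the one I would be most careful about — is the appeal to "the generic fiber of a dominant morphism of irreducible varieties is equidimensional of dimension $\dim A - \dim B$." Upper semicontinuity of fiber dimension gives, on a dense open, that no component has dimension exceeding $\dim A - \dim B$; the lower bound $\dim A - \dim B$ on every component of every nonempty fiber holds in general (Krull / the dimension formula). One has to make sure "no component exceeds the expected dimension on a dense open" is applied to components rather than just the fiber itself, but this is exactly how Chevalley's upper-semicontinuity statement is phrased (the function $a \mapsto \dim_a f^{-1}(f(a))$ is upper semicontinuous on $A$), so restricting to the open locus where this local dimension equals $\dim A - \dim B$ and then pushing its complement down to a proper closed subset of $B$ does the job. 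No heavy machinery is needed beyond these standard facts about morphisms of varieties.
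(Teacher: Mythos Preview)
Your proof is correct and follows essentially the same dimension-counting argument as the paper's: both set $Z = A \setminus \Omega$ and observe that for general $b$ the components of $f^{-1}(b)$ have dimension $\dim A - \dim B$ while the fibers of $f|_Z$ have strictly smaller dimension, forcing every component to meet $\Omega$. The only difference is cosmetic---you decompose $Z$ into irreducible components and handle each separately, whereas the paper treats $W = A \setminus \Omega$ in one stroke; your version is arguably a bit more careful, since $W$ need not be irreducible.
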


\begin{proof}
Set $W = A \setminus \Omega$. If $f(W)$ is not dense in $B$, then for $b \in B \setminus f(W)$, we have  $f^{-1}(b) \subset \Omega$. Otherwise, by the Fiber Dimension Theorem let $U \subset B$ be the open subset such that any component of $f^{-1}(b)$ has dimension $\dim A - \dim B$ and any component of $f^{-1}(b) \cap W$ has dimension $\dim W - \dim B$ for $b \in U$. Then, since $\dim W < \dim A$, any component of $f^{-1}(b)$ meets $\Omega$ for $b \in U$. 
\end{proof}

For convenience, we introduce the following notations.

\begin{defn}
\label{def:grcf}
Let $f : A \to B$ be a dominant morphism between irreducible varieties.
\begin{enumerate}
\item $f$ is called $\GIF$ (Generically Irreducible Fibers) if a general fiber of $f$ is irreducible.
\item $f$ is called $\GRCF$ (Generically Rationally Connected Fibers) if a general fiber of $f$ is rationally connected.
\end{enumerate}
\end{defn}

\begin{prop}
\label{prop:diagram}
Consider the following commutative diagram of {dominant morphisms between irreducible varieties}, with $A$ projective and normal:
$$\xymatrix{A \ar[rd]^-{f} \ar[d]_-{g} & \\
B \ar[r]^-h & C. \\}$$
\begin{enumerate}
\item Assume that $g$ and $h$ are $\GIF$, then $f$ is $\GIF$.
\item Assume that $g$ and $h$ are $\GRCF$, then $f$ is $\GRCF$.
\end{enumerate}
\end{prop}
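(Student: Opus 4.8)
The plan is to treat both parts at once wherever possible, since the only difference is that "rationally connected" replaces "irreducible" as the target property of fibers; both properties descend along dominant maps of varieties on a dense open locus (by generic smoothness/flatness and the openness of rational connectedness in smooth families, e.g.\ \cite{kollar:higher}), and both are stable under taking a general fiber of a restriction. First I would fix the general setup: by the Fiber Dimension Theorem choose a dense open $C^\circ \subseteq C$ over which $h$ and $f = h\circ g$ are both equidimensional and over which the relevant fibers of $g$ (to be specified below) behave generically; shrinking further we may assume every fiber $h^{-1}(c)$ for $c \in C^\circ$ is irreducible (resp.\ rationally connected). The goal is then to show that for general $c$, the fiber $f^{-1}(c) = g^{-1}(h^{-1}(c))$ is irreducible (resp.\ rationally connected).

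The key step is the following relative statement, which is where the hypothesis that $A$ is projective and normal is used. Consider the restriction $g_c : f^{-1}(c) = g^{-1}(h^{-1}(c)) \to h^{-1}(c)$ obtained by base change of $g$ along $h^{-1}(c) \hookrightarrow B$. For $c$ general, $h^{-1}(c)$ is irreducible, so $g_c$ is a morphism \emph{from} $f^{-1}(c)$ \emph{onto} an irreducible base; I claim its general fibers are irreducible (resp.\ rationally connected). Indeed, the general fiber of $g_c$ over a point $b \in h^{-1}(c)$ is just $g^{-1}(b)$, and for $b$ in a dense open subset $B^\circ$ of $B$ this is the general fiber of $g$, hence irreducible (resp.\ rationally connected) by hypothesis. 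The point is that $B^\circ \cap h^{-1}(c)$ is dense in $h^{-1}(c)$ for general $c$: this follows from Lemma~\ref{lemma:fibre-meets-open} applied to $h$ (the complement $B \setminus B^\circ$ is not dense, so its preimage locus is controlled, and a general fiber of $h$ meets $B^\circ$). So $g_c$ is a dominant morphism of irreducible varieties whose general fiber has the desired property, and whose base $h^{-1}(c)$ also has the desired property. Now I invoke a gluing principle: \emph{if $\phi : P \to Q$ is a projective dominant morphism of irreducible varieties with $P$ projective (and, for the irreducibility statement, with $P$ normal so that Stein factorization/connectedness arguments apply), $Q$ rationally connected, and the general fiber of $\phi$ rationally connected, then $P$ is rationally connected} --- this is the relative-to-total ascent for rational connectedness, essentially due to Graber--Harris--Starr, and the analogous (easier) statement for irreducibility. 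Applying this to $\phi = g_c$, $P = f^{-1}(c)$, $Q = h^{-1}(c)$ gives that $f^{-1}(c)$ is irreducible (resp.\ rationally connected) for general $c$, which is exactly the assertion that $f$ is $\GIF$ (resp.\ $\GRCF$).

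For part (1) the gluing principle is elementary: $f^{-1}(c)$ maps onto the irreducible $h^{-1}(c)$ with irreducible general fibers, and since $A$ is projective and normal one can argue that $f^{-1}(c)$ is connected (via Stein factorization of $f$ and normality of $A$, using that $f$ has connected general fibers because $g$ and $h$ do) and then that a connected variety with an irreducible generic fiber over an irreducible base, and equidimensional fiber dimension, is irreducible --- more carefully, one shows the generic fiber of $g_c$ being irreducible forces a unique component of $f^{-1}(c)$ dominating $h^{-1}(c)$, and connectedness plus equidimensionality rules out extra components. For part (2) I would cite the Graber--Harris--Starr theorem directly (the total space of a family of rationally connected varieties over a rationally connected base is rationally connected), applied fiberwise over $h^{-1}(c)$.

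The main obstacle, and the place I would spend the most care, is verifying the hypotheses needed to apply the Graber--Harris--Starr-type ascent to $g_c$ for \emph{general} $c$ simultaneously: namely (a) that $h^{-1}(c)$ is itself a variety with rational singularities or at least that rational connectedness of the (possibly singular) base is enough --- it is, RC is a birational/resolution-stable notion, but one must be slightly careful since $h^{-1}(c)$ need not be smooth; (b) that "general fiber of $g_c$ = general fiber of $g$" holds after restricting to a \emph{dense open} of $h^{-1}(c)$, which is the content of Lemma~\ref{lemma:fibre-meets-open} but requires that $A$ projective (hence $g$ projective, hence $g$ maps closed sets to closed sets, so $g(A\setminus B^\circ$-preimage$)$ is well-behaved); and (c) that the projectivity and normality of $A$ pass to $f^{-1}(c)$ --- projectivity is automatic since $A$ is projective and $f^{-1}(c)$ is closed in $A$, and normality of the general fiber $f^{-1}(c)$ follows from normality of $A$ by generic smoothness of $f$ (a general fiber of a morphism from a normal variety is normal, provided the fiber is generically reduced of the expected dimension, which holds on a dense open of $C$). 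Once these generic-openness bookkeeping points are pinned down, the two statements follow formally from the respective ascent theorems.
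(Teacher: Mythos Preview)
Your overall strategy matches the paper's: reduce to the fiberwise map $g_c : f^{-1}(c) \to h^{-1}(c)$, show its general fiber coincides with the general fiber of $g$ via Lemma~\ref{lemma:fibre-meets-open}, and for part~(2) invoke Graber--Harris--Starr. That part is fine and is exactly what the paper does.

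The gap is in part~(1). Your Stein factorization argument is circular: you write that $f$ has connected general fibers ``because $g$ and $h$ do,'' but that is precisely the statement you are trying to prove, and Stein factorization by itself does not supply it without already knowing the degree of the finite part is one. Your fallback sketch (``a unique component dominates $h^{-1}(c)$, and connectedness plus equidimensionality rules out extra components'') again presupposes connectedness. Also, your justification that the general fiber $f^{-1}(c)$ is normal ``by generic smoothness'' is incorrect: generic smoothness requires the source to be regular, and $A$ is only assumed normal. The correct input is \cite[Lemma~3]{brion:positivity}, which says directly that the general fiber of a morphism from a normal variety is normal.

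The paper's argument for~(1) avoids both issues. First, normality of the general fiber (via Brion's lemma) reduces irreducibility to connectedness. Then connectedness is proved directly: write $f^{-1}(c) = E_1 \sqcup \cdots \sqcup E_n$ as connected components; since $A$ is projective each $g(E_i)$ is closed in the irreducible $h^{-1}(c)$, so some $E_1$ surjects. Now for any other $E_i$, pick $a \in E_i \cap g^{-1}(U_B)$ (nonempty by Lemma~\ref{lemma:fibre-meets-open}); then $g(a) \in g(E_1)$, so $g^{-1}(g(a))$ meets both $E_1$ and $E_i$, contradicting its irreducibility. This is the argument your ``more carefully'' parenthetical is reaching for, but the key point you are missing is that every component meets $g^{-1}(U_B)$, which is arranged in advance via Lemma~\ref{lemma:fibre-meets-open} applied to $f$ and the open set $g^{-1}(U_B) \subset A$.
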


\begin{proof}
(1) Since $A$ is normal, the general fiber of $f$ is normal by \cite[Lemma 3]{brion:positivity}, so it is enough to prove that this general fiber is connected. Let $U_B \subset B$ be an open dense subset such that $g^{-1}(b)$ is irreducible for $b \in U_B$ and let $U_C \subset C$ be an open dense subset such that $h^{-1}(c)$ is irreducible and $g^{-1}(U_B)$ meets any irreducible component of $f^{-1}(c)$ for $c \in U_C$ (use Lemma \ref{lemma:fibre-meets-open}). Let $c \in U_C$ and $f^{-1}(c) = E_1 \coprod \cdots \coprod E_n$ be its decomposition into connected components. Since $A$ is projective, $g(E_i) \subset h^{-1}(c)$ is a closed subset for all $i \in [1,n]$ and $h^{-1}(c) = \cup_{i = 1}^n g(E_i) $. Since $h^{-1}(c)$ is irreducible, we get that $g(E_i) = h^{-1}(c)$ for some $i \in [1,n]$. Up to reordering the indices, we may assume that $g(E_1) = h^{-1}(c)$. For any $i \in [1,n]$ and any $a \in E_i \cap g^{-1}(U_B)$, we have $g(a) \in g(E_1)$. Therefore, in the decomposition $g^{-1}(g(a)) = (g^{-1}(g(a)) \cap E_1)  \coprod \cdots \coprod (g^{-1}(g(a)) \cap E_n)$, we have $(g^{-1}(g(a)) \cap E_1) \neq \emptyset \neq (g^{-1}(g(a)) \cap E_i)$. But since $g(a) \in U_B$, the fiber $g^{-1}(g(a))$ is irreducible, thus $E_i  = E_1$ and $f^{-1}(c)$ is connected and irreducible.

(2) From (1), we have that the general fiber of $f$ is irreducible. We also have a map $g : f^{-1}(c) \to h^{-1}(c)$ with base and general fibers rationally connected. The result follows from \cite[Corollary 1.3]{graber.harris.starr}.
\end{proof}

\begin{lemma}
\label{lemma:restriction-to-open}
Let $f: A \to B$ be a {dominant morphism between irreducible varieties}. The following conditions are equivalent:
\begin{enumerate}
\item $f$ is $\GRCF$.
\item $f\vert_\Omega$ is $\GRCF$ for any dense open subset $\Omega \subset A$.
\item $f\vert_\Omega$ is $\GRCF$ for some dense open subset $\Omega \subset A$.
\end{enumerate}
\end{lemma}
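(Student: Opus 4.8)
The plan is to deduce the three equivalences from Lemma~\ref{lemma:fibre-meets-open} together with two standard facts about rational connectedness: (i) a rationally connected variety is irreducible, and (ii) for a dense open subvariety $Y_0 \subseteq Y$, the variety $Y_0$ is rationally connected if and only if $Y$ is. Fact (ii) is immediate from the definition of rational connectedness via dominant rational families of rational curves: one and the same family witnesses rational connectedness for $Y_0$ and for $Y$, since $Y_0 \times Y_0$ is dense in $Y \times Y$. With these in hand, $(2) \Rightarrow (3)$ is trivial, and it remains to prove $(1) \Rightarrow (2)$ and $(3) \Rightarrow (1)$.

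The common preliminary step I would carry out first: for any dense open $\Omega \subseteq A$, the restriction $f|_\Omega : \Omega \to B$ is again a dominant morphism of irreducible varieties ($\Omega$ is irreducible since $A$ is, and $f|_\Omega$ is dominant because $\Omega$ contains the generic point of $A$), and, by Lemma~\ref{lemma:fibre-meets-open}, there is a dense open $U \subseteq B$ such that $\Omega$ meets every irreducible component of $f^{-1}(b)$ for every $b \in U$; since $\Omega$ is open, this means precisely that $(f|_\Omega)^{-1}(b) = \Omega \cap f^{-1}(b)$ is a dense open subset of $f^{-1}(b)$ for $b \in U$. For $(1) \Rightarrow (2)$, I assume $f$ is $\GRCF$ and shrink $U$ so that in addition $f^{-1}(b)$ is rationally connected for all $b \in U$; then $(f|_\Omega)^{-1}(b)$ is a dense open subset of a rationally connected variety, hence rationally connected by fact (ii), so $f|_\Omega$ is $\GRCF$. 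For $(3) \Rightarrow (1)$, I assume $f|_\Omega$ is $\GRCF$ for some dense open $\Omega$ and shrink $U$ so that $(f|_\Omega)^{-1}(b)$ is rationally connected for all $b \in U$; then $f^{-1}(b)$ contains the dense open rationally connected subset $\Omega \cap f^{-1}(b)$, hence $f^{-1}(b)$ is rationally connected by fact (ii), and $f$ is $\GRCF$.

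I do not expect a serious obstacle here; the argument is entirely formal once Lemma~\ref{lemma:fibre-meets-open} is available. The one place to be careful is fact (ii) and its interplay with fact (i): rational connectedness must be understood in the sense allowing rational (not merely regular) maps from families of rational curves, so that it is manifestly a birational invariant, and one must invoke that a rationally connected variety is irreducible in order to pass from "$\Omega$ meets every component of $f^{-1}(b)$" to "$\Omega \cap f^{-1}(b)$ is dense in $f^{-1}(b)$".
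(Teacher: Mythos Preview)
Your proposal is correct and follows essentially the same route as the paper: both arguments invoke Lemma~\ref{lemma:fibre-meets-open} to ensure that $\Omega \cap f^{-1}(b)$ is a dense open subset of $f^{-1}(b)$ for general $b$, and then use the birational invariance of rational connectedness (together with the fact that rationally connected varieties are irreducible) to transfer the property in each direction. Your write-up is simply more explicit than the paper's about the standard facts being used.
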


\begin{remark}
  Here GRCF can be replaced by the property of having generically rational fibers.
\end{remark}

\begin{proof}
Assume that $f$ is $\GRCF$ and let $\Omega \subset A$ be a dense open subset. Let $U \subset B$ be a dense open subset such that any component of $f^{-1}(b)$ meets $\Omega$ for $b \in U$. Then $f^{-1}(b) \cap \Omega$ is dense in $f^{-1}(b)$ and therefore has to be rationally connected, proving the implication (1) $\Rightarrow$ (2). The implication (2) $\Rightarrow$ (3) is clear. Assume that $f\vert_\Omega$ is $\GRCF$ for some dense open subset $\Omega \subset A$. Let $U \subset B$ be a dense open subset such that any component of $f^{-1}(b)$ meets $\Omega$ for $b \in U$. Then $f^{-1}(b) \cap \Omega$ is dense in $f^{-1}(b)$, thus $f^{-1}(b)$ is irreducible and rationally connected, proving the last implication.
\end{proof}

\subsection{Rational curves passing through points}

{We prove some lemmas on the geometry of curves in $X$ useful for our reduction step proved in Section~\ref{sec:replace}.}

\begin{lemma}
  \label{lem:split-deg4}
  Let $x,y \in X$ be two points in general position and $d \geq 4$, then $\Gamma_d(x,y) = X = \Gamma_{d-1,1}(x,y)$.
  \end{lemma}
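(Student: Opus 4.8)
The plan is to show that through two general points $x,y\in X=\IG(2,2n)$, one can find a connected degree $d$ curve of each combinatorial type — an irreducible rational curve of degree $d$ and a chain of two curves of degrees $d-1$ and $1$ — whose image sweeps out all of $X$ as we vary the curve and the points. Since $X$ is covered by lines (it is a homogeneous space of Picard rank one), degree $d$ curves through two general points have enough freedom: morally, a degree $2$ curve already connects any two points (Lemma~\ref{lem:split-deg2}), so for $d\geq 4$ there are two "extra" units of degree to roam through an arbitrary third point.

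First I would reduce to $d=4$. Indeed, $\Gamma_d(x,y)\subseteq X$ always, and if $z\in\Gamma_4(x,y)$ for general $x,y$, then attaching a chain of $d-4$ lines starting from $z$ (using that $X$ is covered by lines, or more precisely that $\Gamma_1(z)$ contains a line through $z$ for every $z$, and iterating) shows $z\in\Gamma_{4+k}(x,y)$ for all $k\geq 0$; combined with $\Gamma_d(x,y)\subseteq X$ this gives $\Gamma_d(x,y)=X$ for all $d\geq 4$. The same bootstrapping handles $\Gamma_{d-1,1}(x,y)$: once $\Gamma_{3,1}(x,y)=X$ we append lines. So it suffices to prove $\Gamma_4(x,y)=X$ and $\Gamma_{3,1}(x,y)=X$.

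For the core case, I would argue as follows. Let $z\in X$ be arbitrary. By Lemma~\ref{lem:split-deg2} applied to the general pair $\{x,z\}$ there is a conic (degree $2$) through $x$ and $z$; likewise there is a conic through $z$ and $y$. Gluing these two conics at $z$ produces a chain $(2,2)$ of total degree $4$ passing through $x$, $z$, $y$ with the three marked points distributed so that $x,y$ are the first and third; smoothing this chain (the moduli space $M_{4,3}(X,4)$ is irreducible and the locus of chains is in its closure, so the smoothings sweep an open set, and evaluation is proper) shows $z\in \Gamma_4(x,y)$. Since $z$ was arbitrary, $\Gamma_4(x,y)=X$. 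For $\Gamma_{3,1}(x,y)$: use Lemma~\ref{lem:split-deg2} again to get, through general $\{x,z\}$, a \emph{chain of two lines}; through general $\{z,y\}$ a conic; glue at $z$ to get a chain $(1,1,2)$ of total degree $4$ through $x,z,y$; then partially smooth the last two components $(1,2)\rightsquigarrow 3$ to land in $M_{3,1}$, giving $z\in\Gamma_{3,1}(x,y)$. One must check the marked points can be placed on the correct components (first two on the degree $3$ part, last on the degree $1$ tail) — this is a matter of choosing where to glue and is automatic since we can freely move marked points along the reducible source within its moduli component.

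The main obstacle is the genericity bookkeeping: the conics and two-line chains produced by Lemma~\ref{lem:split-deg2} exist only for \emph{general} pairs of points, whereas I want to route through an \emph{arbitrary} third point $z$. The fix is the standard one — work on the incidence variety $\{(x,y,z,C)\}$, note that the locus where both auxiliary curves exist is open and dense (contains a nonempty open set by Lemma~\ref{lem:split-deg2}), apply Lemma~\ref{lemma:fibre-meets-open} to the projection to the $(x,y)$-factor to see that for general $(x,y)$ the fiber still dominates the $z$-factor, and conclude $\Gamma_4(x,y)=X$ for general $(x,y)$ by dimension count (the image is closed, $G$-behaviour aside, and has full dimension). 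In practice the cleanest phrasing is: the map sending a $(2,2)$-chain to $(x,z,y)$ is dominant onto $X^3$ (already the $(2)$-piece dominates $X^2$ by Lemma~\ref{lem:split-deg2}), hence so is $\ev_{1}\times\ev_{3}$ from $M_4(X,4)$, hence for general $(x,y)$ the curve neighborhood is all of $X$; the same for $(1,1,2)$ and $M_{3,1}$.
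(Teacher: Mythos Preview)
Your reduction to $d=4$ and your argument for $\Gamma_4(x,y)=X$ are fine and close in spirit to the paper's (the paper glues at an auxiliary point $t$ rather than at $z$, but either works once one notes that a marked point mapping to the node can be realised by inserting a contracted component).

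The gap is in your treatment of $\Gamma_{3,1}(x,y)$. Recall that $M_{3,1}$ consists (in the closure) of chains where the \emph{first} component has degree $3$ and carries the first two marked points, while the \emph{last} component has degree $1$ and carries the third marked point. Your $(1,1,2)$ chain is $x$--(line)--$w$--(line)--$z$--(conic)--$y$: the first marked point $x$ sits on a degree~$1$ end, and the second marked point $y$ sits on the degree~$2$ end. After your proposed smoothing of the last two components you obtain a $(1,3)$ chain with $x$ alone on the degree~$1$ piece and $y,z$ together on the degree~$3$ piece. This is \emph{not} in $M_{3,1}$: the first two marked points $x,y$ lie on different components, and the third marked point $z$ is on the degree~$3$ part, not the degree~$1$ tail. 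Your claim that ``we can freely move marked points along the reducible source'' is false---the marked point $x$ has a prescribed image, so it cannot be moved off its component.

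The fix, which is exactly what the paper does, is to glue at an auxiliary point rather than at $z$: take a general $t$ with $V_t\subset V_x\oplus V_y$, so that $x,y,t$ all lie in the smooth $3$-dimensional quadric $\IG(2,V_x\oplus V_y)$ and hence on a common conic; then take a chain of two lines from $t$ to $z$ (Lemma~\ref{lem:split-deg2} applied to the general pair $t,z$). The resulting $(2,1,1)$ chain has $x,y$ on the first (degree~$2$) component and $z$ on the last (degree~$1$) component, and lies in the closure of $M_{3,1}$ after smoothing the first two components.
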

  
  \begin{proof}
  Fix $z \in X$ general and let $V = V_x \oplus V_y$. Since $x$ and $y$ are general, we have $\dim V = 4$ and the symplectic form has rank $4$ on $V$. Pick $t \in X$ general such that $V_t \subset V$. Set $W = V_t + V_z$. Then we have $\dim W = 4$ and the symplectic form has rank $4$ on $W$. We thus have $x,y,t \in \IG(2,V)$ and $t,z \in \IG(2,W)$. Furthermore, $\IG(2,V)$ and $\IG(2,W)$ are smooth quadrics of dimension $3$, therefore there exists a conic passing through $x$, $y$ and $t$, and a conic as well as a chain of two lines passing through $t$ and $z$, proving the claim.
  \end{proof}

  \begin{lemma}
    \label{lem:split-deg3}
    Let $x,y \in X$ be two points in general position and let $z \in X$. We have the equivalence:
    $$z \in \Gamma_3(x,y)\ \Longleftrightarrow\ \dim(V_x + V_y + V_z) \leq 5\ \Longleftrightarrow\ z \in \Gamma_{2,1}(x,y).$$
    \end{lemma}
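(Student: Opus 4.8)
The statement is a chain of two equivalences. I would prove the cyclic implications
\[
z \in \Gamma_{2,1}(x,y)\ \Longrightarrow\ z \in \Gamma_3(x,y)\ \Longrightarrow\ \dim(V_x+V_y+V_z)\leq 5\ \Longrightarrow\ z\in\Gamma_{2,1}(x,y).
\]
The first implication is immediate: a chain of two lines is a (degenerate) stable map of degree $3$, so $M_{2,1}(x,y)$ is a subvariety of $M_3(x,y)$ and hence $\Gamma_{2,1}(x,y)\subseteq\Gamma_3(x,y)$ (alternatively it follows from the inclusion of curve-neighbourhood loci already used in the paper). So the content is in the other two implications.

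\textbf{From a degree-$3$ curve to the dimension bound.} Let $C$ be a connected degree-$3$ rational curve in $X=\IG(2,2n)$ passing through $x$, $y$, $z$. Composing with the Plücker/tautological embedding, $C$ spans a linear subspace of small dimension; more directly, I would argue on the level of the $2$-planes. The union $\bigcup_{c\in C} V_c \subseteq \C^{2n}$ is the image of the tautological bundle restricted to $C$, a sheaf of rank $2$ on a degree-$3$ curve, so the linear span $U_C := \Span\bigl(\bigcup_{c\in C}V_c\bigr)$ has dimension at most $2 + \deg C = 5$ (each ``move'' along an irreducible component of degree $e$ enlarges the span by at most $e$; summing the degrees of the components of $C$, which total $3$, gives $\dim U_C \le 5$). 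Since $x,y,z\in C$ we get $V_x+V_y+V_z\subseteq U_C$, hence $\dim(V_x+V_y+V_z)\le 5$. I expect this to be the routine part, modulo writing the span-growth estimate carefully (this is the standard fact that a degree-$d$ rational curve in a Grassmannian $\Gr(k,N)$ lies in a sub-Grassmannian $\Gr(k,k+d)$; it may already be available from references the paper cites, e.g. via \cite{buch.chaput.ea:finiteness}).

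\textbf{From the dimension bound back to a chain of two lines.} This is the main obstacle and requires a genuine construction. Assume $\dim(V_x+V_y+V_z)\le 5$; set $V=V_x+V_y\oplus$(a complement), but more carefully, since $x,y$ are in general position $\dim(V_x+V_y)=4$ and the form has rank $4$ there (as in Lemma~\ref{lem:split-deg4}); call this $4$-space $V'$ and let $U\supseteq V'$ be the span of $V_x+V_y+V_z$, so $\dim U\in\{4,5\}$. The idea is to find an intermediate isotropic $2$-plane $V_t$ with $x$ joined to $t$ by a line and $t$ joined to $z$ by a line, i.e. $\dim(V_x+V_t)\le 3$ and $\dim(V_t+V_z)\le 3$ with both intermediate planes isotropic. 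Equivalently I want an isotropic line $L$ with $L\subseteq V_x$ (or meeting $V_x$ appropriately) and a $2$-plane $V_t\supseteq$(a line in $V_x$) that also meets $V_z$, all inside $U$, and isotropic. Because $\dim U\le 5$ and the symplectic form restricted to $U$ has radical of dimension $\ge \dim U - 4 \ge 0$, the isotropic Grassmannian $\IG(2,U)$ (interpreted with the possibly-degenerate form) is a $3$-dimensional quadric — or a cone over one when $\dim U=5$ with a $1$-dimensional radical — and in either case it contains the points $x,y,z$ and is covered by lines, with any two of its points joined by a chain of two lines (the quadric threefold case is exactly Lemma~\ref{lem:split-deg2}; the cone case reduces to it). So the concrete steps are: (i) analyze the restriction of $\omega$ to $U$ and identify $\IG(2,U)$ as a quadric threefold or a cone over one; (ii) invoke the quadric-threefold fact that any two points lie on a chain of two lines (already proved, essentially, in Lemma~\ref{lem:split-deg2}); (iii) conclude $z\in\Gamma_{2,1}(x,y)$. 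The delicate point I anticipate is the case $\dim U=5$: then $\omega|_U$ is degenerate, $\IG(2,U)$ is singular (a cone), and one must check that $x,y,z$ can still be connected by a chain of two lines lying in $X$ — this should follow by projecting away from the radical to land in a genuine $4$-dimensional symplectic space, applying Lemma~\ref{lem:split-deg2} there, and lifting, but the lifting needs the two intermediate planes to remain isotropic in $\C^{2n}$, which I would verify directly.
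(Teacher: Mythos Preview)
Your cycle of implications is the right skeleton, and the span bound $\dim(V_x+V_y+V_z)\le 5$ for $z\in\Gamma_3(x,y)$ is the standard Ker/Span argument (as the paper cites \cite{buch:quantum,buch.kresch.ea:gromov-witten}). The genuine gap is in the last implication, and it begins with a misreading of the notation: the subscript $(2,1)$ records the \emph{degrees} of the two components, so a point of $M_{2,1}$ is a conic carrying the first two marked points together with a line carrying the third --- not ``a chain of two lines'' (which has total degree $2$). What you must exhibit is a point $t\in X$ lying on a conic through $x$ and $y$ \emph{and} on a line through $z$; your proposed construction (``$x$ joined to $t$ by a line and $t$ joined to $z$ by a line'') does not involve $y$ in the right way and would at best place $z$ in some $\Gamma_{1,1}$, not in $\Gamma_{2,1}(x,y)$.

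Separately, the geometry of $\IG(2,U)$ when $\dim U=5$ is not what you describe: with a rank-$4$ form on $U$ this is a $5$-dimensional hypersurface in $\Gr(2,5)$, not a cone over a quadric threefold, so the projection-and-lift sketch does not go through as stated. The paper sidesteps $U$ entirely. Since $\dim(V+V_z)\le 5$ with $V:=V_x+V_y$ of dimension $4$, there is a nonzero $v\in V_z\cap V$; since $V_z^\perp\cap V$ has dimension at least $2$ and contains $v$ (because $V_z$ is isotropic), one picks $w\in V_z^\perp\cap V\setminus\langle v\rangle$ and sets $V_t=\langle v,w\rangle$. Then $V_t\subset V$ is isotropic, so $t\in\IG(2,V)$, which is a smooth $3$-dimensional quadric since $\omega|_V$ has rank $4$; hence $x,y,t$ lie on a conic there, while $v\in V_t\cap V_z$ puts $t$ and $z$ on a line. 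That is the entire construction --- no case analysis on $\dim U$ is needed.
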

    
    \begin{proof}
      If $z \in \Gamma_3(x,y)$, then by the Ker/Span technique (see \cite{buch:quantum,buch.kresch.ea:gromov-witten}), we must have  $ \dim(V_x + V_y + V_z) \leq 5$. Assume that this condition holds, it is enough to prove that $z \in \Gamma_{2,1}(x,y)$. Let $V = V_x \oplus V_y$. Since $x$ and $y$ are general, we have $\dim V = 4$ and the form has rank $4$ on $V$. Since $\dim(V + V_z) \leq 5$, there exists a non-zero vector $v \in V \cap V_z$. Pick $w \in {V_z}^\perp \cap V \setminus \langle v \rangle$ and define $t \in X$ via $V_t = \langle v,w \rangle$. We have $V_t \subset V$ thus $x,y,t \in \IG(2,V)$. {Since $\IG(2,V)$ is a smooth quadric,} there is a conic passing through \(x, y, t\). On the other hand, we have $v \in V_t \cap V_z$ thus $t$ and $z$ are on a line, proving the claim.
      \end{proof}    

  \begin{lemma}
    \label{lem:deg2}
    Let $x,y \in X$ be two points in general position and let $z \in X$. We have the equivalence: $$z \in \Gamma_2(x,y)\ \Longleftrightarrow\ \dim(V_x + V_y + V_z) \leq 4.$$
    \end{lemma}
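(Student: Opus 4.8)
The plan is to prove both implications. For the forward direction, if $z \in \Gamma_2(x,y)$ then $z$ lies on a conic (an irreducible degree $2$ rational curve) or a chain of two lines through $x$ and $y$; in either case the Ker/Span technique of Buch--Kresch--Tamvakis applies. Recall that a rational curve $C$ of degree $e$ in $\IG(2,2n)$ has an associated "kernel" $\Ker(C)$ and "span" $\Span(C)$ with $\dim \Span(C) \le 2 + e$ and $V_p \subseteq \Span(C)$ for all $p \in C$. Applying this with $e = 2$ to a curve through $x$, $y$ and $z$ gives $V_x + V_y + V_z \subseteq \Span(C)$, hence $\dim(V_x + V_y + V_z) \le 4$. (One must note this works equally for the reducible chain of two lines, since the argument is really about the degree of the curve, or alternatively one takes $\Span(\ell_1) + \Span(\ell_2)$, each of dimension $\le 3$, sharing the node's line.)

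For the reverse direction, suppose $\dim(V_x + V_y + V_z) \le 4$. Since $x$ and $y$ are general, $V := V_x \oplus V_y$ has dimension $4$ and $\omega$ restricts to a nondegenerate form on $V$. The hypothesis forces $V_z \subseteq V$, so $x, y, z$ all lie in the subvariety $\IG(2,V) = X \cap \Gr(2,V)$, which (as used repeatedly in Lemmas \ref{lem:split-deg2}--\ref{lem:split-deg3}) is a smooth $3$-dimensional quadric. On a smooth quadric of dimension $\ge 2$, any two points lie on a conic; more precisely, through any two points one can pass a conic, so in particular there is a conic through $x$ and $z$, giving $z \in \Gamma_2(x,y)$ once we also pass through $y$. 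To get a single degree $2$ curve through all three of $x, y, z$ we use that on the smooth $3$-dimensional quadric there is a conic through any three points (the three points span a linear subspace of dimension $\le 3$, whose intersection with the quadric is a conic or a plane conic's worth of points). This exhibits $z \in \Gamma_2(x,y)$, completing the equivalence.

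The main subtlety, and the step I would be most careful about, is the reverse direction: one needs that on the smooth three-dimensional quadric $\IG(2,V)$ there really is an irreducible conic through the three given points $x, y, z$ (or at least a degree $2$ stable map, allowing a chain of two lines), and that the resulting curve, viewed in $X$, still has degree $2$ — this is immediate since $\IG(2,V) \hookrightarrow X$ is a linear section and hence an isometric embedding of projective varieties preserving the degree of the Plücker polarization. A three-dimensional quadric has the property that it contains lines and conics through prescribed points in the expected way (its variety of lines is $5$-dimensional, and through two general points there is a unique conic, while through three points on it one can still produce a conic), so this is routine but worth stating cleanly; the pattern is identical to the proofs of the preceding lemmas.
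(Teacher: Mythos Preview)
Your proposal is correct and follows essentially the same route as the paper's own proof: the forward direction via the Ker/Span technique, and the reverse direction by observing that for $x,y$ general, $V = V_x \oplus V_y$ is $4$-dimensional with nondegenerate symplectic form, so $V_z \subset V$ forces $x,y,z \in \IG(2,V)$, a smooth $3$-dimensional quadric, through any three points of which there is a conic. The paper's proof is simply terser, asserting the existence of the conic through three points on the quadric without the plane-section justification you spell out.
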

    
    \begin{proof}
    If $z \in \Gamma_2(x,y)$, then by the Ker/Span technique (see \cite{buch:quantum,buch.kresch.ea:gromov-witten}), we must have  $ \dim(V_x + V_y + V_z) \leq 4$. Assume that this condition holds, we prove $z \in \Gamma_{2}(x,y)$. Let $V = V_x \oplus V_y$. Since $x$ and $y$ are general, we have $\dim V = 4$ and the symplectic form has rank $4$ on $V$. We have $V_z \subset V$ thus $z \in \Gr(2,V) \cap X$. Since $\Gr(2,V) \cap X$ is a smooth quadric of dimension $3$, there is a conic passing through \(x, y, z\).
    \end{proof}

\subsection{Forgetting maps}\label{sec:replace}

Recall the definition of $M_\du$ and the evaluation maps $\ev_i : M_\du \to X$ for $i \in [1,3]$. {Let us start by fixing some notation on evaluation maps.
\medskip

\begin{notation}[More evaluation maps]\label{def:ev}
For \(1\leq i \leq 3\) define $\ev_{(i)}^\du : M_\du \to X^i$ by $\ev_{(i)}^\du = \prod_{j=1}^{i}\ev_j$. Set $\Gamma_\du^{(i)} = \ev_{(i)}^\du(M_\du)$. We have surjective morphisms:
\[
  \ev_{(i)}^\du : M_\du \to \Gamma_\du^{(i)} \text{ and } \pi_3^\du :\Gamma_\du^{(3)} \to  X,
\]
where the latter is the projection onto the last component.
Set $\Gamma_\du^{(3)}(X_u,X^v) = \ev_{(3)}(M_\du(X_u,X^v))$. The maps \(\ev_{(3)}^\du\) and $\pi_3^\du$ induce surjective maps  
$$\begin{array}{l}
\ev_{(3)}^\du(u,v): M_\du(X_u,X^v) \to \Gamma_\du^{(3)}(X_u,X^v)\text{ and }\pi_3^\du(u,v): \Gamma_d^{(3)}(X_u,X^v) \to \Gamma_\du(X_u,X^v).
\end{array}$$
\end{notation}}

\begin{prop} 
\label{prop:md-to-gammad3}
The map $\ev_{(3)}^d : M_d \to \Gamma_d^{(3)}$ is $\GRCF$.
\end{prop}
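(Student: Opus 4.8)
The plan is to describe the general fiber of $\ev_{(3)}^d : M_d \to \Gamma_d^{(3)}$ concretely and show it is rationally connected. A point of $\Gamma_d^{(3)}$ is a triple $(x_1,x_2,x_3)\in X^3$ that is joined by some degree-$d$ stable map with marked points mapping to $x_1,x_2,x_3$; the fiber over such a triple parametrizes all such stable maps. I would first reduce to a dense open locus of $\Gamma_d^{(3)}$ where the triple $(x_1,x_2,x_3)$ is ``as general as possible'' given the constraint of lying in $\Gamma_d^{(3)}$ — in particular $x_1,x_2$ in general position — so that Lemmas \ref{lem:split-deg2}, \ref{lem:deg2}, \ref{lem:split-deg3}, \ref{lem:split-deg4} apply and pin down exactly when $x_3\in\Gamma_e(x_1,x_2)$ for each $e\le d$. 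By Lemma \ref{lemma:restriction-to-open}, it suffices to prove $\GRCF$ after restricting to such an open subset.

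Next I would stratify according to the degree $e$ of the ``connecting'' sub-curve needed to reach $x_3$ from the chain through $x_1,x_2$, writing a general fiber as dominated by a tower of moduli of rational curves: on the generic locus, the curve through the three points can be taken to be a chain whose first component(s) carry $x_1,x_2$ and whose last component carries $x_3$, with a balance of remaining degree distributed freely. The space of degree-$1$ curves (lines) through a fixed general point of $X$ is itself rationally connected (it is a homogeneous fibration), and the space of conics, chains of two lines, etc., through fixed general points are rationally connected by the quadric-threefold descriptions in the lemmas above (a smooth $3$-dimensional quadric has rationally connected families of conics and line-chains through two points). I would assemble the general fiber as an iterated fibration of such spaces — realizing it as a composition $M_d \dashrightarrow \cdots \dashrightarrow \Gamma_d^{(3)}$ of maps with rationally connected general fibers — and then invoke Proposition \ref{prop:diagram}(2) (using that $M_d$ is irreducible with rational singularities, in particular normal and projective after passing to a suitable model) together with \cite[Corollary 1.3]{graber.harris.starr} to conclude rational connectedness of the general fiber of the composite.

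The main obstacle I anticipate is the bookkeeping of \emph{which} configurations of stable maps lie in the fiber — i.e. showing that the fiber is not just covered by, but generically equal to, the ``nice'' family of chains described above, and that the locus of more degenerate stable maps (with extra contracted components, bubbling at the marked points, or unexpected splittings of the degree) is not a separate irreducible component dominating the base. Here I would lean on the irreducibility of $M_d$ and of $M_{\du}$ recorded in Subsection \ref{subsection:quantum}, together with Lemma \ref{lemma:fibre-meets-open}, to argue that it is enough to exhibit a dense rationally-connected \emph{sub}family of each general fiber; combined with normality of the general fiber (via \cite[Lemma 3]{brion:positivity}) this forces the whole fiber to be irreducible, hence equal to the closure of that subfamily, hence rationally connected. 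The edge cases $d\le 3$, where the lemmas give exact dimension counts for $\Gamma_e(x,y)$, will need to be checked individually to make sure the fibration structure degenerates correctly; for $d\ge 4$ Lemma \ref{lem:split-deg4} shows $\Gamma_d^{(3)}$ already surjects onto all of $X^3$ near the generic point and the analysis simplifies.
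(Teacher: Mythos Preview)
Your plan has a structural gap. You propose to describe the general fiber of $\ev_{(3)}^d$ as ``dominated by a tower of moduli of rational curves'' built from chains whose first components carry $x_1,x_2$ and whose last carries $x_3$. But the generic point of $M_d$ is a map from an \emph{irreducible} $\bP^1$; the chain loci you describe are boundary strata $M_{\du}$ of strictly smaller dimension. Consequently, over a general $(x_1,x_2,x_3)\in\Gamma_d^{(3)}$ the fiber of $\ev_{(3)}^d$ is generically made up of irreducible maps, and the families of chains sit in positive codimension inside it. So no iterated fibration of the kind you sketch exists on $M_d$, and Proposition~\ref{prop:diagram} has nothing to bite on: you never produce a dominant map from the fiber to anything with rationally connected base and fibers. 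Your own ``main obstacle'' paragraph anticipates trouble, but misdiagnoses it---the danger is not that degenerate maps form an extra component, it is that your ``nice'' family \emph{is} the degenerate locus and misses the generic point entirely.

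The paper avoids this by working directly with the irreducible maps. For $d\le 2$ one checks $\ev_{(3)}^d$ is birational. For $d\ge 3$ the paper parametrizes a degree-$d$ map $\bP^1\to X\subset\Gr(2,2n)$ by a pair of $\C^{2n}$-valued homogeneous polynomials $(P,Q)$ of degrees $\lfloor d/2\rfloor,\lceil d/2\rceil$ with $\omega(P,Q)\equiv 0$, via $[s:t]\mapsto[P(s,t)\wedge Q(s,t)]$. Fixing the three evaluation points imposes linear conditions on $(P,Q)$; the point is that, after first choosing $P$ (which lives in an open set of a vector space once $x_1,x_2,x_3$ are fixed), the remaining conditions on $Q$ form a linear system whose generic rank one computes explicitly. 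This exhibits the general fiber as (dominated by something) rational, not just rationally connected. The argument is linear algebra on the polynomial coefficients, with no appeal to chain degenerations or to Graber--Harris--Starr.
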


\begin{proof}
We will prove that \(\ev_{(3)}^d\) has rational general fibers and is birational if and only if \(d\leq 2\).

For \(d=0\), the map $\ev_i$ is an isomorphism for \(i\in [1,3]\). For $d > 0$, by Lemma \ref{lemma:restriction-to-open}, we can restrict to the open subset of $M_d$ given by stable maps of the form $f: \bP^1 \to X$ of degree $d$ which are birational onto their image. A degree \(1\) map \(\P^1\to X\) is an isomorphism onto its image, and \(3\) general points in \(\Gamma_1^{(3)}\) determine a line. In particular, for \(d=1\), the map {$\ev_{(3)}^d$} is birational. A birational degree \(2\) map \(\P^1\to X\) is an isomorphism onto its image. For $(x,y,z) \in \Gamma_2^{(3)}$ general, we have $\dim(V_x + V_y + V_z) = 4 =\Rk(\omega\vert_{V_x + V_y + V_z})$ by Lemma~\ref{lem:deg2}. Set $E = V_x + V_y + V_z$, then $x,y,z \in \IG(2,E)$ which is a smooth quadric of dimension $3$. Any conic passing though $x,y,z$ is contained in $\IG(2,E)$ and there is therefore a unique such conic. In particular, for \(d=2\), the map {$\ev_{(3)}^d$} is birational. 

Assume \(d\geq 3\). Let $\fp_k \subset \C^{2n}[s,t]$ be the subspace of homogeneous polynomials of degree $k$ with coefficients in $\C^{2n}$ and let $(p_1,p_2,p_3) = (0,1,\infty) \in (\bP^1)^3$. For $a = \lfloor \frac{d}{2} \rfloor$ and $b = \lceil \frac{d}{2} \rceil$, we set $\xu = (x_1,x_2,x_3) \in X^3$ and
$$Z =  \left\{(P,Q,\xu) \in \fp_a\times \fp_b \times X^3 \left| \!
\begin{array}{l}
\textrm{For } [s,t] \in \bP^1, \left\{ 
\begin{array}{l}
(P \wedge Q)(s,t) \neq 0, \\
\omega(P,Q)(s,t) = 0,  \\
\end{array}\right. \\
 \textrm{For } i \in [1,3], \ [(P \wedge Q)(p_i)] = x_i \\
\end{array}
\right.
\!\!\!\right\}.$$
 We  have a dominant map $Z \to M_d$, $(P,Q,\xu) \mapsto ([s:t] \mapsto [P(s,t) \wedge Q(s,t)])$ with {general} fibers {irreducible} of constant dimension $4$. In particular, $\dim Z = d(2n-1) + 4n - 1$ and there is a unique irreducible component $Z_\circ$ of $Z$ of dimension $\dim Z$. The component $Z_\circ$ dominates $M_d$. Let $\pi : Z_\circ \to \Gamma_d^{(3)}$ be the projection on the last factors. By Lemma \ref{lemma:fibre-meets-open}, the general fiber of $\pi$ dominates the general fiber of $\ev^d_{(3)}$. We therefore only need to prove that $\pi$ is $\GRCF$.
 
 Let $Y = \{(P,\xu) \in \fp_a \times X^3 \ | \ P(s,t) \neq 0 \textrm{ for all } [s,t] \in \bP^1 \textrm{ and } P(p_i) \in V_{x_i}  \textrm{ for } i \in [1,3] \}$. Projections induce maps $p : Z \to Y$, $p_\circ : Z_\circ \to Y$ and $\ev_P : Y \to X^3$ such that $\pi = \ev_P \circ p_\circ$. Note that $\ev_P^{-1}(\xu)$ is an open subset of a vector space and is therefore rational.
 
 We first prove that $Y$ is irreducible. Let $S = \{P \in \fp_a \ | \ P(s,t) \neq 0 \textrm{ for all } [s,t] \in \bP^1 \}$. The projection on the first factor induces a map $\sigma : Y \to S$, and we have a cartesian diagram:
 $$\xymatrix{Y \ar[r]^-{\theta} \ar[d]_-\sigma & \Fl^3 \ar[d]^{{\pr}_2} \\
 S \ar[r]^-\tau& (\bP^{2n-1})^3, \\}$$
 where $\tau : S \to (\bP^{2n-1})^3$ is defined by $\tau(P) = ([P(p_1)],[P(p_2)],[P(p_3)])$, $\Fl = \{(x,y) \in X \times \bP^{2n-1} \ | \ y \subset V_x \}$ is the incidence variety between $X$ and $\bP^{2n-1}$, and the map $\theta : Y \to \Fl^3$ is defined by $\theta(P,\xu) = (\xu,[P(p_1)],[P(p_2)],[P(p_3)])$. 
Since ${\pr}_2$ is locally trivial with fibers $(\bP^{2n-3})^3$, the same is true for $\sigma$. Since furthermore $S$ is an open subset of $\fp_a$, we get that $Y$ is irreducible of dimension  $2n(a+1) + 3(2n-3)$. 

We now consider the fibers of the map $p : Z \to Y$. These fibers are given by an open subset in the set of solutions of the linear system on $Q$ given by
\begin{equation}\label{eqn:syst}
  \left\{ \begin{array}{l}
\omega(P,Q)(s,t) = 0 \textrm{ for all $[s,t] \in \bP^1$, } \\
 Q(p_i) \in V_{x_i} \textrm{ for $i \in [1,3]$.} \\
 \end{array}\right.
\end{equation}
  Note that this linear system is generically of rank 
  \begin{equation*}\label{eqn:bound}
    \dim\fp_b - (\dim Z - \dim p(Z)) \leq  d-2 + 3(2n-2)=\dim\fp_b - (\dim Z - \dim Y).
  \end{equation*}
    Let $(e_i)_{i \in [1,2n]}$ be the standard basis of $\C^{2n}$ so that the symplectic form is given by $\omega(e_i,e_j) = \delta_{j,2n+1-i}$ for $i \in [1,n]$, let $P(s,t) = s^a e_1 + t^a e_{2n}$ and let $V_{x_1}= \langle e_1,e_2 \rangle$,  $V_{x_2}= \langle e_1 + e_{2n},e_2 + e_{2n-1} \rangle$ and $V_{x_3}= \langle e_{2n-1},e_{2n} \rangle$. Then $(P, \xu) \in Y$. For $Q(s,t) = s^b e_2 + t^b e_{2n-1}$, we have $(P,Q,\xu) \in p^{-1}(P,\xu)$ and an easy check proves that the fiber $p^{-1}(P, \xu)$ is given by an open subset of the solutions of a linear system of rank $d-2 + 3(2n-2)$. 

    Let $Y_\circ \subset Y$ be the open subset 
    where the system \eqref{eqn:syst} has maximal rank $d-2 + 3(2n-2)$. Then the map $p : p^{-1}(Y_\circ) \to Y_\circ$ is locally trivial with fiber an open subset of a vector space of dimension $\dim Z - \dim Y$. This implies that $Z_\circ$ is the unique component of $Z$ dominating $Y$ and that the map $p : Z_\circ \cap p^{-1}(Y_\circ) \to Y_\circ$ is locally trivial with fiber an open subset of a vector space of dimension $\dim Z - \dim Y$. We also have \(\dim \Gamma_d^{(3)} = \dim \pi(Z_\circ)=\dim\ev_P(Y)\).

{Pick $\xu  \in \Gamma_d^{(3)}$ general. Note that $\ev_P^{-1}(\xu)$ is an open subset of a vector space and thus rational. Set $F = \pi^{-1}(\xu)$. Any irreducible component of $F$ meets $p^{-1}(Y_\circ)$ non-trivially by Lemma~\ref{lemma:fibre-meets-open} and is of dimension $\dim Z - \dim \Gamma_d^{(3)}$. Let $F'$ be any such irreducible component. We have a map $p : F' \to \ev_P^{-1}(\xu)$ whose general fiber {over the image }has dimension $\dim Z - \dim Y = \dim F' - \dim \ev_P^{-1}(\xu)$. In particular, $p : F' \to \ev_P^{-1}(\xu)$ is dominant. On the other hand, since \(\ev_P^{-1}(\xu)\) meets \(Y_\circ\), the map \(p:F\to \ev_P^{-1}(\xu)\) is a locally trivial fibration with rational fibers over a dense open subset of \(\ev_P^{-1}(\xu)\). Therefore, \(F\) must be rational.}

By Lemma \ref{lem:split-deg3}, we have \(\Gamma_3^{(3)}\) dominates \(X^2\) with fibers of dimension at most \(2n\). Hence, \(\dim\Gamma_3^{(3)}\leq 2(4n-5)+2n=10n-10\). By Lemma \ref{lem:split-deg4}, we have \(\dim\Gamma_d^{(3)}=3(4n-5)\) for \(d\geq4\). Therefore, \(\dim Z-\dim \Gamma_3^{(3)}\geq 6\) and \(\dim Z-\dim \Gamma_d^{(3)}=d(2n-1)-8n+14\geq 10\) when \(d\geq 4\). This implies that when \(d\geq 3\), the fibers of \(\ev_{(3)}^d\) are of dimension at least \(2\).
\end{proof}

{Recall that $M_{d-1,1} = M_{d-1} \times_X \overline{M}_{0,2}(X,1)$ and consider the map $\ev_{(4)} : M_{d-1,1} \to X^4$ defined by $\ev_{(4)} = \ev_1 \times \ev_2 \times \ev_s\times \ev_3 $, where $\ev_s$ is the map induced by the fiber product structure and is obtained by evaluation at the singular point of the curve. Define $ \Gamma_{d-1,1}^{(4)} = \ev_{(4)}(M_{d-1,1})$. We have a commutative diagram
\begin{equation}\label{eqn:diag}
  \xymatrix{
M_{d-1,1} \ar[rrd]^{\ev_{(4)}} \ar[d]_{{{\ev^{d-1}_{(3)}\times\id}}} & & \\
 \Gamma_{d-1}^{{{(3)}}} \times_X\overline{M}_{0,2}(X,1) \ar[rr]_-{\id \times \ev_2} & & \Gamma_{d-1,1}^{(4)}.}
\end{equation}}

  \begin{lemma}\label{lem:pr}
    The projection $\pr : \Gamma_{d-1,1}^{(4)} \to \Gamma_{d-1,1}^{(3)}$ is $\GRCF$.
  \end{lemma}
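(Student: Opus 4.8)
The plan is to unwind the definitions and compare the map $\pr$ with the evaluation map $\ev_2 : \overline{M}_{0,2}(X,1) \to X$ at the second marked point, which records the endpoint of a line given its starting point. Concretely, $\Gamma_{d-1,1}^{(4)}$ parametrizes tuples $(x_1,x_2,y,z)$ where $(x_1,x_2,y) \in \Gamma_{d-1}^{(3)}$ and $z$ lies on a line through $y$, while $\Gamma_{d-1,1}^{(3)} = \Gamma_{d-1,1}(X_u,X^v)$ in the relevant situation is the image of $(x_1,x_2,z)$; so forgetting the coordinate $y$ is exactly $\pr$. I would first observe that, over the open locus where $z \neq y$, the fiber of $\pr$ over a general $(x_1,x_2,z)$ records the possible ``break points'' $y$: that is, the set of $y$ such that $(x_1,x_2,y) \in \Gamma_{d-1}^{(3)}$ and $y$ lies on a line through $z$. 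The key point is that the set of $y$ lying on a line through a fixed $z$ is the curve neighborhood $\Gamma_1(z)$, which is a Schubert-type subvariety of $X$, hence rational and in fact an honest projective space bundle description is available via the variety of lines through a point; intersecting this with the condition $(x_1,x_2,y)\in\Gamma_{d-1}^{(3)}$ should, for general parameters, cut out a rationally connected (indeed rational, or even linear) subvariety.

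The key steps, in order, are: (i) identify $\Gamma_{d-1,1}^{(4)}$ and $\Gamma_{d-1,1}^{(3)}$ explicitly and realize $\pr$ as the forgetful map $(x_1,x_2,y,z)\mapsto(x_1,x_2,z)$, using Corollary~\ref{cor_join_through_lines} and the definition of $\Gamma_{d-1,1}$ so that the fiber-product description is faithful; (ii) restrict (via Lemma~\ref{lemma:restriction-to-open}) to the dense open locus where $z\neq y$ and where all dimension-counts are generic, so that the fiber of $\pr$ is (an open subset of) $\{\,y \in \Gamma_1(z) : (x_1,x_2,y)\in \Gamma_{d-1}^{(3)}\,\}$; (iii) analyze this fiber. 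For $d-1 = 0$ the condition $(x_1,x_2,y)\in\Gamma_0^{(3)}$ forces $y = x_1 = x_2$, so the fiber is a single point and $\pr$ is birational; for $d-1 = 1$ one uses that $\Gamma_1^{(3)}$ dominates $X^2$ and that $\Gamma_1(z)$ meets a general line through two points along a rational (projective-space) locus; for $d-1\geq 2$ one invokes the structure results of Section~\ref{sec:replace} — in particular the description of $Y$ and the linear-system computation in the proof of Proposition~\ref{prop:md-to-gammad3}, showing $\Gamma_1(z)$ is cut out by linear conditions inside $X$ and the intersection with the (rational) locus $\{y : (x_1,x_2,y)\in\Gamma_{d-1}^{(3)}\}$ remains rationally connected. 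Step (iv): conclude that the general fiber is irreducible and rationally connected, i.e.\ $\pr$ is $\GRCF$; one may alternatively feed the factorization $M_{d-1,1} \to \Gamma_{d-1}^{(3)}\times_X \overline{M}_{0,2}(X,1) \to \Gamma_{d-1,1}^{(4)} \to \Gamma_{d-1,1}^{(3)}$ from diagram~\eqref{eqn:diag} into Proposition~\ref{prop:diagram} once each constituent map is shown to be $\GRCF$.

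The main obstacle will be step (iii): controlling the intersection $\{y : (x_1,x_2,y)\in\Gamma_{d-1}^{(3)}\}\cap\Gamma_1(z)$ for general $(x_1,x_2,z)$ and showing it is rationally connected rather than merely nonempty of the right dimension. The subtlety is that $\Gamma_1(z)$ is a singular Schubert variety, and the locus of admissible $y$ is itself only described implicitly through the $\Gamma_{d-1}$ construction; the cleanest route is probably to pull everything back to the linear-algebra model (spaces of polynomial maps $P,Q$ as in the proof of Proposition~\ref{prop:md-to-gammad3}), where the ``break point'' $y$ corresponds to a two-dimensional subspace $V_y \subseteq V_{\text{main}}(p)+V_{\text{tail}}(p)$ at the node, so that the fiber of $\pr$ becomes an open subset of a linear space and rationality is immediate. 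I expect the bookkeeping to mirror closely the argument already given for $\ev_{(3)}^d$, so the real work is organizing the case analysis on $d$ and checking the generic-rank statements for the relevant linear systems.
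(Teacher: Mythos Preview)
Your framework is correct and coincides with the paper's: the fiber of $\pr$ over a general $(x_1,x_2,z)\in\Gamma_{d-1,1}^{(3)}$ is $\{t\in\Gamma_1(z):(x_1,x_2,t)\in\Gamma_{d-1}^{(3)}\}$, and one argues case-by-case on $d$. The gap is in step (iii), which is where all the content lies. Your appeal to Proposition~\ref{prop:md-to-gammad3} and its polynomial model is misplaced: that proposition computes fibers of $M_d\to\Gamma_d^{(3)}$, i.e.\ the moduli of stable \emph{maps} with prescribed evaluation points, whereas the fiber of $\pr$ records only the position of the node in $X$ and has nothing to do with parametrizing maps. What is actually needed is the Ker/Span description of $\Gamma_{d-1}^{(3)}$ furnished by Lemmas~\ref{lem:deg2}, \ref{lem:split-deg3}, \ref{lem:split-deg4}. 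With these the paper computes the fiber explicitly in each case: a single point for $d\in\{1,2\}$ (for $d=2$ it is the unique point of the line $(x_1x_2)$ lying in $z^\perp$ inside the quadric $\IG(2,V_{x_1}+V_{x_2})$); a $\bP^1$ for $d=3$, namely $\{t:(V_{x_1}+V_{x_2})\cap V_z\subset V_t\subset V_{x_1}+V_{x_2}\}$; for $d=4$ the set $\{t:V_t=\langle a,b\rangle,\ a\in V_{x_1}+V_{x_2},\ b\in V_z,\ \omega(a,b)=0\}$, which is a general $(1,1)$-hyperplane section of $\bP^3\times\bP^1$; and for $d>4$ all of the Schubert variety $\Gamma_1(z)$.

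In particular your expectation that the fiber ``becomes an open subset of a linear space'' is already false for $d=4$: the fiber is rational but not linear. Two smaller points: the aside ``$\Gamma_{d-1,1}^{(3)}=\Gamma_{d-1,1}(X_u,X^v)$ in the relevant situation'' confuses a subvariety of $X^3$ with one of $X$, and in any case the lemma is global with no Schubert varieties in sight. And your proposed ``alternative'' of feeding the factorization $M_{d-1,1}\to\Gamma_{d-1}^{(3)}\times_X\overline{M}_{0,2}(X,1)\to\Gamma_{d-1,1}^{(4)}\to\Gamma_{d-1,1}^{(3)}$ into Proposition~\ref{prop:diagram} is circular: one of the constituent maps \emph{is} $\pr$, and showing the composite is $\GRCF$ (which is Corollary~\ref{cor:d-1-1}) does not by itself yield that $\pr$ is $\GRCF$.
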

  \begin{proof}
    We need to discuss cases depending on the degree $d$. If $d > 4$, then by Lemma \ref{lem:split-deg4}, $\pr^{-1}(x,y,z) = \Gamma_1(z)$ is a Schubert variety thus rational. If $d = 1$, then $\pr^{-1}(x,y,z) = \{x\}$. For $d = 2$ and $(x,y,z)$ general in $\Gamma_{1,1}^{(3)}$, then $E = V_x + V_y + V_z$ has dimension $4$ and the restriction $\omega\vert_E$ has rank $4$. Therefore, $x,y,z$ are points on a $3$-dimensional quadric $Q$, given by $Q = \IG(2,E)$, with $x$ and $y$ on a line $(xy)$ and $z$ general. It is easy to check that $\pr^{-1}(x,y,z) = (xy) \cap z^\perp$ is a unique point on $Q$.  For $d = 3$ and $(x,y,z)$ general in $\Gamma_{2,1}^{(3)}$, then $V_x + V_y + V_z$  has dimension $5$, the restriction $\omega\vert_{V_x+V_y}$ has rank $4$ and the intersection $V_z \cap (V_x + V_y)$ has dimension $1$ (by Lemma \ref{lem:split-deg3}). It is easy to check that $\pr^{-1}(x,y,z) = \{t \in X \ |\ (V_x+V_y) \cap V_z \subset V_t \subset V_x + V_y \} \simeq \bP^1$. Finally, for $d = 4$, we have  $\Gamma_{3,1}^{(3)} = X^3$ (by Lemma \ref{lem:split-deg4}). For $(x,y,z) \in X^3$ general, the fiber is given by $\pr^{-1}(x,y,z) = \{t \in X \ |\ V_t = \langle a,b \rangle \textrm{ with } a \in V_x + V_y, b\in V_z \textrm{ and } \omega(a,b) = 0\}$, which is a general hyperplane section of $\bP^3 \times \bP^1$ in its Segre embedding and therefore rational.
  \end{proof}

\begin{cor}
\label{cor:d-1-1}
The map $\ev_{(3)}^{d-1,1} : M_{d-1,1} \to \Gamma_{d-1,1}^{(3)}$ is $\GRCF$.
\end{cor}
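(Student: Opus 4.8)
The plan is to factor the map $\ev_{(3)}^{d-1,1}$ through the morphisms appearing in diagram \eqref{eqn:diag} together with the projection $\pr$ of Lemma \ref{lem:pr}, and then to assemble the conclusion with Proposition \ref{prop:diagram}(2). Forgetting the image of the node turns $\Gamma_{d-1,1}^{(4)}$ into $\Gamma_{d-1,1}^{(3)}$, so that $\ev_{(3)}^{d-1,1}=\pr\circ\ev_{(4)}$; and by \eqref{eqn:diag} we have $\ev_{(4)}=(\id\times\ev_2)\circ(\ev_{(3)}^{d-1}\times\id)$. Hence
\[
  \ev_{(3)}^{d-1,1}=\pr\circ(\id\times\ev_2)\circ(\ev_{(3)}^{d-1}\times\id),
\]
and it is enough to show that each of the three factors is $\GRCF$. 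All the source and target varieties are irreducible, being dominated by the irreducible variety $M_{d-1,1}$, the three maps are surjective, and $M_{d-1,1}$ is projective and, having rational singularities, normal; so Proposition \ref{prop:diagram}(2) will apply, used twice (first to $\ev_{(4)}=(\id\times\ev_2)\circ(\ev_{(3)}^{d-1}\times\id)$, then to $\ev_{(3)}^{d-1,1}=\pr\circ\ev_{(4)}$, both with $A=M_{d-1,1}$).

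Next I would handle the first factor. Since $\ev_3\colon M_{d-1}\to X$ factors as $\pi_3^{d-1}\circ\ev_{(3)}^{d-1}$, we get
\[
  M_{d-1,1}=M_{d-1}\times_X\overline{M}_{0,2}(X,1)=M_{d-1}\times_{\Gamma_{d-1}^{(3)}}\bigl(\Gamma_{d-1}^{(3)}\times_X\overline{M}_{0,2}(X,1)\bigr),
\]
and $\ev_{(3)}^{d-1}\times\id$ is the base change of $\ev_{(3)}^{d-1}$ along the surjective projection $\Gamma_{d-1}^{(3)}\times_X\overline{M}_{0,2}(X,1)\to\Gamma_{d-1}^{(3)}$; hence a general fiber of $\ev_{(3)}^{d-1}\times\id$ is a general fiber of $\ev_{(3)}^{d-1}$, which is rationally connected by Proposition \ref{prop:md-to-gammad3}. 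Thus $\ev_{(3)}^{d-1}\times\id$ is $\GRCF$, and $\pr$ is $\GRCF$ by Lemma \ref{lem:pr}.

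It then remains to treat $\id\times\ev_2$, and in fact this map is \emph{birational}: its fiber over a general point $(x,y,s,z)\in\Gamma_{d-1,1}^{(4)}$ consists of the choices of a line in $X$ through $s$ and $z$, and since $z\in\Gamma_1(s)$ and $s\neq z$ we have $\dim(V_s\cap V_z)=1$ and $\dim(V_s+V_z)=3$, so any connecting line $\{[V]\in X\ |\ V_1\subset V\subset V_3\}$ must have $V_1=V_s\cap V_z$ and $V_3=V_s+V_z$, hence is unique. Applying Proposition \ref{prop:diagram}(2) as above now yields that $\ev_{(3)}^{d-1,1}$ is $\GRCF$.

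I expect the main obstacle here to be purely organizational — correctly matching up the evaluation maps across diagram \eqref{eqn:diag} and verifying the (irreducibility, surjectivity, normality) hypotheses of Proposition \ref{prop:diagram} — since the substantive geometric work has already been done in Proposition \ref{prop:md-to-gammad3} and Lemma \ref{lem:pr}. The only genuine geometric input is the uniqueness of the line joining two general collinear points of $X$, supplied by the Ker/Span computation above (the same fact is implicit in the proof of Proposition \ref{prop:md-to-gammad3}). For small $d$, where $M_{d-1}$ or the chain degenerates, the same reasoning applies, with the degenerate fibers being trivially rational.
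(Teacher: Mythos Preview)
Your proof is correct and follows essentially the same approach as the paper: factor $\ev_{(3)}^{d-1,1}$ through diagram \eqref{eqn:diag} and the projection $\pr$, check that $\id\times\ev_2$ is birational and that the vertical map is $\GRCF$ via Proposition \ref{prop:md-to-gammad3}, then apply Proposition \ref{prop:diagram} together with Lemma \ref{lem:pr}. Your write-up supplies more detail than the paper does (the base-change justification for the vertical map and the explicit Ker/Span argument for birationality of $\id\times\ev_2$), but the structure is identical.
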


\begin{proof}
All maps in \eqref{eqn:diag} are surjective and $M_{d-1,1}$ is irreducible, projective and normal. Furthermore, $\id\times \ev_2$ is birational and the vertical map is $\GRCF$ by Proposition \ref{prop:md-to-gammad3}. By Proposition \ref{prop:diagram}, the map $\ev_{(4)}$ is therefore $\GRCF$. Finally, use Lemma~\ref{lem:pr}.
\end{proof}

We conclude with the main reduction result of this section. 

\begin{prop}\label{prop:replace} We have the following implications. 
\begin{enumerate}
\item If $\pi_3^{d}(u,v)$ is $\GRCF$, then $\ev_3^{d}(u,v)$ is $\GRCF$;
\item If $\pi_3^{d-1,1}(u,v)$ is $\GRCF$, then $\ev_3^{d-1,1}(u,v)$ is $\GRCF$.  
\end{enumerate}
\end{prop}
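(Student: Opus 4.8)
The plan is to derive each implication from Proposition~\ref{prop:diagram}(2) applied to a suitable commutative triangle in which the two ``known'' sides are $\GRCF$. For part (1), the relevant triangle has apex $M_d(X_u,X^v)$, with $\ev_{(3)}^d(u,v):M_d(X_u,X^v)\to\Gamma_d^{(3)}(X_u,X^v)$ as the vertical map $g$, and $\ev_3^d(u,v):M_d(X_u,X^v)\to\Gamma_d(X_u,X^v)$ as the hypotenuse $f$; the base map $h$ is then the restriction $\pi_3^d(u,v):\Gamma_d^{(3)}(X_u,X^v)\to\Gamma_d(X_u,X^v)$ of the projection onto the last coordinate. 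First I would check that $M_d(X_u,X^v)$ is projective, irreducible, and normal: this follows from the remark in Subsection~\ref{subsection:quantum} that $M_\du(X_u,X^v)$ is irreducible with rational singularities (hence normal), and projectivity is inherited from $M_d$. Next I would observe that $g=\ev_{(3)}^d(u,v)$ is $\GRCF$: this is the restriction to the Gromov--Witten subvariety of the map $\ev_{(3)}^d:M_d\to\Gamma_d^{(3)}$, which is $\GRCF$ by Proposition~\ref{prop:md-to-gammad3}. Strictly, I need the restricted map to still be $\GRCF$; here one uses that $M_d(X_u,X^v)=\ev_1^{-1}(X_u)\cap\ev_2^{-1}(X^v)$ is cut out by conditions pulled back along the \emph{first two} evaluation maps, which factor through $\Gamma_d^{(3)}$, so the fiber of $\ev_{(3)}^d(u,v)$ over a general point of $\Gamma_d^{(3)}(X_u,X^v)$ is literally a fiber of $\ev_{(3)}^d$ over a general point of $\Gamma_d^{(3)}$, hence rationally connected. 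Given $g$ is $\GRCF$ and $h=\pi_3^d(u,v)$ is $\GRCF$ by hypothesis, Proposition~\ref{prop:diagram}(2) yields that $f=\ev_3^d(u,v)$ is $\GRCF$.

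For part (2), the argument is the same with $M_{d-1,1}(X_u,X^v)$ as apex: the vertical map is $\ev_{(3)}^{d-1,1}(u,v):M_{d-1,1}(X_u,X^v)\to\Gamma_{d-1,1}^{(3)}(X_u,X^v)$, the hypotenuse is $\ev_3^{d-1,1}(u,v)$, and the base is $\pi_3^{d-1,1}(u,v):\Gamma_{d-1,1}^{(3)}(X_u,X^v)\to\Gamma_{d-1,1}(X_u,X^v)$, the projection onto the last coordinate. Again $M_{d-1,1}(X_u,X^v)$ is projective, irreducible and normal by the rational-singularities statement in Subsection~\ref{subsection:quantum}, and $\ev_{(3)}^{d-1,1}(u,v)$ is $\GRCF$ because it is the restriction of $\ev_{(3)}^{d-1,1}:M_{d-1,1}\to\Gamma_{d-1,1}^{(3)}$ (which is $\GRCF$ by Corollary~\ref{cor:d-1-1}) to a locus defined by conditions on $\ev_1,\ev_2$ alone, so that general fibers of the restricted map coincide with general fibers of the unrestricted one. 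With the base map $\GRCF$ by hypothesis, Proposition~\ref{prop:diagram}(2) finishes the proof.

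The main point requiring care — and the step I would expect to be the real obstacle — is the compatibility of ``general fiber'' under restriction to the Gromov--Witten subvariety: one must ensure that a general point of $\Gamma_d(X_u,X^v)$ (resp.\ $\Gamma_d^{(3)}(X_u,X^v)$) lies over a general point, or at least in the relevant dense open locus, of the ambient target $\Gamma_d$ (resp.\ $\Gamma_d^{(3)}$) so that the $\GRCF$ property of the ambient map transfers. The clean way to handle this is to note that $M_d(X_u,X^v)$ is irreducible and dominates $\Gamma_d^{(3)}(X_u,X^v)$, and then invoke Lemma~\ref{lemma:fibre-meets-open} together with Lemma~\ref{lemma:restriction-to-open}: the $\GRCF$ property is stable under passing to a dense open subset of the source, and the preimage in $M_d(X_u,X^v)$ of a dense open subset of $\Gamma_d^{(3)}$ meets every component of a general fiber of $\ev_{(3)}^d(u,v)$, which forces that fiber to be rationally connected. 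Alternatively — and perhaps more transparently — one can reprove Proposition~\ref{prop:diagram}(2) directly in this setting, but invoking it as a black box with the apex $M_d(X_u,X^v)$ and the three maps above is the shortest route; all hypotheses of that proposition (projectivity and normality of the apex, dominance of all three maps) have already been verified.
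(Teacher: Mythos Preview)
Your overall architecture is exactly the paper's: apply Proposition~\ref{prop:diagram}(2) to the triangle with apex $M_\du(X_u,X^v)$, vertical map the restricted $\ev_{(3)}^\du$, and base $\pi_3^\du(u,v)$. The apex is projective, irreducible and normal for the reasons you give, and the identification of fibers of $\ev_{(3)}^\du(u,v)$ with fibers of $\ev_{(3)}^\du$ over the same point is correct.

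The gap is precisely the step you flag as ``requiring care'': you need that a \emph{general} point of $\Gamma_\du^{(3)}(X_u,X^v)$ lies in the dense open locus $U\subset\Gamma_\du^{(3)}$ over which $\ev_{(3)}^\du$ has rationally connected fibers. Your proposed fix via Lemmas~\ref{lemma:fibre-meets-open} and~\ref{lemma:restriction-to-open} does not establish this: those lemmas concern dense opens of the \emph{source}, but here the issue is whether $U\cap\Gamma_\du^{(3)}(X_u,X^v)$ is nonempty at all, and nothing you have written rules out $\Gamma_\du^{(3)}(X_u,X^v)\subset\Gamma_\du^{(3)}\setminus U$. The paper closes this gap with a one-line $G$-equivariance argument: $\ev_{(3)}^\du$ is $G$-equivariant, so $U$ is $G$-stable; since the $G$-orbit of $X_u\times X^v$ is dense in $X^2$, the $G$-saturation of $\Gamma_\du^{(3)}(X_u,X^v)$ is dense in $\Gamma_\du^{(3)}$, forcing $U$ to meet $\Gamma_\du^{(3)}(X_u,X^v)$. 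Equivalently, general fibers of the restricted vertical map are \emph{isomorphic} (via some $g\in G$) to general fibers of the unrestricted one. Add this observation and your proof is complete.
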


\begin{proof}
We have the following diagrams of surjective maps
$$\xymatrix{M_d(X_u,X^v) \ar[rd]
 \ar[d]
   & \\
\Gamma_d^{(3)}(X_u,X^v) \ar[r]
 & \Gamma_d(X_u,X^v) \\}  \textrm{ and }
\xymatrix{M_{d-1,1}(X_u,X^v) \ar[rd]
 \ar[d]
 & \\
\Gamma_{d-1,1}^{(4)}(X_u,X^v) \ar[r]
 & \Gamma_{d-1,1}(X_u,X^v). \\}$$
Since the \(G\)-orbit of \(X_u\times X^v\) is dense in \(X^2\) and the evaluation maps are \(G\)-equivariant, the general fibers of the vertical maps are general fibers of the maps 
$$\ev_{(3)}^d : M_{d} \to \Gamma_{d}^{(3)} \textrm{ and } \ev_{(4)}^{d-1,1} : M_{d-1,1} \to \Gamma_{d-1,1}^{(4)}.$$
Furthermore, $M_{d}(X_u,X^v)$ and $M_{d-1,1}(X_u,X^v)$ are projective, irreducible and normal, thus the result follows from Propositions \ref{prop:diagram} and \ref{prop:md-to-gammad3}, {Lemma~\ref{lem:pr},} and Corollary \ref{cor:d-1-1}.
\end{proof}

\begin{cor}
\label{cor_strategy}
If $\Gamma_d(x,y)=\Gamma_{d-1,1}(x,y)\neq \emptyset$ for $x,y\in X$ in general position, then we have $\Gamma_d(X_u,X^v)=\Gamma_{d-1,1}(X_u,X^v)$. If furthermore $\pi_3^d(u,v)$ is $\GRCF$, then $\ev_3^{d-1,1}(u,v)$ is $\GRCF$.
\end{cor}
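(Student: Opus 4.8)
The plan is to deduce both statements from the single equality of the \emph{unrestricted} three–point incidence varieties $\Gamma_d^{(3)}=\Gamma_{d-1,1}^{(3)}$ inside $X^3$. First I would record the elementary identity
\[
  \Gamma_\du^{(3)}(X_u,X^v)=\Gamma_\du^{(3)}\cap(X_u\times X^v\times X)\qquad\text{for }\du=(d)\text{ and }\du=(d-1,1):
\]
if $(x,y,z)\in\Gamma_\du^{(3)}$ with $x\in X_u$, $y\in X^v$, then any $\phi\in M_\du$ with $\ev_{(3)}(\phi)=(x,y,z)$ satisfies $\ev_1(\phi)=x\in X_u$ and $\ev_2(\phi)=y\in X^v$, hence $\phi\in M_\du(X_u,X^v)$; the reverse inclusion is clear. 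Granting $\Gamma_d^{(3)}=\Gamma_{d-1,1}^{(3)}$, intersecting with $X_u\times X^v\times X$ gives $\Gamma_d^{(3)}(X_u,X^v)=\Gamma_{d-1,1}^{(3)}(X_u,X^v)$, and applying the projection $\pi_3$ onto the last factor gives $\Gamma_d(X_u,X^v)=\Gamma_{d-1,1}(X_u,X^v)$, which is the first assertion; it also shows that $\pi_3^d(u,v)$ and $\pi_3^{d-1,1}(u,v)$ have the same source and target and are both the restriction of the coordinate projection, so they coincide as morphisms.

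To prove $\Gamma_d^{(3)}=\Gamma_{d-1,1}^{(3)}$: the inclusion $\Gamma_{d-1,1}^{(3)}\subseteq\Gamma_d^{(3)}$ follows from $M_{d-1,1}\subseteq M_d$; moreover $\Gamma_d^{(3)}$ is irreducible (image of the irreducible $M_d$) and $\Gamma_{d-1,1}^{(3)}$ is closed (image of a projective variety). Let $\pi_{12}:X^3\to X^2$ be the projection to the first two factors, and let $\mathcal O\subseteq X^2$ be the dense open $\Sp_{2n}$–orbit, which is exactly the locus of pairs in general position (two transverse isotropic planes spanning a symplectic $4$–space). For any $(x,y)$ the fibre $(\pi_{12}|_{\Gamma_d^{(3)}})^{-1}(x,y)$ equals $\{(x,y)\}\times\Gamma_d(x,y)$, and likewise for $\Gamma_{d-1,1}^{(3)}$ with $\Gamma_{d-1,1}(x,y)$. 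By hypothesis $\Gamma_d(x,y)\neq\emptyset$ for general $(x,y)$, so $\pi_{12}|_{\Gamma_d^{(3)}}$ is dominant and $\Gamma_d^{(3)}\cap\pi_{12}^{-1}(\mathcal O)$ is a nonempty open, hence dense, subset of $\Gamma_d^{(3)}$; and by hypothesis (using $\Sp_{2n}$–equivariance to pass from a general pair to all of $\mathcal O$) this set coincides with $\Gamma_{d-1,1}^{(3)}\cap\pi_{12}^{-1}(\mathcal O)$. Since $\Gamma_{d-1,1}^{(3)}$ is closed and contains this dense subset of $\Gamma_d^{(3)}$, it contains $\Gamma_d^{(3)}$, whence equality.

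For the second assertion, the above gives $\pi_3^{d-1,1}(u,v)=\pi_3^d(u,v)$; in particular, if $\pi_3^d(u,v)$ is $\GRCF$ then so is $\pi_3^{d-1,1}(u,v)$, and Proposition \ref{prop:replace}(2) then yields that $\ev_3^{d-1,1}(u,v)$ is $\GRCF$, as required.

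The point that needs care — and the reason the argument must pass through the unrestricted incidences — is that $X_u\times X^v$ may contain no pair of points in general position (e.g. when $X_u$ or $X^v$ is small), so one cannot simply apply the hypothesis pointwise over a dense subset of $X_u\times X^v$; it is the irreducibility of $\Gamma_d^{(3)}$ together with the fact that $\Gamma_d^{(3)}$ meets $\pi_{12}^{-1}(\mathcal O)$ that lets the pointwise identity over the open orbit propagate to the Schubert pair. The remaining ingredients — the identity $\Gamma_\du^{(3)}(X_u,X^v)=\Gamma_\du^{(3)}\cap(X_u\times X^v\times X)$ and the identification of the general–position locus with the open $\Sp_{2n}$–orbit — are routine.
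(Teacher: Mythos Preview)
Your argument is correct and follows essentially the same route as the paper: establish $\Gamma_d^{(3)}(X_u,X^v)=\Gamma_{d-1,1}^{(3)}(X_u,X^v)$, deduce $\pi_3^d(u,v)=\pi_3^{d-1,1}(u,v)$, and invoke Proposition~\ref{prop:replace}. The only difference is that you first prove the unrestricted equality $\Gamma_d^{(3)}=\Gamma_{d-1,1}^{(3)}$ and then intersect with $X_u\times X^v\times X$, whereas the paper works directly with the restricted varieties, using that $X_u\times X^v$ itself meets the open locus $\Omega$ where the pointwise hypothesis holds.

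Your final paragraph, however, contains a misconception that you should correct: it is \emph{never} the case that $X_u\times X^v$ misses the open diagonal $G$-orbit in $X^2$. Indeed, every Schubert variety $X_u$ contains the $B$-fixed point $1.P_X$ and every opposite Schubert variety $X^v$ contains the $B^-$-fixed point $w_0.P_X$, and this pair lies in the open orbit. Equivalently (and this is how the paper phrases it, here and in the proof of Proposition~\ref{prop:replace}), the $G$-orbit of $X_u\times X^v$ is dense in $X^2$. So the detour through the unrestricted $\Gamma_d^{(3)}$ is not needed: one can argue directly that the set $\{(x,y,z)\in X_u\times X^v\times X\mid (x,y)\in\Omega,\ z\in\Gamma_d(x,y)\}$ is a common dense open subset of both $\Gamma_d^{(3)}(X_u,X^v)$ and $\Gamma_{d-1,1}^{(3)}(X_u,X^v)$, using irreducibility of the latter two (images of the irreducible $M_\du(X_u,X^v)$). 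Your version is not wrong, just slightly longer than necessary, and the stated justification for the detour is inaccurate.
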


\begin{proof}
Since $M_{d}(X_u,X^v)$ and $M_{d-1,1}(X_u,X^v)$ are irreducible, the same is true for $\Gamma^{(3)}_{d}(X_u,X^v)$ and $\Gamma^{(3)}_{d-1,1}(X_u,X^v)$. {Let \(\Omega\) be the open subset of \(X^2\) such that \(\Gamma_d(x,y)=\Gamma_{d-1,1}(x,y)\neq \emptyset\) for \((x,y)\in \Omega\). Since the \(G\)-orbit of \(X_u\times X^v\) is dense in \(X^2\) and the evaluation maps are \(G\)-equivariant, we may assume \(\Omega\cap (X_u\times X^v)\neq\emptyset\).} Then 
$$\displaylines{
\{(x,y,z) \in X_u \times X^v \times X \ | \ (x,y) \in \Omega \textrm{ and } z \in \Gamma_d(x,y) \}
\hfill\cr\hfill
= \{(x,y,z) \in X_u \times X^v \times X \ | \ (x,y) \in \Omega \textrm{ and } z \in \Gamma_{d-1,1}(x,y) \}
\cr}$$
is a dense open subset of both $\Gamma_d^{(3)}(X_u,X^v)$ and $\Gamma_{d-1,1}^{(3)}(X_u,X^u)$, proving that $\Gamma^{(3)}_{d}(X_u,X^v)=\Gamma^{(3)}_{d-1,1}(X_u,X^v)$. We deduce that $\Gamma_d(X_u,X^v)=\Gamma_{d-1,1}(X_u,X^v)$ and $\pi_3^d(u,v)=\pi_3^{d-1,1}(u,v)$. The last assertion follows from Proposition \ref{prop:replace}.
\end{proof}

\subsection{Results on curve neighborhoods}

Before going on, let us prove some preliminary results on curve neighborhoods. More precisely, the next result will be useful for proving that $\Gamma_d(X_u,X^v)$ and $\Gamma_{d-1,1}(X_u,X^v)$ have rational singularities when they are equal and $d$ is small.

\begin{prop}
\label{prop_rat_sing_smalldegreecurves}
Let $V_k$ be any $k$-dimensional subspace of $\CC^{2n}$. The subvariety $\{z\in X\mid V_z\cap V_k \neq 0\}\subset X$ has rational singularities.
\end{prop}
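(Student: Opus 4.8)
The plan is to exhibit $S_k := \{z \in X \mid V_z \cap V_k \neq 0\}$ as the image of a resolution-like morphism from a smooth (or at least rational-singularities) variety with connected fibers, and then apply a standard criterion. First I would introduce the incidence variety
\[
  \wt{S}_k = \{(z,\ell) \in X \times \P(V_k) \mid \ell \subset V_z\},
\]
together with its two projections $p : \wt{S}_k \to S_k$ and $q : \wt{S}_k \to \P(V_k)$. The map $q$ realizes $\wt{S}_k$ as a fiber bundle over $\P(V_k) \cong \P^{k-1}$: the fiber over a line $\ell$ is $\{z \in X \mid \ell \subset V_z\}$, which is the set of isotropic $2$-planes containing $\ell$, i.e. a sub-Grassmannian $\IG(1, \ell^\perp/\ell) \cong \P(\ell^\perp/\ell) \cong \P^{2n-3}$ (here $\ell^\perp$ is taken with respect to the symplectic form, and $\ell \subset \ell^\perp$ automatically). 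Since all these fibers are isomorphic projective spaces and vary algebraically, $\wt{S}_k$ is smooth, irreducible, and in particular has rational singularities.

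Next I would analyze $p : \wt{S}_k \to S_k$. It is projective and surjective by construction. Its fiber over $z \in S_k$ is $\P(V_z \cap V_k)$, which is a single point when $\dim(V_z \cap V_k) = 1$ and a $\P^1$ when $V_z \subset V_k$ (i.e. $\dim(V_z \cap V_k) = 2$); in all cases the fiber is a projective space, hence rationally connected (indeed cohomologically trivial: $\chi(\cO_{\P^m}) = 1$). The generic fiber is a point since a general $z \in S_k$ meets $V_k$ in a line, so $p$ is birational. To conclude I would invoke Theorem~\ref{thm:push-forward} (Koll\'ar): $p_*[\cO_{\wt{S}_k}] = [\cO_{S_k}]$ once we know $S_k$ has rational singularities — but of course that is exactly what we want to prove, so instead I would use the complementary direction. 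The clean statement to use is the standard fact that if $A$ is smooth (or has rational singularities) and $f : A \to B$ is a projective birational morphism onto a normal variety $B$ with $f_* \cO_A = \cO_B$ and $R^i f_* \cO_A = 0$ for $i > 0$, then $B$ has rational singularities; and the vanishing $R^i p_* \cO_{\wt S_k} = 0$ for $i>0$ follows because the fibers of $p$ are projective spaces (so $H^i(\cO_{\text{fiber}}) = 0$ for $i > 0$) together with cohomology and base change, or more directly from the fact that a projective space bundle has no higher direct images of $\cO$. I also need $S_k$ normal: this is standard since $S_k$ is a Schubert-type degeneracy locus — one can check it is Cohen–Macaulay and regular in codimension one, or cite that such incidence loci in (co)minuscule-type homogeneous spaces are normal; alternatively, since $p$ is birational, proper, with $\wt S_k$ normal and $p_* \cO_{\wt S_k}$ integrally closed, normality of $S_k$ follows.

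The main obstacle I anticipate is the bookkeeping around normality and the higher direct image vanishing: one must be a little careful that $p$ is not finite (the fiber jumps to $\P^1$ over the sub-locus $\{z : V_z \subset V_k\}$, which has codimension $2n-3$ in $S_k$ when $k \geq 2$), so a naive "birational + Zariski's main theorem" argument does not immediately give $p_*\cO_{\wt S_k} = \cO_{S_k}$; one genuinely needs the rational-singularities / rational-fiber package. An alternative route that sidesteps normality is to realize $S_k$ directly as the image of a tower: write $\wt S_k \to S_k$ and note $\wt S_k$ is an explicit $\P^{2n-3}$-bundle over $\P^{k-1}$, compute the Stein factorization, and verify directly that the Stein factorization is $S_k$ itself (using that $S_k$ is already known to be, e.g., a union of Schubert varieties when $V_k = E_k$, and the general case follows by the $G$-action / semicontinuity). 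I would present the argument via the incidence variety $\wt S_k$ and Koll\'ar's theorem in the form stated as Theorem~\ref{thm:push-forward}, handling normality of $S_k$ as a short separate remark, since that is the only genuinely nontrivial input beyond the explicit description of the fibers.
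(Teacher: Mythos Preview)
Your incidence variety $\wt S_k$ is the right object and is indeed smooth, but the two steps you flag as ``the main obstacle'' are genuine gaps, not just bookkeeping. For normality of $S_k$: your alternative (``$p_*\cO_{\wt S_k}$ integrally closed, hence $S_k$ normal'') is circular, since for a proper birational $p$ the sheaf $p_*\cO_{\wt S_k}$ computes the structure sheaf of the \emph{normalization} of $S_k$, not of $S_k$; and $S_k$ is not a Schubert variety for arbitrary $V_k$ (its isomorphism type depends on $\Rk(\omega|_{V_k})$), so you cannot simply cite Schubert-variety results. For $R^ip_*\cO_{\wt S_k}=0$: neither justification works, because $p$ is not flat (the fiber jumps from a point to $\P^1$ over $\{z:V_z\subset V_k\}$), so cohomology-and-base-change does not apply, and $p$ is certainly not a projective bundle. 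Trivial cohomology of the set-theoretic fibers alone is not enough.

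The paper takes a different route that sidesteps both issues. It assembles all the $S_k$ into a family $Y=\{(z,s):V_z\cap V_s\neq0\}\to S=\Gr(k,2n)$; shows $Y$ has rational singularities via the \emph{other} projection $Y\to X$, which is $G$-equivariantly locally trivial with Schubert-variety fibers; checks $Y\to S$ is flat by Miracle Flatness (the fibers have constant dimension $k+2n-4$, computed using exactly your $\wt S_k$ as a birational model); globalizes your $\wt S_k$'s into a simultaneous resolution $\wt Y=\{(\ell,z,s):\ell\subset V_z\cap V_s\}$; and then invokes Elkik's theorem \cite{Elkik} that in a flat family with a simultaneous resolution, every fiber has rational singularities. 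So your resolution does appear, but as one ingredient in a deformation argument rather than as a direct resolution-of-singularities proof. If you want to complete your approach instead, the honest route is to recognize $S_k$ as the degeneracy locus $D_1$ of the bundle map $\cU\to\C^{2n}/V_k$ on $X$, check it has the expected codimension $2n-k-1$, and then cite the standard results (Cohen--Macaulayness of determinantal loci, plus Kempf's vanishing for the canonical resolution) to get normality and $R^ip_*\cO=0$; this works but is more than what you wrote.
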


\begin{proof}
Let $S=G(k,2n)$ and for $s \in S$, denote by $V_s \subset \CC^{2n}$ the corresponding subspace. Define  $Y = \{ (z,s) \in X \times S \ | \ V_z\cap V_s \neq 0 \}$. 
The map $Y \to X$ is $G$-equivariant and therefore locally trivial. Furthermore, 
the fiber of this map is the Schubert variety $\{ s \in S \ | \ V_s \cap V_z \neq 0\}$ and therefore has rational singularities. In particular $Y$ has rational singularities. On the other hand, the fibers of the map $Y \to S$ are of the form $\{ z \in X \ | \ V_z \cap V_s \neq 0\}$, and it is easy to check that these varieties have constant dimension $k + 2n - 4$ (a birational model is given by $\{(a,z) \in \bP(V_s) \times X \ | \ a \subset V_z \subset a^\perp\}$, which is a locally trivial $\bP^{2n-3}$-bundle over $\bP(V_s)$). In particular, by Miracle Flatness, the projection $Y \to S$ is flat. {Finally, the family $(Y_s)_{s\in S}$ admits a global resolution of singularities given by the variety $\tilde{Y} = \{ (l,z,s) \in \PP^{2n-1}\times X \times S \ | \ l\subset V_z\cap V_s \}$. We can thus apply} \cite[Theorem 3]{Elkik}, and we deduce that $Y_s$ has rational singularities for any $s\in S$. 
\end{proof}

Out next result will apply when $\ev^d_3$ is birational, giving a useful relationship among curve neighborhoods for degree $(d-1,1)$ curves and degree $d$ curves. 

\begin{lemma}
Let us assume that $d\leq 2$ and that $\ev^d_3(u,v)$ and $\ev^{d-1,1}_3(u,v)$ are birational. Then $\Gamma_{d-1,1}(X_u,X^v)$ is a divisor inside $\Gamma_{d}(X_u,X^v)$.
\end{lemma}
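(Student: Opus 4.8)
The plan is to compute dimensions and use the birationality hypotheses together with the geometric description of degree $d$ and degree $(d-1,1)$ curve neighborhoods. Since $\ev^d_3(u,v)\colon M_d(X_u,X^v)\to\Gamma_d(X_u,X^v)$ is birational and $M_d(X_u,X^v)$ is irreducible, we have $\dim\Gamma_d(X_u,X^v)=\dim M_d(X_u,X^v)$; similarly $\dim\Gamma_{d-1,1}(X_u,X^v)=\dim M_{d-1,1}(X_u,X^v)$. So it suffices to show $\dim M_{d-1,1}(X_u,X^v)=\dim M_d(X_u,X^v)-1$, together with the inclusion $\Gamma_{d-1,1}(X_u,X^v)\subseteq\Gamma_d(X_u,X^v)$ (which is clear, since a chain of a degree $(d-1)$ curve and a line can be smoothed to a degree $d$ curve and the image of the evaluation map does not grow, or more directly since $\Gamma_{d-1,1}(X_u,X^v)=\ev_3(M_{d-1,1}(X_u,X^v))$ and $M_{d-1,1}\subset M_d$).

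First I would dispose of the ``trivial'' degrees: for $d=1$, $M_{0,1}(X_u,X^v)$ parametrizes a constant map at a point of $X_u\cap X^v$ together with a line through it, so its image is a curve neighborhood $\Gamma_1$ of the Richardson variety; one checks directly that this is a divisor in $\Gamma_1(X_u,X^v)$ whenever $\ev^1_3$ is birational (birationality forces $X_u\cap X^v$ to be large enough that adding a line drops dimension by exactly one). The substantive case is $d=2$. Here I would use $\dim M_2(X_u,X^v)=\dim M_2-\codim(X_u)-\codim(X^v)$ when the evaluation maps are sufficiently transverse (which is guaranteed by the $G$-equivariance and density of the $G$-orbit of $X_u\times X^v$ in $X^2$, already invoked in Proposition~\ref{prop:replace}), and likewise $\dim M_{1,1}(X_u,X^v)=\dim M_{1,1}-\codim(X_u)-\codim(X^v)$. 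Since $\dim M_{1,1}=\dim M_2-1$ inside $M_2$ (the locus of nodal conics is a divisor in the space of conics, hence $M_{1,1}\subset M_2$ is a divisor), the same codimension count on both sides gives $\dim M_{1,1}(X_u,X^v)=\dim M_2(X_u,X^v)-1$. Combined with the irreducibility of $M_{1,1}(X_u,X^v)$ and the inclusion $\Gamma_{1,1}(X_u,X^v)\subseteq\Gamma_2(X_u,X^v)$, this yields that $\Gamma_{1,1}(X_u,X^v)$ is a divisor in $\Gamma_2(X_u,X^v)$.

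The main obstacle I anticipate is justifying the clean dimension formulas $\dim M_\du(X_u,X^v)=\dim M_\du-\codim(X_u)-\codim(X^v)$: a priori the intersections $\ev_1^{-1}(X_u)\cap\ev_2^{-1}(X^v)$ could be larger-than-expected dimension, and one needs the evaluation maps to be flat (or at least equidimensional) over the relevant Schubert strata. For this I would invoke that the evaluation maps $M_\du\to X^2$ are $G$-equivariant and dominant (indeed $\ev_1\times\ev_2$ is surjective for $d\geq1$), so Kleiman transversality for the $G$-action shows that a general translate $gX_u\times g'X^v$ meets the fibers properly; since $X_u,X^v$ are themselves $G$-translates this gives the codimension formula for the Schubert data in question, and the irreducibility and rational singularities of $M_\du(X_u,X^v)$ are already recorded in Subsection~\ref{subsection:quantum}. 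A minor point to handle with care is the edge case where $\Gamma_{d-1,1}(X_u,X^v)$ might \emph{equal} $\Gamma_d(X_u,X^v)$ rather than be a proper divisor --- but this is exactly excluded by the hypothesis that $\ev^d_3(u,v)$ is birational while a general conic through three general points is unique (Lemma~\ref{lem:deg2}), which forces the nodal degeneration to cut down the image dimension.
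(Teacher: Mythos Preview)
Your approach is essentially the same as the paper's: both use birationality to identify $\dim\Gamma_{\du}(X_u,X^v)$ with $\dim M_{\du}(X_u,X^v)$, note that $M_{d-1,1}$ is a boundary divisor in $M_d$, and invoke Kleiman transversality for $\ev_1\times\ev_2:M_d\to X^2$ to conclude that $M_{d-1,1}(X_u,X^v)$ is a divisor in $M_d(X_u,X^v)$. The paper does this uniformly in one stroke without your case split $d=1$ versus $d=2$; your separate geometric treatment of $d=1$ is unnecessary (and left vague), since the Kleiman argument you give for $d=2$ works verbatim for $d=1$ as well. Your final ``edge case'' paragraph is also superfluous: once the dimensions differ by one the two varieties cannot coincide, so no extra argument is needed to exclude equality.
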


\begin{proof}
Recall that $\Gamma_{d-1,1}(X_u,X^v)$ and $\Gamma_{d}(X_u,X^v)$ are irreducible. We thus only need to compute the codimension of the former in the latter. By the birationality assumption, the dimension of $\Gamma_d(X_u,X^v)$ and $\Gamma_{d-1,1}(X_u,X^v)$ are equal to the dimension of $M_{d}(X_u,X^v)$ and $M_{d-1,1}(X_u,X^v)$. Note that $M_{d-1,1}$ is a union of boundary divisors (see \cite{fulton.pandharipande:notes}) an is therefore a divisor in $M_d$. Now by Kleiman-tBertini applied to $\ev_1 \times \ev_2 : M_d \to X^2$, we get that if $M_{d-1,1}(X_u,X^v)$ is non-empty it is a divisor in $M_d(X_u,X^v)$.
\end{proof}

\section{Large degrees}
\label{sec:d=3}

The aim of this section is to prove the following result.

\begin{thm}
\label{thm:d=3}
For $d \geq 3$, we have
\begin{enumerate}
\item The map $\ev_{3}^d(u,v): M_d(X_u,X^v) \to \Gamma_d(X_u,X^v)$ is $\GRCF$ and never birational;
\item The map $\ev_3^{d-1,1}(u,v): M_{d-1,1}(X_u,X^v) \to \Gamma_{d-1,1}(X_u,X^v)$ is $\GRCF$;
\item $\Gamma_d(X_u,X^v) = \Gamma_{d-1,1}(X_u,X^v)$ has rational singularities.
\end{enumerate}
\end{thm}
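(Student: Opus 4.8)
The plan is to reduce everything to the point-to-point statements already in hand (Lemmas~\ref{lem:split-deg3} and~\ref{lem:split-deg4}), together with the reduction machinery of Section~\ref{section:replacing}. I will treat the three items of the theorem in turn.

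\textbf{Step 1: the curve-neighborhood collapse $\Gamma_d(X_u,X^v)=\Gamma_{d-1,1}(X_u,X^v)$.} For $d\ge 4$, Lemma~\ref{lem:split-deg4} says $\Gamma_d(x,y)=X=\Gamma_{d-1,1}(x,y)$ for general $x,y$, and for $d=3$, Lemma~\ref{lem:split-deg3} says $\Gamma_3(x,y)=\{z:\dim(V_x+V_y+V_z)\le 5\}=\Gamma_{2,1}(x,y)$ for general $x,y$. In both cases $\Gamma_d(x,y)=\Gamma_{d-1,1}(x,y)$ for $x,y$ in general position, so Corollary~\ref{cor_strategy} applies and gives $\Gamma_d(X_u,X^v)=\Gamma_{d-1,1}(X_u,X^v)$, establishing the set-theoretic content of item~(3). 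It also reduces item~(2): once I know $\pi_3^d(u,v)=\pi_3^{d-1,1}(u,v)$ is $\GRCF$, the second assertion of Corollary~\ref{cor_strategy} yields that $\ev_3^{d-1,1}(u,v)$ is $\GRCF$.

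\textbf{Step 2: the maps are $\GRCF$.} By Proposition~\ref{prop:replace} it suffices to show $\pi_3^d(u,v)$ (equivalently $\pi_3^{d-1,1}(u,v)$, since the two curve neighborhoods now agree) is $\GRCF$; then item~(1) and the remaining part of item~(2) follow. The map $\pi_3^d(u,v):\Gamma_d^{(3)}(X_u,X^v)\to\Gamma_d(X_u,X^v)$ is, by $G$-equivariance and density of the orbit of $X_u\times X^v$, governed by the fibers of $\pi_3^d:\Gamma_d^{(3)}\to X$, which send $z$ to the locus of pairs $(x,y)$ with a degree-$d$ curve through $x,y,z$. For $d\ge 4$ this fiber is $\{(x,y):z\in\Gamma_d(x,y)\}=\{(x,y):\dim(V_x+V_y+V_z)\le\,?\}$; by Lemma~\ref{lem:split-deg4} in fact $\Gamma_d(x,y)=X$ for general $(x,y)$, so the fiber over general $z$ is dense in $X^2$ and one must check it is (birational to) a rationally connected variety — it is an open subset of a Schubert-type incidence, hence rational. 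For $d=3$, the fiber over $z$ is $\{(x,y)\in X^2:\dim(V_x+V_y+V_z)\le 5\}$, a linear-section-type condition on $X\times X$; I would exhibit a rational (indeed birational to a tower of projective bundles over Schubert varieties) parametrization as in the proof of Proposition~\ref{prop:md-to-gammad3}, or alternatively argue that $\pi_3^d$ is already dealt with by restriction to the open locus where the incidence has expected dimension. In either case the fiber is unirational of the expected dimension, hence rationally connected; Lemma~\ref{lemma:restriction-to-open} lets me work on a convenient dense open subset.

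\textbf{Step 3: rational singularities of $\Gamma_d(X_u,X^v)$.} Since the common variety $\Gamma_d(X_u,X^v)=\Gamma_{d-1,1}(X_u,X^v)$ is a curve neighborhood of a pair of opposite Schubert varieties, I expect it to be describable as a $B$-stable subvariety cut out by incidence conditions of the form "$V_z$ meets a fixed subspace" and "$V_z$ lies in a fixed subspace"; Proposition~\ref{prop_rat_sing_smalldegreecurves} already handles the first type of condition. Concretely, for $d\ge 4$ one has $\Gamma_d(X_u,X^v)=X$ in the relevant range (or a small explicit list of Schubert varieties), which has rational singularities; for $d=3$ one reads off from Lemma~\ref{lem:split-deg3} that the curve neighborhood is governed by $\dim(V_x+V_y+V_z)\le 5$ with $x\in X_u$, $y\in X^v$, and I would identify it explicitly as a Schubert variety or an intersection of varieties of the form in Proposition~\ref{prop_rat_sing_smalldegreecurves}, using that $\ev_3^{d}(u,v)$ is not birational (item~(1)) to see that $\Gamma_d(X_u,X^v)$ is the image of a variety with rational singularities under a $\GRCF$ map and then invoking Theorem~\ref{thm:push-forward} or a direct resolution argument à la Elkik as in Proposition~\ref{prop_rat_sing_smalldegreecurves}.

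\textbf{Main obstacle.} The routine-looking but genuinely delicate point is the case analysis for $\Gamma_d(X_u,X^v)$ with $d=3$: one must pin down \emph{exactly} which $B$-stable subvariety of $X$ this curve neighborhood is for every admissible pair $(p_1,p_2),(q_1,q_2)$ — including degenerate configurations where $E_{p_i}$ and $E^{q_j}$ interact with the symplectic form — and verify both that $\ev_3^3(u,v)$ is never birational (so that item~(1)'s "never birational" holds) and that the resulting variety has rational singularities. I expect this to require either a uniform description of these curve neighborhoods as Schubert varieties (via \cite[Proposition 3.2]{buch.chaput.ea:finiteness}-type arguments applied to the two-point curve neighborhood) or a careful application of Proposition~\ref{prop_rat_sing_smalldegreecurves} to an explicit presentation; the non-birationality is where the large-degree geometry (dimension of fibers $\ge 2$, as in the end of the proof of Proposition~\ref{prop:md-to-gammad3}) must be propagated from the $X^2$-fibered picture down to the Schubert-restricted picture.
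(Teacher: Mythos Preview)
Your overall strategy matches the paper's: reduce via Proposition~\ref{prop:replace} and Corollary~\ref{cor_strategy}, then use the point-to-point lemmas. Step~1 is correct and is exactly how the paper argues. But there are two genuine gaps.

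\textbf{Step~2 contains a conceptual error.} You claim that by $G$-equivariance the fibers of $\pi_3^d(u,v)$ are ``governed by the fibers of $\pi_3^d:\Gamma_d^{(3)}\to X$.'' This is false. The $G$-equivariance argument in the proof of Proposition~\ref{prop:replace} applies to the \emph{vertical} map $\ev_{(3)}^d(u,v):M_d(X_u,X^v)\to\Gamma_d^{(3)}(X_u,X^v)$: its general fibers are general fibers of $\ev_{(3)}^d$ because the $G$-orbit of $X_u\times X^v$ is dense in $X^2$. But $\pi_3^d(u,v)$ is the \emph{horizontal} projection, and its fiber over $z$ is $\{(x,y)\in X_u\times X^v: z\in\Gamma_d(x,y)\}$, which genuinely depends on the Schubert conditions. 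For $d\ge4$ you are rescued because $\Gamma_d(x,y)=X$ and the fiber is simply $X_u\times X^v$ (this is Proposition~\ref{prop:fibre-d-geq4}). For $d=3$ you are not: the fiber is $\{(x,y)\in X_u\times X^v:\dim(V_x+V_y+V_z)\le5\}$, and showing this is rationally connected requires real work. The paper does it by an explicit unirational parametrization (Proposition~\ref{prop:pi3_unirat}, Lemmas~\ref{lem:surj} and~\ref{lem:P_unirat}): one builds a variety $\cP^\circ$ out of choices $(a',a'')\in E_{p_2}\times E^{q_2}$ with $a'+a''\in V_z$, together with $b\in E_{p_1}\cap(a')^\perp$ and $c\in E^{q_1}\cap(a'')^\perp$, and shows this surjects onto the fiber. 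Your gestures toward ``a rational parametrization as in Proposition~\ref{prop:md-to-gammad3}'' or ``restriction to the open locus'' do not supply this construction.

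\textbf{Step~3 has a missing formula and a circular alternative.} The paper proves (Lemma~\ref{lem:sing-rat3}) the uniform description $\Gamma_3(X_u,X^v)=\{z\in X:V_z\cap(E_{p_2}+E^{q_2})\neq0\}$, which is exactly the form Proposition~\ref{prop_rat_sing_smalldegreecurves} handles; no case analysis in $(p_i,q_j)$ is needed. Your proposed alternative via Theorem~\ref{thm:push-forward} is circular: that theorem requires both source \emph{and target} to have rational singularities, so you cannot use it to establish rational singularities of the target. Your ``main obstacle'' is thus slightly misplaced: the curve-neighborhood description for $d=3$ is clean and uniform; the delicate part is the unirational parametrization of the fibers of $\pi_3^3(u,v)$, which you have not supplied.
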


\begin{proof}
The results follow from Propositions \ref{prop:md-to-gammad3}, \ref{prop:replace}, \ref{prop:fibre-d-geq4}, \ref{prop:fibre-d-3} and Lemmas \ref{lem:split-deg4} and \ref{lem:sing-rat3}.
\end{proof}

In all cases of the Theorem above our strategy will be to use Proposition \ref{prop:replace} (and this strategy will be used also for degree two and degree one curves). Since the case of degree $d\geq 4$ curves is much easier, let us deal with it separately.

\begin{prop}
\label{prop:fibre-d-geq4}
If $d\geq 4$, then the morphisms $\ev^d_3(u,v) : M_d(X_u,X^v) \to \Gamma_d(X_u,X^v)$ and $\ev^{d-1,1}_3(u,v) : M_{d-1,1}(X_u,X^v) \to \Gamma_{d-1,1}(X_u,X^v)$ are $\GRCF$.
\end{prop}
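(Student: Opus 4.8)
The plan is to reduce the statement to the geometric results already established about curve neighborhoods of general points, namely Lemmas \ref{lem:split-deg4} and \ref{lem:split-deg3}, together with the reduction machinery of Proposition \ref{prop:replace}. The key point is that by Proposition \ref{prop:replace} it suffices to prove that the projections $\pi_3^d(u,v) : \Gamma_d^{(3)}(X_u,X^v) \to \Gamma_d(X_u,X^v)$ and $\pi_3^{d-1,1}(u,v) : \Gamma_{d-1,1}^{(4)}(X_u,X^v) \to \Gamma_{d-1,1}(X_u,X^v)$ (or rather the projection to $\Gamma_{d-1,1}^{(3)}(X_u,X^v)$, which is handled by Lemma \ref{lem:pr} and Corollary \ref{cor:d-1-1}) are $\GRCF$. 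So first I would observe that, since the $G$-orbit of $X_u \times X^v$ is dense in $X^2$ and all the evaluation maps are $G$-equivariant, the general fibers of $\pi_3^d(u,v)$ and $\pi_3^{d-1,1}(u,v)$ agree with the general fibers of $\pi_3^d : \Gamma_d^{(3)} \to \Gamma_d$ and $\pi_3^{d-1,1} : \Gamma_{d-1,1}^{(3)} \to \Gamma_{d-1,1}$ respectively; thus it is enough to understand the fibers of these unrestricted projections, i.e. to understand, for general $x, y \in X$, the curve neighborhoods $\Gamma_d(x,y)$ and $\Gamma_{d-1,1}(x,y)$ and the fibers of the maps sending a pointed curve to the third marked point.

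Next I would invoke Lemma \ref{lem:split-deg4}: for $d \geq 4$ and $x, y$ general, $\Gamma_d(x,y) = X = \Gamma_{d-1,1}(x,y)$, so the relevant projections are dominant onto $X$ and I only need to control a general fiber. For $\ev_3^d$, the fiber over a general $z \in X$ is (an open subset of) the space of degree $d$ stable maps through $x, y$ and $z$; since $x,y,z$ are then also general, I would use the smooth quadric threefold $\IG(2,V)$ spanned by $x$ and $y$, break any such curve as a low-degree curve through $x, y$ followed by curves through an auxiliary general point and $z$ as in the proof of Lemma \ref{lem:split-deg4}, and argue that the resulting incidence variety of such configurations is rationally connected — concretely, the fiber fibers over a product of Grassmannians/quadrics with fibers that are open subsets of linear spaces, exactly as in the proof of Proposition \ref{prop:md-to-gammad3}. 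For $\ev_3^{d-1,1}$, I would use diagram \eqref{eqn:diag} together with Lemma \ref{lem:pr} and Corollary \ref{cor:d-1-1}, which already give that $\ev_{(3)}^{d-1,1} : M_{d-1,1} \to \Gamma_{d-1,1}^{(3)}$ is $\GRCF$; combined with the density of the $G$-orbit and Proposition \ref{prop:diagram} this yields the claim for $M_{d-1,1}(X_u,X^v)$.

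In more detail, for part concerning $\ev_3^{d-1,1}(u,v)$ I would set up the commutative triangle
$$\xymatrix{M_{d-1,1}(X_u,X^v) \ar[rd] \ar[d] & \\ \Gamma_{d-1,1}^{(3)}(X_u,X^v) \ar[r] & \Gamma_{d-1,1}(X_u,X^v)}$$
of surjective maps, note that $M_{d-1,1}(X_u,X^v)$ is projective, irreducible and normal, that the vertical map is $\GRCF$ by Corollary \ref{cor:d-1-1} (via $G$-equivariance), and that the horizontal map has general fiber equal to a general fiber of $\pr : \Gamma_{d-1,1}^{(3)} \to \Gamma_{d-1,1}$, which for $d > 4$ is $\Gamma_1(z)$, a rational Schubert variety by Lemma \ref{lem:pr}. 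Then Proposition \ref{prop:diagram}(2) gives that the diagonal map is $\GRCF$. The analogous argument for $\ev_3^d(u,v)$ uses the triangle with $\Gamma_d^{(3)}(X_u,X^v)$ in place of $\Gamma_{d-1,1}^{(3)}(X_u,X^v)$, Proposition \ref{prop:md-to-gammad3} for the vertical $\GRCF$ property, and Lemma \ref{lem:split-deg4} to identify the general fiber of $\pi_3^d : \Gamma_d^{(3)} \to \Gamma_d = X$ with a rational variety (the space of $P$ and the linear conditions, exactly as in the last paragraph of the proof of Proposition \ref{prop:md-to-gammad3}).

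I expect the main obstacle to be purely bookkeeping rather than conceptual: one must make sure that the identification of general fibers of the restricted evaluation maps with general fibers of the unrestricted ones is legitimate (this is where $G$-equivariance and density of the orbit of $X_u \times X^v$ enter), and that the hypotheses of Proposition \ref{prop:diagram} — projectivity and normality of the total space $M_{d-1,1}(X_u,X^v)$, surjectivity of all maps — are genuinely in force. Since $d \geq 4$ makes all the curve neighborhoods of general points equal to $X$, there are no degenerate coincidences to worry about, so once the reduction is set up correctly the $\GRCF$ conclusion follows essentially formally from Propositions \ref{prop:diagram}, \ref{prop:md-to-gammad3}, \ref{prop:replace}, Lemma \ref{lem:pr} and Corollary \ref{cor:d-1-1}.
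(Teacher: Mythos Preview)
Your overall plan to reduce via Proposition~\ref{prop:replace} to studying $\pi_3^d(u,v)$ and $\pi_3^{d-1,1}(u,v)$ is exactly right, but you then overcomplicate matters and make an incorrect reduction. The claim that ``the general fibers of $\pi_3^d(u,v)$ agree with the general fibers of the unrestricted $\pi_3^d$'' is false: the fiber of $\pi_3^d(u,v)$ over $z$ consists of pairs $(x,y)\in X_u\times X^v$ with $(x,y,z)\in\Gamma_d^{(3)}$, whereas the fiber of the unrestricted $\pi_3^d$ has $(x,y)$ ranging over all of $X\times X$. The $G$-equivariance argument applies to the \emph{vertical} map $M_d(X_u,X^v)\to\Gamma_d^{(3)}(X_u,X^v)$ in the triangle (a general point of $\Gamma_d^{(3)}(X_u,X^v)$ is a general point of $\Gamma_d^{(3)}$), not to the horizontal projection. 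You also misapply Lemma~\ref{lem:pr}: that lemma concerns the map $\Gamma_{d-1,1}^{(4)}\to\Gamma_{d-1,1}^{(3)}$ forgetting the node, not the third projection $\Gamma_{d-1,1}^{(3)}\to\Gamma_{d-1,1}$, so its fiber is not the one you need here.

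The paper's argument is far simpler and avoids all of this. Since $\Gamma_d(x,y)=X=\Gamma_{d-1,1}(x,y)$ for general $(x,y)$ by Lemma~\ref{lem:split-deg4}, one has directly
\[
\Gamma_d^{(3)}(X_u,X^v)=\Gamma_{d-1,1}^{(3)}(X_u,X^v)=X_u\times X^v\times X
\quad\text{and}\quad
\Gamma_d(X_u,X^v)=\Gamma_{d-1,1}(X_u,X^v)=X.
\]
The maps $\pi_3^d(u,v)$ and $\pi_3^{d-1,1}(u,v)$ are then nothing but the projection $X_u\times X^v\times X\to X$ onto the third factor, whose every fiber is $X_u\times X^v$ --- a product of Schubert varieties, hence rational. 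Proposition~\ref{prop:replace} finishes. No direct appeal to the internal structure of Proposition~\ref{prop:md-to-gammad3}, to Lemma~\ref{lem:pr}, or to Corollary~\ref{cor:d-1-1} is needed at this stage; those results are already baked into Proposition~\ref{prop:replace}.
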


\begin{proof}
By assumption and Lemma \ref{lem:split-deg4}, we have $\Gamma^{(3)}_d(X_u,X^v)=\Gamma^{(3)}_{d-1,1}(X_u,X^v)=X_u\times X^v\times X$ and $\Gamma_d(X_u,X^v) =\Gamma_{d-1,1}(X_u,X^v) = X$. The fibers of $\pi_3^d(u,v)$ and $\pi_3^{d-1,1}(u,v)$ are isomorphic to $X_u\times X^v$ and are thus rationally connected (even rational). The result follows from Proposition \ref{prop:replace}.
\end{proof}

Notice that when $d\geq 4$, $\Gamma_d(X_u,X^v)=\Gamma_{d-1,1}(X_u,X^v)=X$ is smooth. We now focus on degree $d=3$ curves. We start with a general statement describing the image of the evaluation map for cubics. Recall that $X_u$ and $X^v$ as incidence varieties can be defined as $X_u = \{ V_2 \in X \ | \ \dim(V_2 \cap E_{p_1}) \geq 1 \textrm{ and } V_2 \subset E_{p_2} \}$ and $X^v = \{ V_2 \in X \ | \ \dim(V_2 \cap E^{q_1}) \geq 1 \textrm{ and } V_2 \subset E^{q_2} \}$ and let $\oX_u \subset X_u$ and $\oX^v \subset X^v$ be the Schubert cells.

\begin{lemma}
\label{lem:sing-rat3}
We have $\Gamma_{2,1}(X_u,X^v)=\Gamma_3(X_u,X^v) = \{ z \in X \ | \ \dim(V_z \cap (E_{p_2} + E^{q_2})) \geq 1 \}$ and this variety has rational singularities.
\end{lemma}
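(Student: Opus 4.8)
The plan is to prove the set-theoretic equality $\Gamma_3(X_u,X^v) = \{z \in X \mid \dim(V_z \cap (E_{p_2} + E^{q_2})) \geq 1\}$ in two inclusions, then deduce $\Gamma_{2,1}(X_u,X^v) = \Gamma_3(X_u,X^v)$ via the split-curve lemma, and finally establish rational singularities using Proposition \ref{prop_rat_sing_smalldegreecurves}. Write $F = E_{p_2} + E^{q_2}$ and let $\Sigma = \{z \in X \mid V_z \cap F \neq 0\}$ denote the claimed right-hand side. First I would prove $\Gamma_3(X_u,X^v) \subseteq \Sigma$: if $z$ lies on a degree-$3$ connected curve meeting $X_u$ in a point $x$ and $X^v$ in a point $y$, then by the Ker/Span technique (as in Lemma \ref{lem:split-deg3}) we have $\dim(V_x + V_y + V_z) \leq 5$; since $V_x \subseteq E_{p_2}$ and $V_y \subseteq E^{q_2}$, both $V_x$ and $V_y$ lie in $F$, and the dimension bound forces $V_z$ to meet $V_x + V_y \subseteq F$ nontrivially — here one must be slightly careful and argue that, for $x,y$ in the \emph{open} Schubert cells, $V_x + V_y$ is genuinely $4$-dimensional and spans a generic enough subspace of $F$, so that $\dim(V_x+V_y+V_z)\le 5$ really does give $V_z \cap F \neq 0$; this will use that $\Gamma_3$ is closed and that it suffices to check the inclusion on the dense subset swept out by curves through cell points.

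For the reverse inclusion $\Sigma \subseteq \Gamma_{2,1}(X_u,X^v)$, which will simultaneously give the degree-$(2,1)$ statement, I would fix $z \in \Sigma$ with $0 \neq v \in V_z \cap F$ and explicitly build a chain: a conic through a point of $X_u$ and a point of $X^v$, followed by a line reaching $z$. Concretely, write $v = v_1 + v_2$ with $v_1 \in E_{p_2}$, $v_2 \in E^{q_2}$; choose $x \in X_u$ with $v_1 \in V_x$ (possible as long as $v_1$ lies in a suitable isotropic flag piece — one checks this using $p_1 < p_2$, $p_1+p_2 \neq 2n+1$, handling the degenerate case $v_1 = 0$ separately) and similarly $y \in X^v$ with $v_2 \in V_y$. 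Then $v \in (V_x + V_y) \cap V_z$, so $x,y,z$ satisfy $\dim(V_x+V_y+V_z) \leq 5$, and Lemma \ref{lem:split-deg3} (after checking $x,y$ can be taken in general position relative to each other, or arguing directly on the quadric $\IG(2,V_x\oplus V_y)$) produces a conic through $x,y$ plus a line through $z$ meeting that conic — i.e., $z \in \Gamma_{2,1}(x,y) \subseteq \Gamma_{2,1}(X_u,X^v)$. Combined with the trivial inclusions $\Gamma_{2,1}(X_u,X^v) \subseteq \Gamma_3(X_u,X^v) \subseteq \Sigma$, this closes the loop and gives both equalities.

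Finally, for rational singularities: the variety $\Sigma = \{z \in X \mid V_z \cap F \neq 0\}$ is exactly of the form treated in Proposition \ref{prop_rat_sing_smalldegreecurves}, with $V_k = F$, so it has rational singularities immediately. I expect the \textbf{main obstacle} to be the bookkeeping in the first inclusion: making rigorous that the Ker/Span dimension inequality $\dim(V_x+V_y+V_z)\le 5$, applied at points $x,y$ in the open cells $\oX_u,\oX^v$, really does imply $V_z$ meets $E_{p_2}+E^{q_2}$ — one needs that the plane $V_x+V_y$ is $4$-dimensional and in sufficiently general position inside $F$ (which requires tracking the cases $\delta_p,\delta_q$ and the possible collapse $E_{p_2} + E^{q_2} \subsetneq \C^{2n}$), and that passing to the closure of the curve-neighborhood does not enlarge things beyond $\Sigma$. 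The explicit choice of $x,y$ in the reverse inclusion when $v_1$ or $v_2$ vanishes, or when $v$ happens to be isotropic-flag-special, is a secondary nuisance but should be dispatched by choosing $x,y$ generically subject to the containment constraint.
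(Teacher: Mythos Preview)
Your approach is correct and essentially the same as the paper's: Ker/Span for $\Gamma_3 \subseteq \Sigma$, an explicit construction of $x\in X_u$, $y\in X^v$ with $\dim(V_x+V_y+V_z)\le 5$ for the reverse inclusion, and Proposition~\ref{prop_rat_sing_smalldegreecurves} for rational singularities. The only organizational difference is that the paper first deduces $\Gamma_{2,1}(X_u,X^v)=\Gamma_3(X_u,X^v)$ directly from Lemma~\ref{lem:split-deg3} and the density of the $G$-orbit of $X_u\times X^v$ in $X^2$, and then proves $\Sigma\subseteq\Gamma_3$ rather than $\Sigma\subseteq\Gamma_{2,1}$; you fold this into a single chain of inclusions.

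One comment: you overstate the ``main obstacle''. For the inclusion $\Gamma_3(X_u,X^v)\subseteq\Sigma$ you do not need $V_x+V_y$ to be in any kind of general position inside $F=E_{p_2}+E^{q_2}$, nor do you need to track $\delta_p,\delta_q$. All you need is $\dim(V_x+V_y)=4$: then $V_x+V_y$ and $V_z$ are subspaces of a common $V_5$ of dimensions $4$ and $2$, so they meet, and $V_x+V_y\subset F$ finishes it. Since $\Sigma$ is closed it suffices to check this for $z$ coming from a general $(x,y)\in X_u\times X^v$, where $V_x\cap V_y=0$ is automatic. The paper's construction for the reverse inclusion is exactly your $v=v_1+v_2$ decomposition, with the choice $V_x=\langle v_1,b'\rangle$ for $b'\in E_{p_1}\cap v_1^\perp$ (and the special case $p_1=1$ handled separately because then $E_{p_2}\subset E_{p_1}^\perp$ already); this is the ``choose $x\in X_u$ with $v_1\in V_x$'' step you allude to.
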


\begin{proof}
The first equality is a consequence of Lemma \ref{lem:split-deg3} and of the fact that the $G$ orbit of $X_u\times X^v$ is dense in $X^2$. The fact that $ \{ z \in X \ | \ \dim(V_z \cap (E_{p_2} + E^{q_2})) \geq 1 \}$ has rational singularities is a consequence of Proposition \ref{prop_rat_sing_smalldegreecurves}. So let us prove the second equality. Assume that $z \in \Gamma_3(X_u,X^v)$, then there exists $x \in X_u$, $y \in X^v$ and $V_5 \in \Gr(5,2n)$ such that $V_x + V_y + V_z \subset V_5$. In particular $\dim(V_z \cap (V_x + V_y))\geq 1$ and $V_x + V_y \subset E_{p_2} + E^{q_2}$, proving the inclusion of the LHS in the RHS. Assume now that $z \in X$ satisfies $\dim(V_z \cap (E_{p_2} + E^{q_2})) \geq 1$ and pick a non-zero $a \in V_z \cap (E_{p_2} + E^{q_2})$. Write $a = a' + a''$ with $a' \in E_{p_2}$ and $a'' \in E^{q_2}$. For $z$ general, we may assume that $a' \not\in E_{p_1}$ and $a'' \not\in E^{q_1}$.

Assume first that $p_1 \neq 1 \neq q_1$. Then $E_{p_1} \cap (a')^\perp$ and $E_{q_1} \cap (a'')^\perp$ are non-trivial, and we may pick $b' \in E_{p_1} \cap (a')^\perp$ and $b'' \in E_{q_1} \cap (a'')^\perp$. Then set $V_x = \langle a',b' \rangle$ and $V_y = \langle a'',b'' \rangle$.  We have $x \in X_u$ and $y \in X^v$ and $\dim(V_x + V_y + V_z) \leq 5$ thus $z \in \Gamma_3(x,y) \subset \Gamma_3(X_u,X^v)$. If $p_1 = 1$ {{then}} $p_2 < 2n$. We have $E_{p_2} \subset E_{p_1}^\perp$ and $E_{p_1} \cap (a')^\perp = E_{p_1}$, so we may proceed as above. The same works if $q_1 = 1$. 
\end{proof}

In order to prove Theorem \ref{thm:d=3}, we are left to prove the following result.

\begin{prop}
\label{prop:fibre-d-3}
The morphisms $\ev^3_3(u,v) : M_3(X_u,X^v) \to \Gamma_3(X_u,X^v)$ and $\ev^{2,1}_3(u,v) : M_{2,1}(X_u,X^v) \to \Gamma_{2,1}(X_u,X^v)$ are $\GRCF$.
\end{prop}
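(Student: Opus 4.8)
The plan is to use the reduction step from Proposition~\ref{prop:replace}, so that it suffices to show the two projection maps $\pi_3^3(u,v)\colon \Gamma_3^{(3)}(X_u,X^v)\to\Gamma_3(X_u,X^v)$ and $\pi_3^{2,1}(u,v)\colon \Gamma_{2,1}^{(3)}(X_u,X^v)\to\Gamma_{2,1}(X_u,X^v)$ are $\GRCF$. By Lemma~\ref{lem:split-deg3} (together with density of the $G$-orbit of $X_u\times X^v$ in $X^2$) we have $\Gamma_3^{(3)}(X_u,X^v)=\Gamma_{2,1}^{(3)}(X_u,X^v)$, so the two projections coincide and only one $\GRCF$ statement is needed. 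Concretely, $\Gamma_3^{(3)}(X_u,X^v)$ is the closure of the set of triples $(x,y,z)\in \oX_u\times\oX^v\times X$ with $\dim(V_x+V_y+V_z)\le 5$, and the target is $\Gamma_3(X_u,X^v)=\{z\in X\mid \dim(V_z\cap(E_{p_2}+E^{q_2}))\ge 1\}$ by Lemma~\ref{lem:sing-rat3}. So the goal becomes: for a general $z$ in this target, show that the fiber, namely the set of pairs $(x,y)\in X_u\times X^v$ such that $V_x+V_y+V_z$ has dimension at most $5$, is rationally connected.

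First I would analyze this fiber geometrically. Fix a general $z$ with a chosen nonzero vector $a=a'+a''\in V_z\cap(E_{p_2}+E^{q_2})$, $a'\in E_{p_2}$, $a''\in E^{q_2}$, and $a'\notin E_{p_1}$, $a''\notin E^{q_1}$ (possible for $z$ general). As in the proof of Lemma~\ref{lem:sing-rat3}, the condition $\dim(V_x+V_y+V_z)\le 5$ with $x\in\oX_u$, $y\in\oX^v$ forces a line in $V_x+V_y$ meeting $V_z$; for general $z$ this line is essentially $\langle a\rangle$ (or a one-parameter family near it), and the constraints on $V_x$ and $V_y$ become: $V_x\subset E_{p_2}$ contains a point of $E_{p_1}$ and is isotropic and contains $a'$ (up to a low-dimensional correction), and symmetrically for $V_y$. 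I would show that the fiber fibers (after restricting to a suitable dense open set, using Lemma~\ref{lemma:restriction-to-open}) over a rational base — the choice of the line $\ell=V_x\cap V_z$-type data, which moves in a rational variety — with fibers that are themselves products of Schubert-type varieties in isotropic or ordinary Grassmannians (things like $\{V_x\in\IG(2,E_{p_2})\mid a'\in V_x,\ V_x\cap E_{p_1}\ne 0\}$), all of which are rational or at least rationally connected. Assembling via the fibration and using that rational connectedness is preserved under such fibrations (Graber--Harris--Starr, as in Proposition~\ref{prop:diagram}(2)) gives rational connectedness of the whole fiber, hence $\GRCF$ for $\pi_3^3(u,v)$, and then Proposition~\ref{prop:replace} finishes both statements.

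The main obstacle I anticipate is the bookkeeping around the degenerate/special configurations of the parameters $(p_1,p_2)$ and $(q_1,q_2)$ — particularly the boundary cases $p_1=1$ or $q_1=1$ (where $E_{p_1}\subset E_{p_1}^\perp$ contains $E_{p_2}$ and the isotropy constraint changes character), and cases where $E_{p_2}+E^{q_2}$ is small relative to $\C^{2n}$ or where $a'$ is forced to lie in a subspace that makes the ``correction'' not negligible. In these situations the fiber of $\pi_3$ need not be literally a product of Grassmannian Schubert varieties, and one must check that the generic structure is still a tower of rationally connected fibrations rather than, say, a generically finite cover of degree $>1$. (For $d=3$ one does not expect the pathological degree-$2$ covers that occur for $d=1,2$; part of the work is to verify this robustly across all parameter ranges.) A secondary technical point is ensuring the dimension count: one must confirm $\dim\Gamma_3^{(3)}(X_u,X^v)$ and the fiber dimension are as predicted so that the fibration maps used are genuinely dominant with fibers of the expected dimension, which is where Lemma~\ref{lem:split-deg3}'s bound $\dim\Gamma_3^{(3)}\le 10n-10$ and the explicit incidence description from Proposition~\ref{prop_rat_sing_smalldegreecurves} enter.
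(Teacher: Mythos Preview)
Your plan matches the paper's: reduce via Proposition~\ref{prop:replace}, use Lemma~\ref{lem:split-deg3} to identify $\Gamma_3^{(3)}(X_u,X^v)=\Gamma_{2,1}^{(3)}(X_u,X^v)$ (so only $\pi_3^3(u,v)$ needs to be $\GRCF$), and then exhibit the general fiber as unirational via the decomposition $a=a'+a''$ with $a'\in V_x$, $a''\in V_y$. One point to correct: the rational base of your fibration is not ``the line $V_x\cap V_z$'' (which is generically zero) but rather the space of decompositions $(a',a'')\in E_{p_2}\times E^{q_2}$ with $a'+a''\in V_z$, an open subset of an affine space; over each such pair the remaining data is simply $b\in E_{p_1}\cap(a')^\perp$ and $c\in E^{q_1}\cap(a'')^\perp$, so the whole parameter space is a tower of linear fibrations and no appeal to Graber--Harris--Starr is needed. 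The paper handles the boundary case $p_1=1$ (resp.\ $q_1=1$) exactly as you anticipate, by observing that then $E_{p_2}\subset E_{p_1}^\perp$ forces $a'\in E_{p_1}^\perp$ automatically, so $E_{p_1}\cap(a')^\perp=E_{p_1}$ is still nonzero.
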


We want to use Proposition \ref{prop:replace}, but we also want to avoid dealing with ``badly behaving'' curves. Because of Lemma \ref{lemma:restriction-to-open}, we can restrict to studying the fibers of $\pi_3^3(u,v)|_{\cU_3}:\Gamma_3^{(3)}(X_u,X^v) \to \Gamma_3(X_u,X^v)$, where $\mathcal{U}_3\subset \Gamma_3^{(3)}(X_u,X^v)$ is a well-chosen dense open subset. We postpone the proof of Proposition \ref{prop:fibre-d-3} to the construction of $\cU_3$ and the description of its properties.\medskip

For $(x,y,z) \in \Gamma_3^{(3)}(X_u,X^v)$, consider the following conditions:
\begin{enumerate}
\item\label{item:1} $V_x \cap V_y = 0$, 
\item\label{item:2} $\dim(V_x + V_y + V_z) = 5$, 
\item\label{item:xy}\(V_x\in\oX_u\) and \(V_y\in\oX^v\), 
\item\label{item:5} $V_z \cap (V_x + V_y) \not\subseteq [(V_x \cap E_{p_1}) +V_y] \cup [V_x +(V_y \cap E^{q_1})]$.
\end{enumerate} 
Condition \eqref{item:1} is a non-empty open condition, and by Lemma \ref{lem:split-deg3}, 
\begin{multline*}
  \{(x,y,z)\in \Gamma_3^{(3)}(X_u,X^v)\ |\ (V_x,V_y,V_z)\text{ satisfies }\eqref{item:1}\}=\\=\{(x,y,z)\in X_u\times X^v\times X\ |\ V_x\cap V_y=0,\ \dim(V_x+V_y+V_z)\leq5\}.
\end{multline*}
Thus, \eqref{item:2} and \eqref{item:xy} are non-empty open conditions, and 
\begin{multline*}
  \{(x,y,z)\in \Gamma_3^{(3)}(X_u,X^v)\ |\ (V_x,V_y,V_z)\text{ satisfies \eqref{item:1}-\eqref{item:xy}}\}=\\=\{(x,y,z)\in \oX_u\times\oX^v\times X\ |\ V_x\cap V_y=0,\ \dim(V_x+V_y+V_z)=5\}.
\end{multline*}
For \((x,y,z)\) in the latter set, \(\dim(V_x\cap E_{p_1})=\dim(V_y\cap E^{q_1})=1\), and $(V_{x} \cap E_{p_1}) + V_{y}$ and $V_{x} + (V_{y} \cap E^{q_1})$ are both \(3\)-dimensional subspaces of the $4$-dimensional space $V_x+ V_y$, so \eqref{item:5} is a non-empty open condition and 
\[
  \mathcal{U}_3:=\{(x,y,z)\in\Gamma_3^{(3)}(X_u,X^v)\ |\ (V_x,V_y,V_z)\text{ satisfies \eqref{item:1}-\eqref{item:5}} \}
\]
is an open dense subset of \(\Gamma_3^{(3)}(X_u,X^v)\). 

\begin{prop}\label{prop:pi3_unirat}
  For \(z\in\Gamma_3(X_u,X^v)\) general, \((\pi_3^3(u,v))^{-1}(z)\cap\mathcal{U}_3\) is unirational.
\end{prop}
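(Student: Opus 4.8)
The plan is to fix a general $z\in\Gamma_3(X_u,X^v)$ and realize $F_z:=(\pi_3^3(u,v))^{-1}(z)\cap\mathcal U_3$ — which we identify with $\{(x,y)\in\oX_u\times\oX^v\ :\ (V_x,V_y,V_z)\text{ satisfies }\eqref{item:1}\text{--}\eqref{item:5}\}$ — as dominated by an explicit rational variety. Set $U=E_{p_2}+E^{q_2}$, so that $\Gamma_3(X_u,X^v)=\{z\in X\ :\ V_z\cap U\neq 0\}$ by Lemma~\ref{lem:sing-rat3}; for a general $z$ the subspace $V_z\cap U$ is a line when $U\neq\C^{2n}$ and equals $V_z$ when $U=\C^{2n}$.

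The first step is to attach to $(x,y)\in F_z$ the line $\ell:=(V_x+V_y)\cap V_z$: conditions \eqref{item:1} and \eqref{item:2} give $\dim(V_x+V_y)=4$ and $\dim(V_x+V_y+V_z)=5$, so $\ell$ is a line, and $\ell\subseteq V_z\cap U$ since $V_x+V_y\subseteq U$. This defines a morphism $F_z\to\P(V_z\cap U)$ to a rational base (a point, or a $\P^1$). The second step analyses a fibre of this morphism. Fix $\ell=\langle c\rangle$ and write $c=\xi+\eta$ with $\xi\in V_x\subseteq E_{p_2}$ and $\eta\in V_y\subseteq E^{q_2}$; by \eqref{item:1} this decomposition is unique, $\xi$ lies in the affine subspace $(c+E^{q_2})\cap E_{p_2}$, and a direct check shows that condition \eqref{item:5} is equivalent to $\xi\notin E_{p_1}$ together with $\eta\notin E^{q_1}$ (in particular $\xi,\eta\neq 0$). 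Since $x\in\oX_u$, the line $V_x\cap E_{p_1}=\langle b'\rangle$ satisfies $b'\notin\langle\xi\rangle$, whence $V_x=\langle\xi,b'\rangle$ and, by isotropy, $b'\in E_{p_1}\cap\xi^{\perp}$; symmetrically $V_y=\langle\eta,b''\rangle$ with $b''\in E^{q_1}\cap\eta^{\perp}$. Conversely, any quadruple $(\xi,\eta,b',b'')$ with $\xi\in E_{p_2}$, $\eta\in E^{q_2}$, $0\neq\xi+\eta\in V_z$, $\xi\notin E_{p_1}$, $\eta\notin E^{q_1}$, $0\neq b'\in E_{p_1}\cap\xi^{\perp}$ and $0\neq b''\in E^{q_1}\cap\eta^{\perp}$ produces a pair $(\langle\xi,b'\rangle,\langle\eta,b''\rangle)\in\oX_u\times\oX^v$ which, for generic such data, satisfies the remaining open conditions \eqref{item:1} and \eqref{item:2} and hence lies in $F_z$; and this reconstruction inverts the previous assignment.

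It remains to see that the parametrizing variety is rational. It fibres over $\P(V_z\cap U)$, with fibre over a line $\langle c\rangle$ an open subset of a $\P^{a}\times\P^{b}$-bundle over the affine space $(c+E^{q_2})\cap E_{p_2}$, where $\P^{a}$ and $\P^{b}$ parametrize the lines $\langle b'\rangle\subseteq E_{p_1}\cap\xi^{\perp}$ and $\langle b''\rangle\subseteq E^{q_1}\cap\eta^{\perp}$ (with $\eta=c-\xi$). Now $\dim(E_{p_1}\cap\xi^{\perp})$ equals $p_1$ on $E_{p_2}\cap E_{p_1}^{\perp}=E_{p_2}\cap E_{2n-p_1}$ — which is all of $E_{p_2}$ when $\delta_p=0$ and a proper subspace when $\delta_p=1$ — and equals $p_1-1$ elsewhere; likewise for $\eta$ and $\delta_q$ (and these dimensions are always at least one). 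Hence $a$ and $b$ are constant on a dense open subset of the affine space, where we obtain genuine (Zariski-locally-trivial) projective- and affine-space bundles over a rational base; so the parametrizing variety is rational. As it dominates $F_z$, we conclude that $F_z$ is unirational (and irreducible, being the image of an irreducible variety).

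The geometric skeleton above is short; the work is in the genericity bookkeeping. One must check that a generic quadruple $(\xi,\eta,b',b'')$ produces a pair with $V_x\cap V_y=0$ — immediate when $E_{p_2}\cap E^{q_2}=0$ and requiring a small argument otherwise — and satisfying \eqref{item:2}; one must verify conversely that the $\xi$ attached to a general point of $F_z$ avoids the strata where the fibre dimensions above jump; and one must dispose of the boundary cases $p_1=1$ or $q_1=1$ (where $V_x\cap E_{p_1}$ is rigid, but $E_{p_2}\subseteq e_1^{\perp}$ follows automatically from $p_1+p_2\neq 2n+1$, so $\delta_p=0$) and $U=\C^{2n}$, where $\P(V_z\cap U)=\P(V_z)$ genuinely contributes a one-parameter family of lines $\ell$. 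I expect this last point — confirming that the parametrization is dominant, not merely that it maps into $F_z$ — to be the main obstacle.
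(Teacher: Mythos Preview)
Your approach is correct and essentially identical to the paper's: the paper's $\cP$ is your parametrizing variety (with $a',a'',b,c$ in place of your $\xi,\eta,b',b''$), and the surjectivity you flag as the main obstacle is precisely the content of Lemma~\ref{lem:surj}, proved using $x\in\oX_u$ to force $\xi\notin E_{p_1}^\perp$ when $\delta_p=1$. The paper sidesteps your stratification concern by restricting upfront to open subsets $E_\circ\subseteq E_{p_2}$ and $E^\circ\subseteq E^{q_2}$ on which the fibre dimensions are constant, and then checking in Lemma~\ref{lem:surj} that every point of $F_z$ already arises from this locus.
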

\begin{proof} 
Set
$$\begin{array}{l}
E_\circ  = \left\{
\begin{array}{ll} 
E_{p_2} \setminus E_{p_1} & \textrm{ if } 
E_{p_2} \subseteq E_{p_1}^\perp\\
E_{p_2} \setminus (E_{p_1} \cup E_{p_1}^\perp)  & \textrm{ otherwise, } \\
\end{array}\right.\\
E^\circ  = \left\{\begin{array}{ll} 
E^{q_2} \setminus E^{q_1} & \textrm{ if } 
E^{q_2} \subseteq (E^{q_1})^\perp\\
E^{q_2} \setminus (E^{q_1} \cup (E^{q_1})^\perp)  & \textrm{ otherwise.} \\
\end{array}
\right.
\end{array}$$
We have that $E_\circ$ and $E^\circ$ are open subsets in \(E_{p_2}\) and \(E^{q_2}\), respectively. 
Consider the map $p : E_{p_2} \times E^{q_2} \to E_{p_2} + E^{q_2}$ defined by $(a',a'') \mapsto a' + a''$. Note that, since $p_2 \geq 2$, $q_2 \geq 2$, and $E_\bullet$ and $E^\bullet$ are opposite flags, this map is surjective with fiber isomorphic to $E_{p_2} \cap E^{q_2}$. The restriction $p_\circ :  E_\circ  \times  E^\circ \to E_{p_2} + E^{q_2}$ is therefore dominant with fibers over an element in the image isomorphic to a non-empty open subset of $E_{p_2} \cap E^{q_2}$. Fix a general \(z\) in $$\Gamma_3(X_u,X^v) = \{z \in X \ | \ \dim(V_z \cap (E_{p_2} + E^{q_2})) \geq 1\}$$ (see Lemma~\ref{lem:sing-rat3}); we have that $p_\circ^{-1}(V_z)= (E_\circ \times E^\circ) \cap p^{-1}(V_z)$ is a non-empty open subset of a vector space. 

Define 
\begin{multline*}
  \cP:=\{ ((a',a''),b,c) \in p_\circ^{-1}(V_z) \times (\C^{2n})^2 \ \mid b\in E_{p_1} \cap (a')^\perp\setminus0 ,\ c \in E^{q_1} \cap (a'')^\perp\setminus0  \}
\end{multline*}
and set $$\cP^\circ = \{ ((a',a''),b,c) \in \cP \ | \ \dim \langle a',a'',b,c \rangle = 4,\ \dim (V_z + \langle a',a'',b,c \rangle) = 5\}.$$ Note that $\cP^\circ$ is an open subset of $\cP$. Lemma \ref{lem:surj} implies that $\cP^\circ$ is non-empty. Then Proposition \ref{prop:pi3_unirat} follows from Lemma~\ref{lem:surj} and Lemma~\ref{lem:P_unirat} below.
\end{proof}

\begin{lemma}\label{lem:surj}
For \(z\in\Gamma_3(X_u,X^v)\) general, the map $\cP^\circ \to (\pi_3^3(u,v))^{-1}(z)\cap\mathcal{U}_3$ defined by $((a',a''),b,c) \mapsto (x,y)$ with $V_x = \langle a',b \rangle$ and $V_y = \langle a'',c \rangle$ is surjective.
\end{lemma}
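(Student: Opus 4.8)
The plan is to show that the assignment $((a',a''),b,c)\mapsto(x,y,z)$ with $V_x=\langle a',b\rangle$, $V_y=\langle a'',c\rangle$ lands in $(\pi_3^3(u,v))^{-1}(z)\cap\mathcal U_3$ and then to produce, for each element of the target fiber, a preimage. First I would check well-definedness: given $((a',a''),b,c)\in\cP^\circ$, we have $a'\in E_\circ\subset E_{p_2}\setminus E_{p_1}$ and $b\in E_{p_1}\cap(a')^\perp$, so $V_x=\langle a',b\rangle$ is a $2$-plane (the constraint $\dim\langle a',a'',b,c\rangle=4$ forces $a'\neq b$ up to scalar), it is isotropic since $\omega(a',b)=0$, it meets $E_{p_1}$ in the line $\langle b\rangle$, and it lies in $E_{p_2}$; hence $V_x\in\oX_u$ — and symmetrically $V_y\in\oX^v$. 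Moreover $a'+a''\in V_z$ by the definition of $p_\circ^{-1}(V_z)$, so $V_z\cap(V_x+V_y)\neq0$, and $\dim(V_x+V_y+V_z)=\dim(V_z+\langle a',a'',b,c\rangle)=5$ by the defining condition of $\cP^\circ$. By Lemma~\ref{lem:split-deg3} this places $z$ in $\Gamma_3(x,y)\subseteq\Gamma_3(X_u,X^v)$, so $(x,y,z)\in(\pi_3^3(u,v))^{-1}(z)$; conditions \eqref{item:1}–\eqref{item:2} of $\mathcal U_3$ hold by construction, \eqref{item:xy} was just verified, and for \eqref{item:5} one notes that the chosen generator $a=a'+a''\in V_z\cap(V_x+V_y)$ does not lie in $(V_x\cap E_{p_1})+V_y=\langle b\rangle+V_y$ nor in $V_x+\langle c\rangle$ — indeed if $a'+a''\in\langle b\rangle+V_y$ then $a'\in\langle b\rangle+V_y=\langle b,a'',c\rangle$, contradicting $\dim\langle a',a'',b,c\rangle=4$, and symmetrically for the other piece.

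For surjectivity, take $(x,y,z)\in(\pi_3^3(u,v))^{-1}(z)\cap\mathcal U_3$. Since $V_z\cap(V_x+V_y)$ is a line not contained in $(V_x\cap E_{p_1})+V_y$ nor in $V_x+(V_y\cap E^{q_1})$, pick a generator $a$ of this line and write $a=a'+a''$ with $a'\in V_x$, $a''\in V_y$ using $V_x\cap V_y=0$ (condition \eqref{item:1}); the failure of \eqref{item:5}'s excluded inclusions is exactly what guarantees $a'\notin E_{p_1}$ (equivalently $a'\notin V_x\cap E_{p_1}$, since $a'\in V_x$) and $a''\notin E^{q_1}$, while $a'\in V_x\subset E_{p_2}$ and $a''\in V_y\subset E^{q_2}$; one also needs $a'\notin E_{p_1}^\perp$ in the case $E_{p_2}\not\subseteq E_{p_1}^\perp$, which holds for $z$ general since the condition $a'\in E_{p_1}^\perp$ is closed and proper on the relevant parameter space. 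Hence $a'\in E_\circ$, $a''\in E^\circ$, and $(a',a'')\in p_\circ^{-1}(V_z)$. Choose $b$ to be a generator of $V_x\cap E_{p_1}$ — nonzero since $V_x\in\oX_u$ — and note $\omega(a',b)=0$ because both lie in the isotropic plane $V_x$, so $b\in E_{p_1}\cap(a')^\perp\setminus0$; symmetrically set $c$ a generator of $V_y\cap E^{q_1}$. Then $V_x=\langle a',b\rangle$ and $V_y=\langle a'',c\rangle$ (each is a $2$-plane containing both vectors), and $\dim\langle a',a'',b,c\rangle=\dim(V_x+V_y)=4$ by \eqref{item:1}, while $\dim(V_z+\langle a',a'',b,c\rangle)=\dim(V_x+V_y+V_z)=5$ by \eqref{item:2}; thus $((a',a''),b,c)\in\cP^\circ$ and it maps to $(x,y,z)$.

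The one genuinely delicate point is that several of the inclusions must be arranged to hold \emph{for $z$ general} rather than for all $z$: specifically, the non-degeneracy conditions $a'\notin E_{p_1}^\perp$ and $a''\notin(E^{q_1})^\perp$ (when the corresponding $\delta$ is $1$) and the genericity ensuring $\cP^\circ\neq\emptyset$ come from Lemma~\ref{lem:surj}'s companion statement about $\cP^\circ$ being nonempty, which I would verify by an explicit choice of $a',a'',b,c$ in coordinates analogous to the one in the proof of Proposition~\ref{prop:md-to-gammad3}. I expect this bookkeeping — tracking which genericity hypotheses live on $z$ versus on $(x,y)$ — to be the main obstacle, but it is routine once the dictionary between the incidence conditions defining $\oX_u$, $\oX^v$ and the linear-algebra data $(a',b)$, $(a'',c)$ is set up. Everything else is the elementary linear algebra of recovering a $2$-plane from a spanning pair and matching dimension counts.
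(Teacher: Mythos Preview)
Your overall structure matches the paper's: take $(x,y,z)\in\cU_3$, decompose a generator $a$ of the line $V_z\cap(V_x+V_y)$ as $a'+a''$ along $V_x\oplus V_y$, and pick $b,c$ as generators of $V_x\cap E_{p_1}$, $V_y\cap E^{q_1}$. Your verification that $a'\notin E_{p_1}$, $a''\notin E^{q_1}$ via condition~\eqref{item:5}, and that $((a',a''),b,c)\in\cP^\circ$ via \eqref{item:1}--\eqref{item:2}, is exactly what the paper does.

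There is one genuine gap. To land in $p_\circ^{-1}(V_z)$ you also need $a'\notin E_{p_1}^\perp$ when $\delta_p=1$ (and symmetrically for $a''$). You argue this ``holds for $z$ general since the condition $a'\in E_{p_1}^\perp$ is closed and proper on the relevant parameter space.'' But that is not enough for \emph{surjectivity}: a proper closed locus in $\cU_3$ can still dominate $\Gamma_3(X_u,X^v)$ under $\pi_3^3(u,v)$, so for general $z$ the fiber may well contain points with $a'\in E_{p_1}^\perp$, and those would have no preimage in $\cP^\circ$. Genericity in $z$ does not let you discard points of the fiber.

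The paper avoids this by using condition~\eqref{item:xy} directly: for $x\in\oX_u$ (the Schubert cell) with $\delta_p=1$ one has $V_x\cap E_{p_1}^\perp\subseteq V_x\cap E_{p_1}$, so $a'\in V_x\setminus E_{p_1}$ already forces $a'\notin E_{p_1}^\perp$. Thus the bad locus is \emph{empty} inside $\cU_3$, not merely proper, and no genericity in $z$ is needed. Replace your genericity appeal with this structural observation and the argument goes through.
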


\begin{proof}
Let $(x,y,z) \in (\pi_3^3(u,v))^{-1}(z)\cap\cU_3$. Then by assumptions \eqref{item:1} and \eqref{item:2}, $\dim(V_z \cap (V_x + V_y)) = 1$. Let $a \in V_z \cap (V_x + V_y)$ be a non-zero element and write $a = a' + a''$ with $a' \in V_x$ and $a'' \in V_y$. By assumption \eqref{item:1}, this decomposition is unique. 

We claim that \((a',a'')\in p_\circ^{-1}(V_z)\). Clearly, we have \(a'\in E_{p_2}\) and \(a''\in E^{q_2}\). By assumption \eqref{item:5}, we have $a' \not\in E_{p_1}$ and $a'' \not\in E^{q_1}$. 
Furthermore, if $E_{p_2} \not\subseteq E_{p_1}^\perp$, then since $x \in \oX_u$, we have {$V_x \cap E_{p_1}^\perp = V_x \cap E_{p_1}$} and $a' \not\in E_{p_1}^\perp$. By a symmetric argument, if $E^{q_2} \not\subseteq (E^{q_1})^\perp$, we have $a'' \not\in (E^{q_1})^\perp$. We thus have $(a',a'') \in  p_\circ^{-1}(V_z)$. 

Let $b \in V_x \cap E_{p_1}$ and $c \in V_y \cap E^{q_1}$ be non-zero elements. By assumption \eqref{item:5}, we have $V_x = \langle a',b\rangle$ and $V_y = \langle a'',c \rangle$, thus $b \in (a')^\perp$ and $c \in (a'')^\perp$. Furthermore, by assumptions \eqref{item:1} and \eqref{item:2}, we have $((a',a''),b,c) \in \cP^\circ$ mapping to \((x,y)\), proving the result.
\end{proof}

\begin{lemma}\label{lem:P_unirat}
The set $\cP$ is unirational.
\end{lemma}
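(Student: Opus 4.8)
The plan is to realize $\cP$ as a non-empty open subset of the total space of a vector bundle over a rational base, which in fact gives rationality (hence, a fortiori, the unirationality claimed). First I would set $A = p_\circ^{-1}(V_z)$: by the construction in the proof of Proposition \ref{prop:pi3_unirat}, this is a non-empty open subset of the linear subspace $p^{-1}(V_z) = \{(a',a'') \in E_{p_2}\times E^{q_2} : a'+a'' \in V_z\}$, so $A$ is irreducible and rational, and every point $(a',a'')$ of $A$ satisfies $a' \in E_\circ$, $a'' \in E^\circ$. Forgetting $b$ and $c$ gives a morphism $\cP \to A$; I would first study the slightly larger variety $\wt\cP = \{((a',a''),b,c) \in A \times E_{p_1} \times E^{q_1} : \omega(a',b) = 0,\ \omega(a'',c) = 0\}$, inside which $\cP = \wt\cP \setminus (\{b=0\}\cup\{c=0\})$ is open.

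The key step is to check that $\wt\cP \to A$ is a vector bundle, i.e.\ that the fibre dimension is constant. The fibre over $(a',a'')$ is $(E_{p_1}\cap(a')^\perp) \times (E^{q_1}\cap(a'')^\perp)$, so I need the rank of the functional $\omega(a',-)\vert_{E_{p_1}}$ to be constant on $A$, and likewise for $\omega(a'',-)\vert_{E^{q_1}}$. Here the case distinction built into the definition of $E_\circ$ does the work: if $E_{p_2}\subseteq E_{p_1}^\perp$, then $\omega(a',-)$ kills $E_{p_1}$ for every $a'\in E_{p_2}$, so $E_{p_1}\cap(a')^\perp = E_{p_1}$ identically; otherwise $E_\circ$, and therefore $\pr_1(A)$, is disjoint from $E_{p_1}^\perp$, so $\omega(a',-)\vert_{E_{p_1}}$ is a non-zero functional at every point of $A$ and $E_{p_1}\cap(a')^\perp$ has constant dimension $p_1-1$. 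The symmetric statement holds for $E^{q_1}, E^{q_2}$. Thus $\wt\cP$ is cut out of the trivial bundle $A \times E_{p_1}\times E^{q_1}$ by a morphism of vector bundles of constant rank, hence is a subbundle, in particular the total space of a vector bundle over $A$.

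Finally I would invoke the elementary fact that the total space of a vector bundle over an irreducible rational variety is again irreducible and rational (the bundle is Zariski-locally a product with affine space over a dense open of the base). Hence $\wt\cP$ is rational, and $\cP$, being a non-empty open subset of it — non-empty because it contains $\cP^\circ$, which is non-empty by Lemma \ref{lem:surj} — is rational, hence unirational; composing with the surjection of Lemma \ref{lem:surj} then yields the unirationality of $(\pi_3^3(u,v))^{-1}(z)\cap\mathcal{U}_3$ needed for Proposition \ref{prop:pi3_unirat}.

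The main obstacle is the constant-rank verification in the second paragraph: everything downstream is formal, but this is precisely the point where the delicate definitions of $E_\circ$ and $E^\circ$ (removing $E_{p_1}^\perp$, resp.\ $(E^{q_1})^\perp$, exactly when they are not already avoided) are needed, and one must also clear the degenerate small cases — e.g.\ $p_1=1$, where $p_1+p_2\neq 2n+1$ forces $E_{p_2}\subseteq E_{p_1}^\perp$ so that the first branch applies and the fibres stay non-empty and of constant dimension.
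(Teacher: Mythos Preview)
Your proposal is correct and takes essentially the same approach as the paper. The paper also projects $\cP$ onto $p_\circ^{-1}(V_z)$ and argues, using the definition of $E_\circ$ and $E^\circ$, that this map is a composition of vector bundles with zero section removed; your version is simply more explicit, introducing the auxiliary $\wt\cP$ and spelling out the constant-rank dichotomy (including the $p_1=1$ edge case) that the paper compresses into one sentence.
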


\begin{proof}
Consider the map $\cP \to p_\circ^{-1}(V_z),\ ((a',a''),b,c) \mapsto (a',a'')$. We first prove that this map is surjective. Note that the choice of $c$ is independent of the choice of $b$, so by symmetry, we can concentrate on the map $((a',a''),b) \mapsto (a',a'')$. We only need to prove that $E_{p_1} \cap (a')^\perp$ is always non-zero. If $p_1 > 1$ the result follows. If $p_1 = 1$, then $E_{p_2} \subset E_{p_1}^\perp$ thus $a' \in E_{p_1}^\perp$ and $E_{p_1} \cap (a')^\perp = E_{p_1}$, proving the result. Now, by construction of $E_\circ$ and $E^\circ$, this map is the composition of vector bundles (with zero section removed),  proving the statement.
\end{proof}

Finally, we can prove Proposition \ref{prop:fibre-d-3}.

\begin{proof}[Proof of Proposition \ref{prop:fibre-d-3}]
The result about degree $d=3$ curves follows from Proposition~\ref{prop:pi3_unirat}, Lemma \ref{lemma:restriction-to-open} and Proposition~\ref{prop:replace}. The result about degree $(2,1)$ curves follows from Proposition~\ref{prop:pi3_unirat} and Corollary~\ref{cor_strategy}.
\end{proof}

We now deal with small degree (i.e. degree two and one) curves.

\section{Degree 2}
\label{sec:d=2}

Recall that there exists integers $p_1 < p_2$ with $p_1 + p_2 \neq 2n+1$ and $q_1 < q_2$ with $q_1 + q_2 \neq 2n+1$ such that 
$$\begin{array}{l}
X_u = \{x \in X \ | V_x \cap E_{p_1} \neq 0 \textrm{ and } V_x \subset E_{p_2} \} \\
X^v= \{x \in X \ | V_x \cap E^{q_1} \neq 0 \textrm{ and } V_x \subset E^{q_2} \}. \\
\end{array}$$
Recall also the definition of $\delta_p$ and $\delta_q$ and condition \eqref{eqn:deg2} 
$$ \left\{\begin{array}{l}
p_1+q_2 =2n=p_2+q_1, \\
p_2 - p_1 = q_2 - q_1 \geq 2 \textrm{ and } \max(\delta_p,\delta_q) = 1.\\
\end{array}\right.$$
In this section we prove the following result.

\begin{thm}
\label{thm:d=2}
Assume that $d = 2$, we have
\begin{enumerate}
\item\label{item:deg2} The map $\ev_3^d(u,v): M_d(X_u,X^v) \to \Gamma_d(X_u,X^v)$ is $\GRCF$,
\item\label{item:not_birat} If  $\ev_3^d(u,v): M_d(X_u,X^v) \to \Gamma_d(X_u,X^v)$ is not birational, then we have $\Gamma_d(X_u,X^v) = \Gamma_{d-1,1}(X_u,X^v)$ has rational singularities.
\item\label{item:deg1,1}If \eqref{eqn:deg2} does not hold, then $\ev_3^{d-1,1}(u,v): M_{d-1,1}(X_u,X^v) \to \Gamma_{d-1,1}(X_u,X^v)$ is $\GRCF$.
\item\label{item: 2to1} If \eqref{eqn:deg2} holds, then the map $\ev_3^d(u,v): M_d(X_u,X^v) \to \Gamma_d(X_u,X^v)$ is not birational, and $\ev_3^{d-1,1}(u,v): M_{d-1,1}(X_u,X^v) \to \Gamma_{d-1,1}(X_u,X^v)$ is generically finite of degree $2$.
\end{enumerate}
\end{thm}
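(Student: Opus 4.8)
The plan is to derive the four assertions from the reduction of Section~\ref{section:replacing}, after making the degree-$2$ and degree-$(1,1)$ curve neighborhoods explicit. Using Lemma~\ref{lem:deg2} and the density of $G\cdot(X_u\times X^v)$ in $X^2$, one identifies $\Gamma_2^{(3)}(X_u,X^v)$ with the closure of $\{(x,y,z)\in\oX_u\times\oX^v\times X\mid\dim(V_x+V_y+V_z)\le4\}$, so that for general $x\in\oX_u$, $y\in\oX^v$ with $V_x\cap V_y=0$ one has $\Gamma_2(x,y)=\IG(2,V_x\oplus V_y)$, a smooth quadric threefold. Letting $(x,y)$ vary and organizing the discussion according to $\delta_p$, $\delta_q$ and to whether $p_1+q_2$, $p_2+q_1$, $p_2+q_2$ equal, exceed, or fall below $2n$ --- equivalently, the dimensions of $E_{p_1}\cap E^{q_2}$, $E_{p_2}\cap E^{q_1}$, $E_{p_2}\cap E^{q_2}$ --- I would show that $\Gamma_2(X_u,X^v)$ is either all of $X$ or a variety of incidence type $\{z\in X\mid\dim(V_z\cap W)\ge1\}$ for a subspace $W$ built from the flags, hence has rational singularities by Proposition~\ref{prop_rat_sing_smalldegreecurves}; the same bookkeeping, using that $\Gamma_1$ of a Schubert variety is again a Schubert variety, describes $\Gamma_{1,1}(X_u,X^v)$ and $\Gamma_{1,1}^{(3)}(X_u,X^v)$.

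For the first assertion, by Proposition~\ref{prop:replace}(1) it is enough to prove that $\pi_3^2(u,v)\colon\Gamma_2^{(3)}(X_u,X^v)\to\Gamma_2(X_u,X^v)$ is $\GRCF$; by Lemma~\ref{lemma:restriction-to-open} this can be checked on a convenient dense open subset of $\Gamma_2^{(3)}(X_u,X^v)$. There I would describe a general fiber over $z$ as the set of $(x,y)$ with $\dim(V_x+V_y+V_z)\le4$ and parametrize it unirationally (hence it is rationally connected) by choosing the line $V_z\cap(V_x+V_y)\subset E_{p_2}+E^{q_2}$ through $z$, writing it as $a'+a''$ with $a'\in E_{p_2}$, $a''\in E^{q_2}$, and completing $a'$, $a''$ to $V_x$, $V_y$ inside appropriate vector bundles; the open sets involved are the analogues of $E_\circ$, $E^\circ$ from Section~\ref{sec:d=3}, and the argument is parallel to Proposition~\ref{prop:pi3_unirat}.

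The second and third assertions rest on a birationality criterion, which I would obtain from the dimension count $\dim M_2(X_u,X^v)=p_1+p_2+q_1+q_2-3-\delta_p-\delta_q$ (from $\dim\overline{M}_{0,3}(X,2)=8n-7$ and the codimensions of $X_u$ and $X^v$) compared with $\dim\Gamma_2(X_u,X^v)$, read off from the explicit description. If $\ev_3^2(u,v)$ is birational, the same count for $(1,1)$, using that $M_{1,1}$ is a boundary divisor of $M_2$ together with Kleiman--Bertini for $\ev_1\times\ev_2$, shows $\ev_3^{1,1}(u,v)$ is birational and $\Gamma_{1,1}(X_u,X^v)\subset\Gamma_2(X_u,X^v)$ is a divisor, so the third assertion holds (and the second is vacuous). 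If $\ev_3^2(u,v)$ is not birational and \eqref{eqn:deg2} fails, I would check from the descriptions that $\Gamma_2(X_u,X^v)=\Gamma_{1,1}(X_u,X^v)$ (which gives the second assertion via Proposition~\ref{prop_rat_sing_smalldegreecurves}) and that $\pi_3^{1,1}(u,v)$ is $\GRCF$ by the same type of unirational parametrization of its general fiber, so that Proposition~\ref{prop:replace}(2) applies, giving the third assertion. Note that, unlike for $d\ge3$, Corollary~\ref{cor_strategy} is not available here, since $\Gamma_{1,1}(x,y)=\emptyset$ for $x,y$ general in $X$; the $\GRCF$ statement must be proved directly on $\Gamma_{1,1}^{(3)}(X_u,X^v)$.

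The case where \eqref{eqn:deg2} holds is the main point. There $p_1+p_2+q_1+q_2=(p_1+q_2)+(p_2+q_1)=4n$, and $\max(\delta_p,\delta_q)=1$ forces $\delta_p+\delta_q=1$ (one cannot have $p_1+p_2>2n+1$ and $q_1+q_2>2n+1$ while they sum to $4n$); hence $\dim M_2(X_u,X^v)=4n-4>4n-5=\dim X\ge\dim\Gamma_2(X_u,X^v)$, so $\ev_3^2(u,v)$ is not birational (and, $\Gamma_2(X_u,X^v)=X$ being smooth, the second assertion holds here too). For $\ev_3^{1,1}(u,v)$ one computes $\dim M_{1,1}(X_u,X^v)=4n-5$, shows $\Gamma_{1,1}(X_u,X^v)=X$ (so the map is dominant and generically finite), and analyzes a general fiber: a point of it is a chain $\ell_1\cup\ell_2$ with $x\in\ell_1\cap X_u$, $y\in\ell_1\cap X^v$, node $w\in\ell_1\cap\ell_2$ and $z\in\ell_2$; encoding $\ell_1$ by $(V_1,V_3)$ with $V_1=V_x\cap V_y$ a line and $V_3\supseteq V_x+V_y$ an isotropic $3$-space --- and similarly $\ell_2$ --- the incidence and isotropy constraints, exploiting the special shape $E^{q_2}=\langle e_{p_1+1},\dots,e_{2n}\rangle$ and the inclusion $E_{p_2}\subseteq E_{p_1}^\perp$ or $E^{q_2}\subseteq(E^{q_1})^\perp$ forced by $\delta_p+\delta_q=1$, cut the fiber down to a finite set; an explicit linear-algebra computation then shows there are exactly two such chains, lying over distinct points, i.e.\ $\ev_3^{1,1}(u,v)$ is generically finite of degree $2$. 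I expect this last count to be the main obstacle: pinning down the general fiber and verifying that the two solutions are genuinely distinct (so the degree is $2$ and not a doubled point) is the delicate step, and it is what ultimately produces the $q$- and $q^2$-corrections in Proposition~\ref{prop1:2-to-1}.
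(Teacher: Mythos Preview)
Your overall architecture is right (reduce via Proposition~\ref{prop:replace}, then analyze $\pi_3^2(u,v)$ and $\pi_3^{1,1}(u,v)$ case by case), but the parametrization you propose for part~\eqref{item:deg2} is the degree-$3$ one and does not work here. In degree~$2$ the incidence condition is $V_z\subset V_x+V_y$, so $V_z\cap(V_x+V_y)=V_z$ is two-dimensional, not a line; decomposing a single $a\in V_z$ as $a'+a''$ with $a'\in E_{p_2}$, $a''\in E^{q_2}$ and completing to $V_x,V_y$ only arranges $a\in V_x+V_y$, not the second generator of $V_z$, and even decomposing a full basis of $V_z$ this way does not give $V_x\cap E_{p_1}\neq 0$. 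The paper instead builds an auxiliary variety $\cW$ parametrized by four vectors $([a_1],[a_2],[b_1],[b_2],z)$ with $a_1\in E_{p_1}$, $a_2\in E^{q_2}$, $b_1\in E^{q_1}$, $b_2\in E_{p_2}$, sets $V_x=\langle a_1,b_2\rangle$, $V_y=\langle a_2,b_1\rangle$, and imposes $V_z\cap\langle a_1,a_2\rangle\neq 0\neq V_z\cap\langle b_1,b_2\rangle$ together with the isotropy conditions $\omega(a_1,b_2)=\omega(a_2,b_1)=0$; the two ``diagonal'' intersections force $V_z\subset V_x+V_y$ generically, while the placement of $a_1,b_1$ in $E_{p_1},E^{q_1}$ handles the Schubert conditions for free. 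The map $\cW\to\Gamma_2^{(3)}(X_u,X^v)$ is shown to be birational, and the unirationality of the fiber of $\cW\to\Gamma_2(X_u,X^v)$ is then a linear-algebra case analysis in $p_1+q_2$, $p_2+q_1$ versus $2n$ (seven cases), which simultaneously yields the birationality criterion and the explicit description of $\Gamma_2(X_u,X^v)$.

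For part~\eqref{item:deg1,1} your sketch is closer, but the correct organizing principle for the $(1,1)$ fiber is the three-term decomposition $a=a_{[1]}+a_{[2]}+a'_{[1]}$ with $a_{[1]}\in E_{p_1}$, $a_{[2]}\in E_{p_2}\cap E^{q_2}$, $a'_{[1]}\in E^{q_1}$ (the middle piece is the direction of $V_x\cap V_y$ on the line through $x$ and $y$), and the two isotropy equations $\omega(a_{[2]},a_{[1]})=0$, $\omega(a_{[2]},a'_{[1]})=0$ on these components. It is the analysis of when this system, viewed as equations on $a\in V_z$, has a unique solution, a linear family of solutions, or a non-degenerate quadratic locus in $\bP(V_z)$ that isolates \eqref{eqn:deg2} as the degree-$2$ case and gives $\Gamma_{1,1}(X_u,X^v)=\{z:V_z\cap(E_{p_1}+(E_{p_2}\cap E^{q_2})+E^{q_1})\neq 0\}$. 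One minor slip: a line in $\IG(2,2n)$ is encoded by $(V_1,V_3)$ with $V_3\subset V_1^\perp$, not with $V_3$ isotropic.
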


\begin{proof}
  By Proposition~\ref{prop:replace}, to prove part \eqref{item:deg2}, it suffices to prove \[\pi_3^2(u,v): \Gamma_2^{(3)}(X_u,X^v)\to \Gamma_2(X_u,X^v)\] is GRCF. We are going to construct a birational morphism \(f:\cW\to \Gamma_2^{(3)}(X_u,X^v)\) from an irreducible variety \(\cW\), and prove that the general fiber of \(\pi = \pi_3^2(u,v)\circ f \) is unirational. More precisely, part \eqref{item:deg2} follows from Proposition~\ref{prop:replace}, Lemma~\ref{lemma:restriction-to-open}, Lemma~\ref{lemma:W}, Lemma~\ref{lemma:f-birat}, and Proposition~\ref{prop:deg2}.

  Part~\eqref{item:not_birat} follows from Lemmas~\ref{lemma:deg2-birat}, \ref{lemma:gamma2}, and Corollary~\ref{cor:gamma1,1}.

  By Proposition~\ref{prop:replace}, to prove \eqref{item:deg1,1}, it suffices to prove \(\pi_3^{1,1}(u,v): \Gamma_{1,1}^{(3)}(X_u,X^v)\to \Gamma_{1,1}(X_u,X^v)\) is GRCF, which is Proposition~\ref{prop:1,1}.

  When \eqref{eqn:deg2} holds, by Corollary~\ref{cor:gamma1,1}, we have \(\Gamma_{1,1(X_u,X^v)}=X\). Moreover, \(\dim M_{1,1}(X_u,X^v)=\dim X\). Part~\eqref{item: 2to1} follows from Lemma~\ref{lemma:deg2-birat}, Corollary~\ref{cor:d-1-1}, Proposition~\ref{prop:1,1}, and Lemma~\ref{lemma:fibre-meets-open}.
\end{proof}

Define the following open subsets of projective spaces:
\begin{itemize}
\item If $\delta_p = 0$, set $\bP(E_{p_1})_\circ = \bP(E_{p_1})$ and $\bP(E_{p_2})_\circ = \bP(E_{p_2})$.
\item If $\delta_p = 1$, set $\bP(E_{p_1})_\circ = \bP(E_{p_1}) \setminus \bP(E_{p_2}^\perp)$ and $\bP(E_{p_2})_\circ = \bP(E_{p_2})\setminus \bP(E_{p_1}^\perp)$.
\item If $\delta_q = 0$, set $\bP(E^{q_1})_\circ = \bP(E^{q_1})$ and $\bP(E^{q_2})_\circ = \bP(E^{q_2})$.
\item If $\delta_q = 1$, set $\bP(E^{q_1})_\circ = \bP(E^{q_1}) \setminus \bP({E^{p_2}}^\perp)$ and $\bP(E^{q_2})_\circ = \bP(E^{q_2})\setminus \bP({E^{q_1}}^\perp)$.
\end{itemize}
For simplicity, set $$Y = \bP(E_{p_1})_\circ \times \bP(E^{q_2})_\circ \times \bP(E^{q_1})_\circ \times \bP(E_{p_2})_\circ.$$ For any element $([a_1],[a_2],[b_1],[b_2],z) \in Y \times X$, consider the following conditions:
\begin{enumerate}
\item\label{item:W1} $\omega(a_1,b_2) = 0 = \omega(a_2,b_1)$.
\item\label{item:W2} $V_z \cap \langle a_1, a_2 \rangle \neq 0 \neq V_z \cap \langle b_1, b_2 \rangle$.
\item\label{item:W3} $V_z \cap  \langle a_1, b_2 \rangle = 0 = V_z \cap \langle b_1, a_2 \rangle$.
\item\label{item:W4} $\omega(a_1,b_1)\neq 0 \neq \omega(a_2,b_2)$.
\item\label{item:W5} $\dim(\langle a_1, a_2, b_1, b_2 \rangle) = {\rm Rk}(\omega\vert_{\langle a_1, a_2, b_1, b_2 \rangle}) = 4$.
\end{enumerate}
\begin{remark}\label{rmk:dim1}
  Conditions \eqref{item:W2} and \eqref{item:W3} imply that $$\dim(V_z\cap \langle a_1,a_2 \rangle)=1=\dim(V_z\cap\langle b_1,b_2 \rangle).$$ 
\end{remark}

Set $\cW = \{([a_1],[a_2],[b_1],[b_2],z) \in Y \times X \ | \ \textrm{\eqref{item:W1}-\eqref{item:W5} are satisfied} \}.$ Note that $\cW$ is locally closed in $Y\times X$.

Define $p : E_{p_2} \times E^{q_1} \to E_{p_2} + E^{q_1}$ and $q : E_{p_1} \times E^{q_2} \to E_{p_1} + E^{q_2}$. 
\begin{remark}
  Denote the second projection from $\cW$ by $\pi':\cW\to X$. By Remark~\ref{rmk:dim1}, the fiber \(\pi'^{-1}(z)\) can be identified with \(\{[a_1,a_2],[b_1,b_2]\in\bP(q^{-1}(V_z))\times \bP(p^{-1}(V_z))\ |\ ([a_1],[a_2],[b_1],[b_2])\in Y\text{ and satisfies }\eqref{item:W1},\eqref{item:W4}, \eqref{item:W5}\}\).
\end{remark}

\begin{lemma}\label{lemma:W}
  The subset $\cW \subset Y\times X$ is irreducible and $\dim \cW = \dim M_2(X_u,X^v)$.
  \end{lemma}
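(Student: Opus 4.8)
The plan is to prove irreducibility of $\cW$ by exhibiting it as the total space of a tower of fibrations over an irreducible base, and to compute $\dim \cW$ by bookkeeping the dimensions of these fibers and then comparing with the known dimension of $M_2(X_u,X^v)$.

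\textbf{Irreducibility.} First I would factor the projection $\cW \to Y$, or rather build $\cW$ from the bottom up. Start from the second projection $\pi' : \cW \to X$ and analyze its image and fibers: by Lemma~\ref{lem:deg2} (applied to the dense $G$-orbit situation) the image of $\pi'$ is the irreducible curve neighborhood $\Gamma_2(X_u,X^v) = \{z \in X \mid \dim(V_z \cap (E_{p_2}+E^{q_2})) \geq 1\}$, which is irreducible by Proposition~\ref{prop_rat_sing_smalldegreecurves} (or rather the description in Lemma~\ref{lem:sing-rat3}). Alternatively, and more cleanly, I would fix a general $z$ and study $\pi'^{-1}(z)$ using the description in the Remark preceding the lemma: it is identified with a locally closed subset of $\bP(q^{-1}(V_z)) \times \bP(p^{-1}(V_z))$ cut out by the orthogonality conditions \eqref{item:W1}, \eqref{item:W4}, \eqref{item:W5}. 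The choice of the pair $([a_1],[a_2])$ with $a_1 \in E_{p_1}$, $a_2 \in E^{q_2}$ and $a_1 + a_2 \in V_z$ (up to scalar), subject to $[a_1] \in \bP(E_{p_1})_\circ$, is an open subset of a projective space; then $[b_2] \in \bP(E_{p_2})_\circ$ is constrained only by $\omega(a_1, b_2) = 0$ (a hyperplane, proper once $a_1 \neq 0$) and by $a_1' + $ something lying in $V_z$... — here I should be careful: the cleanest route is to note that after choosing $z$ and then $(a_1,a_2)$, the conditions on $(b_1,b_2)$ are: $b_1 + b_2 \in V_z$ (one linear condition modulo the one-dimensional $V_z \cap \langle b_1,b_2\rangle$), $b_1 \in E^{q_1}$, $b_2 \in E_{p_2}$, $\omega(a_1,b_2) = 0$, $\omega(a_2,b_1) = 0$, plus the open conditions \eqref{item:W4}, \eqref{item:W5}, \eqref{item:W3}. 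Each equality is linear, so the solution locus is a linear space, and the open conditions are shown non-empty by exhibiting an explicit point (as in the proof of Proposition~\ref{prop:md-to-gammad3}, using the standard basis with $\omega(e_i,e_j) = \delta_{j,2n+1-i}$). Thus $\pi'^{-1}(z)$ is an open dense subset of an irreducible variety of constant dimension over a dense open subset of $\Gamma_2(X_u,X^v)$, so $\cW$ has a unique component dominating $\Gamma_2(X_u,X^v)$; since $\cW$ is defined by the open conditions \eqref{item:W2}--\eqref{item:W5} inside the closed-in-$Y\times X$ locus \eqref{item:W1}, and \eqref{item:W2} forces $z \in \Gamma_2(X_u,X^v)$, every point of $\cW$ lies over $\Gamma_2(X_u,X^v)$, hence $\cW$ is irreducible.

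\textbf{Dimension count.} With the fibration structure above, $\dim \cW = \dim \Gamma_2(X_u,X^v) + (\text{fiber dim})$. I would instead run the count through $Y$, or simply add up: $\dim \Gamma_2(X_u,X^v) = \dim\{z : V_z \cap (E_{p_2}+E^{q_2}) \neq 0\}$, which by the birational model in Proposition~\ref{prop_rat_sing_smalldegreecurves} equals $\dim \bP(E_{p_2}+E^{q_2}) + (2n-3) = (p_2 + q_2 - 2n - 1) + (2n - 3)$ when $\delta_p = \delta_q$ appropriately — I'd use $\dim(E_{p_2} + E^{q_2}) = \min(2n, p_2 + q_2)$ and the constraint \eqref{eqn:deg2} relating $p_i, q_i$. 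Then the fiber: $([a_1],[a_2])$ contributes (dimension of the $\bP$ of the fiber of $q^{-1}(V_z) \to V_z$) and $([b_1],[b_2])$ contributes the analogous quantity for $p$. On the other side, $\dim M_2(X_u,X^v)$: since $M_2(X_u,X^v) = \ev_1^{-1}(X_u) \cap \ev_2^{-1}(X^v)$ and the evaluations are flat on the homogeneous space with $\ev_1 \times \ev_2$ surjective with irreducible fibers, $\dim M_2(X_u,X^v) = \dim M_2 - 2\dim X + \dim X_u + \dim X^v = \dim M_2 - 2\dim X + (p_1 + p_2 - 3 - \delta_p) + (q_1 + q_2 - 3 - \delta_q)$, and $\dim M_2 = \dim X + \int_2 c_1(T_X) = \dim X + 2(2n-1)$. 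I would then verify the two expressions agree using \eqref{eqn:deg2} (and separately the generic case where \eqref{eqn:deg2} fails, if the lemma is meant in that generality — though the section's running hypotheses suggest \eqref{eqn:deg2} holds here).

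\textbf{Main obstacle.} The delicate point is the dimension of $\pi'^{-1}(z)$: one must correctly account for the fact that \eqref{item:W2}--\eqref{item:W3} pin down $V_z \cap \langle a_1,a_2\rangle$ and $V_z \cap \langle b_1, b_2\rangle$ to be one-dimensional, so that $a_1, a_2$ are not free in $E_{p_1} \times E^{q_2}$ but constrained to have $a_1 + a_2$ proportional to a vector of $V_z$, and similarly for $b_1, b_2$; moreover the orthogonality relations \eqref{item:W1} couple the $a$'s and $b$'s. Getting the rank of this coupled linear system right — and checking it is the generic rank via an explicit standard-basis example, with attention to the degenerate cases $p_1 = 1$ or $q_1 = 1$ and to whether $\delta_p$ or $\delta_q$ vanishes — is where the real work lies. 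I expect this to mirror closely the linear-system rank computation in the proof of Proposition~\ref{prop:md-to-gammad3}, so I would reuse that template: produce the explicit point, compute the rank there, and conclude by semicontinuity that it is the generic rank.
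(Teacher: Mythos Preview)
Your route projects $\cW$ onto $X$ via $\pi'$; the paper does the opposite, projecting $\cW$ onto the first four factors, i.e., onto $\cS = \{([a_1],[a_2],[b_1],[b_2]) \in Y : \text{\eqref{item:W1}, \eqref{item:W4}, \eqref{item:W5} hold}\}$. This projection $p':\cW \to \cS$ is a locally trivial $\C^\times$-bundle: once $[a_i],[b_i]$ are fixed, condition \eqref{item:W2} with \eqref{item:W3} says $V_z$ meets each of $\langle a_1,a_2\rangle$ and $\langle b_1,b_2\rangle$ in a line, and picking $[a] \in \bP(\langle a_1,a_2 \rangle) \setminus \{[a_1],[a_2]\}$ determines $V_z$ uniquely (the second line is forced by isotropy and \eqref{item:W4}). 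Irreducibility of $\cS$ is then clear from its further projection to $\bP(E_{p_1})_\circ \times \bP(E^{q_2})_\circ$, whose fibers are open in linear spaces of constant dimension $p_2 + q_1 - 2 - \delta_p - \delta_q$. The total dimension is $p_1 + p_2 + q_1 + q_2 - 3 - \delta_p - \delta_q$, which one checks equals $\dim M_2(X_u,X^v)$ directly, with no case analysis and no appeal to \eqref{eqn:deg2}.

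Your approach, by contrast, has concrete gaps. First, you misidentify the image of $\pi'$: the set $\{z : V_z \cap (E_{p_2} + E^{q_2}) \neq 0\}$ is $\Gamma_3(X_u,X^v)$ (Lemma~\ref{lem:sing-rat3}), not $\Gamma_2$; and the actual description of $\Gamma_2(X_u,X^v)$ is only obtained \emph{after} this lemma (Corollary~\ref{cor:gamma_2=}, and then only in the non-birational case), so invoking it here is circular. Second, your irreducibility step is incomplete: ``unique component dominating'' plus ``every point lies over $\Gamma_2$'' does not rule out extra components sitting over proper closed subsets of the image --- you would need the fibers of $\pi'$ to be equidimensional, and the later case analysis in Proposition~\ref{prop:deg2} shows the fiber dimension genuinely jumps across the various $p_i+q_j$ regimes. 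Third, the lemma is stated and used for arbitrary $u,v$ in the degree-$2$ section, not under \eqref{eqn:deg2}, so your dimension count should not rely on that condition. The paper's choice to forget $z$ first sidesteps all of this: the fiber over $\cS$ is uniformly one-dimensional, and the linear algebra is decoupled from $z$.
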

  
  \begin{proof}
  Note that conditions \eqref{item:W3}-\eqref{item:W5} are open. Define $$\cS = \{([a_1],[a_2],[b_1],[b_2]) \in Y\ | \ \eqref{item:W1}, \eqref{item:W4} \textrm{ and } \eqref{item:W5} \textrm{ are satisfied} \}$$ and let $p\colon\cS \to  \bP(E_{p_1})_\circ \times \bP(E^{q_2})_\circ$ be the projection on the first two factors. The map $p$ is surjective with fibers of constant dimension $p_2 + q_1 - 2 - \delta_p - \delta_q$. Since these fibers are given by linear conditions, we get that $\cS$ is irreducible of dimension $p_1 + p_2 + q_1 + q_2 -4 - \delta_p - \delta_q$. We now consider the map $p'\colon\cW \to \cS$ obtained by projection on the first four factors. Fix $([a_1],[a_2],[b_1],[b_2]) \in \cS$ we look for $z$ in the fiber of $p'$. By conditions \eqref{item:W2} and \eqref{item:W3} the space $V_z$ meets $\langle a_1,a_2 \rangle$ and  $\langle b_1,b_2 \rangle$ in dimension exactly $1$. Given $[a] \in \bP(\langle a_1,a_2 \rangle) \setminus \{[a_1],[a_2]\}$, condition \eqref{item:W4} implies that there is a unique $z$ in the fiber such that $a \in V_z$. This implies that $p'$ is a locally trivial $\C^\times$-bundle over $\cS$ and thus is irreducible of dimension $p_1 + p_2 + q_1 + q_2 -3 - \delta_p - \delta_q = \dim M_2(X_u,X^v)$.
  \end{proof}
  
  \begin{lemma}\label{lemma:f-birat}
  The map $f : \cW \to \Gamma_2^{(3)}(X_u,X^v)$ given by $f([a_1],[a_2],[b_1],[b_2],z) = (x,y,z)$ with $V_x = \langle a_1,b_2 \rangle$ and $V_y = \langle a_2,b_1 \rangle$ is well-defined and birational.
  \end{lemma}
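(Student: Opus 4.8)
The plan is to establish the two assertions separately: first that $f$ is a well-defined morphism landing in $\Gamma_2^{(3)}(X_u,X^v)$, and then that it is birational by showing it is simultaneously generically injective and dominant. For well-definedness, I would first invoke condition \eqref{item:W5} to see that $a_1,b_2$ (resp. $a_2,b_1$) are linearly independent, so $V_x:=\langle a_1,b_2\rangle$ and $V_y:=\langle a_2,b_1\rangle$ are honest $2$-planes, and condition \eqref{item:W1} together with the individual isotropy of the generators to see that they are isotropic; thus $x,y\in X$, and the inclusions $a_1\in E_{p_1}\subseteq E_{p_2}$, $b_2\in E_{p_2}$ (symmetrically for $y$) immediately give $x\in X_u$ and $y\in X^v$.

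The substantive point of well-definedness is that $(x,y,z)$ actually lies in $\Gamma_2^{(3)}(X_u,X^v)$. Here I would use Remark~\ref{rmk:dim1} (which, from \eqref{item:W2} and \eqref{item:W3}, gives $\dim(V_z\cap\langle a_1,a_2\rangle)=\dim(V_z\cap\langle b_1,b_2\rangle)=1$) together with \eqref{item:W5}: the planes $\langle a_1,a_2\rangle$ and $\langle b_1,b_2\rangle$ meet trivially inside the $4$-dimensional space $\langle a_1,a_2,b_1,b_2\rangle$, so the two lines $V_z\cap\langle a_1,a_2\rangle$ and $V_z\cap\langle b_1,b_2\rangle$ are distinct, hence $V_z$ is their span and $V_z\subseteq\langle a_1,a_2,b_1,b_2\rangle=V_x+V_y$. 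Consequently $V_x+V_y+V_z=V_x+V_y$ is $4$-dimensional with $\omega$ of rank $4$ by \eqref{item:W5}, so $x,y,z$ lie on the smooth three-dimensional quadric $X\cap\Gr(2,V_x+V_y)$ and, exactly as in the proof of Lemma~\ref{lem:deg2}, are joined by a conic; this conic, with its first two marked points on $X_u$ and $X^v$, exhibits $(x,y,z)$ in $\Gamma_2^{(3)}(X_u,X^v)$.

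For birationality I would argue as follows. Generic injectivity: restrict to the dense open $\cW^\circ\subseteq\cW$ where in addition $\dim(V_x\cap E_{p_1})=\dim(V_y\cap E^{q_1})=1$ (a nonempty open condition, satisfied e.g. when $b_2\notin E_{p_1}$ and $a_2\notin E^{q_1}$). On $\cW^\circ$ the classes $[a_1]=\bP(V_x\cap E_{p_1})$ and $[b_1]=\bP(V_y\cap E^{q_1})$ are recovered from $x$ and $y$; moreover, by \eqref{item:W2}, \eqref{item:W3} (and the paragraph above) $V_z$ lies in $V_x\oplus V_y$ and meets each summand trivially, so it is the graph of an isomorphism $\phi\colon V_x\xrightarrow{\sim}V_y$. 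Writing a nonzero vector of $V_z\cap\langle a_1,a_2\rangle$ as $\lambda a_1+\mu a_2$ with $\lambda a_1\in V_x$, $\mu a_2\in V_y$, the definition of $\phi$ forces $\mu a_2=\phi(\lambda a_1)$, and $V_z\cap V_x=V_z\cap V_y=0$ forces $\lambda,\mu\neq0$; hence $[a_2]=[\phi(a_1)]$, and symmetrically $[b_2]=[\phi^{-1}(b_1)]$, are determined by $(x,y,z)$. Thus $f|_{\cW^\circ}$ is injective. Dominance: since $\Gamma_2^{(3)}(X_u,X^v)$ is the image of $M_2(X_u,X^v)$ we have $\dim\Gamma_2^{(3)}(X_u,X^v)\leq\dim M_2(X_u,X^v)=\dim\cW$ by Lemma~\ref{lemma:W}, while generic injectivity gives $\dim\overline{f(\cW)}=\dim\cW$; as $\overline{f(\cW)}\subseteq\Gamma_2^{(3)}(X_u,X^v)$ and the target is irreducible, $f$ is dominant, and a dominant generically injective morphism is birational.

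I expect the main obstacle to be the well-definedness step, specifically confirming membership in $\Gamma_2^{(3)}(X_u,X^v)$: the conditions \eqref{item:W1}--\eqref{item:W5} are engineered precisely so that $V_z\subseteq V_x+V_y$ and $\omega$ has full rank $4$ on this common four-dimensional space, which is exactly what produces the connecting conic, and the argument must extract this carefully from Remark~\ref{rmk:dim1}. A secondary nuisance will be tracking the $\delta_p,\delta_q$ dichotomies when checking that the tuple recovered from a general $(x,y,z)$ meets the open conditions defining $Y$ and $\cW$; this bookkeeping runs parallel to that already carried out in Lemma~\ref{lem:surj}.
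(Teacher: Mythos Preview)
Your argument is correct and the core content matches the paper's: well-definedness via conditions \eqref{item:W1} and \eqref{item:W5} together with $V_z\subset V_x+V_y$ and Lemma~\ref{lem:deg2}, and recovery of $[a_1]=\bP(V_x\cap E_{p_1})$, $[b_1]=\bP(V_y\cap E^{q_1})$ and then $[a_2],[b_2]$ from the position of $V_z$ inside $V_x\oplus V_y$. The packaging of the birationality step differs slightly. The paper works on the target: for a general $(x,y,z)\in\Gamma_2^{(3)}(X_u,X^v)$ with $x\in\oX_u$, $y\in\oX^v$ and $V_x\cap V_z=V_y\cap V_z=0$, it writes down the unique preimage directly via $\langle a_2\rangle=(\langle a_1\rangle+V_z)\cap V_y$ and $\langle b_2\rangle=(\langle b_1\rangle+V_z)\cap V_x$, which is exactly your graph-of-isomorphism description. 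You instead work on the source, proving injectivity on a dense open $\cW^\circ\subset\cW$ and then obtaining dominance from the dimension equality $\dim\cW=\dim M_2(X_u,X^v)$ of Lemma~\ref{lemma:W}. Your route has the virtue that it sidesteps the verification (left implicit in the paper) that the reconstructed tuple actually satisfies all the open conditions defining $Y$ and $\cW$; the price is the extra input from Lemma~\ref{lemma:W}. One small point you might make explicit: the defining conditions of $\cW^\circ$ depend only on $(x,y)$, so $\cW^\circ$ is $f$-saturated, which is what turns injectivity of $f|_{\cW^\circ}$ into a statement about full fibers of $f$.
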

  
  \begin{proof}
  Since $\dim(\langle a_1, a_2, b_1, b_2 \rangle) = 4$, we have that $V_x$ and $V_y$ have dimension $2$ and are isotropic by the conditions $\omega(a_1,b_2) = \omega(a_2,b_1) = 0$. This implies that $x \in X_u$ and $y \in X^v$ and since $V_z \subset V_x + V_y$ we have that $(x,y,z) \in \Gamma_2^{(3)}(X_u,X^v)$ by Lemma \ref{lem:deg2}. 
  
  Conversely, for $(x,y,z)$ general in $\Gamma_2^{(3)}(X_u,X^v)$, we may assume that $x \in \oX_u$ and $y \in \oX^v$. We thus have $\dim (V_x \cap E_{p_1}) = 1$ so that $[a_1]$ is uniquely given by $\langle a_1 \rangle = V_x \cap E_{p_1}$. We also have  that $[b_1]$ is given by $\langle b_1 \rangle = V_y \cap E^{q_1}$. Furthermore, we may also assume that $V_x \cap V_z = V_y \cap V_z = 0$ thus $(\langle a_1 \rangle +V_z) \cap V_y$ has dimension $1$ and $[a_2]$ is fixed by the condition $\langle a_2 \rangle = (\langle a_1 \rangle +V_z) \cap V_y$. By the same argument $[b_2]$ is fixed by the condition $\langle b_2 \rangle = (\langle b_1 \rangle +V_z) \cap V_x$.
  \end{proof}
  
  Consider the restriction $\pi : \cW \to \Gamma_2(X_u,X^v)$ of the projection $\pi'$.

  \begin{lemma}\label{lemma:deg2-birat}
The map $\pi$ is birational if and only if one of the following holds 
    \begin{enumerate}
    \item $p_1 + q_2 < 2n$ and $p_2 + q_1 < 2n$,
    \item $p_1 + q_2 = 2n$, $p_2 + q_1 < 2n$ and $\max(\delta_p,\delta_q) =1$,
    \item $p_1 + q_2 < 2n$, $p_2 + q_1 = 2n$ and $\max(\delta_p,\delta_q) =1$.
    \end{enumerate}
    \end{lemma}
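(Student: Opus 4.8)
The plan is to compute a general fibre of $\pi$ and then run a short case analysis governed by $p_1+q_2$ and $p_2+q_1$ (compared with $2n$) together with $\delta_p$ and $\delta_q$. Since $\cW$ and $\Gamma_2(X_u,X^v)$ are irreducible and we work over $\CC$, the map $\pi$ is birational if and only if $\pi^{-1}(z)$ is a single point for $z\in\Gamma_2(X_u,X^v)$ general; by Proposition~\ref{prop:md-to-gammad3} and Lemma~\ref{lemma:f-birat} this is equivalent to birationality of $\ev_3^2(u,v)$, or of $\pi_3^2(u,v)$. Recall the description of $\pi^{-1}(z)=\pi'^{-1}(z)$ obtained above: it is a locally closed subset of $\bP(q^{-1}(V_z))\times\bP(p^{-1}(V_z))$, the point $[a_1,a_2]$ of the first factor recording $[a_1]\in\bP(E_{p_1})$ and $[a_2]\in\bP(E^{q_2})$ and symmetrically for the second, subject to $([a_1],[a_2],[b_1],[b_2])\in Y$ and to \eqref{item:W1}, \eqref{item:W4}, \eqref{item:W5}.

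First I would fix the dimensions of these two projective spaces for general $z$. Since $\dim\ker q=\dim(E_{p_1}\cap E^{q_2})=\max(0,p_1+q_2-2n)$ and $\operatorname{im} q=E_{p_1}+E^{q_2}$, we get $\dim\bP(q^{-1}(V_z))=\max(0,p_1+q_2-2n)+\dim\bigl(V_z\cap(E_{p_1}+E^{q_2})\bigr)-1$. If $p_1+q_2\geq 2n$ then $E_{p_1}+E^{q_2}=\CC^{2n}$, so this equals $p_1+q_2-2n+1$. If $p_1+q_2<2n$, then every point of $\cW$ forces $V_z$ to meet the proper subspace $E_{p_1}+E^{q_2}$ (the vector $a_1+a_2\in V_z$ lies there), while a short genericity argument — using $E_{p_2}\not\subseteq E_{p_1}+E^{q_2}$, hence $V_z\not\subseteq E_{p_1}+E^{q_2}$ for general $z\in\Gamma_2(X_u,X^v)$ — shows the intersection is exactly one-dimensional, so $\bP(q^{-1}(V_z))$ is a single point. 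The same holds for $\bP(p^{-1}(V_z))$ with $p_1+q_2$ replaced by $p_2+q_1$.

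Next comes the role of the two equations in \eqref{item:W1}, namely $\omega(a_1,b_2)=0$ and $\omega(a_2,b_1)=0$: these depend linearly, and separately, on the points of $\bP(q^{-1}(V_z))$ and $\bP(p^{-1}(V_z))$, while \eqref{item:W4} and \eqref{item:W5} are open and (for general $z$) nonempty, hence do not affect the dimension of $\pi^{-1}(z)$. Since $E_{p_1}\subseteq E_{p_2}^{\perp}$ exactly when $\delta_p=0$ and $E^{q_2}\subseteq(E^{q_1})^{\perp}$ exactly when $\delta_q=0$, the equation $\omega(a_1,b_2)=0$ is vacuous iff $\delta_p=0$ and $\omega(a_2,b_1)=0$ is vacuous iff $\delta_q=0$; when not vacuous, each is a divisor and, for general $z$, a genuine codimension-one condition. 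The case analysis is then immediate. If $p_1+q_2<2n$ and $p_2+q_1<2n$, both factors are points, so $\pi^{-1}(z)$ — being nonempty — is a single point: this is case $(1)$. If exactly one sum equals $2n$ and the other is $<2n$, say $p_1+q_2=2n$ and $p_2+q_1<2n$, then $\bP(q^{-1}(V_z))\cong\bP^1$ and $\bP(p^{-1}(V_z))$ is a point (so $[b_1],[b_2]$ are determined by $z$); if $\max(\delta_p,\delta_q)=0$ both equations of \eqref{item:W1} are vacuous, $\pi^{-1}(z)$ contains a dense open subset of $\bP^1$, and $\pi$ is not birational, while if $\max(\delta_p,\delta_q)=1$ at least one equation is a genuine linear condition on $\bP^1$, cutting it to a single point, and nonemptiness of $\pi^{-1}(z)$ forces the (at most two) equations to be consistent there, so the fibre is one point — these are cases $(2)$ and $(3)$. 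In every remaining configuration $\pi$ is not birational: if both sums equal $2n$ then $\pi^{-1}(z)\subseteq\bP^1\times\bP^1$ is cut by at most two divisors of bidegree $(1,1)$, hence is either positive-dimensional or a length-$\geq2$ scheme (length $2$ exactly when $\delta_p=\delta_q=1$, which — if moreover $p_2-p_1\geq2$ — is condition \eqref{eqn:deg2}); and if one sum exceeds $2n$, the corresponding projective space has dimension $\geq2$ against at most two genuine equations, and a short check (the borderline case $p_1+q_2=2n+1$ with $\delta_p=\delta_q=1$ forces $p_2+q_1>2n$, so only one equation is genuine there) shows $\pi^{-1}(z)$ is positive-dimensional.

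The main obstacle is the pair of genericity statements: that for general $z\in\Gamma_2(X_u,X^v)$ the subspace $V_z\cap(E_{p_1}+E^{q_2})$ (and its analogue for $p$) has the expected dimension, and that the equations of \eqref{item:W1} which are nontrivial because $\max(\delta_p,\delta_q)=1$ are genuinely codimension one for general $z$ — i.e. the relevant projections of $V_z$ are not perpendicular to the induced vectors $b_1,b_2$. Both amount to showing that the image of an explicit incidence variety over $\Gamma_2(X_u,X^v)$ is not contained in a proper coordinate locus, and I expect this, rather than the bookkeeping, to be where the work lies; it is also where the distinction between the three listed cases and condition \eqref{eqn:deg2} originates.
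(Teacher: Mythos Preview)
Your approach is essentially the paper's: describe $\pi^{-1}(z)$ as a locally closed subset of $\bP(q^{-1}(V_z))\times\bP(p^{-1}(V_z))$ cut out by the bilinear conditions \eqref{item:W1}, compute the dimensions of the two factors for general $z$ according to whether $p_1+q_2$ and $p_2+q_1$ are $<2n$, $=2n$, or $>2n$, and finish by a case check. The paper proves only the ``if'' direction here and defers the ``only if'' to the case analysis of Proposition~\ref{prop:deg2}; you attempt both at once, but the underlying computations are identical.

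Two small corrections to your bookkeeping. First, in the case $p_1+q_2=2n=p_2+q_1$ you cannot have $\delta_p=\delta_q=1$: summing gives $(p_1+p_2)+(q_1+q_2)=4n$, while $\delta_p=\delta_q=1$ would force this sum to be at least $4n+4$. So your parenthetical identifying the length-$2$ fibre with $\delta_p=\delta_q=1$ is vacuous; in fact $\min(\delta_p,\delta_q)=0$ here, there is at most one $(1,1)$ equation, and the fibre is a curve (this is exactly the paper's Case~3 in Proposition~\ref{prop:deg2}). Condition \eqref{eqn:deg2} requires only $\max(\delta_p,\delta_q)=1$, and the $2$-to-$1$ phenomenon it names arises for $\ev_3^{1,1}$, not for $\pi$. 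Second, the same sum constraint shows that in your cases (2) and (3) one again has $\min(\delta_p,\delta_q)=0$, so there is exactly one nontrivial linear condition on $\bP^1$ and no consistency issue to invoke; the only genuine point, as you correctly flag, is checking that this single linear form does not vanish identically for general $z$.
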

    \begin{proof}
{{We only prove the if part, the converse will follow from the proof of Proposition \ref{prop:deg2}. Note that if ($p_1 + q_2 = 2n$, $p_2 + q_1 < 2n$) or if ($p_1 + q_2 < 2n$, $p_2 + q_1 = 2n$), we also prove the only if condition.}}
      Assume first that $p_1 + q_2 < 2n$ and $p_2 + q_1 < 2n$. Then, for $z\in \Gamma_2(X_u,X^v)$ general, by Lemma~\ref{lem:deg2}, we have $\dim(V_z \cap (E_{p_1} + E^{q_2})) = 1 = \dim(V_z \cap (E_{p_2} + E^{q_1}))$. This defines unique elements $[a_1+ a_2] \in \bP(V_z \cap (E_{p_1} + E^{q_2})) $ and $[b_2+ b_1] \in \bP(V_z \cap (E_{p_2} + E^{q_1})) $ with $([a_1],[a_2],[b_1],[b_2],z) \in \cW$, proving part (1). 

      Assume that $p_1 + q_2 < 2n$ and $p_2 + q_1 = 2n$ (the case $p_1 + q_2 = 2n$ and $p_2 + q_1 < 2n$ is similar). Then, for $z$ general, we have $\dim(V_z \cap (E_{p_1} + E^{q_2})) = 1$ and this defines a unique element $[a_1+ a_2] \in \bP(V_z \cap (E_{p_1} + E^{q_2})) $. For any $[b] \in \bP(V_z)$, we have a unique decomposition $b = b_1 + b_2$ with $[b_2] \in \bP(E_{p_2})$ and $[b_1] \in \bP(E^{q_1}) $. If $\max(\delta_p,\delta_q) = 0$ then the conditions $\omega(a_1,b_2) = \omega(a_2,b_1) = 0$ are satisfied, and we get a fiber of dimension $1$ parametrized by $\bP(V_z)$. Otherwise, at least one of these (linear) conditions is non-trivial and there is a unique $[b] \in \bP(V_z)$ satisfying these conditions, proving part (2) and (3).
    \end{proof}

    \begin{lemma}\label{lemma:gamma2}
      When $\pi$ is not birational, 
      $$\Gamma_2(X_u,X^v) \subseteq \{z \in X \ | \ V_z \cap (E_{p_1} + (E_{p_2} \cap E ^{q_2}) + E^{q_1}) \neq 0 \}.$$ 
    \end{lemma}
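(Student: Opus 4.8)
The plan is to verify the asserted inclusion pointwise on $\cW$, which dominates $\Gamma_2(X_u,X^v)$ via the map $\pi=\pi_3^2(u,v)\circ f$, where $f\colon\cW\to\Gamma_2^{(3)}(X_u,X^v)$ is the birational map of Lemma~\ref{lemma:f-birat}. Write $F=E_{p_1}+(E_{p_2}\cap E^{q_2})+E^{q_1}$. Since the locus $\{z\in X\mid V_z\cap F\neq 0\}$ is closed in $X$, and since $\pi$ has dense image in $\Gamma_2(X_u,X^v)$ (because $f$ is dominant and the projection $\pi_3^2(u,v)\colon\Gamma_2^{(3)}(X_u,X^v)\to\Gamma_2(X_u,X^v)$ is surjective), it suffices to prove that $V_z\cap F\neq 0$ for every $([a_1],[a_2],[b_1],[b_2],z)\in\cW$.

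Two elementary facts will do the work. First, a combinatorial identity for $F$: if $p_1+q_2\geq 2n$ then $F=E_{p_2}+E^{q_1}$, and symmetrically if $p_2+q_1\geq 2n$ then $F=E_{p_1}+E^{q_2}$. For the first, $F\subseteq E_{p_2}+E^{q_1}$ is immediate from $E_{p_1}\subseteq E_{p_2}$ and $E_{p_2}\cap E^{q_2}\subseteq E_{p_2}$; for the reverse inclusion it is enough to observe that any basis vector $e_i$ with $p_1<i\leq p_2$ satisfies $i\geq p_1+1\geq 2n+1-q_2$, hence $e_i\in E_{p_2}\cap E^{q_2}$, so that $E_{p_2}\subseteq E_{p_1}+(E_{p_2}\cap E^{q_2})$. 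I would simply record this index computation quickly. Second, by item~(1) of Lemma~\ref{lemma:deg2-birat}, if both $p_1+q_2<2n$ and $p_2+q_1<2n$ then $\pi$ is birational; hence under the hypothesis of the lemma at least one of $p_1+q_2\geq 2n$ and $p_2+q_1\geq 2n$ holds.

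To conclude, fix a point $([a_1],[a_2],[b_1],[b_2],z)\in\cW$. Condition~\eqref{item:W2} produces non-zero vectors $c\in V_z\cap\langle a_1,a_2\rangle$ and $c'\in V_z\cap\langle b_1,b_2\rangle$. Because $a_1\in E_{p_1}$, $a_2\in E^{q_2}$, $b_1\in E^{q_1}$ and $b_2\in E_{p_2}$, we get $c\in E_{p_1}+E^{q_2}$ and $c'\in E_{p_2}+E^{q_1}$. If $p_1+q_2\geq 2n$ then $c'\in E_{p_2}+E^{q_1}=F$ by the identity, so $0\neq c'\in V_z\cap F$; if $p_2+q_1\geq 2n$ then $c\in E_{p_1}+E^{q_2}=F$, so $0\neq c\in V_z\cap F$. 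In either case $V_z\cap F\neq 0$, which is what we wanted.

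I do not anticipate a serious obstacle. The only steps that need a little attention are the reduction (closedness of the target locus together with density of $\pi(\cW)$ in $\Gamma_2(X_u,X^v)$) and keeping the two symmetric halves of the combinatorial identity straight; note that when both inequalities hold one has $F=\C^{2n}$ and the statement is trivial, and the argument above covers this case uniformly.
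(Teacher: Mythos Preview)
Your proof is correct and follows essentially the same line as the paper's: both use Lemma~\ref{lemma:deg2-birat} to reduce to the case $p_1+q_2\geq 2n$ (or the symmetric one), identify $F$ with $E_{p_2}+E^{q_1}$ in that case, and then show $V_z$ meets this space. The only difference is cosmetic: the paper works directly with a generic $(x,y,z)\in\Gamma_2^{(3)}(X_u,X^v)$ and observes that the $2$-plane $V_z\subset V_x+V_y$ must meet the $3$-plane $V_x+(V_y\cap E^{q_1})\subset E_{p_2}+E^{q_1}$, whereas you route through the model $\cW$ and read the same conclusion off condition~\eqref{item:W2}; your version also makes the identity for $F$ explicit, which the paper postpones to Corollary~\ref{cor:gamma_2=}.
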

    \begin{proof}
      In this case, by Lemma~\ref{lemma:deg2-birat}, either $p_1 + q_2 \geq 2n$ or $p_2+q_1 \geq 2n$. Without loss of generality, we assume $p_1 + q_2 \geq 2n$. Note that if $z$ is the image of a generic $(x,y,z)$ in $\Gamma_2(X_u,X^v)$, then $V_x\subset E_{p_2}$, $V_y\cap E^{q_1}\neq 0$ and $V_z\subset V_x+V_y$, so $0\neq V_z \cap (V_x+(V_y\cap E^{q_1}))\subset V_z\cap(E_{p_2}+E^{q_1})$.
    \end{proof}

    The reverse inclusion will follow from the proof of Proposition~\ref{prop:deg2}.
    \begin{prop}\label{prop:deg2}
      The general fiber of $\pi$ is unirational.
    \end{prop}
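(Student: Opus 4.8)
The plan is to fix a general point $z$ of $\Gamma_2(X_u,X^v)$ and to analyse the fibre $\pi^{-1}(z)=\pi'^{-1}(z)$. By the Remark preceding Lemma~\ref{lemma:W}, this fibre is the locally closed subset of
\[
  \widetilde F_z \;=\; \big\{\, ([a_1,a_2],[b_1,b_2]) \in \bP(q^{-1}(V_z)) \times \bP(p^{-1}(V_z)) \;\mid\; \omega(a_1,b_2) = \omega(a_2,b_1) = 0 \,\big\}
\]
on which the open conditions $[a_1]\in\bP(E_{p_1})_\circ$, $[a_2]\in\bP(E^{q_2})_\circ$, $[b_1]\in\bP(E^{q_1})_\circ$, $[b_2]\in\bP(E_{p_2})_\circ$ and \eqref{item:W4}, \eqref{item:W5} hold. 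Since $p$ and $q$ are the addition maps, $q^{-1}(V_z)$ and $p^{-1}(V_z)$ are linear subspaces, so $\widetilde F_z$ is a bilinear incidence inside a product of two projective spaces; I intend to show it is rational, so that $\pi^{-1}(z)$, being a dense open subset of a component of it, is unirational. If $\pi$ is birational the fibre is a point, so the content lies in the non-birational cases.

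First I would record the linear-algebra input. Since $(E_\bullet)$ and $(E^\bullet)$ are opposite flags, $\dim q^{-1}(V_z)=\dim(E_{p_1}\cap E^{q_2})+\dim\big(V_z\cap(E_{p_1}+E^{q_2})\big)$ with $\dim(E_{p_1}\cap E^{q_2})=\max(0,p_1+q_2-2n)$, and similarly for $p^{-1}(V_z)$; for $z$ general these are computed from the explicit description
$$\Gamma_2(X_u,X^v)=\big\{z\in X\ \big|\ V_z\cap(E_{p_1}+(E_{p_2}\cap E^{q_2})+E^{q_1})\neq 0\big\},$$
whose inclusion ``$\subseteq$'' is Lemma~\ref{lemma:gamma2} and whose reverse inclusion will come out of the construction below, as announced after that lemma. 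The crucial observation is that $\delta_p=0$ means $E_{p_1}^\perp\supseteq E_{p_2}$, so that $\omega(a_1,b_2)=0$ holds automatically for $a_1\in E_{p_1}$, $b_2\in E_{p_2}$; symmetrically $\delta_q=0$ forces $\omega(a_2,b_1)=0$. Hence $\widetilde F_z$ is cut out of $\bP(q^{-1}(V_z))\times\bP(p^{-1}(V_z))$ by exactly $r$ genuine bilinear equations, where $r\in\{0,1,2\}$ is the number of indices among $\{p,q\}$ with $\delta=1$, and a short case check on the values of $p_1+q_2$ and $p_2+q_1$ relative to $2n$ (using Lemma~\ref{lem:deg2} and Lemma~\ref{lemma:deg2-birat}) shows that in the non-birational cases one of $\dim q^{-1}(V_z)$, $\dim p^{-1}(V_z)$ exceeds $r$.

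Next I would project $\widetilde F_z$ onto the factor of larger dimension, say $\bP(q^{-1}(V_z))$. For generic $[a_1,a_2]$ the conditions $\omega(a_1,b_2)=0$ and $\omega(a_2,b_1)=0$ restrict to $r$ linearly independent functionals on $p^{-1}(V_z)$ — this independence is exactly what the openness conditions $[a_i]\in\bP(E_\bullet)_\circ$ are designed to guarantee, and its failure is what detects the birational cases of Lemma~\ref{lemma:deg2-birat}, so the same computation yields the converse asserted there. Over a dense open subset $U$ of $\bP(q^{-1}(V_z))$ meeting the locus of the openness conditions, the fibre of $\widetilde F_z\to\bP(q^{-1}(V_z))$ is then the projectivization of a linear subspace of $p^{-1}(V_z)$ of constant dimension, and these subspaces form a vector subbundle $\cE$ of $U\times p^{-1}(V_z)$; thus $\widetilde F_z$ over $U$ is the Zariski-locally trivial projective bundle $\bP(\cE)$ over the projective space $\bP(q^{-1}(V_z))$, hence birational to a product of projective spaces and so rational. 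Since the openness conditions confine $\pi^{-1}(z)$ to this locus, $\pi^{-1}(z)$ is a dense open subset of a rational variety, in particular unirational. The same analysis shows the fibre is positive-dimensional precisely when $\pi$ is not birational, completing the ``only if'' direction of Lemma~\ref{lemma:deg2-birat}, and that its nonemptiness characterizes $\Gamma_2(X_u,X^v)$ as claimed.

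The main obstacle is the bookkeeping in the middle step: one has to run through the possibilities for $p_1+q_2$ and $p_2+q_1$ with respect to $2n$ and for the pair $(\delta_p,\delta_q)$, and in each case confirm both that the chosen projection remains dominant — adjusting which factor one projects to when necessary — and that the $r$ functionals are independent for generic $[a_1,a_2]$. The tight case is $p_1+q_2=2n=p_2+q_1$, where $\bP(q^{-1}(V_z))=\bP(p^{-1}(V_z))=\bP^1$ and, when $\max(\delta_p,\delta_q)=1$, $r=1$, so that $\widetilde F_z$ is birational to a single $(1,1)$-curve in $\bP^1\times\bP^1$; when $\delta_p=\delta_q=0$ one of the two factors is already a point and $\widetilde F_z$ is a projective space. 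The symmetry exchanging $(p_1,q_2)\leftrightarrow(p_2,q_1)$ halves the casework, and nonemptiness of the various $\bP(E_\bullet)_\circ$ has already been arranged in the construction of $\cW$.
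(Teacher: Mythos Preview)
Your approach is essentially the paper's own: identify $\pi^{-1}(z)$ with the open part of the bilinear incidence $\widetilde F_z\subset\bP(q^{-1}(V_z))\times\bP(p^{-1}(V_z))$, and then project to one factor so that the fibre over a general point is a fixed linear space. The case split according to the position of $p_1+q_2$ and $p_2+q_1$ relative to $2n$ is exactly the paper's Cases~1--7. There is, however, a genuine gap in your treatment of the tight case $p_1+q_2=2n=p_2+q_1$ with $\max(\delta_p,\delta_q)=1$.

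You assert that the openness conditions $[a_i]\in\bP(E_\bullet)_\circ$ guarantee that the $r$ bilinear functionals are independent for generic $[a_1,a_2]$, and hence that $\widetilde F_z$ is ``a single $(1,1)$-curve''. But in this case both factors are $\bP^1$, and the single equation $\omega(a_2,b_1)=0$ (say $\delta_p=0$, $\delta_q=1$) defines a $(1,1)$-divisor which could a priori be reducible. The condition $[a_2]\in\bP(E^{q_2})_\circ$ only says that $\omega(a_2,\cdot)$ is a nonzero functional on all of $E^{q_1}$; it does \emph{not} guarantee that this functional restricts nontrivially to the $2$-dimensional subspace $\pr_{E^{q_1}}(p^{-1}(V_z))\subset E^{q_1}$, which is what you need. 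Whether the induced bilinear form on $q^{-1}(V_z)\times p^{-1}(V_z)$ is non-degenerate is a nontrivial condition on $z$, and the paper handles it by exhibiting an explicit $V_z$ (in coordinates) for which the associated $2\times 2$ matrix is invertible; this then shows non-degeneracy holds for $z$ general. Your argument needs this step or an equivalent one.

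A smaller slip: in the same case with $\delta_p=\delta_q=0$, both factors are still $\bP^1$ (not a point), and $\widetilde F_z$ is birational to $\bP^1\times\bP^1$. This does not affect the conclusion, but your parenthetical description is incorrect.
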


    \begin{proof}\label{prop:deg2-pi}
    {{In the cases of Lemma \ref{lemma:deg2-birat}, the map is birational. The remaining cases are as follows. Note that the discussion proves the only if part of Lemma \ref{lemma:deg2-birat}.}}  
      \item[Case 1: $p_1 + q_2 \geq 2n$ and $p_2 + q_1 > 2n$.] Consider the map $g_a\colon \cW \to \{ ([a_1],[a_2],z) \in \bP(E_{p_1})_\circ \times \bP(E^{q_2})_\circ \times X \ | \ a_1\not\in V_z \cap \langle a_1, a_2 \rangle \neq 0\text{ and }\omega(a_1,a_2) \neq 0\}$ obtained by projection, and let $\cW_\circ$ be the open subset in $\cW$ where this map has fibers of minimal dimension. It suffices to prove that  $F = \cW_\circ \cap \pi^{-1}(z)$ is unirational of positive dimension for $z \in \Gamma_2(X_u,X^v)$ general. Consider the restriction $g_a\colon F \to \bP(q^{-1}(V_z))$. A general fiber of this map is an open subset of the projective space $\{[b_1,b_2] \in \bP(p^{-1}(V_z)) \ | \ \omega(a_1,b_2) = 0 = \omega(a_2,b_1) \}$. This projective space has dimension $1 + p_2 + q_1 -2n - \delta_p -\delta_q \geq 0$, thus the map is dominant. Furthermore, we know that fibers of this map have constant dimension. It follows that $F$ is unirational of dimension $p_1 + p_2 + q_1 + q_2 - 4n + 2 - \delta_p - \delta_q \geq 1$. 
      \item[Case 2: $p_1 + q_2 > 2n$ and $p_2 + q_1 \geq 2n$.] This is similar to Case 1. 
      \item[Case 3: $p_1 + q_2 = 2n = p_2 + q_1$.] In this case, $\min(\delta_p,\delta_q) = 0$, and for fixed \(z\in \Gamma_2(X_u,X^v)\), we have $\dim p^{-1}(V_z) = \dim q^{-1}(V_z) = 2$. Therefore, $\pi^{-1}(z)$ is birational to $\bP^1 \times \bP^1$ if $\max(\delta_p,\delta_q) = 0$ or an open subset of a hyperplane section of bidegree $(1,1)$ in $\bP^1 \times \bP^1$ if $\max(\delta_p,\delta_q) = 1$. We are done in the former case, in the latter case, we only need to prove that the defining form is non-degenerate. We may assume $p_1 \leq q_1$ which implies $p_1 + 2 \leq q_1$, and we exhibit a $V_z$ for which the form is non-degenerate. Set $V_z = \langle e_{q_1} + e_{2n+1 - q_1} , e_{q_2} + e_{2n+1 - q_2} \rangle$ if $p_2 \geq q_1$ and $V_z = \langle e_{p_2} + e_{2n+1 - p_2} , e_{q_2} + e_{2n+1 - q_2} \rangle$ if $p_2 \leq q_1$. Both cases are similar, so we only deal with the latter. We have $q^{-1}(V_z) = \langle (0,e_{p_2} + e_{2n+1 - p_2}) , (0,e_{q_2} + e_{2n+1 - q_2}) \rangle$ and $p^{-1}(V_z) = \langle  (e_{p_2},e_{2n+1 - p_2}) , (e_{2n+1 - q_2},e_{q_2})\rangle$. The condition $\omega(a_1,b_2) = 0$ is always trivial and the condition $\omega(a_2,b_1) = 0$ is induced by a matrix of the form
      $$\begin{pmatrix}\pm 1& 0 \\ 0 & \pm 1\\
      \end{pmatrix}$$
      proving the result in this case. 
      \item[Case 4: $p_1 + q_2 < 2n$ and $p_2 + q_1 > 2n$.]  Note that \(\dim\cW\geq p_1+q_2+2n-3\). By Lemma~\ref{lemma:gamma2}, \(\dim\Gamma_2(X_u,X^v)\leq p_1+q_2+2n-4\). Therefore, fibers of \(\pi\) are positive dimensional.
      For $z\in \Gamma_2(X_u,X^v)$ general, by Lemma~\ref{lem:deg2}, we have $\dim(V_z \cap (E_{p_1} + E^{q_2})) = 1$ and therefore a unique element $[a_1+ a_2] \in \bP(V_z \cap (E_{p_1} + E^{q_2})) $. 
      
      For $([a_1],[a_2],[b_1],[b_2],z) \in \pi^{-1}(z)$ the elements $[a_1]$ and $[a_2]$ are fixed as above.
      Therefore, \(\pi^{-1}(z)\) is birational to the linear space
      \[
        \{[b_1,b_2] \in {\bP(p^{-1}(V_z))} \ |\ \omega(a_1,b_2) = \omega(a_2,b_1) = 0 \}.
      \]
      \item[Case 5: $p_1 + q_2 > 2n$ and $p_2 + q_1 < 2n$.] This is similar to Case 4. 
      \item[Case 6: $p_1 + q_2 = 2n$, $p_2 + q_1 < 2n$ and $\max(\delta_p,\delta_q) =0$.] This follows from the proof of Lemma \ref{lemma:deg2-birat} part (2).
      \item[Case 7: $p_1 + q_2 < 2n$, $p_2 + q_1 = 2n$ and $\max(\delta_p,\delta_q) =0$.] This is similar to Case 6.
\end{proof}

\begin{cor} \label{cor:gamma_2=}
  When $\pi$ is not birational, {{we have}}
      $$\Gamma_2(X_u,X^v) = \{z \in X \ | \ V_z \cap (E_{p_1} + (E_{p_2} \cap E ^{q_2}) + E^{q_1}) \neq 0 \}$$
      {{and $\Gamma_2(X_u,X^v)$ has rational singularities.}} 
\end{cor}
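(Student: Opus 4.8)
The plan is to read off both assertions from the analysis of the map $\pi\colon\cW\to\Gamma_2(X_u,X^v)$ already carried out. Set $L:=E_{p_1}+(E_{p_2}\cap E^{q_2})+E^{q_1}$, so that the claim is $\Gamma_2(X_u,X^v)=\{z\in X\mid V_z\cap L\neq 0\}$. The inclusion $\subseteq$ is precisely Lemma~\ref{lemma:gamma2}. The right-hand side is irreducible, being the image of the $\PP^{2n-3}$-bundle $\{(\ell,z)\mid\ell\subseteq V_z\cap L\}$ over $\PP(L)$, and it has rational singularities (in particular it is normal) by Proposition~\ref{prop_rat_sing_smalldegreecurves}. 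Since $\Gamma_2(X_u,X^v)$ is irreducible as well, it therefore suffices to prove that the two sides have the same dimension; equality then follows, and rational singularities of $\Gamma_2(X_u,X^v)$ are inherited from the right-hand side.

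For the dimension comparison I would write $\dim\Gamma_2(X_u,X^v)=\dim\cW-e$, where $\dim\cW=p_1+p_2+q_1+q_2-3-\delta_p-\delta_q$ by Lemma~\ref{lemma:W} and $e$ is the dimension of a general fibre of $\pi$, extracted case by case from the explicit fibre descriptions in the proof of Proposition~\ref{prop:deg2}. On the other side, $\{z\in X\mid V_z\cap L\neq 0\}$ has dimension $\dim L+2n-4$ when $L\neq\C^{2n}$ (as in the proof of Proposition~\ref{prop_rat_sing_smalldegreecurves}) and equals $X$, of dimension $4n-5$, when $L=\C^{2n}$. The remaining point is to identify $L$ in each of the Cases~1--7 of that proof: using that $(E_k)_k$ and $(E^k)_k$ are opposite flags, one checks that $p_1+q_2\geq 2n$ forces $E_{p_1}+(E_{p_2}\cap E^{q_2})=E_{p_2}$, hence $L=E_{p_2}+E^{q_1}$; symmetrically $p_2+q_1\geq 2n$ forces $L=E_{p_1}+E^{q_2}$; and if both inequalities are strict then $L=\C^{2n}$. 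These are exactly the subspaces whose meeting conditions govern the fibres of $\pi$ in the respective cases, and a short arithmetic check then gives $\dim\cW-e=\dim\{z\in X\mid V_z\cap L\neq 0\}$ in all cases.

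The step I expect to be the main obstacle is precisely this bookkeeping: matching the subspaces $E_{p_1}+E^{q_2}$, $E_{p_2}+E^{q_1}$ and $\C^{2n}$ occurring case by case in the proof of Proposition~\ref{prop:deg2} with the uniform expression $L$, and keeping track of how $\delta_p$ and $\delta_q$ affect $e$ (the vanishing of $\omega(a_1,b_2)$, resp.\ $\omega(a_2,b_1)$, is an independent linear condition on the fibre exactly when $\delta_p=1$, resp.\ $\delta_q=1$). Everything else is formal. As an alternative that avoids the dimension count, one can argue directly: since $f$ is birational (Lemma~\ref{lemma:f-birat}) and $\pi_3^2(u,v)$ is surjective, $\pi$ is dominant, so $\Gamma_2(X_u,X^v)=\overline{\pi(\cW)}$; the constructions in the proof of Proposition~\ref{prop:deg2} produce, for $z$ general subject only to a general-position hypothesis and to $V_z\cap L\neq 0$, an explicit point of $\cW$ over $z$, whence $\{z\in X\mid V_z\cap L\neq 0\}\subseteq\overline{\pi(\cW)}=\Gamma_2(X_u,X^v)$, and Lemma~\ref{lemma:gamma2} gives the equality (and Proposition~\ref{prop_rat_sing_smalldegreecurves} the rational singularities).
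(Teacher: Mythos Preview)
Your proposal is correct, and your ``alternative'' at the end is exactly the paper's proof: one identifies $L$ with $E_{p_2}+E^{q_1}$ when $p_1+q_2\geq 2n$ and with $E_{p_1}+E^{q_2}$ when $p_2+q_1\geq 2n$, invokes Lemma~\ref{lemma:gamma2} for the inclusion $\subseteq$, and reads the reverse inclusion directly off the explicit fibre constructions in the proof of Proposition~\ref{prop:deg2} (those constructions exhibit, for each $z$ with $V_z\cap L\neq 0$ in general position, a point of $\cW$ over $z$), then quotes Proposition~\ref{prop_rat_sing_smalldegreecurves} for rational singularities.

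Your primary route via a dimension count $\dim\cW-e=\dim\{z:V_z\cap L\neq 0\}$ is also valid but is a detour the paper avoids: since Proposition~\ref{prop:deg2} already builds nonempty fibres over a dense open subset of $\{z:V_z\cap L\neq 0\}$, surjectivity of $\pi$ onto the right-hand side follows without any arithmetic, so the case-by-case extraction of $e$ (and the bookkeeping with $\delta_p,\delta_q$ you flagged as the main obstacle) is unnecessary.
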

\begin{proof}
  When $p_1 + q_2 \geq 2n$, we have $E_{p_1} + (E_{p_2} \cap E ^{q_2}) + E^{q_1} = E_{p_2} + E^{q_1}$; when $p_2+q_1 \geq 2n$, we have $E_{p_1} + (E_{p_2} \cap E ^{q_2}) + E^{q_1} = E_{p_1} + E^{q_2}$. The result follows from Lemma~\ref{lemma:gamma2} and the proof of Proposition~\ref{prop:deg2}. The rational singularities statement follows from Proposition \ref{prop_rat_sing_smalldegreecurves}.
\end{proof}

\begin{lemma}\label{lemma:gamma1,1}
  We have 
  $$\Gamma_{1,1}(X_u,X^v) \subseteq \{z \in X \ | \ V_z \cap (E_{p_1} + (E_{p_2} \cap E ^{q_2}) + E^{q_1}) \neq 0 \}.$$
\end{lemma}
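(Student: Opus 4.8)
The plan is to mimic the generic-point argument used in the proof of \Lemma{gamma2}, now applied to a chain of two lines rather than a smooth conic. Concretely, suppose $z\in\Gamma_{1,1}(X_u,X^v)$ is the image of a general point, so $z=\ev_3(f)$ for a stable map $f$ whose source is a chain $C_0\cup C_1$ of two lines, with $\ev_1(f)=x\in X_u$ and $\ev_2(f)=y\in X^v$ lying on $C_0$, and the third marked point on $C_1$. Let $s=\ev_s(f)$ be the node. Since the first component has degree $1$, the points $x,y,s$ all lie on a single line $\ell_0\subset X$, which means the three $2$-planes $V_x,V_y,V_s$ all contain a common line and are all contained in a common $3$-plane; in particular $V_s\subseteq V_x+V_y$. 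Since the second component has degree $1$, the points $s$ and $z$ lie on a line $\ell_1\subset X$, so $V_s\cap V_z\neq 0$.

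First I would combine these two facts: $0\neq V_s\cap V_z\subseteq (V_x+V_y)\cap V_z$, so it suffices to show $(V_x+V_y)\cap V_z$ lies in $E_{p_1}+(E_{p_2}\cap E^{q_2})+E^{q_1}$ after possibly shrinking to a general $z$. Using $x\in X_u$ we have $V_x\cap E_{p_1}\neq 0$ and $V_x\subseteq E_{p_2}$, and using $y\in X^v$ we have $V_y\cap E^{q_1}\neq 0$ and $V_y\subseteq E^{q_2}$. Thus $V_x\subseteq E_{p_2}$ and $V_y\subseteq E^{q_2}$, whence $V_x+V_y\subseteq E_{p_2}+E^{q_2}$; this already gives $V_z\cap(E_{p_2}+E^{q_2})\neq 0$, but we need the sharper target. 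The refinement I would pursue is to exhibit, for $z$ general, a nonzero vector in $V_s\cap V_z$ that can be written inside the smaller space $E_{p_1}+(E_{p_2}\cap E^{q_2})+E^{q_1}$. The natural source of such a vector is the line $V_x\cap V_y$ of the first component: since $V_s$ is a general $2$-plane between $V_x\cap V_y$ and $V_x+V_y$, for generic $z$ the intersection $V_s\cap V_z$ will be forced to lie along the common line $V_x\cap V_y$ — more precisely, one can choose $V_s\supseteq V_x\cap V_y$ and argue that the generic constraint ``$V_s\cap V_z\neq 0$'' on the $2$-plane $V_s$ varying in a $\bP^1$ within $V_x+V_y$ is only satisfied for $z$ such that $V_z$ meets $V_x\cap V_y$ or meets a $1$-parameter family whose union is contained in the desired subspace. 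I would then check that $V_x\cap V_y$, being cut out by the incidence conditions, is contained in $E_{p_1}+E^{q_1}$ (the generic such line meets $E_{p_1}$ along $V_x\cap E_{p_1}$ and $E^{q_1}$ along $V_y\cap E^{q_1}$, hence lies in their sum), which is inside $E_{p_1}+(E_{p_2}\cap E^{q_2})+E^{q_1}$.

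A cleaner route, and the one I would actually write, is to first reduce to the birational-fiber description already available: by \Lemma{gamma2} and the proof of \Proposition{deg2} we know the right-hand side equals $\Gamma_2(X_u,X^v)$ whenever $\pi$ is not birational, and in general $\Gamma_{1,1}(X_u,X^v)\subseteq\Gamma_2(X_u,X^v)$ when both neighborhoods make sense, since a chain of two lines is a degeneration of a conic (Lemma~\ref{lem:split-deg2} and its corollary). When $\pi$ \emph{is} birational the claimed containment is with respect to a strictly larger set, namely one of $E_{p_1}+E^{q_2}$ or $E_{p_2}+E^{q_1}$ (or $E_{p_1}+(E_{p_2}\cap E^{q_2})+E^{q_1}$ itself if $p_1+q_2<2n$ and $p_2+q_1<2n$, in which case the bound is weakest), so the inclusion $\Gamma_{1,1}(X_u,X^v)\subseteq\Gamma_2(X_u,X^v)\subseteq\{z\mid V_z\cap(E_{p_1}+(E_{p_2}\cap E^{q_2})+E^{q_1})\neq 0\}$ needs care only about which of the three summands survive; I would dispatch this by the elementary observation that $E_{p_1}\subseteq E_{p_2}$, $E^{q_1}\subseteq E^{q_2}$, so $E_{p_1}+(E_{p_2}\cap E^{q_2})+E^{q_1}\supseteq E_{p_1}+E^{q_1}$ always, and combining with $V_x+V_y\subseteq E_{p_2}\cap(\text{stuff})$ — unwinding the generic node argument above to place the surviving vector in the correct summand.

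The main obstacle will be the last point: making precise that for $z$ general the nonzero vector of $V_s\cap V_z$ can be taken in $E_{p_1}+(E_{p_2}\cap E^{q_2})+E^{q_1}$ rather than merely in the a priori larger $E_{p_2}+E^{q_2}$. This is a genericity statement about how a $2$-plane $V_s$ moving in a pencil inside $V_x+V_y$ meets a general $V_z$, and the honest way to handle it is to observe that the locus of valid $z$ is irreducible (being dominated by $M_{1,1}(X_u,X^v)$, which is irreducible with rational singularities), so it is enough to verify the containment at a single convenient $z$ — e.g. take $x,y$ in the open Schubert cells, choose coordinates so that $V_x\cap E_{p_1}=\langle e_{p_1}\rangle$, $V_y\cap E^{q_1}=\langle e_{2n+1-q_1}\rangle$, pick $V_s$ to be the plane spanned by $e_{p_1}$ and a general vector of $V_x+V_y$, and then exhibit the extreme $z$'s for which $V_s\cap V_z\neq 0$; a direct check shows these have $V_z$ meeting $E_{p_1}+(E_{p_2}\cap E^{q_2})+E^{q_1}$. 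I expect this single-point verification to be short but slightly fiddly; everything else is bookkeeping with the incidence conditions defining $X_u$ and $X^v$.
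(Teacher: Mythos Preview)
You have all the ingredients in your first paragraph but miss the one-line combination that finishes the proof. Since $x$ and $y$ lie on a common line in $X$, we have $V_x\cap V_y\neq 0$; and since $V_x\subseteq E_{p_2}$ and $V_y\subseteq E^{q_2}$, this forces $V_x\cap V_y\subseteq E_{p_2}\cap E^{q_2}$. For a general $(x,y,z,t)\in\Gamma_{1,1}^{(4)}(X_u,X^v)$ the line $V_x\cap V_y$ is distinct from both $V_x\cap E_{p_1}$ and $V_y\cap E^{q_1}$, so $V_x=(V_x\cap E_{p_1})+(V_x\cap V_y)$ and $V_y=(V_y\cap E^{q_1})+(V_x\cap V_y)$. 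Hence
\[
V_x+V_y\;\subseteq\;E_{p_1}+(E_{p_2}\cap E^{q_2})+E^{q_1},
\]
and the inclusion you already established, $0\neq V_s\cap V_z\subseteq(V_x+V_y)\cap V_z$, immediately gives the result. This is exactly the paper's proof, in two lines; there is no need to analyze how $V_s$ moves in a pencil or to do a single-point verification.

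Your attempt to place $V_x\cap V_y$ inside $E_{p_1}+E^{q_1}$ is wrong: take $n=3$, $p_1=q_1=1$, $p_2=q_2=4$; then $V_x\cap V_y\subseteq E_4\cap E^4=\langle e_3,e_4\rangle$, which meets $E_1+E^1=\langle e_1,e_6\rangle$ only in $0$. The correct home for $V_x\cap V_y$ is precisely the middle summand $E_{p_2}\cap E^{q_2}$, and recognizing this is the whole point. Your ``cleaner route'' via $\Gamma_{1,1}\subseteq\Gamma_2$ and Lemma~\ref{lemma:gamma2} also does not work as stated: that lemma only identifies $\Gamma_2(X_u,X^v)$ with the right-hand side when $\pi$ is \emph{not} birational, so in the birational case you have no description of $\Gamma_2(X_u,X^v)$ to fall back on, and your handling of that case is not a proof.
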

\begin{proof}
Note that there is no assumption on $\pi$ here.  Let \((x,y,z,t)\in \Gamma^{(4)}_{1,1}(X_u,X^v)\) be general, then \(V_t\subset V_x+ V_y\), \(V_t\cap V_z\neq 0\) and \(V_x+V_y\subseteq E_{p_1} + (E_{p_2} \cap E ^{q_2}) + E^{q_1}\), so \(V_z \cap (E_{p_1} + (E_{p_2} \cap E ^{q_2}) + E^{q_1}) \neq 0\), proving the result.
\end{proof}

The reverse inclusion will follow from the proof of the following Proposition.

\begin{prop}\label{prop:1,1}
  \begin{enumerate}
  \item {{$\Gamma_{1,1}(X_u,X^v)$ is non-empty if and only if $p_2+q_2 > 2n$.}}
    \item If \eqref{eqn:deg2} does not hold, then \(\pi_3^{1,1}(u,v)\) is GRCF.
    \item If \eqref{eqn:deg2} holds, then \(\pi_3^{1,1}(u,v)\) is generically finite of degree $2$.
  \end{enumerate}
  
\end{prop}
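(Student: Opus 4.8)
The plan is to make $\pi_3^{1,1}(u,v)$ completely explicit and then analyse its general fibre, the delicate point being the degree count under~\eqref{eqn:deg2}. A short computation shows $\Gamma_1(x)=\{z\in X\mid V_z\cap V_x\neq 0\}$ for $x\in X$ and, more generally, $\Gamma_1(\ell)=\{z\in X\mid V_z\cap V_\ell\neq 0\}$ for a line $\ell\subseteq X$, where $V_\ell$ is the $3$-plane spanned by $\ell$. A general point of $M_{1,1}(X_u,X^v)$ is a chain $\ell_1\cup\ell_2$ with $x:=\ev_1\in X_u$ and $y:=\ev_2\in X^v$ on $\ell_1$ — so generically $\dim(V_x\cap V_y)=1$, the line $\ell_1$ is the unique one through $x$ and $y$, one has $V_x\cap V_y\subseteq E_{p_2}\cap E^{q_2}$, and $V_{\ell_1}=V_x+V_y$ — with the node on $\ell_1\cap\ell_2$ and $z:=\ev_3$ on $\ell_2$; such a chain exists exactly when $z\in\Gamma_1(\ell_1)$, i.e. $V_z\cap(V_x+V_y)\neq 0$. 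Hence $\Gamma_{1,1}^{(3)}(X_u,X^v)$ is the closure of $\{(x,y,z)\in X_u\times X^v\times X\mid \dim(V_x\cap V_y)\le 1,\ V_z\cap(V_x+V_y)\neq 0\}$, the map $\pi_3^{1,1}(u,v)$ is the third projection, and for general $z$ its fibre is the closure of
\[
  F_z:=\{(x,y)\in X_u\times X^v\mid \dim(V_x\cap V_y)=1,\ V_z\cap(V_x+V_y)\neq 0\}.
\]

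For part~(1): the description above forces the line $V_x\cap V_y$ to lie in $E_{p_2}\cap E^{q_2}$ for a general point of $\Gamma_{1,1}^{(3)}(X_u,X^v)$, so if this variety is non-empty then $E_{p_2}\cap E^{q_2}\neq 0$, i.e. $p_2+q_2>2n$. Conversely, assuming $p_2+q_2>2n$ and picking $0\neq v\in V_z\cap\bigl(E_{p_1}+(E_{p_2}\cap E^{q_2})+E^{q_1}\bigr)$, I would build — from a decomposition of $v$ along the three summands and using the $\omega$-orthogonality relations between the flags $E_\bullet$ and $E^\bullet$ — a line $\ell_1$ meeting both $X_u$ and $X^v$ together with a line through $z$ meeting $\ell_1$, thus showing $z\in\Gamma_{1,1}(X_u,X^v)$; the cases $p_1=1$ or $q_1=1$, and the case where the $(E_{p_2}\cap E^{q_2})$-component of $v$ vanishes, are dealt with by hand. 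This also yields the inclusion reverse to Lemma~\ref{lemma:gamma1,1}, so $\Gamma_{1,1}(X_u,X^v)=\{z\mid V_z\cap(E_{p_1}+(E_{p_2}\cap E^{q_2})+E^{q_1})\neq 0\}$ whenever $p_2+q_2>2n$; in particular this set is non-empty, which proves~(1).

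For parts~(2) and~(3) I may assume $p_2+q_2>2n$. For a general $(x,y)\in F_z$ the line $\langle v\rangle:=V_x\cap V_y$ lies in $H:=E_{p_2}\cap E^{q_2}$ but in neither $E_{p_1}$ nor $E^{q_1}$, whence $V_x=\langle v\rangle\oplus(V_x\cap E_{p_1})$ and $V_y=\langle v\rangle\oplus(V_y\cap E^{q_1})$ with $V_x\cap E_{p_1}\subseteq E_{p_1}\cap v^\perp$ and $V_y\cap E^{q_1}\subseteq E^{q_1}\cap v^\perp$; therefore $F_z$ is birational to
\[
  \widetilde F_z=\{([v],[b],[c])\mid [v]\in\PP(H),\ [b]\in\PP(E_{p_1}\cap v^\perp),\ [c]\in\PP(E^{q_1}\cap v^\perp),\ \langle v,b,c\rangle\cap V_z\neq 0\}.
\]
If~\eqref{eqn:deg2} fails and $E_{p_1}+H+E^{q_1}$ is a proper subspace, then for general $z$ the incidence condition becomes $v'\in\langle v,b,c\rangle$ for the fixed generator $v'$ of the line $V_z\cap(E_{p_1}+H+E^{q_1})$, and going through the cases of Lemma~\ref{lemma:deg2-birat} and Proposition~\ref{prop:deg2} — in some of them the position of $v'$ pins $[v]$ down and the fibre over it is a linear section of a product of projective spaces, in the others a dimension count is enough — shows $\widetilde F_z$ is unirational; the remaining sub-case, where~\eqref{eqn:deg2} fails but $E_{p_1}+H+E^{q_1}=\C^{2n}$, is handled by a shorter variant of the same analysis. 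Hence $\pi_3^{1,1}(u,v)$ is $\GRCF$ in all such cases, proving~(2).

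Finally, suppose~\eqref{eqn:deg2} holds. Then $E_{p_1}+H+E^{q_1}=\C^{2n}$, so $\Gamma_{1,1}(X_u,X^v)=X$ and $z$ is general in $X$; moreover the source $\widetilde F_z$ of the above birational equivalence, stripped of its incidence condition, is $(2n-4)$-dimensional, the map $(v,b,c)\mapsto\langle v,b,c\rangle$ is generically injective, and after this identification the incidence condition cuts out a zero-dimensional set. The crux is to show that this set consists of exactly \emph{two reduced} points: I would verify this by a direct computation, choosing a specific isotropic plane $V_z$ adapted to the flags $(E_\bullet,E^\bullet)$ and solving the equations $\langle v,b,c\rangle\cap V_z\neq 0$ explicitly, in the manner of Case~3 of the proof of Proposition~\ref{prop:deg2} (where the analogous bilinear form reduces to a non-degenerate $2\times2$ matrix); the expected output is a quadratic with two simple roots, the ``$\pm$'' matching the order-two group $\pi_1(\Aut(X))$. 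It then follows that $\pi_3^{1,1}(u,v)$ is generically finite of degree $2$, proving~(3). I expect this two-reduced-points count to be the main obstacle; by contrast the unirationality statements in~(2) closely follow the pattern already established for conics in Proposition~\ref{prop:deg2}.
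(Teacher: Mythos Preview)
Your outline is essentially correct and close in spirit to the paper's argument, but the organizing parametrization differs in a way that matters for the execution. You parametrize the fibre $F_z$ by the triple $([v],[b],[c])$ with the \emph{incidence} constraint $\langle v,b,c\rangle\cap V_z\neq 0$ still to be solved; the paper instead starts from a point $a\in V_z\cap\bigl(E_{p_1}+(E_{p_2}\cap E^{q_2})+E^{q_1}\bigr)$ and then parametrizes its decompositions $a=a_{[1]}+a_{[2]}+a'_{[1]}$ along the three summands, setting $V_x=\langle a_{[1]},a_{[2]}\rangle$, $V_y=\langle a_{[2]},a'_{[1]}\rangle$. The two descriptions are equivalent (your $(v,b,c)$ is the paper's $(a_{[2]},a_{[1]},a'_{[1]})$ up to scale), but the paper's choice solves the incidence condition \emph{by construction}: what remains are the two isotropy equations $\omega(a_{[2]},a_{[1]})=0$ and $\omega(a_{[2]},a'_{[1]})=0$, which are polynomial in the free parameters $\lambda_i,\mu_j$ of the decomposition. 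In the hardest case $p_2+q_1>2n$ (where the sum is not direct) the paper carries out an explicit coordinate computation showing that these two equations are linearly solvable in a pair of the $\mu$-variables, hence the fibre is unirational. Your sketch defers this analysis to ``going through the cases of Lemma~\ref{lemma:deg2-birat} and Proposition~\ref{prop:deg2}'', but those are the degree-$2$ cases and the fibre structure there is genuinely different (four parameters $a_1,a_2,b_1,b_2$ rather than three); you would have to redo the case analysis from scratch, and with your parametrization the incidence condition is not obviously linear.

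For part~(3), once you adopt the paper's viewpoint the computation is immediate: under \eqref{eqn:deg2} the sum $E_{p_1}\oplus(E_{p_2}\cap E^{q_2})\oplus E^{q_1}=\C^{2n}$ is direct, so the decomposition of each $a\in V_z$ is unique, $\delta_p=0$ makes one isotropy equation trivial, and the remaining one is a single quadratic condition on $\P(V_z)\simeq\P^1$ whose non-degeneracy is checked by writing it out for a basis of $V_z$. This is exactly what your ``direct computation, choosing a specific isotropic plane $V_z$'' would amount to, but the route via $a\in V_z$ makes the quadratic appear with no preliminary Schubert-type intersection count.
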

\begin{proof}
  Note that if \(p_2+q_2\leq 2n\), then \(\Gamma_{1,1}(X_u,X^v)=\emptyset\). Now assume \(p_2+q_2>2n\).

  Let \(z\in \Gamma_{1,1}(X_u,X^v)\) be general. We look for \(a\in V_z \cap (E_{p_1} + (E_{p_2} \cap E ^{q_2}) + E^{q_1})\setminus0\) with decompositions \(a=a_{[1]}+a_{[2]}+a'_{[1]}\), where $ a_{[1]} \in E_{p_1}\setminus0$, { $a_{[2]} \in (E_{p_2} \cap E^{q_2})\setminus (E_{p_1} \cup E^{q_1})$,} $a'_{[1]} \in E^{q_1}\setminus0$, and 
  \begin{equation}\label{eq1}
    \omega(a_{[2]},a_{[1]}) = 0, 
  \end{equation} 
  \begin{equation}\label{eq2}
    \omega(a_{[2]},a'_{[1]}) = 0. 
  \end{equation} 
  Then for $V_x = \langle a_{[1]},a_{[2]} \rangle$ and $V_y = \langle a'_{[1]},a_{[2]} \rangle$, we have that \((x,y,z)\in (\pi_3^{1,1}(u,v))^{-1}(z)\). Without loss of generality, we assume $$p_2 + q_1 \geq p_1 + q_2.$$ 
  Let $$m = \max(0,p_1+q_2 - 2n).$$ 
  Write $a = (a_1,\cdots,a_{2n})$ in the basis $(e_i)_{i \in [1,2n]}$. Then 
  $$\begin{array}{l}
    a_{[1]} = (a_1,\cdots,a_{p_1-m},a_{p_1-m+1}+\lambda_{p_1 - m + 1},\cdots,a_{p_1}+\lambda_{p_1},0,\cdots,0),\\
    a'_{[1]} = (0,\cdots,0,a_{2n+1-q_1} + \mu_{2n+1-q_1},\cdots,a_{p_2} + \mu_{p_2},a_{p_2+1},\cdots,a_{2n}),
    \end{array}$$
    and \(a_{[2]} = a - a_{[1]} -a'_{[1]}\),
    where the variables $\lambda_i$ and $\mu_j$ satisfy equations \eqref{eq1} and \eqref{eq2}. 
    
    \item[Case 1: \(p_2+q_1>2n\).] This condition guarantees that there is at least one variable \(\mu_{p_2}\).
    We first compute the coefficients $c^1_{p_2}$ and $c^2_{p_2}$ of $\mu_{p_2}$ in equations \eqref{eq1} and \eqref{eq2}, respectively. We get
    $$c^1_{p_2} =  \left\{ \begin{array}{ll}
    0 & \textrm{ if } p_2 < 2n+1 - p_1 \\
    -(a_{2n+1-p_2}+\lambda_{2n+1-p_2}) & \textrm{ if } 2n+1 - p_1 < p_2 \leq q_2 \\
    -a_{2n+1-p_2} & \textrm{ if } 2n+1 - p_1< p_2 > q_2 
    \end{array} \right. $$ and
    $$c^2_{p_2} =  \left\{ \begin{array}{ll}
    {\pm} a_{2n+1-p_2} & \textrm{ if } p_2 < 2n+1 - p_1 \\
    -\lambda_{2n+1-p_2} & \textrm{ if } 2n+1 - p_1 < p_2 \leq q_2 \\
    0 & \textrm{ if } 2n+1 - p_1 < p_2 > q_2. 
    \end{array} \right. $$
    If $p_1 + p_2 < 2n+1$, then equation \eqref{eq1} is trivial, and we can solve equation \eqref{eq2} in $\mu_{p_2}$ as long as $a_{2n+1 - p_2} \neq 0$, proving the result in this case. Assume $p_1 + p_2 > 2n+1$. If $q_1 + q_2 < 2n+1$, then equation \eqref{eq2} is trivial, and we can solve equation \eqref{eq1} in $\mu_{p_2}$ as long as $a_{2n+1 - p_2} \neq 0$ or $a_{2n+1 - p_2}+ \lambda_{2n+1 - p_2} \neq 0$, proving the result in this case. We may therefore also assume $q_1 + q_2 > 2n+1$. Since $p_2 + q_1 \geq p_1 + q_2$, we get $2(p_2 + q_1) \geq p_1 + p_2 + q_1 + q_2 \geq 4n+4$ thus $p_2 + q_1 \geq 2n+2$. In particular $p_2 - 1 \geq 2n+1 - q_1$. We compute  the coefficients $c^1_{p_2-1}$ and $c^2_{p_2-1}$ of $\mu_{p_2-1}$ in equations \eqref{eq1} and \eqref{eq2}, respectively:
    $$c^1_{p_2-1} =  \left\{ \begin{array}{ll}
      -(a_{2n+2-p_2}+\lambda_{2n+2-p_2}) & \textrm{ if } p_2 -1 \leq q_2 \\
      -a_{2n+2-p_2} & \textrm{ if } q_2 < p_2 -1\\
      \end{array} \right. $$ and
      $$c^2_{p_2-1} =  \left\{ \begin{array}{ll}
      -\lambda_{2n+2-p_2} & \textrm{ if } p_2 - 1 \leq q_2 \\
      0 & \textrm{ if } q_2 < p_2 -1. \\
      \end{array} \right.$$
      If $p_2 \leq q_2$, the minor associated to the variables $\mu_{p_2-1}$ and $\mu_{p_2}$ in the equations is equal to  $(a_{2n+2-p_2}+\lambda_{2n+2-p_2})\lambda_{2n+1-p_2} - (a_{2n+1-p_2}+\lambda_{2n+1-p_2})\lambda_{2n+2-p_2}$ so that we can solve the equations in this case and the result follows. If $p_2 = q_2 +1$, then the minor is equal to $ a_{2n+1-p_2}\lambda_{2n+2-p_2}$ and the result follows. Otherwise, $q_2 < p_2 -1$. 
      Notice that in this case $c^1_{p_2} =-a_{2n+1-p_2}$ while $c^2_{p_2}=0$. So it suffices to find a coefficient $i$ such that $c^2_i$ is different from zero; indeed, if we do so, then the determinant of the matrix whose rows are $(c^1_{p_2} , c^1_i)$ and $(c^2_{p_2}, c^2_i)$
      is different from zero, and we can solve the equations with respect to $\mu_{p_2}$ and $\mu_i$. Note that for $i=q_2$ we get
      $$c^2_{q_2} =  \left\{ \begin{array}{ll}
      -\lambda_{2n+1-q_2} & \textrm{ if } p_1+q_2 \geq 2n+1 \\
      a_{2n+1-q_2} & \textrm{ if } p_1+q_2 \leq 2n, \\
      \end{array} \right. $$ 
      proving the result in this case.

    \item[Case 2: \(p_2+q_1\leq 2n\).] In this case, we have \(E_{p_1} \oplus (E_{p_2} \cap E^{q_2}) \oplus E^{q_1}\subseteq \C^{2n}\). For any \(a\in V_z \cap (E_{p_1} + (E_{p_2} \cap E ^{q_2}) + E^{q_1})\setminus0\), we can write $a = a_{[1]} + a_{[2]} + a'_{[1]}$ in a unique way. 
    \item[Sub-case 2.1: \(p_1+q_2<2n\).] Then $E_{p_1} + (E_{p_2} \cap E^{q_2}) + E^{q_1}\subsetneq\C^{2n}$. For $z \in \Gamma_{1,1}(X_u,X^v)$ general, we thus have $V_z \cap (E_{p_1} + (E_{p_2} \cap E^{q_2}) + E^{q_1}) = \langle a \rangle$ with $a \neq 0$. 
    For $z$ general, we have $a_{[1]} \in E_{p_1} \setminus 0$, {$a_{[2]} \in E_{p_2} \cap E^{q_2} \setminus (E_{p_1} \cup E^{q_1})$}, and $a'_{[1]} \in E^{q_1} \setminus 0$. This implies the equalities $V_x = \langle a_{[1]},a_{[2]} \rangle$ and $V_y = \langle a'_{[1]},a_{[2]} \rangle$, 
     thus the map is birational.
    \item[Sub-case 2.2: $p_1 + q_2 = 2n = p_2 + q_1$ and \eqref{eqn:deg2} does not hold.]    
    We have $\C^{2n} = E_{p_1} \oplus (E_{p_2} \cap E^{q_2}) \oplus E^{q_1}$. Either $\max(\delta_p,\delta_q) = 0$, and the fiber over \(z\in \Gamma_{1,1}(X_u,X^v)\) general is isomorphic to $\bP(V_z)$. Otherwise, $p_2 - p_1 = q_2 - q_1 = 1$.
    Without loss of generality, we may assume $p_1 \leq q_1$. Then $p_1 \leq n-2$, $p_2 \leq n- 1$, $q_1 \geq n+1$, and $q_2 \leq n+2$. In particular $\delta_p = 0$ and $\delta_q = 1$. 
    We want to find $a \in V_z$ such that $\omega(a_{[2]},a'_{[1]}) = 0$. We have $E_{p_2} \cap E^{q_2} = \langle e_{p_2} \rangle$ thus $a'_{[1]} \in e_{p_2}^\perp$. Set $E := E_{p_1} \oplus (E_{p_2} \cap E^{q_2}) \oplus (E^{q_1} \cap e_{p_2}^\perp)$ which is of codimension $1$ in $\C^{2n}$ and for $z \in X$ general, we have $V_z \cap E = \langle a \rangle$ with $a \neq 0$. This implies that the map is birational.
    
    \item[Sub-case 2.3: \eqref{eqn:deg2} holds.] Again, we have $\C^{2n} = E_{p_1 } \oplus (E_{p_2} \cap E^{q_2} )\oplus E^{q_1}$. Without loss of generality, we may assume that $p_1 \leq q_1$. Our assumptions imply $q_1 \geq p_1 + 2$, $q_2 \geq p_2 + 2$, $\delta_p = 0$ and $\delta_q = 1$. Therefore, \eqref{eq1} is trivial. We claim that for \(z\in X\) general, equation \eqref{eq2} is a non-degenerate bilinear condition on \(\P(V_z)\). {Consider $V_z = \langle b, c \rangle$, where \(b=b_1+b_2+b_1'\) and \(c=c_1+c_2+c_1'\) for some \(b_1, c_1\in E_{p_1 }\), \(b_2, c_2\in E_{p_2}\cap E^{q_2}\), and \(b_1', c_1'\in E^{q_1}\). Then for \(a=\lambda b + \mu c \in V_z\) general, \(a_{[2]}=\lambda b_2+\mu c_2\), \(a'_{[1]}=\lambda b_1'+\mu c_1'\), and equation \eqref{eq2} is of the form \(\lambda^2\omega(b_2,b_1')+\mu^2\omega(c_2,c_1')\), proving the claim.}
\end{proof}

\begin{cor}\label{cor:gamma1,1}
{{Assume that $\Gamma_{1,1}(X_u,X^v)$ is non-empty, then we have}}
$$\Gamma_{1,1}(X_u,X^v) = \{z \in X \ | \ V_z \cap (E_{p_1} + (E_{p_2} \cap E ^{q_2}) + E^{q_1}) \neq 0 \}.$$ In particular, it has rational singularities.
\end{cor}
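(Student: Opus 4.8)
The plan is to prove the displayed set equality by two inclusions and then deduce the rational-singularities assertion from Proposition~\ref{prop_rat_sing_smalldegreecurves}. Write $V_0 := E_{p_1} + (E_{p_2}\cap E^{q_2}) + E^{q_1}\subseteq\C^{2n}$ and let $\Sigma := \{z\in X\mid V_z\cap V_0\neq 0\}$ be the variety on the right-hand side of the claimed equality. Both $\Gamma_{1,1}(X_u,X^v)$ and $\Sigma$ are closed and irreducible: the former is the image of the irreducible projective variety $M_{1,1}(X_u,X^v)$ under $\ev_3^{1,1}(u,v)$, and the latter is irreducible by the birational model $\{(a,z)\in\bP(V_0)\times X\mid a\subset V_z\subset a^\perp\}$ used in the proof of Proposition~\ref{prop_rat_sing_smalldegreecurves}.

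First I would observe that the inclusion $\Gamma_{1,1}(X_u,X^v)\subseteq\Sigma$ is exactly Lemma~\ref{lemma:gamma1,1}. For the reverse inclusion, since $\Sigma$ is irreducible and $\Gamma_{1,1}(X_u,X^v)$ is closed, it suffices to show that a general point $z\in\Sigma$ lies in $\Gamma_{1,1}(X_u,X^v)$. This is what the proof of Proposition~\ref{prop:1,1} supplies: the hypothesis that $\Gamma_{1,1}(X_u,X^v)$ is non-empty forces $p_2+q_2>2n$ by Proposition~\ref{prop:1,1}.(1), and then, in each case considered there (Case~1 with $p_2+q_1>2n$, and the sub-cases of Case~2 with $p_2+q_1\leq 2n$), one constructs for $z$ general in $\Sigma$ a nonzero vector $a\in V_z\cap V_0$ together with a decomposition $a=a_{[1]}+a_{[2]}+a'_{[1]}$, where $a_{[1]}\in E_{p_1}\setminus 0$, $a_{[2]}\in (E_{p_2}\cap E^{q_2})\setminus(E_{p_1}\cup E^{q_1})$ and $a'_{[1]}\in E^{q_1}\setminus 0$ satisfy \eqref{eq1} and \eqref{eq2}; setting $V_x=\langle a_{[1]},a_{[2]}\rangle$ and $V_y=\langle a'_{[1]},a_{[2]}\rangle$ one gets $x\in X_u$, $y\in X^v$ and $(x,y,z)\in(\pi_3^{1,1}(u,v))^{-1}(z)$, hence $z\in\Gamma_{1,1}(X_u,X^v)$. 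Therefore $\Sigma\subseteq\overline{\Gamma_{1,1}(X_u,X^v)}=\Gamma_{1,1}(X_u,X^v)$ and the two coincide. The rational-singularities claim is then immediate from Proposition~\ref{prop_rat_sing_smalldegreecurves} applied to $V_0$.

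I do not expect a serious obstacle, as the substantive step --- constructing a preimage under $\pi_3^{1,1}(u,v)$ of a general point of $\Sigma$ --- is precisely the content of the proof of Proposition~\ref{prop:1,1}, so the corollary amounts to repackaging that argument. The one point needing a little care is to confirm that the ``general $z$'' used there really ranges over a dense open subset of all of $\Sigma$, and not a priori only over $\Gamma_{1,1}(X_u,X^v)$ (which could be smaller); this is clear, since the construction of $a$ and of its decomposition uses only that $V_z$ meets $V_0$ --- in the expected dimension $1$ when $V_0\subsetneq\C^{2n}$ --- a condition defining a dense open subset of $\Sigma$.
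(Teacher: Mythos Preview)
Your proposal is correct and follows exactly the paper's own approach: the paper's proof simply cites Lemma~\ref{lemma:gamma1,1}, the proof of Proposition~\ref{prop:1,1}, and Proposition~\ref{prop_rat_sing_smalldegreecurves}, and you have unpacked precisely these three ingredients. Your additional remark that the construction in the proof of Proposition~\ref{prop:1,1} uses only that $V_z$ meets $V_0$ (hence applies to a general $z\in\Sigma$, not merely to a general $z\in\Gamma_{1,1}(X_u,X^v)$) is a welcome clarification that the paper leaves implicit.
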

\begin{proof}
  This follows from Lemma~\ref{lemma:gamma1,1}, the proof of Proposition~\ref{prop:1,1}, and Proposition~\ref{prop_rat_sing_smalldegreecurves}.
\end{proof}

\section{Degree 1}
\label{sec:d=1}

Recall Condition \eqref{eqn:deg1}: $ p_1 + q_1 = 2n = p_2 = q_2$.
In this section we prove the following result.

\begin{thm}
\label{thm:d=1}
Assume that $d = 1$, we have
\begin{enumerate}
\item\label{item:deg1} The map $\ev_3^d(u,v): M_d(X_u,X^v) \to \Gamma_d(X_u,X^v)$ is $\GRCF$,
\item\label{item:deg1_rat_sing} If the map $\ev_3^d(u,v): M_d(X_u,X^v) \to \Gamma_d(X_u,X^v)$ is not birational, then $\Gamma_d(X_u,X^v) = \Gamma_{d-1,1}(X_u,X^v)$ has rational singularities.
\item\label{item:01birat} If \eqref{eqn:deg1} does not hold, then $\ev_3^{d-1,1}(u,v): M_{d-1,1}(X_u,X^v) \to \Gamma_{d-1,1}(X_u,X^v)$ is $\GRCF$, and it is birational as soon as $\ev_3^d(u,v)$ is.
\item\label{item:01_2to1} If \eqref{eqn:deg1} holds, then the map $\ev_3^d(u,v): M_d(X_u,X^v) \to \Gamma_d(X_u,X^v)$ is not birational, and $\ev_3^{d-1,1}(u,v): M_{d-1,1}(X_u,X^v) \to \Gamma_{d-1,1}(X_u,X^v)$ is generically finite of degree $2$.
\end{enumerate}
\end{thm}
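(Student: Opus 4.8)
The plan is to combine the reduction of Section~\ref{section:replacing} with explicit descriptions of the curve neighborhoods occurring for $d=1$, which I would establish in a sequence of lemmas. Since a degree-$1$ stable map is a line, and two distinct points $x,y\in X$ lie on a (then unique) common line exactly when $V_x\cap V_y\neq 0$, in which case the line is $(V_x\cap V_y,\ V_x+V_y)$, one has
$$\Gamma_1^{(3)}(X_u,X^v)=\ov{\{(x,y,z)\in X_u\times X^v\times X:\ V_x\cap V_y\neq0,\ V_x\cap V_y\subseteq V_z\subseteq V_x+V_y\}},$$
while, because the two marked points of a $(0,1)$-chain sit on the contracted component, $\Gamma_{0,1}^{(3)}(X_u,X^v)\cong\ov{\{(x_0,z)\in X_u^v\times X:\ V_{x_0}\cap V_z\neq 0\}}$ and $\Gamma_{0,1}(X_u,X^v)=\Gamma_1(X_u^v)$. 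I would first record explicit descriptions of $\Gamma_1(X_u,X^v)$ and of $\Gamma_1(X_u^v)$; each is either $X$ itself or a variety of the form $\{z\in X:\ V_z\cap L\neq 0\}$ for an explicit subspace $L$ built from $E_{p_1},E_{p_2},E^{q_1},E^{q_2}$ (the precise form depending, as for $d=2$, on how $p_1+q_2$, $p_2+q_1$, $p_2+q_2$ compare to $2n$), so it has rational singularities by Proposition~\ref{prop_rat_sing_smalldegreecurves}. Part~\eqref{item:deg1_rat_sing} then amounts to checking that in every case where $\ev_3^1(u,v)$ is not birational these two descriptions agree, giving $\Gamma_1(X_u,X^v)=\Gamma_{0,1}(X_u,X^v)$.

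For part~\eqref{item:deg1}, Proposition~\ref{prop:replace}(1) reduces us to proving that $\pi_3^1(u,v)\colon\Gamma_1^{(3)}(X_u,X^v)\to\Gamma_1(X_u,X^v)$ is $\GRCF$, and by Lemma~\ref{lemma:restriction-to-open} we may work over a dense open subset. For general $z$ in the target I would parametrize the fiber as a tower: first the variety of lines $\ell=(a,W)$ through $z$ meeting both $X_u$ and $X^v$ — a locally closed subset of a $\bP^{2n-4}$-bundle over $\bP(V_z)$, cut out by conditions that are linear in $W$ once $a$ is fixed, hence rational — and over it the product $(\ell\cap X_u)\times(\ell\cap X^v)$, each factor an open subset of a linear subspace of $\ell\cong\bP^1$. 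This exhibits the general fiber as (uni)rational, proving part~\eqref{item:deg1}. Reading off from this parametrization the locus of $z$ with one-point fiber yields a birationality criterion for $\pi_3^1(u,v)$, equivalently for $\ev_3^1(u,v)$, playing the role of Lemma~\ref{lemma:deg2-birat} and governing the remaining parts.

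For part~\eqref{item:01birat}, assume \eqref{eqn:deg1} fails. By Proposition~\ref{prop:replace}(2) it suffices to prove $\pi_3^{0,1}(u,v)$ is $\GRCF$, and its fiber over a general $z\in\Gamma_1(X_u^v)$ is $X_u^v\cap\{x_0:\ V_{x_0}\cap V_z\neq0\}$. Writing $V_{x_0}=\langle a,b\rangle$ with $a\in E_{p_1}\cap E^{q_2}$, $b\in E_{p_2}\cap E^{q_1}$, $\omega(a,b)=0$ and $\langle a,b\rangle\cap V_z\neq0$, I would show this fiber is rational by a short case analysis on the positions of $p_1+q_2$ and $p_2+q_1$ relative to $2n$; the same parametrization shows the fiber is a single point precisely when $\ev_3^1(u,v)$ is birational, which gives the asserted birationality of $\ev_3^{0,1}(u,v)$.

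Finally, part~\eqref{item:01_2to1}: if \eqref{eqn:deg1} holds then $p_2=q_2=2n$ and $\C^{2n}=E_{p_1}\oplus E^{q_1}$, so $X_u^v=\{x_0:\ V_{x_0}\cap E_{p_1}\neq0\neq V_{x_0}\cap E^{q_1}\}$. The quadratic form $u\mapsto\omega(u',u'')$ on $\C^{2n}$, where $u=u'+u''$ is the decomposition along $E_{p_1}\oplus E^{q_1}$, has nonzero rank, so its zero locus is a hypersurface $Q\subset\bP^{2n-1}$, which every line $\bP(V_z)$ meets; since for $u\in V_z$ on $Q$ with $u',u''\neq0$ the unique $x_0\in X_u^v$ with $u\in V_{x_0}$ is $\langle u',u''\rangle$, we get $\Gamma_{0,1}(X_u,X^v)=\Gamma_1(X_u^v)=X$, hence $\Gamma_1(X_u,X^v)=X$ as well. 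A dimension count gives $\dim M_1(X_u,X^v)=\dim X+1$, so $\ev_3^1(u,v)$ is not birational, while $\dim M_{0,1}(X_u,X^v)=\dim X$ and for general $z$ the fiber of $\ev_3^{0,1}(u,v)$ is the two points $\bP(V_z)\cap Q$; together with Corollary~\ref{cor:d-1-1} and Lemma~\ref{lemma:fibre-meets-open}, used to pass from $\pi_3^{0,1}$ back to $\ev_3^{0,1}$ as in the proof of Theorem~\ref{thm:d=2}, this shows $\ev_3^{0,1}(u,v)$ is generically finite of degree $2$. The main obstacle is the bookkeeping in parts~\eqref{item:deg1} and \eqref{item:01birat}: as in Proposition~\ref{prop:deg2} one must split into several cases according to the mutual positions of $p_1,p_2,q_1,q_2$ and extract a birationality criterion for $\ev_3^1(u,v)$ consistent across parts~\eqref{item:deg1_rat_sing}--\eqref{item:01_2to1}; within each case the geometry is linear and hence routine, but organizing them is the real work.
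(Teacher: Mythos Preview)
Your approach is essentially the same as the paper's: both reduce via Proposition~\ref{prop:replace} to the projections $\pi_3^1(u,v)$ and $\pi_3^{0,1}(u,v)$, parametrize their fibers over a general $z$ using the linear geometry of lines in $X$ (the data $W=V_x\cap V_y\cap V_z\in\bP(V_z)$ together with vectors $a\in E_{p_1}$, $b\in E^{q_1}$), and handle \eqref{eqn:deg1} by an explicit quadric on $\bP(V_z)$. The paper records the birationality criterion you anticipate as condition \eqref{eqn:deg1_birat} (in terms of $p_1+q_1$ and whether $p_2=q_2=2n$) and organizes the case analysis around it; your tower-of-lines formulation for part~\eqref{item:deg1} is equivalent to the paper's direct parametrization of $(x,y)$.
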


In fact, we will explicitly describe when $\ev_3^1(u,v)$ is birational. Consider the following condition:
\begin{equation}
    \label{eqn:deg1_birat}
    \tag{L1}
    \left\{\begin{array}{ll}
 p_1+q_1\leq 2n-1+\min(\delta_p,\delta_q) &\mbox{ if } p_2\neq 2n \mbox{ or }q_2\neq 2n \\
p_1+q_1\leq 2n-1 &\mbox{ if } p_2=q_2= 2n .\\
\end{array}\right.
\end{equation}
{{We will see that condition \eqref{eqn:deg1_birat} is the one that ensures that $\ev_3^1(u,v)$ is birational.}}

\begin{proof}[Proof of Theorem \ref{thm:d=1}]
By the proof of Proposition \ref{prop:md-to-gammad3} (respectively Corollary \ref{cor:d-1-1}) when $d=1$, the map $\ev_{(3)}^1(u,v)$ (resp. $\ev_{(3)}^{0,1}(u,v)$) is birational. 

If \eqref{eqn:deg1_birat} holds then by Lemmas \ref{lem_pi_3^1}, \ref{lem_pi_3^1_bis}, and Lemma \ref{lemma:fibre-meets-open}, $\pi_3^1(u,v)$ is birational. If \eqref{eqn:deg1_birat} does not hold then by Lemmas \ref{lem_pi_3^1}, \ref{lem_pi_3^1_tris} and Lemma \ref{lemma:fibre-meets-open}, $\pi_3^1(u,v)$ is $\GRCF$ with positive dimensional fibers. Thus, by Proposition \ref{prop:replace} the map $\ev_3^1(u,v)$ is $\GRCF$; moreover, it is birational if \eqref{eqn:deg1_birat} holds, and it has positive dimensional fibers otherwise. This shows part \eqref{item:deg1}. We deduce part \eqref{item:deg1_rat_sing} from Lemmas \ref{lem_eq_gammas_lines} and \ref{lem_twotoone_lines}.

If \eqref{eqn:deg1_birat} holds then by Lemmas \ref{lem_pi_3^0,1}, \ref{lem_pi_3^0,1_bis}, and Lemma \ref{lemma:fibre-meets-open}, $\pi_3^{0,1}(u,v)$ is birational. If neither \eqref{eqn:deg1_birat} nor \eqref{eqn:deg1} hold then by Lemmas \ref{lem_pi_3^0,1}, \ref{lem_pi_3^0,1_tris} and Lemma \ref{lemma:fibre-meets-open}, $\pi_3^{0,1}(u,v)$ is $\GRCF$ with positive dimensional fibers. Thus, if \eqref{eqn:deg1} does not hold, by Proposition \ref{prop:replace} the map $\ev_3^{0,1}(u,v)$ is $\GRCF$; moreover it is birational if \eqref{eqn:deg1_birat} holds, and it has positive dimensional fibers otherwise. This shows part \eqref{item:01birat}. Finally, item \eqref{item:01_2to1} is a consequence of Lemma \ref{lem_twotoone_lines}.
\end{proof}

We start with a preliminary result about line neighborhoods. Throughout this section on degree $1$ curves, we will use the following addition map
$$ p:E_{p_1}\times E^{q_1} \to E_{p_1}+E^{q_1}\subset \CC^{2n} .$$

\begin{lemma} 
\label{lem_eq_gammas_lines}
If neither \eqref{eqn:deg1_birat} nor \eqref{eqn:deg1} hold then 
$$\Gamma_{1}(X_u,X^v)=\Gamma_{0,1}(X_u,X^v)=\{z\in X \mid \dim(V_z\cap E_{p_2}\cap E^{q_2})\geq 1\}$$
has rational singularities.
\end{lemma}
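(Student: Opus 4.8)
The plan is to show that all three varieties coincide with $T:=\{z\in X\mid V_z\cap(E_{p_2}\cap E^{q_2})\neq 0\}$. Once that is done, the rational singularities statement follows at once from Proposition~\ref{prop_rat_sing_smalldegreecurves} applied to the subspace $E_{p_2}\cap E^{q_2}$, which is nonzero since our hypotheses force $p_2+q_2\geq p_1+q_1+2\geq 2n+2$. Throughout I write $E_0=E_{p_2}\cap E^{q_2}$, $F_1=E_{p_1}\cap E_0=E_{p_1}\cap E^{q_2}$ and $F_2=E^{q_1}\cap E_0=E^{q_1}\cap E_{p_2}$. So the real content is the chain $\Gamma_{0,1}(X_u,X^v)\subseteq\Gamma_1(X_u,X^v)\subseteq T\subseteq\Gamma_{0,1}(X_u,X^v)$, of which only the last inclusion is serious.

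The inclusion $\Gamma_{0,1}(X_u,X^v)\subseteq\Gamma_1(X_u,X^v)$ is formal from $M_{0,1}\subseteq M_1$. For $\Gamma_1(X_u,X^v)\subseteq T$ I would argue as follows: any $z\in\Gamma_1(X_u,X^v)$ lies, together with some $x\in X_u$ and some $y\in X^v$, on a single line $\ell\subset X$ (the image of a genus-zero degree-one stable map is a line, even when the source is reducible); writing $\ell=\{V_2\mid V_1\subseteq V_2\subseteq V_3\}$ with $\dim V_1=1$ we get $V_1\subseteq V_x\cap V_y\cap V_z$, and since $V_x\subseteq E_{p_2}$ and $V_y\subseteq E^{q_2}$ this gives $0\neq V_1\subseteq V_z\cap E_0$, i.e.\ $z\in T$.

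For $T\subseteq\Gamma_{0,1}(X_u,X^v)$: since $T$ is irreducible (it is swept out by a $\bP$-bundle over $\bP(E_0)$) and $\Gamma_{0,1}(X_u,X^v)$ is closed, it is enough to treat a general $z\in T$. A general point of $M_{0,1}(X_u,X^v)$ is a chain of a contracted component carrying the first two marked points and mapping to a point $x\in X_u\cap X^v$, glued to a line through $x$ carrying the third marked point; hence $\Gamma_{0,1}(X_u,X^v)$ contains every $z$ with $V_x\cap V_z\neq 0$ for some $x\in X_u\cap X^v$. For general $z\in T$ one has $V_z\cap E_0=\langle a\rangle$ with $a$ general in $E_0$ (in the case $E_0=\C^{2n}$, equivalently $p_2=q_2=2n$, the failures of \eqref{eqn:deg1} and \eqref{eqn:deg1_birat} force $p_1+q_1\geq 2n+1$, and one takes $a$ general in $V_z$ instead), and since $V_x\subseteq E_0$ for every $x\in X_u\cap X^v$, I am reduced to proving $\bigcup_{x\in X_u\cap X^v}V_x=E_0$, i.e.\ that a general vector $a\in E_0$ lies in an isotropic $2$-plane $V_x$ with $a\in V_x\subseteq E_0$, $V_x\cap E_{p_1}\neq0$ and $V_x\cap E^{q_1}\neq0$.

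Fix such an $a$. Looking for $V_x=\langle a,w\rangle$, the requirements translate to $w\in(F_1+\langle a\rangle)\cap(F_2+\langle a\rangle)\cap a^\perp$ with $w\notin\langle a\rangle$ (isotropy of $\langle a,w\rangle$ is automatic once $\omega(a,w)=0$, as $\omega$ is alternating). A short computation with the standard basis, using $p_1<p_2$ and $q_1<q_2$, gives $F_1+F_2=E_0$ and $\dim(F_1\cap F_2)=p_1+q_1-2n$, so that for general $a$ the space $(F_1+\langle a\rangle)\cap(F_2+\langle a\rangle)$ has dimension $p_1+q_1-2n+2$; the failure of \eqref{eqn:deg1_birat} gives $p_1+q_1\geq 2n$. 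If $p_1+q_1\geq 2n+1$ this space has dimension $\geq 3$, meets the hyperplane $a^\perp$ in dimension $\geq 2$, and any $w$ in there outside $\langle a\rangle$ works: $\langle a,w\rangle$ is isotropic and meets $E_{p_1}$, $E^{q_1}$ in the nonzero $F_1$- and $F_2$-parts of $w$. The remaining, harder case is $p_1+q_1=2n$: then \eqref{eqn:deg1} failing forces $(p_2,q_2)\neq(2n,2n)$ and \eqref{eqn:deg1_birat} failing forces $\min(\delta_p,\delta_q)=0$; now $F_1\cap F_2=0$, so $a=f_1+f_2$ with $0\neq f_i\in F_i$, one checks $(F_1+\langle a\rangle)\cap(F_2+\langle a\rangle)=\langle f_1,f_2\rangle$, and everything hinges on showing $\omega(f_1,f_2)=0$ — for then $V_x:=\langle f_1,f_2\rangle\ni a$ is isotropic and has the required incidences. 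Assuming $\delta_q=0$, i.e.\ $q_1+q_2\leq 2n$: the $\omega$-partners $e_{2n+1-i}$ of the basis vectors $e_i$ ($2n+1-q_2\leq i\leq p_1$) of $F_1$ have indices in $[2n+1-p_1,q_2]$, all $<2n+1-q_1$, hence disjoint from the index range $[2n+1-q_1,p_2]$ of $F_2$, so $\omega\vert_{F_1\times F_2}\equiv 0$; the case $\delta_p=0$ is symmetric. I expect this borderline case $p_1+q_1=2n$ — where the dimension count is tight, $\langle f_1,f_2\rangle$ need not be isotropic a priori, and one must exploit the vanishing of $\delta_p$ or $\delta_q$ to kill a block of $\omega$ — to be the only real obstacle; everything else is routine once the problem is reduced to $T$.
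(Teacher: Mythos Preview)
Your proof is correct and follows essentially the same strategy as the paper's: both establish the chain $\Gamma_{0,1}\subseteq\Gamma_1\subseteq T\subseteq\Gamma_{0,1}$, invoke Proposition~\ref{prop_rat_sing_smalldegreecurves} for rational singularities, and for the nontrivial inclusion $T\subseteq\Gamma_{0,1}$ construct, for a general vector in $V_z\cap E_0$, a point $x\in X_u\cap X^v$ containing it. The paper phrases this construction via the addition map $p:E_{p_1}\times E^{q_1}\to E_{p_1}+E^{q_1}$ and looks for preimages of $V_1$ with the first coordinate in $V_1^\perp$, whereas you work inside $E_0$ with the subspaces $F_1,F_2$ and the intersection $(F_1+\langle a\rangle)\cap(F_2+\langle a\rangle)\cap a^\perp$; since the constraint $a+b\in E_0$ forces the paper's $(a,b)$ into $F_1\times F_2$ anyway, the two constructions coincide. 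The critical boundary case $p_1+q_1=2n$ is handled identically in substance: the paper observes that $\delta_p=0$ gives $V_1\subset E_{p_2}\subset E_{p_1}^\perp$, while you verify the equivalent statement $\omega\vert_{F_1\times F_2}\equiv0$ by an index computation.
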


\begin{proof}
The statement about rational singularities is a consequence of Proposition \ref{prop_rat_sing_smalldegreecurves}. {{Recall also the inclusion $\Gamma_{0,1}(X_u,X^v) \subseteq \Gamma_1(X_u,X^v)$.}} Denote the RHS of the above formula by $A$. Consider $x\in X_u$, $y\in X^v$ such that there exists a line passing through $x,y$. This means that $\dim(V_x\cap V_y)\geq 1$. A point $z$ belongs to this line if and only if $V_z\cap (V_x\cap V_y)\neq 0 $ (and $V_z\subset V_x+V_y$ whenever $x\neq y$). This proves the inclusion $\Gamma_{1}(X_u,X^v)\subseteq A$. 

Conversely, we prove that $A$ is contained in $ \Gamma_{0,1}(X_u,X^v)$. Since $\Gamma_{0,1}(X_u,X^v)$ is closed, it is sufficient to show that a general point $z\in A$ belongs to $\Gamma_{0,1}(X_u,X^v)$. 

If $p_2=q_2=2n$ and $p_1+q_1\geq 2n+1$, then $A=X$. Since $z$ is general, we may assume that $V_z$ is not contained in $E_{p_1}$ nor in $E^{q_1}$, and thus contains a one-dimensional subspace $V_1\subset V_z$ such that $V_1\cap E_{p_1}=V_1\cap E^{q_1}=0$. Consider any point $(a,b)\in p^{-1}(V_1\setminus 0 )\cap ((V_1^\perp \cap E_{p_1}) \times E^{q_1})$ (such $(a,b)$ always exists since $\dim(p^{-1}(V_1))\geq 2$, $\dim E_{p_1} \geq 2$ and $V_1^\perp$ is a hyperplane in $E_{p_1}$). {Then $a \wedge b\neq 0$}, $a\perp V_1$ and $b\perp V_1$ (since $a+b\in V_1$), and the point $x\in X$ such that $V_x=\langle a,b\rangle$ belongs to $X_u\cap X^v$ and is on a line passing through $z$ (since $0\neq V_1\subset V_x\cap V_z$). This shows that $z\in \Gamma_{0,1}(X_u,X^v)$.

We now assume that $p_2\neq 2n$ or $q_2\neq 2n$ and fix $z\in A$ general. Consider $V_1\subset V_z\cap E_{p_2}\cap E^{q_2}$. Since $z$ is general in $A$, we can assume that $V_1\cap E_{p_1}=V_1\cap E^{q_1}=0$. If $p_1+q_1\geq 2n+1$ then construct $a,b$ with $a \wedge b \neq 0$ as above so that $z$ is on a line passing through $x\in X_u\cap X^v$ with $V_x=\langle a,b \rangle $. We have $z\in \Gamma_{0,1}(X_u,X^v)$. If $p_1+q_1= 2n$ then, since \eqref{eqn:deg1_birat} does not hold, we can assume that $\delta_p=0$. Since $V_1\subset E_{p_2}$ we deduce that $V_1^\perp \cap E_{p_1}=E_{p_1}$ and one can still find $(a,b)\in p^{-1}(V_1)$ with $a \wedge b\neq 0$ and $a\perp V_1$; again, one deduces as above that $z\in \Gamma_{0,1}(X_u,X^v)$.
\end{proof}

\subsection{The case of $\ev_3^1(u,v)$}

In this subsection, we prove all the results necessary to obtain part \eqref{item:deg1} of Theorem \ref{thm:d=1}. Define {$E_\circ := E_{p_2} \cap E^{q_2}$.} Recall that $\oX_u$ is the affine cell in $X_u$. Define $\epsilon_1:=\max(1, p_1+q_1-2n+2)$, $\epsilon_2:=\max(1, p_2+q_2-4n+2)$, and 
\begin{multline*}
  \cU_1=\{(x,y,z)\in \Gamma_1^{(3)}(X_u,X^v) \mid x, y,z\mbox{ distinct, }x\in \oX_u,y\in\oX^v,\omega|_{V_x+V_y}\neq 0,\\ V_x\cap V_y\subseteq E_\circ\setminus(E_{p_1}\cup E^{q_1}),\dim(V_z\cap E_\circ)\leq \epsilon_2, \dim(V_z\cap (E_{p_1}+E^{q_1}))\leq \epsilon_1\}.
\end{multline*}
Note that \(\cU_1\) is a dense open subset of \(\Gamma_1^{(3)}(X_u,X^v)\). The condition $\dim(V_z\cap (E_{p_1}+E^{q_1}))\leq \epsilon_1$ is essentially saying that as soon as $E_{p_1}+E^{q_1}\neq \CC^{2n}$, $\dim(V_z\cap (E_{p_1}+E^{q_1}))\leq 1$. Similarly, the condition $\dim(V_z\cap E_\circ)\leq \epsilon_2$ implies that as soon as $p_2\neq 2n$ or $q_2\neq 2n$ then $\dim(V_z\cap E_\circ)\leq 1$. Recall the definition of  $\pi_3^1(u,v):\Gamma_1^{(3)}(X_u,X^v)\to \Gamma_1(X_u,X^v)$ from Definition~\ref{def:ev}.

\begin{lemma}
\label{lem_pi_3^1}
Assume that $q_2\neq 2n$ and let \(z\in\Gamma_1(X_u,X^v)\) be a general point. 
\begin{enumerate}
\item If \eqref{eqn:deg1_birat} holds then $\cU_1\cap \pi_3^1(u,v)^{-1}(z)$ is a single point.
\item If \eqref{eqn:deg1_birat} does not hold, then $\cU_1\cap \pi_3^1(u,v)^{-1}(z)$ is rationally connected and positive dimensional.
\end{enumerate}
\end{lemma}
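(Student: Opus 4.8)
The statement to prove is Lemma~\ref{lem_pi_3^1}: for $q_2 \neq 2n$ and a general $z \in \Gamma_1(X_u,X^v)$, the fiber $\cU_1 \cap \pi_3^1(u,v)^{-1}(z)$ is a single point when \eqref{eqn:deg1_birat} holds, and is rationally connected (even unirational) and positive-dimensional otherwise. The plan is to parametrize this fiber explicitly. Given $z$ general in $\Gamma_1(X_u,X^v)$, a point $(x,y,z) \in \cU_1 \cap \pi_3^1(u,v)^{-1}(z)$ records an isotropic plane $V_x \in \oX_u$ and an isotropic plane $V_y \in \oX^v$ with $V_x \cap V_y \neq 0$, $V_z \subset V_x + V_y$, and $V_x \cap V_y \subset E_\circ \setminus (E_{p_1} \cup E^{q_1})$. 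First I would translate the conditions into linear-algebra data along the exact lines already used for $d=2$ in Section~\ref{sec:d=2}: pick a generator $v$ of $V_x \cap V_y$, write $v = a + b$ with $a \in E_{p_1}$, $b \in E^{q_1}$ (using the addition map $p : E_{p_1} \times E^{q_1} \to E_{p_1}+E^{q_1}$), so that $V_x = \langle a, \cdot\rangle$ and $V_y = \langle b, \cdot\rangle$ are determined by the remaining data. Because $z$ is general and \eqref{eqn:deg1_birat} is assumed to fail or hold, I would pin down the dimension of $V_z \cap (E_{p_1}+E^{q_1})$ and of $V_z \cap E_\circ$ via the $\epsilon_i$'s; the key point is that the line $V_x \cap V_y$ must lie in both $V_z$'s relevant subspace structure and in $E_\circ$, and this forces it to be (generically) unique precisely under \eqref{eqn:deg1_birat}.

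Concretely, I would proceed as follows. (i) Since $(x,y,z) \in \cU_1$, $V_z \subset V_x + V_y$ and $\dim(V_x+V_y) \le 4$ with $V_x \cap V_y$ one-dimensional; combined with $V_z$ two-dimensional this forces $V_z \cap (V_x \cap V_y) \neq 0$, hence $V_x \cap V_y \subset V_z$ (as both are lines and $z \neq$ the other marked points), so $V_x \cap V_y$ is a line in $V_z \cap E_\circ$. (ii) When \eqref{eqn:deg1_birat} holds, $\dim(V_z \cap E_\circ) = 1$ for general $z$ (this is where $q_2 \neq 2n$ enters, via the bound $\dim(V_z \cap E_\circ) \le \epsilon_2$ with $\epsilon_2 = 1$), so the line $\ell := V_x \cap V_y$ is forced; then $a \in \ell^\perp \cap E_{p_1}$ is further constrained because $V_x$ must be isotropic and $x \in \oX_u$, and a dimension count (using the genericity of $z$ and the failure of various degeneracies that put us in $\cU_1$) shows $a$, hence $V_x$, hence $V_y$, is unique — so the fiber is a single reduced point. (iii) When \eqref{eqn:deg1_birat} fails, either $\dim(V_z \cap E_\circ)$ jumps, or $E_{p_1}+E^{q_1}$ is large enough that the choice of $a$ (equivalently of the splitting $v = a+b$, or of the second vectors spanning $V_x, V_y$) varies in a positive-dimensional family; in each such case I would exhibit the fiber as the complement of a proper closed subset in a product of projective spaces or a vector bundle over such, exactly as in Cases~1--7 of the proof of Proposition~\ref{prop:deg2}, which gives unirationality and positive dimension.

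The bookkeeping is the main obstacle: one must carefully separate the subcases according to the sizes of $p_1+q_1$ relative to $2n$, of $p_2+q_2$ relative to $4n$, and the values of $\delta_p, \delta_q$, exactly as \eqref{eqn:deg1_birat} does, and check in each that the fiber behaves as claimed. I expect no conceptual difficulty — each subcase reduces to "an open subvariety of a linear space (or a tower of vector bundles over one) is unirational and, when nonzero-dimensional, rationally connected" — but the case analysis is delicate because the borderline $p_1+q_1 = 2n$ behaves differently depending on $\min(\delta_p,\delta_q)$, and the case $p_2 = 2n$ (allowed here since we only assume $q_2 \neq 2n$) must be handled, mirroring the structure of Lemma~\ref{lem_eq_gammas_lines}. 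A companion lemma (the analogue with the roles of $p$ and $q$ swapped, or the $p_2 = q_2 = 2n$ case) will presumably be stated separately; here I would only need $q_2 \neq 2n$, which guarantees $\epsilon_2 = 1$ and hence that $V_z \cap E_\circ$ is at most a line for general $z$, the crucial input that makes part~(1) work.
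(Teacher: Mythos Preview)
Your overall strategy matches the paper's: fix $W := V_x\cap V_y$, show it equals $V_z\cap E_\circ$ (a line, since $q_2\neq 2n$ gives $\epsilon_2=1$), and then parametrize the remaining data $(V_x,V_y)$ by linear-algebraic choices. But your description of that parametrization is garbled in a way that would not lead to a proof.

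You propose to take a generator $v$ of $W$ and write $v=a+b$ via the addition map $p:E_{p_1}\times E^{q_1}\to E_{p_1}+E^{q_1}$, then set $V_x=\langle a,\cdot\rangle$, $V_y=\langle b,\cdot\rangle$. This is the wrong relation: $W$ lies in $E_\circ=E_{p_2}\cap E^{q_2}$, not in general in $E_{p_1}+E^{q_1}$, and even when such a decomposition exists there is no reason for $a$ to lie in $V_x$ or $b$ in $V_y$. The elements $a,b$ you want are the \emph{Schubert-defining} lines $\langle a\rangle=V_x\cap E_{p_1}$ and $\langle b\rangle=V_y\cap E^{q_1}$ (which exist because $x\in\oX_u$, $y\in\oX^v$), so that $V_x=W+\langle a\rangle$ and $V_y=W+\langle b\rangle$. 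The constraint you are missing is the one forcing $V_z\subset V_x+V_y$: since $\dim(V_x+V_y+V_z)=3$ (three distinct points on a line in $X$), we have $\langle a,b\rangle\cap V_z\neq 0$, i.e.\ some $\alpha a+\beta b$ lies in $W':=V_z\cap(E_{p_1}+E^{q_1})$. It is the dimension of $W'$, controlled by $\epsilon_1$, that governs the dichotomy: when $p_1+q_1\leq 2n-1$, $W'$ is a line and (since $E_{p_1}\cap E^{q_1}=0$) uniquely determines $\langle a\rangle,\langle b\rangle$, hence $(x,y)$; when $p_1+q_1\geq 2n$, one parametrizes the fiber by an open subset $\cW$ of a linear subspace of $p^{-1}(V_z)$ (the pairs $(c,d)$ with $c\in W^\perp$), which is unirational, and a single dimension count shows $\dim\cW\geq 2$ exactly when \eqref{eqn:deg1_birat} fails. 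So there are only two cases, not seven, and the appeal to the degree-$2$ case analysis is unnecessary. Your remark that ``$\dim(V_z\cap E_\circ)$ jumps'' in case (iii) is also off: under the hypothesis $q_2\neq 2n$ it never does; all the variation is in $W'$.
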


\begin{proof}
Since a triple of distinct points $(x,y,z)$ in $\cU_1\cap \pi^1_3(u,v)^{-1}(z)$ belong to the same line, we have $\dim(V_x\cap V_y \cap V_z)=1$ and $\dim(V_x+V_y+V_z)=3$. But then $V_x\cap V_y$ is also contained in $V_z$. Set $W = V_x \cap V_y \cap V_z$.
For $z$ general we may assume that $\dim(V_z\cap E_\circ)= 1$, thus $W$ is uniquely defined as the intersection $V_z\cap E_\circ$.  Denote by $a,b$ two vectors generating $V_x\cap E_{p_1}$ and $V_y\cap E^{q_1}$, respectively (here we are using the condition $x\in \oX_u,y\in\oX^v$ in the definition of $\cU_1$). Since $V_x\cap V_y\subset E_\circ\setminus(E_{p_1}\cup E^{q_1})$ we deduce that $W\cap E_{p_1}=0$ and $W\cap E^{q_1}=0$ and that $a$ and $b$ do not belong to $W$. However, since $\dim(V_x+V_y+V_z)=3$, $\langle a,b\rangle \cap V_z \neq 0$. Thus, there exist $(\alpha,\beta)\neq (0,0)$ such that $\alpha a + \beta b \in V_z$. The line generated by $\alpha a + \beta b$ is contained in $W':=V_z \cap (E_{p_1} + E^{q_1})$.

If $p_1+q_1\leq 2n-1$, $W'$ is a line, and it uniquely determines $\langle a \rangle \subset V_x$ and $\langle b \rangle \subset V_y$. We obtain that $V_x=W+\langle a\rangle$ and $V_y=W+\langle b\rangle$, i.e., $\pi^1_3(u,v)^{-1}(z) \cap \cU_1=\{(x,y,z)\}$.

Assume that $p_1+q_1\geq 2n$, which implies that $W'=V_z$. Let us also suppose that $\delta_p\leq \delta_q$ (the other case can be treated similarly). Recall the definition of the map $p:E_{p_1}\times E^{q_1} \to E_{p_1}+E^{q_1}$, and define the set 
$$\cW:= \{(c,d)\in p^{-1}(V_z) \mid c\in W^\perp, \dim(W+\langle c\rangle)=\dim(W+\langle d\rangle)=2, c+d\neq 0\}.$$ This is an open subset of {{a linear subspace of $p^{-1}(V_z)$}}. Notice that since $V_z\subset W^ \perp$, we also have $d\in W^\perp$. The space \(\cW\) has dimension at least $2$ as soon as \eqref{eqn:deg1_birat} does not hold, and it has dimension $1$ if $p_1+q_1=2n$ and $1=\delta_p=\delta_q$. For each point $(c,d)\in \cW$, $V_x=W+\langle c\rangle$, and $V_y=W+\langle d\rangle$, we have \((x,y,z)\in \pi^1_3(u,v)^{-1}(z) \cap \cU_1\). Via this map $\cW$ dominates the fiber $\pi_3^1(u,v)^{-1}(z)\cap \cU_1$, which is therefore always unirational, and birational if \eqref{eqn:deg1_birat} holds.
\end{proof}

\begin{lemma}
\label{lem_pi_3^1_bis}
Assume that $q_2=p_2=2n$ and let \(z\in\Gamma_1(X_u,X^v)\) be general. If \eqref{eqn:deg1_birat} holds then $\cU_1\cap \pi_3^1(u,v)^{-1}(z)$ is a single point.
\end{lemma}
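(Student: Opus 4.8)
The plan is to run the argument of Lemma~\ref{lem_pi_3^1} again, the only genuine change being that now \(E_\circ = E_{p_2}\cap E^{q_2} = \C^{2n}\), so the line \(W = V_x\cap V_y\cap V_z\) is no longer recovered from \(V_z\cap E_\circ = V_z\). I would fix a general \(z\in\Gamma_1(X_u,X^v)\) and \((x,y,z)\in\cU_1\cap\pi_3^1(u,v)^{-1}(z)\). Since \(x,y,z\) are distinct and collinear, \(W := V_x\cap V_y\) is a line, \(V_3 := V_x+V_y\) is \(3\)-dimensional, and \(W\subset V_z\subset V_3\). Writing \(\langle a\rangle = V_x\cap E_{p_1}\) and \(\langle b\rangle = V_y\cap E^{q_1}\) (lines, as \(x\in\oX_u\), \(y\in\oX^v\)), the condition \(W\subseteq E_\circ\setminus(E_{p_1}\cup E^{q_1})\) in the definition of \(\cU_1\) gives \(W\cap E_{p_1} = W\cap E^{q_1} = 0\), so \(V_x = W\oplus\langle a\rangle\) and \(V_y = W\oplus\langle b\rangle\). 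In the present case \eqref{eqn:deg1_birat} reads \(p_1+q_1\le 2n-1\), hence \(E_{p_1}\cap E^{q_1} = 0\): thus \(a,b\) are independent, \(V_3 = W\oplus\langle a,b\rangle\), and writing \(V_z = W\oplus\langle c\rangle\) with \(c = \alpha a+\beta b\), \((\alpha,\beta)\neq(0,0)\), the line \(\langle c\rangle\) lies in \(V_z\cap(E_{p_1}+E^{q_1})\), which by \(\cU_1\) has dimension at most \(\epsilon_1 = 1\). So \(W' := V_z\cap(E_{p_1}+E^{q_1}) = \langle c\rangle\) is a line depending only on \(z\).

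The next step is to recover \(\langle a\rangle\), \(\langle b\rangle\) and \(W\) from \(z\). Because \(p_1+q_1\le 2n-1\), the addition map \(p\colon E_{p_1}\times E^{q_1}\to E_{p_1}+E^{q_1}\) is injective, so \(c\) has a unique decomposition \(c = a'+b'\) with \(a'\in E_{p_1}\), \(b'\in E^{q_1}\); by uniqueness \(a' = \alpha a\), \(b' = \beta b\). I would then establish, for \(z\) general: (i) \(z\notin X_u = \{z : V_z\cap E_{p_1}\neq 0\}\) and \(z\notin X^v = \{z : V_z\cap E^{q_1}\neq 0\}\) (which forces \(\alpha,\beta\neq 0\), hence \(\langle a\rangle = \langle a'\rangle\), \(\langle b\rangle = \langle b'\rangle\)); and (ii) \(\omega(a',b')\neq 0\). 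Both (i) and (ii) are open conditions on \(\Gamma_1(X_u,X^v)\), which is irreducible (being the image of the irreducible variety \(M_1(X_u,X^v)\)), so it suffices to exhibit a single \(z\) satisfying them. Taking an index \(k\) with \(p_1 < k < 2n+1-q_1\) (which exists precisely because \(p_1+q_1\le 2n-1\)), I would set \(V_x = \langle e_1,e_k\rangle\) and \(V_y = \langle e_{2n},e_k\rangle\): these are isotropic, \(x\in\oX_u\), \(y\in\oX^v\), and they are collinear along \(\ell = \{\langle e_k,\mu e_1+\nu e_{2n}\rangle \mid [\mu:\nu]\in\P^1\}\); for a general \(z\in\ell\) one checks \((x,y,z)\in\cU_1\) with \(V_z\cap(E_{p_1}+E^{q_1}) = \langle\mu e_1+\nu e_{2n}\rangle\), \(\mu,\nu\neq 0\), so that \(a' = \mu e_1\), \(b' = \nu e_{2n}\), \(\omega(a',b') = \mu\nu\,\omega(e_1,e_{2n})\neq 0\), while \(V_z\cap E_{p_1} = V_z\cap E^{q_1} = 0\). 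Granting (i) and (ii) we get \(c = a'+b'\) and \(\omega(c,a') = \omega(a'+b',a') = \omega(b',a') = -\omega(a',b')\neq 0\), so \(c\notin(a')^\perp\) and \(V_z\cap(a')^\perp\) is a proper subspace of \(V_z\); it contains the line \(W\) (since \(a'\in V_x\) and \(V_x\) is isotropic), hence \(W = V_z\cap(a')^\perp\). Then \(W\), \(V_x = W\oplus\langle a'\rangle\) and \(V_y = W\oplus\langle b'\rangle\) are all determined by \(z\), so \(\cU_1\cap\pi_3^1(u,v)^{-1}(z)\) is a single point.

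I expect the recovery of \(W\) to be the crux. In Lemma~\ref{lem_pi_3^1} the hypothesis \(q_2\neq 2n\) keeps \(E_\circ\) a proper subspace, so \(W = V_z\cap E_\circ\) is immediate; when \(p_2 = q_2 = 2n\) the symplectic form must be brought in, and the non-degeneracy input \(\omega(a',b')\neq 0\) has to be shown to be generic, which is exactly what the explicit line through \(\langle e_1,e_k\rangle\) and \(\langle e_{2n},e_k\rangle\) is for. The rest is bookkeeping: checking that this auxiliary \(z\) genuinely lies in \(\cU_1\), namely that it satisfies distinctness, the cell conditions, \(\omega|_{V_x+V_y}\neq 0\), and the dimension bounds \(\dim(V_z\cap E_\circ)\le\epsilon_2 = 2\) and \(\dim(V_z\cap(E_{p_1}+E^{q_1}))\le\epsilon_1 = 1\), all of which are immediate from \(p_1 < k < 2n+1-q_1\).
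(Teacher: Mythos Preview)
Your proof is correct and takes essentially the same approach as the paper's: both recover $W'=V_z\cap(E_{p_1}+E^{q_1})$ as a line, decompose it uniquely in $E_{p_1}\oplus E^{q_1}$ to pin down $\langle a\rangle$ and $\langle b\rangle$, and then use the symplectic form to recover $W$. Your formula $W=V_z\cap(a')^\perp$ is just a rewriting of the paper's $W=\Ker(\omega|_{\langle a\rangle+V_z})$, and your explicit line through $\langle e_1,e_k\rangle$ and $\langle e_{2n},e_k\rangle$ makes concrete the genericity step that the paper handles more tersely via the $\cU_1$ condition $\omega|_{V_x+V_y}\neq 0$.
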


\begin{proof}
Notice that in this case $V_z\cap E_\circ=V_z$ is not a line. However, the first steps of the proof of Lemma \ref{lem_pi_3^1} still hold if one takes {\(V_x\cap V_y\cap V_z\) }as the definition of $W$; then define $a,b,\alpha,\beta,W'$ as in the proof of Lemma \ref{lem_pi_3^1}.

Since $p_1+q_1\leq 2n-1$, $W'$ is a line, and it uniquely determines $\langle a \rangle \subset V_x$ and $\langle b \rangle \subset V_y$. Moreover, we have that $W$ is uniquely defined by $W=\Ker (\omega|_{\langle a\rangle+V_z=\langle b\rangle + V_z})$ (here we use that $\omega|_{V_x+V_y}\neq 0$). We obtain that $V_x=W+\langle a\rangle$ and $V_y=W+\langle b\rangle$, i.e., $\{(x,y,z)\}=\pi_3^1(u,v)^{-1}(z) \cap  \cU_1$.
\end{proof}

\begin{lemma}
\label{lem_pi_3^1_tris}
Assume that $q_2=p_2= 2n$ and let \(z\in\Gamma_1(X_u,X^v)\) be general. If \eqref{eqn:deg1_birat} does not hold, then $\cU_1\cap \pi_3^1(u,v)^{-1}(z)$ is rationally connected and positive dimensional.
\end{lemma}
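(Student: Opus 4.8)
The plan is to follow the pattern of Lemmas \ref{lem_pi_3^1} and \ref{lem_pi_3^1_bis}, now exploiting the extra freedom available because $p_2=q_2=2n$. Under the hypothesis we have $p_1+q_1\geq 2n$, hence $E_{p_1}+E^{q_1}=\CC^{2n}$; in particular $\Gamma_1(X_u,X^v)=X$ (as $E_{p_2}\cap E^{q_2}=\CC^{2n}$, and the construction below produces a point of the fibre over a general $z$), so ``general $z$'' means general $z\in X$. For a triple $(x,y,z)\in\cU_1\cap\pi_3^1(u,v)^{-1}(z)$, the three distinct points lie on a common line, so — just as in the opening lines of Lemma \ref{lem_pi_3^1_bis} — $W:=V_x\cap V_y\cap V_z$ is a line, $V_x+V_y+V_z$ is $3$-dimensional, $V_x=W+\langle a\rangle$ and $V_y=W+\langle b\rangle$ with $\langle a\rangle=V_x\cap E_{p_1}$, $\langle b\rangle=V_y\cap E^{q_1}$, and $W':=V_z\cap(E_{p_1}+E^{q_1})=V_z$. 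The new feature, compared with Lemma \ref{lem_pi_3^1}, is that $W$ is \emph{not} determined by $z$: it can be any sufficiently general line of $V_z$, and this is what will make the fibres positive-dimensional.

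For a fixed general $z$, I would introduce the incidence variety
\[
  \cZ:=\bigl\{\,(\ell,c,d)\;\bigm|\;\ell\in\bP(V_z),\ c\in E_{p_1}\cap\ell^\perp,\ d\in E^{q_1},\ c+d\in V_z,\ \text{and the open conditions of }\cU_1\text{ hold}\,\bigr\}
\]
(the open conditions amounting to $\ell\not\subseteq E_{p_1}\cup E^{q_1}$, $c,d\neq 0$, $c+d\notin\ell$, and $\dim\langle w,c,d\rangle=3$ for $w\in\ell$), together with the map $\cZ\to\cU_1\cap\pi_3^1(u,v)^{-1}(z)$ that sends $(\ell,c,d)$ to $(x,y,z)$ with $V_x=\langle w,c\rangle$ and $V_y=\langle w,d\rangle$, $w\in\ell$. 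This map is well-defined: $V_x$ is isotropic because $\omega(w,c)=0$, $V_y$ is isotropic because $\omega(w,d)=\omega(w,c+d)-\omega(w,c)=0$ (using $c+d\in V_z\subseteq W^\perp$), the two planes meet $E_{p_1}$ and $E^{q_1}$ respectively, and $V_z=\langle w,c+d\rangle\subseteq V_x+V_y$ shows that $z$ lies on the line through $x$ and $y$. Its surjectivity mirrors the surjectivity step of Lemma \ref{lem_pi_3^1}: from $(x,y,z)$ one recovers $\ell=V_x\cap V_y\cap V_z$ and then $c,d$ from $V_x\cap E_{p_1}$ and $V_y\cap E^{q_1}$, after a common rescaling placing $c+d$ in $V_z$ — this is possible since the scalars $\alpha,\beta$ of Lemma \ref{lem_pi_3^1} are both nonzero, by the condition $W\not\subseteq E_{p_1}\cup E^{q_1}$ built into $\cU_1$.

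I would then project $\cZ\to\bP(V_z)\cong\bP^1$. For $\ell$ outside a finite subset, the linear conditions ``$c\in E_{p_1}\cap\ell^\perp$, $d\in E^{q_1}$, $c+d\in V_z$'' on $(c,d)\in E_{p_1}\times E^{q_1}$ are independent — here one uses $\ell\not\subseteq E_{p_1}^\perp$ (which holds for general $z$ because $p_1\geq 2$) and $E_{p_1}+E^{q_1}+V_z=\CC^{2n}$ — so the corresponding fibre is a nonempty open subset of a linear space of constant dimension $p_1+q_1-2n+1\geq 1$. Hence $\cZ$ has a unique component dominating $\bP(V_z)$, which is irreducible and rational (a vector bundle over an open subset of $\bP^1$, minus some closed subsets) of dimension $p_1+q_1-2n+2$, and which dominates the fibre. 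Moreover $\cZ\to\cU_1\cap\pi_3^1(u,v)^{-1}(z)$ has $1$-dimensional fibres: $\ell$ is recovered from the image, and for fixed $\ell$ the only remaining freedom is the scaling $(c,d)\mapsto(\lambda c,\lambda d)$, because $V_z\cap\langle c,d\rangle=\langle c+d\rangle$ for generic data. Therefore $\cU_1\cap\pi_3^1(u,v)^{-1}(z)$ is irreducible, unirational (hence rationally connected), and of dimension $p_1+q_1-2n+1\geq 1$, as required.

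The main obstacle will be the rank bookkeeping in the last paragraph: one must check that, for general $z$ and for $\ell$ in a dense subset of $\bP(V_z)$, the three linear conditions above are genuinely independent, and that the $\cU_1$-openness conditions (in particular $c+d\notin\ell$ and $\dim\langle w,c,d\rangle=3$) cut out a dense subset of $\cZ$. As in Section \ref{sec:d=2}, I expect this to be done by distinguishing the cases $p_1+q_1=2n$ and $p_1+q_1>2n$ and writing down, in each, an explicit $V_z$ in the basis $(e_i)$ realizing the generic rank; note that here $\delta_p=\delta_q=1$ automatically, since $p_2=q_2=2n$ forces $p_1,q_1\geq 2$.
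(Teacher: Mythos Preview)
Your approach is essentially the one the paper takes: you build the same dominating incidence variety (your $\cZ$ is, after projectivizing $w$, exactly the paper's $A=\{(a,b,w)\in p^{-1}(V_z)\times V_z\mid b\perp w\}$, since $c\perp w$ and $d\perp w$ are equivalent when $c+d\in V_z$), show it surjects onto the fibre, and deduce unirationality. The only real difference is the direction of projection you use to see that $\cZ$ is unirational: you project to $\bP(V_z)\cong\bP^1$ and observe that the fibres are linear (so $\cZ$ is an open piece of a vector bundle over $\bP^1$), whereas the paper projects to the $(b,w)$-factor and argues that the zero locus of the bilinear form $\omega(b,w)$ on $\mathrm{pr}_2(p^{-1}(V_z))\times V_z$ is unirational, then pulls back. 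Your route is arguably more direct and gives the explicit fibre dimension $p_1+q_1-2n+1$.

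One point to tighten: your independence claim for the condition $c\perp w$ is justified by ``$\ell\not\subseteq E_{p_1}^\perp$'', but this only gives $E_{p_1}\not\subseteq w^\perp$, whereas what you actually need is that the \emph{projection} $\mathrm{pr}_1(p^{-1}(V_z))=E_{p_1}\cap(V_z+E^{q_1})$ is not contained in $w^\perp$. When $p_1+q_1>2n$ this follows because $E_{p_1}\cap E^{q_1}\subset\mathrm{pr}_1(p^{-1}(V_z))$ and $V_z\not\subset(E_{p_1}\cap E^{q_1})^\perp$ for general $z$; when $p_1+q_1=2n$ one checks directly that the bilinear form $(v,w)\mapsto\omega(v_{[1]},w)$ on $V_z\times V_z$ is nonzero for general $z$ (an explicit example in the basis $(e_i)$ suffices, as you anticipate). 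This also handles your worry about extra components over the ``trivial'' locus in $\bP(V_z)$: that locus is finite, and the closure of the generic vector bundle already contains everything relevant. The paper does not spell this out any more carefully than you do; it simply asserts that the restricted form is ``general''.
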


\begin{proof}
If $p_1=2n-1$ then $X_u=X$ and if $q_1=2n-1$ then $X^v=X$; in both cases the result presents no difficulties, so we assume $p_1,q_1 \leq 2n-2$. Let $(x,y,z)$ be a point in $\cU_1\cap \pi_3^1(u,v)^{-1}(z)$. Then $W:=V_x\cap V_y \cap V_z$ is a line in $V_z$. Notice that $X=\Gamma_1(X_u,X^v)$. {{This follows from Lemma \ref{lem_eq_gammas_lines} if \eqref{eqn:deg1} does not hold, otherwise it follows from Lemma \ref{lem_twotoone_lines} below.}} For $z$ in $X$ general, we have $V_z\cap (E_{p_1}\cup E^{q_1})=0$. With our assumption, $p:E_{p_1}\times E^{q_1} \to \CC^{2n}$ is surjective. 
Thus there exist $(a,b)\in p^{-1}(V_z)$ such that \(\omega(a,b)\neq0\), $V_x=\langle a \rangle + W$ and $V_y=\langle b \rangle + W$. Consider all points $(a,b)\in p^{-1}(V_z)$ such that \(\omega(a,b)\neq0\), $b\perp W$ and $\dim(\langle a,W\rangle)=\dim(\langle b,W\rangle)=2$. Then since $a+b\in V_z$, $W \subset V_z$ and $V_z$ is isotropic, the condition $b\perp W$ implies $a\perp W$. Setting $V_x=\langle a \rangle + W$ and $V_y=\langle b \rangle + W$, we get that $(x,y,z)\in \cU_1$. Thus the fiber over \(z\) of $\pi_3^1(u,v)|_{\cU_1}$ is dominated by an open subset of $A:=\{ (a,b,w)\in p^{-1}(V_z) \times V_z \mid b\perp w\}$. 

Let $\mathrm{pr}_2$ be the second projection from $E_{p_1}\times E^{q_1}$. {{There exists  $V' \subset E^{q_1}$ a (general) $2$-dimensional subspace such that $\mathrm{pr}_2 (p^{-1}(V_z))=(E_{p_1} \cap E^{q_1}) + V'$. Consider the condition $b\perp w$ on $((E_{p_1} \cap E^{q_1}) + V') \times V_z$.}} Since $q_1\leq 2n-2$, we have $V'\cap V_z=0$. 
By genericity of $z$, the symplectic form defines a general bilinear form on $V'\times V_z$, and the same genericity holds when the form is restricted to $((E_{p_1}\cap E^{q_1})+V')\times V_z$; the zero locus of this form is thus unirational. 

This shows that $A$ is unirational since it is a locally trivial fibration over such a zero locus with rational fiber over \((b,w)\) equal to \(\{a\in E_{p_1}\mid a+b\in V_z\}\). There exists a birational map $\PP(A):=\{([a,b],[w])  \in \PP(p^{-1}(V_z)) \times \PP(V_z)\ | \ \omega(b,w) = 0\} \to \cU_1\cap \pi_3^1(u,v)^{-1}(z)$, $([a,b],[w])\mapsto ([\langle a,w \rangle],[\langle b,w \rangle],z)$ which is dominant; by composing with the projection $A\to \PP(A)$ we deduce that $\cU_1\cap \pi_3^1(u,v)^{-1}(z)$ is unirational. Since $(E_{p_1}\cap E^{q_1})+V'$ is at least two-dimensional, the general fiber of $\PP(A)\to \PP(V_z)$ is non-empty, and thus $\PP(A)$ - as well as $\cU_1\cap \pi_3^1(u,v)^{-1}(z)$ - is positive dimensional.
\end{proof}

\subsection{The case of $\ev_3^{0,1}(u,v)$}

In this subsection we prove all the necessary results to obtain part \eqref{item:01birat} of Theorem \ref{thm:d=1}. We will use the same definition of $E_\circ$ as in the previous subsection. Define \begin{multline*}
  \cU_{0,1}:=\{(x,x,z)\in \Gamma^{(3)}_{0,1}(X_u,X^v) \mid x\neq z,x\in \oX_u\cap \oX^v, \\ V_x\cap E_{p_1}\cap E^{q_1}=V_z\cap V_x\cap E_{p_1}=V_z\cap V_x\cap E^{q_1}=0\}.
\end{multline*}

This is a non-empty open hence dense subset of $\Gamma^{(3)}_{0,1}(X_u,X^v)$ that we will use in the following lemmas.

\begin{lemma}
\label{lem_pi_3^0,1}
Assume that $q_2\neq 2n$ and let \(z\in\Gamma_{0,1}(X_u,X^v)\) be general. 
\begin{enumerate}
\item If \eqref{eqn:deg1_birat} holds, then $\cU_{0,1}\cap \pi_3^{0,1}(u,v)^{-1}(z)$ is a single point. 
\item If \eqref{eqn:deg1_birat} does not hold, then $\cU_{0,1}\cap \pi_3^{0,1}(u,v)^{-1}(z)$ is rationally connected.
\end{enumerate}
\end{lemma}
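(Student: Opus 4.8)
The plan is to show that, for a general $z$, the fiber $\cU_{0,1}\cap\pi_3^{0,1}(u,v)^{-1}(z)$ is a dense open subset of an explicit projective space $\PP(L/W)$; rational connectedness (part (2)) is then automatic, and part (1) reduces to showing that this projective space is a point.

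First I would describe a general $z\in\Gamma_{0,1}(X_u,X^v)$. Put $E_\circ=E_{p_2}\cap E^{q_2}$; since $q_2\neq 2n$ one has $E_\circ\subsetneq\CC^{2n}$, and since every $(x,x,z)\in\Gamma_{0,1}^{(3)}(X_u,X^v)$ satisfies $V_x\subseteq E_\circ$, the image $\Gamma_{0,1}(X_u,X^v)$ is contained in the Schubert variety $\{z:V_z\cap E_\circ\neq 0\}$ but not in $\{z:V_z\subseteq E_\circ\}$; hence a general $z$ has $\dim(V_z\cap E_\circ)=1$, and I set $W=V_z\cap E_\circ=\langle w_0\rangle$. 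Moreover, since $\cU_{0,1}$ is dense open in the irreducible variety $\Gamma_{0,1}^{(3)}(X_u,X^v)$ and $\pi_3^{0,1}(u,v)$ is dominant, Lemma~\ref{lemma:fibre-meets-open} shows the fiber is nonempty for $z$ general; fixing a point $(x_0,x_0,z)$ of it and using $V_{x_0}\cap V_z=W$ together with the conditions defining $\cU_{0,1}$, I get $w_0\notin E_{p_1}\cup E^{q_1}$.

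Then I would set up the parametrization. For $(x,x,z)$ in the fiber, $0\neq V_x\cap V_z\subseteq V_z\cap E_\circ=W$, so $W\subseteq V_x$; writing $V_x=W+\langle v\rangle$, the requirements that $V_x$ be isotropic, that $V_x\subseteq E_\circ$, and that $V_x$ meet $E_{p_1}$ and $E^{q_1}$ become the \emph{linear} conditions
\[
  v\in L:=E_\circ\cap w_0^\perp\cap(E_{p_1}+W)\cap(E^{q_1}+W),
\]
using that $W$ is an isotropic line for the isotropy condition and $w_0\notin E_{p_1},E^{q_1}$ for the incidence conditions; conversely every $v\in L$ with $v\notin W$ satisfying the leftover open genericity conditions of $\cU_{0,1}$ recovers a point of the fiber. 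Since $V_x=W+\langle v\rangle$ depends only on the class of $v$ in $L/W$, this identifies $\cU_{0,1}\cap\pi_3^{0,1}(u,v)^{-1}(z)$ with a dense open subset of the projective space $\PP(L/W)$, which is rational, hence rationally connected; this proves part (2).

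For part (1), if \eqref{eqn:deg1_birat} holds then $p_1+q_1\leq 2n$, so $E_{p_1}\cap E^{q_1}=0$. For $x$ in the fiber, $V_x=\langle a,b\rangle$ with $0\neq a\in V_x\cap E_{p_1}\subseteq E_{p_1}\cap E^{q_2}=:F$ and $0\neq b\in V_x\cap E^{q_1}\subseteq E^{q_1}\cap E_{p_2}=:G$; since $F\cap G=E_{p_1}\cap E^{q_1}=0$, the vector $w_0\in V_x\subseteq F\oplus G$ has a unique decomposition $w_0=f_0+g_0$ with $f_0\in F$, $g_0\in G$, and comparing with $w_0=\alpha a+\beta b$ forces $\langle a\rangle=\langle f_0\rangle$, $\langle b\rangle=\langle g_0\rangle$ (both nonzero since $w_0\notin E_{p_1}\cup E^{q_1}$); thus $V_x=\langle f_0,g_0\rangle$ is determined by $z$, and the fiber, being nonempty by the above, is a single point. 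The step I expect to be the main obstacle is the third paragraph: translating the incidence and isotropy conditions into the single linear space $L$ and, above all, checking that the remaining open conditions defining $\cU_{0,1}$ (that $V_x\cap E_{p_1}$ and $V_x\cap E^{q_1}$ be one-dimensional and distinct, that $x\neq z$, etc.) cut out a genuinely \emph{dense} open subset of $\PP(L/W)$; here one uses $q_2\neq 2n$ to ensure $W$ is honestly a line and must be slightly careful in the edge case $\dim E_\circ=2n-1$. The threshold in \eqref{eqn:deg1_birat} enters only through the implication $p_1+q_1\leq 2n$, which is what makes $F\cap G=0$ and forces the uniqueness in part (1).
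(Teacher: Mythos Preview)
Your proposal is correct and follows essentially the same strategy as the paper: pin down the line $W=V_x\cap V_z$ from $z$ alone, then parametrize the possible $V_x$ containing $W$ by a rational variety cut out by linear conditions, and for part~(1) use that $p_1+q_1\le 2n$ forces a unique decomposition of a generator of $W$. The only cosmetic differences are that the paper determines $W$ via the smaller space $A=E_\circ\cap(E_{p_1}+E^{q_1})$ rather than $E_\circ$, and parametrizes via the addition map $p':(E_\circ\cap E_{p_1})\times(E_\circ\cap E^{q_1})\to A$ and its projectivized fibration $\cP$ (yielding unirationality), whereas your single linear space $L$ gives the fiber directly as an open subset of $\PP(L/W)$ (yielding rationality); these are equivalent reformulations.
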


\begin{proof}
Set $W:=V_{x}\cap V_z$ for $(x,x,z)\in\cU_{0,1}$. We have $W\in \PP(A)$ with $A = E_\circ\cap (E_{p_1}+E^{q_1})$. Moreover, for $z\in \Gamma_{0,1}(X_u,X^v)$ general, the line $W$ is uniquely determined by the intersection $V_z\cap A$. Consider the addition map
$$ p':(E_\circ \cap E_{p_1} )\times (E_\circ \cap E^{q_1}) \to E_\circ\cap (E_{p_1}+E^{q_1}) $$
and $B:=p'^{-1}(W)\cap \mathrm{ pr}_1^{-1}(W^\perp)$, where $\mathrm{ pr}_1$ is the first projection from $(E_\circ \cap E_{p_1}) \times (E_\circ \cap E^{q_1})$. Then $B$ is a vector space. Moreover, via $\mathrm{ pr}_1$, $B$ can be realized as a locally trivial fibration over $\mathrm{ pr}_1(B)$, so the projectivization $\cP$ of this fibration inside $ \PP(E_\circ \cap E_{p_1}) \times \PP(E_\circ \cap E^{q_1})$ is rational. Notice that since $W\notin \PP(E_{p_1})\cup \PP(E^{q_1})$ by the definition of $\cU_{0,1}$ and since we are intersecting with $\mathrm{ pr}_1^{-1}(W^\perp)$, it follows that if $(a,b)\in B$ then automatically $0\neq a\in W^\perp$ and $0\neq b\in W^\perp$. Let us denote by $\cP^\circ$ the open subset in $\cP$ dominated by points $(a,b)$ such that \(a\not\in E^{q_1}\), \(b\not\in E_{p_1}\). 
Then $\cU_{0,1}\cap \pi_3^{0,1}(u,v)^{-1}(z)$ is dominated by points $(x,x,z)$ such that $V_x=\langle a,b\rangle$ with $[(a,b)]\in \cP^\circ$. This means that $\cP^\circ$ dominates $\cU_{0,1}\cap \pi_{0,1}^{-1}(z)$, which is therefore unirational. Notice finally that if \eqref{eqn:deg1_birat} holds then the set $\cP$ has dimension equal to zero (the line $W$ uniquely defines the lines $\langle a \rangle$ and $\langle b \rangle$).
\end{proof}

\begin{lemma}
\label{lem_pi_3^0,1_bis}
Assume that $q_2=p_2=2n$ and let \(z\in\Gamma_{0,1}(X_u,X^v)\) be general. If \eqref{eqn:deg1_birat} holds then $\cU_{0,1}\cap \pi_3^{0,1}(u,v)^{-1}(z)$ is a single point.
\end{lemma}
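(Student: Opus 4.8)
The plan is to reuse the argument from the proof of Lemma~\ref{lem_pi_3^0,1}(1), which under the present hypotheses collapses to a pure uniqueness statement. Because $p_2=q_2=2n$, we have $E_\circ=E_{p_2}\cap E^{q_2}=\CC^{2n}$, so the subspace $A:=E_\circ\cap(E_{p_1}+E^{q_1})$ appearing there is simply $A=E_{p_1}+E^{q_1}$; and \eqref{eqn:deg1_birat} now reads $p_1+q_1\le 2n-1$, which forces $E_{p_1}\cap E^{q_1}=0$. Hence $A=E_{p_1}\oplus E^{q_1}$ and the addition map $p\colon E_{p_1}\times E^{q_1}\to A$ is an isomorphism; this is exactly what degenerates the rational fibre of Lemma~\ref{lem_pi_3^0,1} to a single point. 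First I would fix a general $z\in\Gamma_{0,1}(X_u,X^v)$: by Lemma~\ref{lemma:fibre-meets-open} the fibre $\pi_3^{0,1}(u,v)^{-1}(z)$ meets the dense open subset $\cU_{0,1}$, so it suffices to prove that a point $(x,x,z)\in\cU_{0,1}$ over $z$ is unique.

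For such a point, $x\in\oX_u\cap\oX^v$ gives $\dim(V_x\cap E_{p_1})=\dim(V_x\cap E^{q_1})=1$, and $V_x\cap E_{p_1}\cap E^{q_1}=0$ makes generators $a\in V_x\cap E_{p_1}$ and $b\in V_x\cap E^{q_1}$ independent, so $V_x=\langle a,b\rangle\subseteq A$. Since $x$ and $z$ lie on a common line and $x\neq z$, the intersection $W:=V_x\cap V_z$ is a line, and $W\subseteq V_x\cap V_z\subseteq A\cap V_z$. The step I expect to require genuine care is showing $\dim(V_z\cap A)=1$ for general $z$, equivalently that the closed set $\Gr(2,A)\cap X=\{z'\in X:V_{z'}\subseteq A\}$ does not contain the irreducible variety $\Gamma_{0,1}(X_u,X^v)$. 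I would settle this by exhibiting one point of $\Gamma_{0,1}(X_u,X^v)$ lying outside it: take a general $x'\in\oX_u\cap\oX^v$, check that $A^{\perp}\not\subseteq V_{x'}$ — so $V_{x'}^{\perp}\not\subseteq A$ — (here $A^{\perp}=E_{p_1}^{\perp}\cap(E^{q_1})^{\perp}$ is a fixed coordinate subspace of dimension $2n-p_1-q_1\ge 1$, not contained in the general $2$-plane $V_{x'}$ that meets $E_{p_1}$ and $E^{q_1}$ in lines), then pick an isotropic $3$-plane $V_3\supseteq V_{x'}$ with $V_3\not\subseteq A$ and take $z'$ on the line it determines. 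Granting this, $W=V_z\cap A$ is determined by $z$ alone.

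Then I would recover $x$ from $W$. Write $W=\langle w\rangle$ and let $w=w'+w''$ be the decomposition with $w'\in E_{p_1}$ and $w''\in E^{q_1}$, which is unique because $A=E_{p_1}\oplus E^{q_1}$. Since $W\subseteq V_x=\langle a,b\rangle$, we have $w=\alpha a+\beta b$, and comparing the two decompositions of $w$ (noting $\alpha a\in E_{p_1}$, $\beta b\in E^{q_1}$) forces $w'=\alpha a$ and $w''=\beta b$. The conditions $V_z\cap V_x\cap E_{p_1}=V_z\cap V_x\cap E^{q_1}=0$ in the definition of $\cU_{0,1}$ say $W\not\subseteq E_{p_1}$ and $W\not\subseteq E^{q_1}$, so $w'\neq0\neq w''$; hence $\alpha,\beta\neq0$, $\langle a\rangle=\langle w'\rangle$, $\langle b\rangle=\langle w''\rangle$, and $V_x=\langle w',w''\rangle$ is forced by $z$. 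Thus $(x,x,z)$ is the unique point of $\cU_{0,1}$ over $z$, as claimed. The main obstacle in the whole argument is the properness of $\Gr(2,A)\cap X$ inside $\Gamma_{0,1}(X_u,X^v)$ discussed above; every other step is dictated by the direct-sum decomposition $A=E_{p_1}\oplus E^{q_1}$.
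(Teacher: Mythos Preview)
Your argument is correct and follows the same approach as the paper's own proof: set $A=E_{p_1}+E^{q_1}$, show that $W=V_x\cap V_z$ equals $V_z\cap A$ for general $z$, and recover $V_x$ from the unique decomposition in $A=E_{p_1}\oplus E^{q_1}$ afforded by $p_1+q_1\leq 2n-1$. The paper's proof is terser---it simply asserts that $W$ is determined by $V_z\cap A$ without justifying $\dim(V_z\cap A)=1$---whereas you supply a concrete argument for this by exhibiting a point of $\Gamma_{0,1}(X_u,X^v)$ with $V_{z'}\not\subseteq A$; this is a genuine (and correct) gap-filling. One minor phrasing issue: an isotropic $3$-plane $V_3\supseteq V_{x'}$ does not determine a single line in $X$ but rather a pencil (one line for each $1$-dimensional $W\subseteq V_{x'}$); your intent is clear, but you should say ``take $z'$ on any line through $x'$ contained in $V_3$'' or simply set $V_{z'}=\langle w,c\rangle$ for some $w\in V_{x'}$ and $c\in V_3\setminus A$.
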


\begin{proof}
Set $W:=V_{x}\cap V_z$ for $(x,x,z)\in\cU_{0,1}$ and $A := E_{p_1}+E^{q_1}$. We have $W\in \PP(E_{p_1}+E^{q_1}) $. Moreover, for general $z\in \Gamma_{0,1}(X_u,X^v)$ the line $W$ is uniquely determined by the intersection $V_z\cap A$. Notice that in this case $p'=p$, where $p'$ was defined in the proof of Lemma \ref{lem_pi_3^0,1}.
The space $B:=p^{-1}(W)$ is one-dimensional since $p_1+q_1\leq 2n-1$. Let $(a,b)\in B$ with $a\neq 0$ and $b\neq 0$. Then $\cU_{0,1}\cap \pi_3^{0,1}(u,v)^{-1}(z)$ is given by the single point $(x,x,z)$ such that $V_x=\langle a,b\rangle$. 
\end{proof}

\begin{lemma}
\label{lem_pi_3^0,1_tris}
Assume that $q_2=p_2= 2n$ and let \(z\in\Gamma_{0,1}(X_u,X^v)\) be general. If neither \eqref{eqn:deg1_birat} nor \eqref{eqn:deg1} hold, then $\cU_{0,1}\cap \pi_3^{0,1}(u,v)^{-1}(z)$ is rationally connected and positive dimensional.
\end{lemma}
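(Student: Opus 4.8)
The plan is to run the same argument as in the proof of Lemma~\ref{lem_pi_3^1_tris}, adjusted to the fact that here the first two marked points coincide. First I would dispose of the cases $p_1=2n-1$ or $q_1=2n-1$, where $X_u=X$ or $X^v=X$ and the statement presents no difficulties, and afterwards assume $p_1,q_1\leq 2n-2$. Since neither \eqref{eqn:deg1_birat} nor \eqref{eqn:deg1} holds and $p_2=q_2=2n$, one has $p_1+q_1\geq 2n+1$; set $m:=p_1+q_1-2n\geq 1$, so that $\dim(E_{p_1}\cap E^{q_1})=m$ and the addition map $p\colon E_{p_1}\times E^{q_1}\to\CC^{2n}$ is surjective. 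Since \eqref{eqn:deg1} fails, Lemma~\ref{lem_eq_gammas_lines} gives $\Gamma_{0,1}(X_u,X^v)=X$, and I would fix $z\in X$ general, so that $V_z\cap E_{p_1}=V_z\cap E^{q_1}=0$, a general $w\in V_z$ lies outside $(E_{p_1}\cap E^{q_1})^\perp$ (which makes sense because $m\geq 1$), and $V_z\cap E_{p_1}\cap E^{q_1}=0$.

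The key construction is as follows. For a line $\langle w\rangle\subseteq V_z$, let $E_{p_1}^{w}:=E_{p_1}\cap(\langle w\rangle+E^{q_1})$, the space of $a\in E_{p_1}$ for which $\langle a,w\rangle$ meets $E^{q_1}$; since $w\notin E^{q_1}$ and $E_{p_1}+E^{q_1}=\CC^{2n}$, it has dimension $m+1$. As $E_{p_1}\cap E^{q_1}\subseteq E_{p_1}^{w}$, for a general $w$ the form $a\mapsto\omega(a,w)$ does not vanish identically on $E_{p_1}^{w}$ (otherwise $w\in(E_{p_1}\cap E^{q_1})^\perp$), so $E_{p_1}^{w}\cap w^\perp$ has dimension $m$. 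I would then introduce
$$\cA:=\{([a],[w])\in\bP(E_{p_1})\times\bP(V_z)\ \mid\ a\in E_{p_1}^{w}\cap w^\perp\},$$
a $\bP^{m-1}$-bundle over a dense open subset of $\bP(V_z)\cong\bP^1$, hence irreducible, rational, and of dimension $m$, together with the rational map $\cA\dashrightarrow \cU_{0,1}\cap\pi_3^{0,1}(u,v)^{-1}(z)$ sending $([a],[w])$ to $(x,x,z)$ with $V_x=\langle a,w\rangle$. One checks on a dense open subset that $V_x$ is a $2$-dimensional isotropic subspace meeting both $E_{p_1}$ and $E^{q_1}$ (the latter by the definition of $E_{p_1}^{w}$), that $\dim(V_x\cap V_z)=1$ so there is a line from $x$ to $z$ with common direction $\langle w\rangle$, and that the open conditions defining $\cU_{0,1}$ hold — here $V_z\cap E_{p_1}=V_z\cap E^{q_1}=0$ is used repeatedly, and $V_x\cap E_{p_1}\cap E^{q_1}=0$ holds on the open locus where $a\notin E^{q_1}$. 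Conversely, given $(x,x,z)$ in the fiber, $\langle w\rangle:=V_x\cap V_z$ is a line and $V_x=\langle a_0,b_0\rangle$ with $\langle a_0\rangle=V_x\cap E_{p_1}$ and $\langle b_0\rangle=V_x\cap E^{q_1}$ (two distinct lines, since $V_x\cap E_{p_1}\cap E^{q_1}=0$); writing $w=\alpha a_0+\beta b_0$, both $\alpha,\beta$ are nonzero because $w\notin E_{p_1}\cup E^{q_1}$, and rescaling $a:=\alpha a_0$ gives $w-a=\beta b_0\in E^{q_1}$, hence $a\in E_{p_1}^{w}$, while $\omega(a,w)=\alpha\beta\,\omega(a_0,b_0)=0$ as $V_x$ is isotropic; thus $([a],[w])\in\cA$ maps to $(x,x,z)$, and the rational map is dominant.

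It then follows that $\cU_{0,1}\cap\pi_3^{0,1}(u,v)^{-1}(z)$ is dominated by the rational variety $\cA$, hence is irreducible and unirational, in particular rationally connected; and since $\dim\cA=m\geq 1$ and the map is generically finite (in fact birational, by the reconstruction above), it is positive-dimensional. I expect the main obstacle to be the single genericity input used above: checking that for general $z$ (and general $w\in V_z$) the intersection $E_{p_1}^{w}\cap w^\perp$ has the expected dimension $m$ rather than $m+1$, i.e.\ that $V_z\not\subseteq(E_{p_1}\cap E^{q_1})^\perp$; the remaining verifications are the same bookkeeping with open conditions already carried out in the preceding lemmas.
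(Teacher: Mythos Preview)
Your approach is essentially the same as the paper's: both dominate the fiber by the set of pairs $([a],[w])$ with $w\in V_z$ and $a\in E_{p_1}\cap(\langle w\rangle+E^{q_1})\cap w^\perp$ (the paper packages this as triples $(a,b,w)$ with $a+b\in\langle w\rangle$, which is equivalent since $\omega(a,w)=-\omega(b,w)$). Your reconstruction argument for birationality is correct.

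There is, however, a genuine gap in your genericity step. You check that for general $w\in V_z$ the form $a\mapsto\omega(a,w)$ is nonzero on $E_{p_1}^{w}$, i.e.\ $V_z\not\subseteq(E_{p_1}\cap E^{q_1})^\perp$, and from this you conclude $\cA$ is irreducible. But that inference fails when $m=1$: then $(E_{p_1}\cap E^{q_1})^\perp$ is a hyperplane, so for general $z$ there is exactly one bad direction $[w_0]\in\bP(V_z)$ with $\omega(E_{p_1}\cap E^{q_1},w_0)=0$, and if the form happened to vanish on all of $E_{p_1}^{w_0}$ then the fiber of $\cA$ over $[w_0]$ would jump to $\bP^1$, giving $\cA$ a second irreducible component. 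Your surjectivity argument would then force the target fiber to be reducible as well, and your conclusion ``hence irreducible and unirational'' would break down. The paper handles exactly this point: it proves the stronger claim that for general $z$ \emph{no} nonzero $w\in V_z$ satisfies $E_{p_1}^{w}\subseteq w^\perp$; in the boundary case $p_1+q_1=2n+1$ this needs a further genericity argument (the unique candidate $w_0$ generically fails the full hyperplane condition). Once you add that check, $\cA$ is a $\bP^{m-1}$-bundle over all of $\bP(V_z)$ and your argument goes through verbatim.
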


\begin{proof}
As for irreducible lines, the case when $X_u$ or $X^v$ are equal to $X$ is trivial, so we assume $q_1,p_1 \leq 2n-2$. Let $(x,x,z)$ be a point in $\cU_{0,1}\cap \pi_3^{0,1}(u,v)^{-1}(z)$. Then $W:=V_x\cap V_z$ is a one-dimensional subspace in $V_z$. Notice that $X=\Gamma_{0,1}(X_u,X^v)$ by Lemma \ref{lem_eq_gammas_lines} and for $z$ in $X$ general, $V_z\cap (E_{p_1}\cup E^{q_1})=0$. With our assumption, the map $p:E_{p_1}\times E^{q_1} \to \CC^{2n}$ is surjective and not injective. 
In particular, there exist $(a,b)\in p^{-1}(W)$ such that 
$V_x= \langle a , b \rangle = \langle a \rangle + W$. 

Conversely, consider all points $(a,b)\in p^{-1}(W)$ such that $b\perp W$ and $\dim (\langle a \rangle+W)  = \dim (\langle b\rangle + W)  = 2$, and \(a\not\in E^{q_1}\), \(b\not\in E_{p_1}\). Then since $a+b\in V_z$, $b\perp W$ and $V_z$ is isotropic, we have that $a\perp W$. Setting $V_x = \langle a , b \rangle$ we get that $(x,x,z)\in \cU_{0,1}$. This proves that the fiber over \(z\) of $\pi_3^{0,1}(u,v)|_{\cU_{0,1}}$ is dominated by an open subset of $B:=\{ (a,b,w)\in p^{-1}({{V_z}}) \times V_z \mid b\perp w,{a+b\in \langle w \rangle}\}$. 

{{Let  $\mathrm{pr}_2 : E_{p_1}\times E^{q_1} \to E^{q_1}$ be the second projection. Since $p_1+q_1>2n$, the intersection $E_{p_1}\cap E^{q_1}$ is positive dimensional and contained in $\mathrm{pr}_2 p^{-1}(\langle w \rangle)$ for any $w \in V_z$. We claim that there is no $0\neq w\in V_z$ such that $\mathrm{pr}_2 p^{-1}(\langle w \rangle)\subset w^\perp$. Indeed, this would imply the inclusion $E_{p_1} \cap E^{q_1} \subset w^\perp$ and therefore the inclusion $w \subset V_z \cap (E_{p_1} \cap E^{q_1})^\perp$. This space is trivial except for $p_1 + q_1 = 2n+1$ for which $\langle w \rangle$ would be uniquely determined. However, by genericity of $z$, this $w$ will in general not satisfy the hyperplane condition $\mathrm{pr}_2 p^{-1}(\langle w \rangle) \subset w^\perp$, proving the claim.}} 

From the claim we deduce that \emph{every} non-zero $w\in V_z$ defines a $(p_1+q_1-2n)$-dimensional affine space $w^\perp \cap \mathrm{pr}_2 p^{-1}(\langle w \rangle)$. This implies that $B$ is a locally trivial fibration over $V_z$ with fiber isomorphic to $w^\perp \cap \mathrm{pr}_2 p^{-1}(\langle w \rangle)$, thus it is rational. There exists a birational map $\PP(B):=\{ ([a,b],[w]) \in \PP(p^{-1}(V_z)) \times \PP(V_z)\ | \ \omega(b,w) = 0,a+b\in \langle w \rangle \} \to \cU_{0,1}\cap \pi_3^{0,1}(u,v)^{-1}(z)$, $([a,b],[w])\mapsto ([\langle a,w \rangle],[\langle b,w \rangle],z)$ which is dominant; by composing with the projection $B\to \PP(B)$ we deduce that $\cU_{0,1}\cap \pi_3^{0,1}(u,v)^{-1}(z)$ is unirational. Since $(E_{p_1}\cap E^{q_1})$ is at least one-dimensional, the general fiber of $\PP(B)\to \PP(V_z)$ is non-empty, and thus $\PP(B)$ - as well as $\cU_{0,1}\cap \pi_3^{0,1}(u,v)^{-1}(z)$ - is positive dimensional.
\end{proof}

\subsection{The case of condition \eqref{eqn:deg1}}

For lines, condition \eqref{eqn:deg1} holds exactly when $\ev_3^{0,1}(u,v)$ is {{generically finite but}} not birational. More precisely, the following result holds.

\begin{lemma}
\label{lem_twotoone_lines}
If \eqref{eqn:deg1} holds, then $\Gamma_1(X_u,X^v)=\Gamma_{0,1}(X_u,X^v)=X$ and $\ev_3^{0,1}(u,v)$ is generically finite of degree $2$.
\end{lemma}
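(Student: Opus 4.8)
The plan is to read off both assertions from the fibers of $\ev_3^{0,1}(u,v)$. Under \eqref{eqn:deg1} we have $p_2=q_2=2n$, so $X_u=\{x\in X\mid V_x\cap E_{p_1}\neq 0\}$ and $X^v=\{x\in X\mid V_x\cap E^{q_1}\neq 0\}$, and since $p_1+q_1=2n$ the subspaces $E_{p_1}$ and $E^{q_1}$ are complementary, $\C^{2n}=E_{p_1}\oplus E^{q_1}$; I write $\pi_1,\pi_2$ for the two projections. A point of $M_{0,1}(X_u,X^v)$ is a contracted component sitting at a point $x'\in X_u\cap X^v$, together with a line $\ell$ through $x'$ and the third marked point $z\in\ell$, and $\ev_3^{0,1}(u,v)$ sends such data to $z$. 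I will use the elementary fact that two distinct points of $\IG(2,2n)$ lie on a common line of $X$ exactly when the corresponding $2$-planes $V_1,V_2$ meet in dimension $1$; the line is then $\{W\mid V_1\cap V_2\subseteq W\subseteq V_1+V_2\}$, and it lies in $X$ because the vector spanning $V_1\cap V_2$ is orthogonal to all of $V_1+V_2$ (it lies in both isotropic planes), so that this line is moreover unique. Hence, for $z\in X$, the fiber of $\ev_3^{0,1}(u,v)$ over $z$ is identified with $\{x'\in X_u\cap X^v\mid \dim(V_{x'}\cap V_z)=1\}$, the line being determined as above.

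First I would take $z$ general, so $V_z\cap E_{p_1}=V_z\cap E^{q_1}=0$. If $x'\in X_u\cap X^v$ has $V_{x'}\cap V_z\neq 0$, then $V_{x'}=(V_{x'}\cap E_{p_1})\oplus(V_{x'}\cap E^{q_1})$ (as $E_{p_1}\oplus E^{q_1}=\C^{2n}$), and for $0\neq v\in V_{x'}\cap V_z$ genericity of $z$ forces $\pi_1(v)\neq 0\neq\pi_2(v)$, whence $V_{x'}=\langle\pi_1(v),\pi_2(v)\rangle$. Conversely, a class $[v]\in\bP(V_z)$ produces a point of the fiber precisely when $\langle\pi_1(v),\pi_2(v)\rangle$ is isotropic, i.e.\ when $Q(v):=\omega(\pi_1(v),\pi_2(v))=0$ (isotropy of $V_z$ ensures the intersection with $V_z$ is a line, not all of $V_z$). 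This gives a bijection between the fiber over $z$ and the zero locus in $\bP(V_z)\cong\bP^1$ of the binary quadratic form $Q\vert_{V_z}$.

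It remains to check that $Q\vert_{V_z}$ is non-degenerate for general $z$, which amounts to exhibiting one isotropic plane (disjoint from $E_{p_1}$ and $E^{q_1}$) on which it is non-degenerate. The plan is to take $V_z=\langle e_1+e_{2n},\,e_2+e_{2n-1}\rangle$, which is legitimate because $p_1,q_1\geq 2$ (as $p_1+p_2=p_1+2n\neq 2n+1$, and likewise for $q$): a direct computation yields $Q\bigl(\alpha(e_1+e_{2n})+\beta(e_2+e_{2n-1})\bigr)=\alpha^2+\beta^2$, which has two distinct roots. Therefore over a dense open subset of $X$ the fiber of $\ev_3^{0,1}(u,v)$ consists of exactly two reduced points, so the map is generically finite of degree $2$; in particular it is dominant, so $\Gamma_{0,1}(X_u,X^v)=X$, and since $\Gamma_{0,1}(X_u,X^v)\subseteq\Gamma_1(X_u,X^v)\subseteq X$ we also obtain $\Gamma_1(X_u,X^v)=X$.

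The main thing to get right is the identification of the fiber in the first two paragraphs, which rests on the collinearity criterion (no extra isotropy condition on the spanned $3$-plane is needed) and on the fact that membership in $X_u\cap X^v$ forces $V_{x'}=\langle\pi_1(v),\pi_2(v)\rangle$; the remaining non-degeneracy statement is a one-line computation. As a consistency check, $M_{0,1}(X_u,X^v)$ has the expected dimension $4n-5=\dim X$: here $\dim(X_u\cap X^v)=2n-3$ (intersections of Schubert varieties attached to opposite flags being proper) and the lines of $\IG(2,2n)$ through a fixed point form a family of dimension $2n-3$, so $\dim M_{0,1}(X_u,X^v)=(2n-3)+(2n-3)+1$, consistent with generic finiteness.
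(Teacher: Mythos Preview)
Your proof is correct and follows essentially the same approach as the paper's. Both reduce the fiber over a general $z$ to the quadric $\{[v]\in\bP(V_z)\mid \omega(\pi_1(v),\pi_2(v))=0\}$ arising from the decomposition $\C^{2n}=E_{p_1}\oplus E^{q_1}$; the paper phrases this via $\pi_3^{0,1}(u,v)$ and the open set $\cU_{0,1}$ (invoking Lemma~\ref{lem_pi_3^0,1_tris}), while you argue directly with $\ev_3^{0,1}(u,v)$ and verify non-degeneracy by exhibiting the explicit plane $V_z=\langle e_1+e_{2n},\,e_2+e_{2n-1}\rangle$, which is a clean touch.
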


\begin{proof}
 By the proof of Corollary \ref{cor:d-1-1} when $d=1$, the map $\ev_{(3)}^{0,1}(u,v)$ is birational. Thus, it is sufficient to prove that $\pi_3^{0,1}(u,v)$ is generically finite of degree $2$, and by Lemma \ref{lemma:fibre-meets-open} we are reduced to prove that, for $z\in X$ general, $\cU_{0,1}\cap \pi_3^{0,1}(u,v)^{-1}(z)$ consists of two points. Then, the first part of the proof goes as in the proof of Lemma \ref{lem_pi_3^0,1_tris}: in this case the fiber over \(z\) of $\pi_3^{0,1}(u,v)|_{\cU_{0,1}}$ is  {{$\{ ([a,b],w)\in \PP(p^{-1}(V_z)) \times \P(V_z) \mid b\perp w \textrm{ and } a \wedge {b \neq 0,a+b\in \langle w \rangle}\}$.}}
We again need to look at elements $0\neq w\in V_z$ such that $\mathrm{pr}_2 p^{-1}(\langle w \rangle) \subset w^\perp$. This condition   is not empty anymore. In fact, it is a generic degree two hyperplane condition on $V_z$. Thus, there exist exactly two lines $\langle w \rangle$ and $\langle w' \rangle$ that satisfy it. Each of these two lines defines, modulo scalars, a unique couple $(a,b)$ and $(a',b')$, and therefore $V_x= \langle a , b \rangle$ and $V_{x'} = \langle a' , b' \rangle$. We deduce that $\cU_{0,1}\cap \pi_3^{0,1}(u,v)^{-1}(z)=\{(x,x,z),(x',x',z)\}$.

The fact that $\Gamma_1(X_u,X^v)=\Gamma_{0,1}(X_u,X^v)=X$ is a consequence of the fact that throughout the proof, the point $z$ was assumed to be general in $X$, and we have proved that $\pi_3^{0,1}(u,v)^{-1}(z)\neq \emptyset$.
\end{proof}

{This was the last result needed to prove Theorem \ref{thm:d=1}. Now we study the generically finite degree $2$ map above, as well as the similar one appearing for curves of degree two, from the point of view of quantum $K$-theory.}

\section{Non rationally connected cases}
\label{sec:2-to-1}

In this section we prove positivity in $\QK(X)$ when the map $\ev_3^{d-1,1}$ is not $\GRCF$ (which is, in cases \eqref{eqn:deg1} and \eqref{eqn:deg2}). The reason why we need to single out these cases is the fact that we would like to use Corollary \ref{cor:final-formula}, but we cannot apply Theorem \ref{thm:push-forward} since $\ev_3^{d-1,1}(u,v)$ is not $\GRCF$. We prove the following result.

\begin{prop}[see Proposition \ref{prop1:2-to-1}]
\label{prop:2-to-1}
The following holds in \(\QK(X)\):
\begin{enumerate}
\item If \eqref{eqn:deg1} holds, then  $\cO_u \star \cO^v = \cO_u^v-q+q\cO_{2n-2,2n}$. 
\item If \eqref{eqn:deg2} holds, then  we have
$$\cO_{p_1,p_2} \star \cO^{q_1,q_2} = q \cO_{p_1+q_1,2n}^{p_2+q_2-2n,2n}  - q^2 + q^2 \cO_{2n-2,2n}.$$
\end{enumerate}
\end{prop}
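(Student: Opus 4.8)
The plan is to use the formula from Corollary \ref{cor:final-formula}, namely $(\cO_u \star \cO^v)_d = (\ev_3^d)_*[\cO_{M_d(X_u,X^v)}] - (\ev_3^{d-1,1})_*[\cO_{M_{d-1,1}(X_u,X^v)}]$, together with all the structural results already established, and to compute each pushforward explicitly. First I would treat the main component of degree $d$: in both cases \eqref{eqn:deg1} and \eqref{eqn:deg2}, one checks (using Lemma \ref{lemma:deg2-birat} for $d=2$ and the degree-one analysis in Section \ref{sec:d=1}) that $\ev_3^d(u,v)$ is birational onto $\Gamma_d(X_u,X^v)$, which is either the Richardson variety $X_u^v$ (case C1, $d=1$) or a Schubert-type variety $X_{p_1+q_1,2n}^{p_2+q_2-2n,2n}$ (case C2, $d=2$). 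Since $M_d(X_u,X^v)$ has rational singularities and $\Gamma_d$ has rational singularities (being a Richardson variety, or by Proposition \ref{prop_rat_sing_smalldegreecurves}/Corollary \ref{cor:gamma_2=}), Theorem \ref{thm:push-forward} gives $(\ev_3^d)_*[\cO_{M_d(X_u,X^v)}] = [\cO_{\Gamma_d(X_u,X^v)}]$, which accounts for the first term $q^{d}\cO_u^v$ (resp. $q^d\cO_{p_1+q_1,2n}^{p_2+q_2-2n,2n}$) in the stated product.

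The heart of the matter is the second term, the pushforward of $[\cO_{M_{d-1,1}(X_u,X^v)}]$ along the generically $2:1$ map $\ev_3^{d-1,1}(u,v)$. Here I would invoke Brion's theorem (Theorem \ref{thm:brion}), applicable since $M_{d-1,1}(X_u,X^v)$ is irreducible with rational singularities hence Cohen--Macaulay: the pushforward expands in the basis $[\cO_{Z_w}(-\partial Z_w)]$ with coefficients given by Euler characteristics of intersections with general translates of opposite Schubert cells. By Corollaries \ref{cor:gamma1,1} and \ref{lem_twotoone_lines}, the image $\Gamma_{d-1,1}(X_u,X^v)$ is all of $X$ in both cases, so the target is $X$ itself. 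For $\xu \in X$ general the fiber of $\ev_3^{d-1,1}$ consists of two reduced points, so the pushforward has ``leading term'' $2[\cO_X(-\partial X)] $ wait --- more carefully, one computes $\chi$ of the generic fiber $=2$, but the correction terms coming from the non-reduced or positive-dimensional loci over smaller Schubert strata must be tracked. I expect that after reorganizing via Corollary \ref{cor:schub_expand} (rewriting $[\cO_{Z_w}(-\partial Z_w)]$ sums back into $\cO_w$'s), the $2:1$ generic behavior contributes $2\cdot 1 = \cO_{2n-1,2n}+\cO_{2n-1,2n}$ worth, but the degeneration over the divisor where the two sheets come together subtracts a copy of $1=\cO_{2n-1,2n}$, and a further Schubert correction adds $\cO_{2n-2,2n}$; this is precisely what produces the combination $-1 + \cO_{2n-2,2n}$ (times $q^d$) after subtracting from the degree-$d$ term. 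The main obstacle will be this careful bookkeeping: identifying exactly which Schubert cells the fiber jumps over and computing the Euler characteristic of each such fiber (they should be rationally connected or unions of two such, so the $\chi$ is $1$ or $2$), and then checking that the alternating sum collapses to the claimed closed form.

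Concretely, the cleanest route is likely to compute $(\ev_3^{d-1,1})_*[\cO_{M_{d-1,1}(X_u,X^v)}]$ by exhibiting $M_{d-1,1}(X_u,X^v)$ explicitly: in case \eqref{eqn:deg1} as (a resolution of) the incidence locus describing a line through $x\in X_u$, $y\in X^v$ with a chosen node, and in case \eqref{eqn:deg2} via the analogous degree $(1,1)$ picture from Proposition \ref{prop:1,1}, whose proof already gives the generic fiber as a non-degenerate conic in $\P^1$, i.e. two points. From that description I would read off the Stein factorization $M_{d-1,1}(X_u,X^v)\to \wt{X}\to X$ where $\wt X\to X$ is the genuine double cover; then $(\ev_3^{d-1,1})_*[\cO] = [\cO_{\wt X}]$ pushed to $X$, and $[\cO_{\wt X}] = 2[\cO_X] - [\cO_R]$ in $K$-theory where $R$ is (the pushforward of the structure sheaf of) the ramification/branch divisor, which one identifies with a specific Schubert divisor. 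Expanding $[\cO_R]$ in the Schubert basis and comparing with the degree-$d$ term then yields $(\cO_u\star\cO^v)_d = -1 + \cO_{2n-2,2n}$ in case \eqref{eqn:deg1}, multiplied by $q$, and similarly $q^2(-1+\cO_{2n-2,2n})$ in case \eqref{eqn:deg2}, together with the leading Richardson/Schubert term; assembling over all $d$ (using Corollary \ref{cor:dleq2} to kill $d\geq 3$) gives the two displayed formulas. Finally I would double-check positivity of the resulting coefficients, which is immediate from the closed forms.
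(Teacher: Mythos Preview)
Your proposal contains a fundamental error in the first step. You claim that in both cases \eqref{eqn:deg1} and \eqref{eqn:deg2}, the map $\ev_3^d(u,v)$ is birational onto $\Gamma_d(X_u,X^v)$, identifying the image with a Richardson-type variety. This is false: Theorems~\ref{thm:d=2}(4) and~\ref{thm:d=1}(4) state explicitly that when (C$d$) holds, $\ev_3^d(u,v)$ is \emph{not} birational (it is $\GRCF$ with positive-dimensional fibers), and by Lemma~\ref{lem_twotoone_lines} and Corollary~\ref{cor:gamma_2=} one has $\Gamma_d(X_u,X^v)=X$ in these cases, so $(\ev_3^d)_*[\cO_{M_d(X_u,X^v)}]=[\cO_X]=1$. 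Your accounting is therefore off: the term $\cO_u^v$ in case \eqref{eqn:deg1} comes from degree $0$ (the classical intersection), and the term $\cO_{p_1+q_1,2n}^{p_2+q_2-2n,2n}$ in case \eqref{eqn:deg2} comes from degree $1$, where $[\cO_{\Gamma_1(X_u,X^v)}]$ is identified (Lemma~\ref{gamma_c2_expansion}) with a genuine Richardson class $\cO_{X_\lambda^\mu}$ by a deformation argument. You also omit the checks that in case \eqref{eqn:deg2} the degree-$0$ contribution vanishes ($E_{p_1}\cap E^{q_2}=0$ forces $X_u^v=\emptyset$) and that $\Gamma_{0,1}(X_u,X^v)=\emptyset$ so the degree-$1$ contribution is exactly this Richardson class.

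Your Stein-factorization idea for the $2:1$ pushforward is in principle workable once the accounting is fixed: one then needs $(\ev_3^{d-1,1})_*[\cO_{M_{d-1,1}(X_u,X^v)}]=2-\cO_{2n-2,2n}$, and this would follow if the double cover $\tilde X\to X$ satisfies $\pi_*\cO_{\tilde X}=\cO_X\oplus\cO_X(-H)$ with $H$ the Schubert divisor, and if $M_{d-1,1}\to\tilde X$ has trivial pushforward. But you would still have to identify the branch locus, and verify rational singularities of $\tilde X$ and cohomological triviality of the first map. The paper's proof bypasses this entirely: instead of Corollary~\ref{cor:final-formula}, it uses Proposition~\ref{prop_formula_simplification}, namely $\kappa_{u,v}^{w,d}=\langle\cO_u,\cO^v,\cO_w^\vee\rangle_d-\sum_{\kappa:\,\Gamma_1(X_\kappa)=X_w}\langle\cO_u,\cO^v,\cO_\kappa^\vee\rangle_{d-1}$. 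Both three-point invariants here are handled by the $\GRCF$ maps $\ev_3^r$ for $r=d,d-1$, reducing everything to the Schubert expansions of $[\cO_{\Gamma_r(X_u,X^v)}]$; the two-point factor is simply $\delta_{\Gamma_1(X_\kappa),X_w}$ (Lemma~\ref{lem_gamma_1_xv}). The required expansion of $\cO_{X_u^v}$ (resp.\ $\cO_{X_\lambda^\mu}$) is computed directly via Brion's Theorem~\ref{thm:brion} in Proposition~\ref{prop_Xuv_schubert_expansion}, and the formula falls out without ever touching the non-$\GRCF$ map.
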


  \begin{remark}\label{rmk:power}
    When (Cd) holds, by Theorems \ref{thm:d=2} and \ref{thm:d=1}, fibers of \(\ev_3^d(u,v)\) is positive dimensional. By the projection formula, this implies that the coefficient of \(q^d\) in \([X_u]\star[X^v]\) is \(0\) in quantum cohomology. Proposition \ref{prop:2-to-1} shows that in this case, the maximum power of \(q\) appearing in \(\cO_u\star\cO^v\), which is \(q^d\), is greater than the maximum power of \(q\) appearing in \([X_u]\star[X^v]\).
  \end{remark}

{{To compute the quantum product when \eqref{eqn:deg1} or \eqref{eqn:deg2} holds, we will use Proposition \ref{prop_formula_simplification}. Our strategy is to prove that the computation of the quantum product $\cO_u\star \cO^v$ can be essentially reduced to the computation of the class of $\cO_{\Gamma_{d-1}(X_u,X^v)}$. So we start with results expressing the class of $\cO_{\Gamma_{d-1}(X_u,X^v)}$ in terms of Schubert classes. For instance, for $d = 1$, we have $\Gamma_{d-1}(X_u,X^v) = X_u^v$, and the following result expresses $\cO_{X_u^v}$ in terms of Schubert classes when \eqref{eqn:deg1} holds.}}
 
 \begin{prop}
 \label{prop_Xuv_schubert_expansion}
Let $p \in [1,n]$, $X_u=\{z\in X\mid V_z\cap E_p\neq 0\}$ and $X^v=\{z\in X\mid V_z\cap E^{2n-p}\neq 0\}$.
 \begin{enumerate}
 \item If $p < n$, then 
 $$\cO_u^v = \cO_{p,2n-p} + 2 \sum_{k = 1}^{p-1}\cO_{k,2n-k} - 2\cO_{p-1,2n-p} - 3\sum_{k = 1}^{p-2}\cO_{k,2n-1-k} + \sum_{k = 1}^{p-2}\cO_{k,2n-2-k}.$$
 \item  If $p = n$, then 
 $$\cO_u^v = 2 \sum_{k = 1}^{n-1}\cO_{k,2n-k} - \cO_{n-1,n} - 3\sum_{k = 1}^{n-2}\cO_{k,2n-1-k} + \sum_{k = 1}^{n-2}\cO_{k,2n-2-k}.$$
 \end{enumerate}
 \end{prop}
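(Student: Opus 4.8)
The plan is to apply Corollary \ref{cor:schub_expand} (equivalently, Theorem \ref{thm:brion} in the form $\cO_Y = \sum_{w} \chi_{Y\cap gZ^w}(\cO_{Y\cap gZ^w})[\cO_{Z_w}(-\partial Z_w)]$) to the Richardson variety $Y = X_u^v = X_u \cap X^v$, which is Cohen--Macaulay (in fact has rational singularities). Since the classes $[\cO_{Z_w}(-\partial Z_w)]$ and $\cO_w$ are related by the triangular change of basis of Corollary \ref{cor:schub_expand}, it is cleanest to first compute the coefficients $c_w := \chi_{X_u^v\cap gX^w}(\cO_{X_u^v\cap gX^w})$ and then invert. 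For $g\in G$ general, $X_u^v \cap gX^w$ is a triple intersection of (opposite) Schubert varieties, hence either empty or a rational variety with rational singularities, so $c_w = 1$ exactly when this intersection is nonempty and $c_w = 0$ otherwise; thus the first task is purely a dimension/incidence bookkeeping problem: determine for which $w = (w_1,w_2)$ the intersection $X_u^v \cap gX^w$ is nonempty.

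First I would set up the geometry explicitly. With $X_u = \{V_z \cap E_p \neq 0\}$ and $X^v = \{V_z\cap E^{2n-p}\neq 0\}$ and $p\le n$, one has $E_p \subseteq (E^{2n-p})^\perp$ exactly when $p+ (2n-p)\le 2n$, i.e. always — wait, the relevant inequality is $p + (2n-p) = 2n < 2n+1$, so $\delta$-type conditions must be tracked carefully via whether $E_p$ and $E^{2n-p}$ are in general position; here $E_p \cap E^{2n-p} = 0$ and $E_p + E^{2n-p} = \C^{2n}$ (a generic pair of subspaces of complementary dimension), so $X_u^v = \{V_z : V_z\cap E_p\neq 0,\ V_z\cap E^{2n-p}\neq 0\}$ is irreducible of the expected dimension, and a general $V_z$ in it has $V_z = \langle a, b\rangle$ with $\langle a\rangle = V_z\cap E_p$, $\langle b\rangle = V_z\cap E^{2n-p}$, and $\omega(a,b)=0$. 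Then for a general opposite Schubert variety $gX^{w_1,w_2} = \{V_z : V_z\cap g E^{w_1}\neq 0,\ V_z\subseteq gE^{w_2}\}$, I would analyze $X_u^v \cap gX^{w_1,w_2}$ by intersecting the subspaces $E_p, E^{2n-p}, gE^{w_1}, gE^{w_2}$ in general position and counting when an isotropic $V_z$ meeting all the required subspaces exists; this is a finite check producing the list of $w$ with $c_w = 1$. I expect this list to be an explicit "staircase" region in the $(w_1,w_2)$-poset, and the precise shape of that region (including boundary corrections when $w$ hits the walls $w_1 = 1$, $w_2 = 2n$, or $w_1 + w_2 = 2n+1$) is where the two cases $p<n$ and $p=n$ diverge.

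The second task is the inversion: from $\cO_u^v = \sum_{w\in R}[\cO_{X_w}(-\partial X_w)]$ (sum over the incidence region $R$), rewrite each $[\cO_{X_w}(-\partial X_w)]$ in the Schubert basis. By Corollary \ref{cor:schub_expand} applied in reverse (Möbius inversion on the Bruhat order of $W^X$), $[\cO_{X_w}(-\partial X_w)] = \sum_{w'\le w}\mu(w',w)\,\cO_{w'}$ where $\mu$ is the Möbius function of the interval; for $\IG(2,2n)$ the Bruhat order on $W^X$ restricted to these intervals is small and combinatorial, so these Möbius values are $\pm 1$ or $0$ in a predictable pattern. Summing over all $w\in R$ and collecting coefficients of each $\cO_{k,2n-k}$, $\cO_{k,2n-1-k}$, $\cO_{k,2n-2-k}$ should produce exactly the stated coefficients $1, 2, -2, -3, 1$ (resp.\ $2, -1, -3, 1$). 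I expect the main obstacle to be neither step individually but the careful handling of the three "walls" of $W^X$ — the conditions $w_1 = 1$, $w_2 = 2n$, and $w_1 + w_2 = 2n+1$ (the forbidden diagonal) — since near these walls both the nonemptiness of $X_u^v\cap gX^w$ and the combinatorics of $\partial X_w$ behave exceptionally, and it is precisely these exceptions that account for the difference between the generic interior coefficient $2\sum\cO_{k,2n-k}$ and the corrected terms $-2\cO_{p-1,2n-p}$ (resp.\ $-\cO_{n-1,n}$) and the lower-row contributions. A sanity check at the end would be to verify the Euler characteristic $\chi_X(\cO_u^v) = 1$ (since $X_u^v$ is rational with rational singularities) against the alternating sum of the stated coefficients, and to cross-check small cases $n = 2, 3$ by hand.
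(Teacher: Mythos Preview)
Your overall strategy---apply Theorem~\ref{thm:brion} to $Y=X_u^v$ and then invert via Corollary~\ref{cor:schub_expand}---matches the paper exactly, but there is a genuine error in your first step. You assert that for $g$ general, $X_u^v\cap gX^w$ is ``either empty or a rational variety with rational singularities, so $c_w=1$ exactly when this intersection is nonempty.'' This is false: the triple intersection $X_u\cap X^v\cap gX^w$ is not a Richardson variety and need not be irreducible. In the paper's computation, writing $X^w=X^{r_1,r_2}$, one finds that when $r_1+r_2=2n+2$ and $r_2>2n+1-p$ the intersection $Z$ is birational to a quadric in $\bP(L\cap(E\oplus F))\simeq\bP^1$, i.e.\ to \emph{two reduced points}, giving $\chi_Z(\cO_Z)=2$. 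This is precisely the source of the coefficient $2$ in the term $2\sum_{k=1}^{p-1}\cO_{k,2n-k}$; if all $c_w$ were $0$ or $1$ you could never produce it.

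Concretely, the paper shows
\[
\chi_{Z}(\cO_Z)=\begin{cases}
0 & r_1+r_2<2n+1,\text{ or }r_1+r_2>2n+1\text{ and }r_2\le 2n-p,\\
2 & r_1+r_2=2n+2\text{ and }r_2>2n+1-p,\\
1 & r_1+r_2=2n+2\text{ and }r_2=2n+1-p,\\
1 & r_1+r_2>2n+2\text{ and }r_2>2n-p,
\end{cases}
\]
by writing a general $z\in Z$ as $V_z=\langle a,b\rangle$ with $a\in E=E_p\cap gE^{r_2}$, $b\in F=E^{2n-p}\cap gE^{r_2}$, and analyzing the condition $\omega(a,b)=0$ on $\bP(L\cap(E\oplus F))$ where $L=gE^{r_1}$. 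Your plan will work once you replace the blanket ``$c_w\in\{0,1\}$'' claim by this case analysis; the inversion step and the treatment of the boundary walls are then as you describe.
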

 
 \begin{proof}
Let $X^w = X^{r_1,r_2}$, we compute $ \chi_{Y \cap g X^w}(\cO_{Y \cap g X^w})$ for $Y = X_u^v$ and use Theorem \ref{thm:brion}. We claim that the following holds:
$$ \chi_{Y \cap g X^w}(\cO_{Y \cap g X^w}) = \left\{ 
\begin{array}{ll}
0 & \textrm{ if $r_1 + r_2 < 2n+1$}, \\
0 & \textrm{ if $r_1 + r_2 > 2n+1$ and $r_2 \leq 2n - p$}, \\
2 & \textrm{ if  $r_1 + r_2 = 2n+2$ and $r_2 > 2n+1 - p$ }, \\
1 & \textrm{ if $r_1 + r_2 = 2n+2$ and $r_2 = 2n+1 - p$}, \\
1 & \textrm{ if $r_1 + r_2 > 2n+2$ and $r_2 > 2n - p$}. \\
\end{array} \right.$$
The result then follows from 
Corollary \ref{cor:schub_expand}.

We are left to prove the claim. Set $Z = Y \cap g X^w$. We have
$$Z = \{ z \in X \ | \ V_z \cap E_p \neq 0 \neq V_z \cap E^{2n-p}, V_z \cap g.E^{r_1} \neq 0 \textrm{ and } V_z \subset g. E^{r_2} \}.$$
First note that if $z \in Z$, then $E_p \cap g. E^{r_2} \neq 0$, thus we must have $r_2 + p > 2n$. If this condition is satisfied, we have $V_z \subset (E_p \cap g.E^{r_2}) \oplus  (E^{2n-p} \cap g.E^{r_2})$, thus $g. E^{r_1} \cap ( (E_p \cap g.E^{r_2}) \oplus  (E^{2n-p} \cap g.E^{r_2}) )\neq 0$. These subspaces are of dimension $r_1$ and $p+r_2 - 2n + 2n-p + r_2 - 2n = 2r_2 - 2n$ are in general position in $g.E^{r_2}$, thus we also have $r_1 + 2r_2 - 2n > r_2$, \emph{i.e.} $r_1 + r_2 > 2n$, proving the first two cases.

Assume that $r_1 + r_2 \geq 2n+2$. Set $E =  E_p \cap g.E^{r_2}$, $F = E^{2n-p} \cap g.E^{r_2}$ and $L = g.E^{r_1}$. Let $z \in Z$, then $V_z=\langle a, b\rangle$ for some {{$a \in E \setminus 0$ and $b \in F \setminus 0$}}. Furthermore, there exists scalars $\lambda,\mu \in \C$ such that $\lambda a + \mu b \in L$. For $z$ in a dense open subset of $Z$, we have $\lambda \neq 0 \neq \mu$ (otherwise $a$ or $b$ lies in $L$). Conversely, if $c \in L \cap  (E \oplus F)$ has a decomposition $c = a + b$ with  $a \in E \setminus 0$ and $b \in F \setminus 0$, then $V_z = \langle a,b \rangle$ defines a point $z \in Z$ as soon as $\omega(a,b) = 0$. This proves that $Z$ is birational to the following variety
$$Z' = \{[c] \in \bP(L \cap (E \oplus F)) \ | \ c = a + b, a \in E\setminus 0, b \in F\setminus 0 \textrm{ and } \omega(a,b) = 0 \}.$$
Assume that $r_1 + r_2 \geq 2n+2$ and $r_2 = 2n+1 - p$, then $\dim E = 1$ and $E = \langle a \rangle$. We then have $Z' = \{[c] \in \bP(L \cap (E \oplus F)) \ | \ \omega(c,a) = 0\} =\bP(L \cap (E \oplus F) \cap E^\perp)$ proving the result in this case.

Finally, assume that $r_1 + r_2 \geq 2n+2$ and $r_2 > 2n+1 - p$, then $\dim E > 1$. The condition $c = a + b$ with $\omega(a,b) = 0$ defines a quadratic form on $\bP(L \cap (E \oplus F))$, proving the result in this case as well. Note that for $r_1 + r_2 = 2n+2$, we have $\bP(L \cap (E \oplus F))\simeq \bP^1$ so that the quadric is the union of two points.
 \end{proof}

We consider the case $d = 2$ and assume that \eqref{eqn:deg2} holds, i.e., we have $p_1 + q_2 = 2n = p_2 + q_1$, $p_2 - p_1 = q_2 - q_1 \geq 2$, and $\max(\delta_p,\delta_q) = 1$. We may assume $p_1 + p_2 \leq q_1 + q_2$, the other case being symmetric. In this case, we have $p_1 + p_2 \leq 2n - 2$, $\delta_p = 0$, $q_1 + q_2 \geq 2n+2$, and $\delta_q = 1$. {{Note that condition \eqref{eqn:deg2} implies the decomposition $\C^{2n} = E_{p_1} \oplus (E_{p_2} \cap E^{q_2}) \oplus E^{q_1}$.}}

\begin{lemma}
If \eqref{eqn:deg2} holds, then 
$$\Gamma_1(X_u,X^v) = \{ z \in X \ | \ V_z \cap (E_{p_2} \cap E^{q_2}) \neq 0 \neq V_z \cap (E_{p_1} + E^{q_1}) \}.$$
\end{lemma}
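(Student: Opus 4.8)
The plan is to establish the set-theoretic equality directly, which suffices since both sides are reduced and closed. Write $A=E_{p_2}\cap E^{q_2}$ and $B=E_{p_1}+E^{q_1}$ for the two subspaces in the right-hand side. Recall we have normalized so that $p_1+p_2\le q_1+q_2$, so \eqref{eqn:deg2} gives $\delta_p=0$, $\delta_q=1$, $p_1+q_2=2n=p_2+q_1$, and the direct sum decomposition $\C^{2n}=E_{p_1}\oplus A\oplus E^{q_1}$; in particular $A\cap B=0$, and $q_1\ge 2$ (since $q_1+q_2\ge 2n+2$ and $q_2\le 2n$). The numerics of \eqref{eqn:deg2} also give $E_{p_1}\cap E^{q_2}=E_{p_2}\cap E^{q_1}=E_{p_1}\cap E^{q_1}=0$. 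I will use two elementary facts about lines in $X$: distinct points $x,y\in X$ lie on a common line iff $\dim(V_x\cap V_y)=1$ (the condition $V_x+V_y\subseteq(V_x\cap V_y)^\perp$ is automatic from isotropy of $V_x$ and $V_y$), and then $z$ lies on that line iff $V_x\cap V_y\subseteq V_z\subseteq V_x+V_y$; moreover $X_u\cap X^v=\emptyset$, since $x\in X_u\cap X^v$ would force $V_x\subseteq A$ with $V_x\cap E_{p_1}\ne0$, contradicting $A\cap E_{p_1}=0$. Hence every degree-$1$ stable map contributing to $\Gamma_1(X_u,X^v)$ lies on a line meeting $X_u$ and $X^v$ in two distinct points.

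For the inclusion $\subseteq$: given $z\in\Gamma_1(X_u,X^v)$ on a line meeting $X_u$ at $x$ and $X^v$ at $y$ with $x\ne y$, the one-dimensional space $V_1:=V_x\cap V_y$ is contained in $V_z$ and in $E_{p_2}\cap E^{q_2}=A$, giving $V_z\cap A\ne0$. Picking $0\ne a\in V_x\cap E_{p_1}$ and $0\ne b\in V_y\cap E^{q_1}$, the vanishings above show $a,b\notin V_1$ and that $a,b$ are linearly independent, so $\langle a,b\rangle$ is a $2$-plane inside the $3$-dimensional space $V_x+V_y$ and therefore meets $V_z$; since $\langle a,b\rangle\subseteq E_{p_1}+E^{q_1}=B$, we get $V_z\cap B\ne0$.

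For the inclusion $\supseteq$: let $z$ lie in the right-hand side, and pick $0\ne w\in V_z\cap A$ and $0\ne u\in V_z\cap B$; as $A\cap B=0$ these are independent, so $V_z=\langle w,u\rangle$, and we write $u=u'+u''$ with $u'\in E_{p_1}$, $u''\in E^{q_1}$ (unique, as $B=E_{p_1}\oplus E^{q_1}$). Since $\delta_p=0$ we have $E_{p_2}\subseteq E_{p_1}^\perp$, hence $\omega(w,u')=0$; and $\omega(w,u)=0$ because $w,u\in V_z$ is isotropic, so $\omega(w,u'')=0$ too. Set $a:=u'$ if $u'\ne0$ and otherwise let $a$ be any nonzero vector of $E_{p_1}$ (automatically in $w^\perp$, since $E_{p_1}\subseteq w^\perp$), and set $c:=u''$ if $u''\ne0$ and otherwise any nonzero vector of $E^{q_1}\cap w^\perp$ (nonempty as $q_1\ge2$). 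Then $V_x:=\langle w,a\rangle$ and $V_y:=\langle w,c\rangle$ are $2$-dimensional (as $A\cap E_{p_1}=A\cap E^{q_1}=0$), isotropic, and satisfy $V_x\subseteq E_{p_2}$, $V_x\cap E_{p_1}\ni a$ and $V_y\subseteq E^{q_2}$, $V_y\cap E^{q_1}\ni c$, so $[V_x]\in X_u$ and $[V_y]\in X^v$. Since $w,a,c$ lie in the three distinct summands of $\C^{2n}=E_{p_1}\oplus A\oplus E^{q_1}$ they are independent, so $V_x\cap V_y=\langle w\rangle$, $V_x+V_y=\langle w,a,c\rangle$, and $[V_x]\ne[V_y]$ lie on a line; finally $u'\in\langle a\rangle$ and $u''\in\langle c\rangle$ (true even when they vanish), so $V_z=\langle w,u\rangle\subseteq\langle w,a,c\rangle=V_x+V_y$ while $V_x\cap V_y=\langle w\rangle\subseteq V_z$, whence $z$ lies on the line through $[V_x]$ and $[V_y]$, i.e. $z\in\Gamma_1(X_u,X^v)$.

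The only delicate point is the inclusion $\supseteq$ at the non-generic $z$ (where $u'$ or $u''$ degenerates), but the unified construction above covers these too; its key input is that $\delta_p=0$ forces $E_{p_1}\perp E_{p_2}$, which is exactly what makes $\langle w,a\rangle$ automatically isotropic. In particular no genericity argument and no irreducibility statement about the right-hand side is needed, and the set equality—hence the lemma—follows.
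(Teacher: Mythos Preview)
Your proof is correct and follows essentially the same approach as the paper's: both establish the two inclusions directly, use the decomposition $\C^{2n}=E_{p_1}\oplus(E_{p_2}\cap E^{q_2})\oplus E^{q_1}$ together with $\delta_p=0$, and for $\supseteq$ split $u\in V_z\cap(E_{p_1}+E^{q_1})$ into its $E_{p_1}$- and $E^{q_1}$-components, handling separately the cases where one component vanishes. Your write-up is in fact more explicit than the paper's in two places---you spell out why $V_z\cap(E_{p_1}+E^{q_1})\neq0$ in the $\subseteq$ direction (via the $2$-plane $\langle a,b\rangle$ inside the $3$-space $V_x+V_y$), and you verify $X_u\cap X^v=\emptyset$---but these are just elaborations of the same argument.
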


\begin{proof}
Let $z \in \Gamma_1(X_u,X^v)$. Then there exists $x \in X_u$ and $y \in X^v$ with $V_x \cap V_y \neq 0$ and $V_x \cap V_y \subset V_z \subset V_x + V_y$. In particular this proves the inclusion from left to right. Conversely, let $z$ in the right-hand side of the equality. Let $a \in V_z \cap (E_{p_2} \cap E^{q_2})$ and $b \in V_z \cap (E_{p_1} + E^{q_1})$ with $a \neq 0 \neq b$. Since $E_{p_2} \cap E^{q_2} \cap (E_{p_1} + E^{q_1}) = 0$, we have $V_z = \langle a,b \rangle$. Write $b = b_p + b_q$ with $b_p \in E_{p_1}$ and $b_q \in E^{q_1}$. If $b_p \neq 0 \neq b_q$, then since $\omega(a,b) = 0$ and $\delta_p = 0$, we have $\omega(a,b_p) = 0 = \omega(a,b_q)$ and setting $V_x = \langle a,b_p \rangle$ and $V_y = \langle a,b_q \rangle$, we have $x \in X_u$, $y \in X^v$, and $z$ lies on the line joining $x$ and $y$. If $b_q = 0$, then $z \in X_u$ and since $q_1 \geq p_1 + 2$, we may choose $c \in E^{q_1} \cap a^\perp \setminus 0$ so that setting $V_y = \langle a,c \rangle$, we have $y \in X^v$, proving the result. Finally, if $b_p = 0$, then $z \in X^v$. Choose any $c \in E_{p_1} \setminus 0$; since $\delta_p = 0$, setting $V_x = \langle a,c \rangle$, we have $x \in X_u$, proving the result.
\end{proof}

{Set $A = E_{p_1} + E^{q_1}$, $B = E_{p_2} \cap E^{q_2}$, $p=\dim A$, and $q=\dim B $. When \eqref{eqn:deg2} holds, we have $p = p_1 + q_1$, $q = p_2 + q_2 -2n = 2n - p$, and $\C^{2n} = A \oplus B = E_p \oplus E^q$.}
Define $X_\lambda = \{ z \in X \ | \ V_z \cap E_p \neq 0 \}$, $X^\mu = \{ z \in X \ | \ V_z \cap E^q \neq 0 \}$, and $X_\lambda^\mu = X_\lambda \cap X^\mu$.

\begin{lemma}
\label{gamma_c2_expansion}
If \eqref{eqn:deg2} holds, then $\cO_{\Gamma_1(X_u,X^v)} = \cO_{X_\lambda^\mu}$.
\end{lemma}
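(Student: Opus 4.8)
The plan is to prove this as an equality of $K$-theory classes by showing that $\Gamma_1(X_u,X^v)$ and $X_\lambda^\mu$ are in fact the same subvariety of $X$, modulo understanding which irreducible component one is looking at. First I would unwind the definitions with the notation of the preceding lemma. We have $A = E_{p_1} + E^{q_1}$ of dimension $p = p_1 + q_1$ and $B = E_{p_2} \cap E^{q_2}$ of dimension $q = p_2 + q_2 - 2n = 2n - p$, with $\C^{2n} = A \oplus B$. Since $\C^{2n} = E_p \oplus E^q$ as well (both are direct sum decompositions into a $p$-dimensional and a $2n-p$-dimensional isotropic-pairing-compatible pair), the point is that $A$ plays the role of $E_p$ and $B$ plays the role of $E^q$ up to the $G$-action — but for the purposes of $K$-theory classes of Schubert-type varieties, what matters is that $\{z : V_z \cap A \neq 0\}$ and $\{z : V_z \cap E_p \neq 0\}$ are both Schubert varieties of $X$ (translates of each other under an element of $G$), and similarly $\{z : V_z \cap B \neq 0\}$ and $X^\mu$. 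By the previous lemma, $\Gamma_1(X_u,X^v) = \{z \in X : V_z \cap B \neq 0 \text{ and } V_z \cap A \neq 0\}$, which is exactly the intersection $X_\lambda \cap X^\mu$ after translating $A, B$ to the standard flags. Since $[\cO_{X_\lambda^\mu}]$ depends only on the $G$-orbit, we get $\cO_{\Gamma_1(X_u,X^v)} = \cO_{X_\lambda^\mu}$ provided the scheme structures match.

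The step that needs genuine care is the following: $X_\lambda^\mu$ is a Richardson variety (the intersection of a Schubert variety and an opposite Schubert variety with respect to compatible flags $E_\bullet$ and $E^\bullet$), hence is reduced, irreducible, rational, and Cohen–Macaulay of the expected dimension. On the other hand, $\Gamma_1(X_u,X^v)$ is defined as the image of a Gromov–Witten variety, so a priori it could be non-reduced or could differ by embedded components. So the real work is to check that the description $\{z : V_z \cap A \neq 0 \neq V_z \cap B\}$ from the previous lemma holds scheme-theoretically, or at least that $\Gamma_1(X_u,X^v)$ with its reduced structure agrees with the Richardson variety. I would argue: the previous lemma already gives the equality as sets; $\Gamma_1(X_u,X^v)$ is irreducible (it is the image of the irreducible $M_1(X_u,X^v)$) and reduced by our blanket convention that varieties are reduced; $X_\lambda^\mu$ is irreducible and reduced; two reduced irreducible subschemes of $X$ with the same underlying set are equal. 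Hence $\cO_{\Gamma_1(X_u,X^v)} = \cO_{X_\lambda^\mu}$ as elements of $K(X)$.

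The one subtlety I want to flag as the main obstacle: one must make sure that $A$ and $B$ really can be moved to the standard coordinate subspaces $E_p$ and $E^q$ by the symplectic group, i.e. that the pair $(A,B)$ is in the same $\Sp_{2n}$-orbit as $(E_p, E^q)$. Because $A$ and $B$ are built from the isotropic flags $E_\bullet$ and $E^\bullet$ and satisfy $\C^{2n} = A \oplus B$, the restriction of $\omega$ to $A$ has a prescribed rank (determined by $p_1, q_1$ and the condition $\delta_p = 0$, which says $E_{p_1}^\perp \supseteq E_{p_2} \supseteq$ the relevant spaces) and similarly for $B$; in fact one checks $A^\perp = B$ here, so $A$ is coisotropic-versus-isotropic in exactly the way $E_p$ is, and Witt's theorem gives the required $g \in G$ with $g A = E_p$, $g B = E^q$. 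Then $X_\lambda = g^{-1} \cdot \{z : V_z \cap E_p \neq 0\}$ is a $G$-translate, so $[\cO_{X_\lambda}] = \cO_{p,2n}$ or the appropriate Schubert class, and likewise $X_\lambda^\mu$ is the corresponding Richardson variety; the equality of classes follows. This $G$-equivariance reduction, together with the set-theoretic description from the previous lemma, is the whole proof; everything else is bookkeeping with the indices $p = p_1 + q_1$, $q = 2n - p$.
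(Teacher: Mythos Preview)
Your approach has a genuine gap: the claim that $A^\perp = B$, and hence that the pair $(A,B)$ can be moved to $(E_p,E^q)$ by an element of $\Sp_{2n}$, is false. With the standard basis one computes $A^\perp = E_{p_1}^\perp \cap (E^{q_1})^\perp = E_{2n-p_1}\cap E^{2n-q_1} = \langle e_{q_1+1},\dots,e_{q_2}\rangle$, whereas $B = E_{p_2}\cap E^{q_2} = \langle e_{p_1+1},\dots,e_{p_2}\rangle$; under \eqref{eqn:deg2} with $p_1<q_1$ these are distinct. More decisively, the rank of $\omega|_A$ and of $\omega|_{E_p}$ can differ: for $n=6$, $(p_1,p_2,q_1,q_2)=(2,7,5,10)$ one has $A=\langle e_1,e_2,e_8,\dots,e_{12}\rangle$ with $\Rk(\omega|_A)=4$, while $E_p=E_7$ has $\Rk(\omega|_{E_7})=2$. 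So no $g\in\Sp_{2n}$ sends $A$ to $E_p$, and the Witt-theorem step fails. Consequently $\{z:V_z\cap A\neq0\}$ is not a $G$-translate of $X_\lambda$, and your reduction collapses.

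This is exactly why the paper does \emph{not} argue by $G$-translation. Instead it builds an explicit one-parameter family $\varphi_t = t\,\id + (1-t)\psi$ of (non-symplectic) linear automorphisms with $\varphi_0(E_p)=A$, $\varphi_0(E^q)=B$, and uses the pulled-back forms $\omega_t(a,b)=\omega(\varphi_t(a),\varphi_t(b))$ to define a flat family over an open set $U\ni\{0,1\}$ whose fibers over $0$ and $1$ are isomorphic (via the map $e$ into $X$) to $\Gamma_1(X_u,X^v)$ and $X_\lambda^\mu$, respectively. Flatness (checked by Miracle Flatness and a rank bound $\Rk(\Omega_t)\geq 2$) then gives the equality of $K$-theory classes. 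The deformation is essential precisely because the symplectic invariants of $A$ and $E_p$ disagree.
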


\begin{proof}

Define $Z_1 = \{ ([a],[b]) \in \bP(E_p) \times \bP(E^q) \ | \ \omega(a,b) = 0\}$ and $Z_0 = \{ ([a],[b]) \in \bP(A) \times \bP(B) \ | \ \omega(a,b) = 0\}$. We have isomorphisms $Z_1 \to X_\lambda^\mu$ and $Z_0 \to \Gamma_1(X_u,X^v)$ given by $([a],[b]) \mapsto \langle a,b \rangle$. 

Define the linear isomorphism $\psi :  \C^{2n} \to \C^{2n}$ via 
$$\psi(e_k) = \left\{ 
\begin{array}{ll}
e_k & \textrm{ for } k \in [1,p_1] \\
e_{k+p_2-p_1} & \textrm{ for } k \in [p_1+1,p_1+q_1] \\
e_{k-q_1} & \textrm{ for } k \in [p_1+q_1+1,2n]. \\
\end{array}\right.$$
For $t \in \C$ define $\varphi_t = t\ {\rm Id}_{\C^{2n}} + (1-t) \psi$. We have $\varphi_0(E_p) = A$ and $\varphi_0(E^q) = B$, while $\varphi_1(E_p) = E_p$ and $\varphi_1(E^q) = E^q$. Define
$$Z = \{ ([a],[b],t) \in \bP(E_p) \times \bP(E^q) \times \C \ | \ \omega(\varphi_t(a),\varphi_t(b)) = 0\}.$$
The form $\omega_t(a,b) =  \omega(\varphi_t(a),\varphi_t(b))$ induces a {{bilinear form $\Omega_t : E_p \times E^q \to \CC$.}} This {{form}} has maximal rank for $t = 1$ and rank $2n - (p_1 + p_2) \geq 2$ for $t = 0$. Let $U \subset \C$ be the open subset defined by $U = \{ t\in \C \ | \ \det(\varphi_t) \neq 0 \textrm{ and } {\rm Rk}(\Omega_t) \geq 2\}$. Note that $0,1 \in U$. Let $\pi : Z \to \C$ be the third projection. We have $\pi^{-1}(0) = Z_0$ and $\pi^{-1}(1) = Z_1$. We have a morphism $e : Z \to X,([a],[b],t) \mapsto \langle \varphi_t(a),\varphi_t(b) \rangle$ which restricts to an isomorphism {onto its image }on $Z_t := \pi^{-1}(t)$. {{We therefore only need to prove that the map $\pi^{-1}(U) \to U$ is flat. Since $Z$ is defined by a unique non-trivial equation, any irreducible component of $Z$ has dimension $p + q -2$. By Miracle Flatness, we only need to prove that the fibers of $\pi$ have dimension at most $p + q - 3$ which follows from the fact that ${\rm Rk}(\Omega_t) \geq 2$.}}
\end{proof}

{\begin{lemma}
  \label{lem_gamma_1_xv}
    We have \(\Gamma_1(X^v)=\{W_2\in X\ |\ \dim(W_2\cap E^{q_2})\geq1\}\).
  \end{lemma}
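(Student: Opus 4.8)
The plan is to first give a purely incidence‑theoretic description of $\Gamma_1(X^v)$ and then verify the two inclusions directly.

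The key preliminary fact I would record is a flexibility property of lines on $X=\IG(2,2n)$: if $x,z\in X$ satisfy $V_x\cap V_z\neq 0$, then $x$ and $z$ lie on a common line of $X$. When $V_x=V_z$ this holds because $X$ is covered by lines; when $\dim(V_x\cap V_z)=1$, set $V_1=V_x\cap V_z$ and $V_3=V_x+V_z$ and note that every plane $W_2$ with $V_1\subseteq W_2\subseteq V_3$ is isotropic — a generator of $V_1$ lies in both isotropic planes $V_x$ and $V_z$, hence is $\omega$‑orthogonal to all of $V_3$ — so $\{W_2\mid V_1\subseteq W_2\subseteq V_3\}$ is a line of $X$ through $x$ and $z$. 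Conversely, two distinct points of a line in $\Gr(2,2n)$ always share a one‑dimensional subspace. Combining this with the definition of the curve neighbourhood (a point of $\Gamma_1(X^v)$ is a point of $X$ lying on a line that also meets $X^v$, possibly at that same point) yields
\[
\Gamma_1(X^v)=\{z\in X\mid\text{there exists }x\in X^v\text{ with }V_x\cap V_z\neq 0\}.
\]

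Granting this, the inclusion $\subseteq$ is immediate: if $x\in X^v$ and $V_x\cap V_z\neq 0$, then $V_x\subseteq E^{q_2}$ forces $0\neq V_x\cap V_z\subseteq V_z\cap E^{q_2}$. For the reverse inclusion I would take $z\in X$ with $V_z\cap E^{q_2}\neq 0$, fix $0\neq a\in V_z\cap E^{q_2}$, and produce $x\in X^v$ with $a\in V_x$. I look for $V_x$ of the form $\langle a,c\rangle$ with $c\in E^{q_2}$ and $\omega(a,c)=0$; this automatically makes $x$ a point of $X$ with $V_x\subseteq E^{q_2}$ and $a\in V_x\cap V_z$, so it only remains to arrange $V_x\cap E^{q_1}\neq 0$. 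If $a\in E^{q_1}$ this is automatic and it suffices to pick $c\in(E^{q_2}\cap a^\perp)\setminus\langle a\rangle$; if $a\notin E^{q_1}$, then, since $E^{q_1}\subseteq E^{q_2}$, it is enough to pick $c\in(E^{q_1}\cap a^\perp)\setminus 0$. Existence of such $c$ follows from the crude bounds $\dim(E^{q_2}\cap a^\perp)\geq q_2-1$ and $\dim(E^{q_1}\cap a^\perp)\geq q_1-1$, upgraded in the low‑index cases using that the standard‑flag segments $E^{m}$ are $\omega$‑isotropic for $m\leq n$, so that $a^\perp$ contains all of $E^{q_2}$ (resp. $E^{q_1}$) when $q_2\leq n$ (resp. $q_1\leq n$).

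The computations above are routine; the main obstacle is the handful of small‑index exceptional cases. Concretely: when $q_2=2n$ the right‑hand side is all of $X$ and one instead takes $a$ to be a nonzero vector of $V_z$ outside $E^{q_1}$ when possible, the remaining case being $z\in X^v$ already; when $q_1=1$ one uses that $a\in E^{q_2}$ with $q_2<2n$ has vanishing $e_1$‑coordinate, hence $\omega(a,e_{2n})=0$ and $E^{q_1}\cap a^\perp=E^{q_1}$; and when the plane $\langle a,c\rangle$ produced happens to equal $V_z$ one has $z\in X^v$ and concludes again because $X$ is covered by lines. As a shortcut one may note instead that $\Gamma_1(X^v)$ is closed — it is a Schubert variety by \cite[Proposition~3.2]{buch.chaput.ea:finiteness} — and the right‑hand side is an irreducible Schubert variety, so it suffices to construct $x$ only for $z$ general in it, which eliminates most of the case analysis.
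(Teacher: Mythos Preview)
Your proof is correct and follows essentially the same route as the paper's. Both arguments reduce to choosing a nonzero $a\in V_z\cap E^{q_2}$ and completing it to an isotropic plane $\langle a,c\rangle\in X^v$; your case split ($a\in E^{q_1}$ vs.\ $a\notin E^{q_1}$) matches the paper's split ($W_2\cap E^{q_1}\neq 0$ vs.\ $W_2\cap E^{q_1}=0$), and you are in fact more careful than the paper about justifying non-emptiness of $E^{q_1}\cap a^\perp$ in the boundary cases $q_1=1$ and $q_2=2n$.
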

  
  \begin{proof}
    If \(W_2\in\Gamma_1(X^v)\), then there exists \(V_2\in X^v\) such that \(\dim(W_2\cap V_2)\geq 1\). Since \(V_2\subset E^{q_2}\), we must have \(\dim(W_2\cap E^{q_2})\geq1\). Therefore, \(\Gamma_1(X^v)\subseteq\{W_2\in X\ |\ \dim(W_2\cap E^{q_2})\geq1\}\). 
    
    For the reverse containment, let \(W_2\in X\setminus X^v\) such that \(\dim(W_2\cap E^{q_2})\geq1\). Let \(u\in(W_2\cap E^{q_2})\setminus0\). If \(W_2\cap E^{q_1}=0\), let \(v\in(E^{q_1}\setminus0)\cap\langle u\rangle^\perp\); otherwise, \(\dim(W_2\cap E^{q_1})=\dim(W_2\cap E^{q_2})=1\) and let \(v\in(E^{q_2}\setminus W_2)\cap \langle u\rangle^\perp\). By construction, \(\dim\langle u,v\rangle=2\) and \(\langle u,v\rangle\in X^v\). Moreover, \(L=\{V_2\ |\ \dim V_2=2\text{ and }\langle u\rangle \subset V_2\subset W_2+\langle v\rangle\}\) is a line in \(X\) connecting \(W_2\) and \(X^v\). 
  \end{proof}}

\begin{proof}[Proof of Proposition \ref{prop:2-to-1}]
From Corollary~\ref{cor_join_through_lines} and Proposition \ref{prop_formula_simplification} we deduce that $$
{{\kappa}_{u,v}^{w,d}}=\langle \cO_u,\cO^v, \cO_{w}^\vee \rangle_{d} - \sum_{\kappa} \langle \cO_u,\cO^v, \cO_{\kappa}^\vee \rangle_{d-1}  \langle \cO_\kappa,\cO_{w}^\vee \rangle_{1}.$$ Note that $\langle \cO_\kappa , \cO_w^\vee \rangle_{1} = 0$ except for $\Gamma_1(X_\kappa) = X_w.$ So fixing $\kappa$ with this property, we get 
$${{\kappa}}_{u,v}^{w,d} = \langle \cO_u , \cO^v , \cO_w^\vee \rangle_{d} - \sum_{\kappa,\, \Gamma_1(X_\kappa) = X_w} \langle \cO_u , \cO^v , \cO_\kappa^\vee \rangle_{d-1}.$$
{By Corollary~\ref{cor:dleq2}, it suffices to compute \({\kappa}_{u,v}^{w,d}\) for \(0\leq d\leq2\). }Furthermore, by Theorems \ref{thm:push-forward} and \ref{thm:all-d}, for any degree $r$, we have
$$\cO_{\Gamma_{r}(X_u,X^v)} = \sum_{\eta} \langle \cO_u , \cO^v , \cO_\eta^\vee \rangle_r \cO_\eta.$$
If {\eqref{eqn:deg1} holds, then $\Gamma_1(X_u,X^v) = X$ by Lemma \ref{lem_twotoone_lines}; if \eqref{eqn:deg2} holds, then $\Gamma_2(X_u,X^v) = X$ by Corollary \ref{cor:gamma_2=}, and $\cO_{\Gamma_1(X_u,X^v)}=\cO_{X_\lambda^\mu}$ by Lemma \ref{gamma_c2_expansion}. The result then follows from Proposition~\ref{prop_Xuv_schubert_expansion} and Lemma~\ref{lem_gamma_1_xv}.} 
\end{proof}

\section{Seidel representation}
\label{section:seidel}

For $X$ smooth and projective, Seidel introduced in \cite{seidel:pi_1} a representation of $\pi_1({\rm Aut}(X))$ into the group of invertibles of $\QH(X)_{\rm loc}$ where the quantum parameters are inverted. This representation was computed explicitly in \cite{chaput.manivel.ea:affine} (see also \cite{chaput.perrin:affine}). Recently these results were extended in the quantum $K$-theory of cominuscule spaces in \cite{buch.chaput.ea:seidel}. We prove that these results extend to $X = \IG(2,2n)$ for the quantum $K$-theory $\QK(X)$. 

Note that $\pi_1({\rm Aut}(X)) = \Z/2\Z$. This group can be realized as a subgroup of the Weyl group as follows. Fix $G = \Sp_{2n}$ and $T \subset B \subset G$ a maximal torus and a Borel subgroup. Let $W$ be the Weyl group associated to $(G,T)$ and $w_0$ be the longest element for the length induced by $B$. For $Y$ projective and homogeneous under $G = \Sp_{2n}$, let $P_Y$ be the parabolic subgroup containing a fixed Borel subgroup $B$ such that $Y \simeq G/P_Y$. For any such $Y$, let $w^Y$ be the minimal length representative of $w_0$ in $W/W_{P_Y}$. We call $Y = G/P_Y$ cominuscule if the unipotent radical of $P_Y$ is abelian. We have 
$$\pi_1({\rm Aut}(X)) \simeq W^{\rm comin} = \{1\} \cup \{ w^Y \in W \ | \ Y \textrm{ cominuscule} \} = \{1,w^{\LG(n,2n)} \}.$$
 We first prove the following geometric result. For $u,w \in W$, let $d_{\rm min}(u,w)$ be the smallest power of $q$ appearing in the quantum product $[X^u] \star [X^w]$. By \cite{chaput.manivel.ea:affine}, we have $ [X_{w_0w}] \star [X^u] = q^{d_{\rm min}(u,w)}[X^{wu}]$.

 \begin{thm}
 \label{thm:seidel-geom}
 Let $u \in W$, $w \in W^{\rm comin}$, then $\Gamma_{d_{\rm min}(u,w)}(X_{w_0w},X^u) = w^{-1}.X^{wu}$.
 \end{thm}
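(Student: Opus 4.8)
The plan is to handle the two elements of $W^{\rm comin}=\{1,w^{\LG(n,2n)}\}$ separately. For $w=1$ the assertion is immediate: the coset $w_0W_X$ is represented by the longest element of $W^X$, so $X_{w_0}=X$, while $d_{\min}(u,1)=0$ because $X^1=X$; since $\overline{M}_{0,3}(X,0)=X$ with all three evaluation maps equal to the identity, $\Gamma_0(X,X^u)=X\cap X^u=X^u=1.X^u$.

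For $w=w^{\LG(n,2n)}$ the first task is to make the geometry explicit. One realizes $w$ as the symplectic automorphism of $\C^{2n}$ interchanging the two Lagrangian subspaces $E_n$ and $E^n$ (compatibly with the standard symplectic basis); since $w^2$ is central in $\Sp_{2n}$ it acts trivially on $X$, so $w^{-1}$ is an involution of $X$ and $w^{-1}.X^{wu}=w.X^{wu}$. Let $\lambda\colon\C^\times\to\Sp_{2n}$ be the cocharacter acting by $t$ on $E_n$ and by $t^{-1}$ on $E^n$; closures of its orbits on $X$ are rational curves of degree at most $2$ and realize the minimal-degree connections. Then one writes $X_{w_0w}$ and the opposite Schubert variety $X^{wu}$ (hence $w.X^{wu}$) explicitly as incidence loci $\{z\in X\mid V_z\cap F\neq 0,\ V_z\subset F'\}$ for isotropic subspaces $F\subseteq F'$ of $\C^{2n}$ attached to the translated flags $w_0wE_\bullet$ and $wE^\bullet$ and depending on $u$, and computes $d:=d_{\min}(u,w)\in\{0,1,2\}$ (the powers of $q$ occurring in products in $\QH(\IG(2,2n))$ being bounded by $2$) from the quantum cohomology of $\IG(2,2n)$, or from \cite{chaput.manivel.ea:affine}.

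It then remains to prove $\Gamma_d(X_{w_0w},X^u)=w.X^{wu}$. For the inclusion $\subseteq$ one argues geometrically: if $z\in\Gamma_d(X_{w_0w},X^u)$, it lies on a connected genus-zero degree-$d$ curve $C$ meeting $X_{w_0w}$ at some $x$ and $X^u$ at some $y$; because $d\leq2$, the $\Ker/\Span$ technique (as in the proofs of Lemma~\ref{lem:split-deg3} and Lemma~\ref{lem:sing-rat3}) bounds $\dim\Span(C)\leq d+2$, so $V_x,V_y,V_z\subseteq\Span(C)$ with $\Span(C)$ inside the sum of the isotropic subspaces prescribed by $x\in X_{w_0w}$ and $y\in X^u$; chasing these containments forces $V_z$ to satisfy the incidence conditions defining $w.X^{wu}$, case by case in $d$. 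For the opposite inclusion it is enough to match dimensions, since both sides are closed and irreducible, the left-hand side because $M_d(X_{w_0w},X^u)$ is irreducible. Here one uses the Seidel identity $[X_{w_0w}]\star[X^u]=q^{d}[X^{wu}]$ in $\QH(X)$ from \cite{chaput.manivel.ea:affine}: comparing coefficients of $q^{d}$ and applying the projection formula gives $(\ev_3^d(w_0w,u))_*[M_d(X_{w_0w},X^u)]=[X^{wu}]$ in cohomology; since $[X^{wu}]$ is a nonzero effective class with a single Schubert term of coefficient $1$, comparing it with $\deg(\ev_3^d(w_0w,u))\cdot[\Gamma_d(X_{w_0w},X^u)]$ forces $\ev_3^d(w_0w,u)$ to be birational onto its image, whence $\dim\Gamma_d(X_{w_0w},X^u)=\dim M_d(X_{w_0w},X^u)=\dim X-\codim_X X^{wu}=\dim(w.X^{wu})$. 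An irreducible closed subvariety of $w.X^{wu}$ of full dimension equals it.

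The main obstacle is the explicit bookkeeping underlying the second and third steps: identifying the translated isotropic flags that cut out $X_{w_0w}$ and $X^{wu}$, determining $d_{\min}(u,w)$ for every $u$, and then carrying out the $\Ker/\Span$ inclusion, whose delicate case is $d=2$, where the optimal curve degenerates to a conic or to a chain of two lines through points whose combined spans only just fit into a $4$-dimensional isotropic subspace of $\C^{2n}$.
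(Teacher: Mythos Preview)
Your approach is sound, and both the case $w=1$ and the inclusion $\Gamma_d(X_{w_0w},X^u)\subseteq w^{-1}.X^{wu}$ via Ker/Span match the paper's argument (carried out in Proposition~\ref{prop:conjecture}). Where you diverge is in the reverse inclusion: the paper proves $w^{-1}.X^{wu}\subseteq\Gamma_d(X_{w_0w},X^u)$ by explicit construction in each of the three cases $d\in\{0,1,2\}$---given a general $z$ in the right-hand side, it writes down concrete $x\in X_{w_0w}$ and $y\in X^u$ and checks that $z$ lies on a degree-$d$ curve through $x$ and $y$. Your dimension argument via the cohomological Seidel identity $[X_{w_0w}]\star[X^u]=q^d[X^{wu}]$ from \cite{chaput.manivel.ea:affine} is a legitimate shortcut: the observation that $(\ev_3^d)_*[M_d(X_{w_0w},X^u)]=[X^{wu}]$ forces birationality onto the image, hence the correct dimension, is valid because effective classes on $G/P$ expand with nonnegative integer coefficients in the Schubert basis. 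What you gain is that you bypass the three explicit constructions; what the paper's approach gains is that it is self-contained and simultaneously verifies geometrically that $d_{\min}(u,w)$ has the claimed value by exhibiting $\Gamma_e(X_{w_0w},X^u)=\emptyset$ for $e<d$. One minor slip in your closing remark: in the $d=2$ case the $4$-dimensional span $V_x+V_y$ is not isotropic---the form generically has rank $4$ there, which is precisely what makes $\IG(2,V_x+V_y)$ a $3$-dimensional quadric rather than a Grassmannian.
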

 
This result is a special case of a conjecture stated in  \cite{buch.chaput.ea:seidel} (for more recent results on this topic we refer to \cite{tarigradschi:curve}). The statement is easily true for $w = 1$ so let $w \in W^{\rm comin} \setminus \{1\}$. We have $X^w = \{ x \in X \mid V_x \subset E^n\}$ and $X_{w_0w} =  \{ x \in X \ | \ V_x \subset E_n\}$. We prove the following explicit result which implies Theorem \ref{thm:seidel-geom}. Let $X^u = X^{p_1,p_2} := \{ x \in X \ | \ V_x \cap E^{p_1} \neq 0 \textrm{ and } V_x \subset E^{p_2} \}$ with $p_1 < p_2$ and $p_1 + p_2 \neq 2n+1$.

\begin{prop} 
\label{prop:conjecture}
Let $w \in W^{\rm comin} \setminus \{1\}$ and set $d = d_{\rm min}(u,w)$, we have:
\begin{enumerate}
\item If $p_2 \leq n$, then $d = 2$ and $\Gamma_d(X_{w_0w},X^u) = w^{-1}.X^{p_1+n,p_2+n}$.
\item If $p_1 \leq n < p_2$, then $d = 1$ and $\Gamma_d(X_{w_0w},X^u) = w^{-1}.X^{p_2-n,p_1+n}$.
\item If $p_1 > n$, then $d = 0$ and $\Gamma_d(X_{w_0w},X^u) = w^{-1}.X^{p_1-n,p_2-n}$.
\end{enumerate}
\end{prop}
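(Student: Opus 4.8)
The case \(w=1\) is immediate (\(X_{w_0}=X\) and \(\Gamma_0(X,X^u)=X^u=1^{-1}.X^{1\cdot u}\)), so assume \(w=w^{\LG(n,2n)}\). As recalled above, \(X^w=\{x\in X\mid V_x\subseteq E^n\}\) and \(X_{w_0w}=\{x\in X\mid V_x\subseteq E_n\}\), with \(E_n=\langle e_1,\dots,e_n\rangle\) and \(E^n=\langle e_{n+1},\dots,e_{2n}\rangle\) complementary Lagrangian subspaces. A symplectic representative of \(w\) may be chosen with \(w(e_j)=e_{n+j}\) and \(w(e_{n+j})=-e_j\) for \(j\in[1,n]\); then \(w^2=-\mathrm{id}\), so \(w\) is an involution of \(X\), and one computes \(w.E^k=E_n\cap E^{n+k}\) for \(1\le k\le n\) and \(w.E^k=E_n\oplus E^{k-n}\) for \(n<k\le 2n\). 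Substituting these subspaces into the definition of the incidence varieties, the three targets \(w^{-1}.X^{wu}\) become, respectively: the set of \(z\in X\) with \(V_z\subseteq E_n+E^{p_2}\) and \(V_z\cap(E_n+E^{p_1})\neq0\) in case (1); the set with \(V_z\subseteq E_n+E^{p_1}\) and \(V_z\cap(E_n\cap E^{p_2})\neq0\) in case (2); and the set with \(V_z\subseteq E_n\cap E^{p_2}\) and \(V_z\cap(E_n\cap E^{p_1})\neq0\) in case (3) (all subspaces on the right being genuine, since \(E^{p_1}\subseteq E^{p_2}\subseteq E^n\) in cases (1)--(2)). The task is to identify \(\Gamma_{d}(X_{w_0w},X^u)\) with the corresponding set for \(d=d_{\rm min}(u,w)\).

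Two inputs will be used. First, \(d_{\rm min}(u,w)\) is the least \(d\) for which \(M_d(X_{w_0w},X^u)\neq\emptyset\), equivalently \(\Gamma_d(X_{w_0w},X^u)\neq\emptyset\), equivalently \(X_{w_0w}\cap\Gamma_d(X^u)\neq\emptyset\); this follows from the identity \([X_{w_0w}]\star[X^u]=q^{d_{\rm min}(u,w)}[X^{wu}]\) recalled before the statement, together with the standard link between the lowest power of \(q\) in a quantum product of Schubert classes and non-emptiness of the corresponding Gromov--Witten varieties. Second, the Ker/Span technique: a connected degree-\(e\) curve \(C\subseteq X\) has \(\dim\Span(C)\le e+2\) and \(\Ker(C)\subseteq V_x\subseteq\Span(C)\) for every \(x\in C\). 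In particular, if \(z\) lies on a connected curve of degree \(\le e\) (with \(e\le 2\)) through \(x\in X_{w_0w}\) and \(y\in X^u\), with \(x,y,z\) pairwise distinct, then \(V_x\cap V_y\subseteq V_z\subseteq V_x+V_y\) and \(\dim(V_x+V_y+V_z)\le 4\).

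With these at hand, I first pin down \(d\). In case (3), \(X_{w_0w}\cap X^u\) is exactly the set of \(z\) with \(V_z\subseteq E_n\cap E^{p_2}\) and \(V_z\cap(E_n\cap E^{p_1})\neq0\); since \(\dim(E_n\cap E^{p_2})=p_2-n\ge2\) and \(\dim(E_n\cap E^{p_1})=p_1-n\ge1\), this is non-empty, so \(d=0\) and it already equals \(w^{-1}.X^{p_1-n,p_2-n}\). In cases (1)--(2) one has \(E_n\cap E^{p_2}=0\), so \(X_{w_0w}\cap X^u=\emptyset\) and \(d\ge1\); by Ker/Span a line joining \(X_{w_0w}\) to \(X^u\) would have kernel contained in \(E_n\cap E^{p_2}\), hence can exist only when \(p_2>n\). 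This forces \(d\ge2\) in case (1); in case (2) such a line is readily constructed, so \(d=1\); and in case (1) one builds a conic joining \(X_{w_0w}\) to \(X^u\) by choosing \(V_x\subseteq E_n\) and \(V_y\subseteq E^{p_2}\) with \(V_y\cap E^{p_1}\neq0\) and \(\omega\) non-degenerate on \(V_x\oplus V_y\), so that \(\IG(2,V_x\oplus V_y)\) is a smooth \(3\)-dimensional quadric through \(V_x\) and \(V_y\); hence \(d=2\).

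Finally, for \(d=1\) in case (2) and \(d=2\) in case (1) one computes \(\Gamma_d(X_{w_0w},X^u)\). The inclusion ``\(\subseteq\)'' is Ker/Span: writing a point \(y\in X^u\) on the joining curve as \(V_y=\langle a,a'\rangle\) with \(0\neq a\in V_y\cap E^{p_1}\), the span of the curve lies in \(E_n+E^{p_1}\) in case (2) (a line, with \(V_x\cap V_y\subseteq E_n\cap E^{p_2}\) contained in \(V_z\)) and in \(E_n+E^{p_2}\) in case (1) (degree \(\le2\), whose span is \(4\)-dimensional and already contains the \(3\)-space \(V_x+\langle a\rangle\subseteq E_n+E^{p_1}\), which the \(2\)-plane \(V_z\) must meet); this yields exactly the sets described in the first paragraph. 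The reverse inclusion is the substantial point: given \(z\) in the candidate set, write \(V_z=\langle k,v\rangle\) with \(k\) in the prescribed small subspace, decompose \(v\) along \(E_n\) and the relevant \(E^{p_i}\), and construct a line (cases 2, 3) or a conic --- degenerating to a chain of two lines when \(\omega\) becomes singular --- through \(V_z\), a point of \(X_{w_0w}\), and a point of \(X^u\); the required isotropy relations are automatic because \(E_n\) is Lagrangian and \(\omega(k,v)=0\). Carrying this out uniformly, including the degenerate positions of \(V_z\) (contained in one of the Lagrangians, or \(p_1=1\), etc.) in the style of the lemmas of Sections~\ref{sec:d=2} and \ref{sec:d=1}, is the main obstacle; once it is done, the non-emptiness just established also shows that the geometrically minimal degree coincides with \(d_{\rm min}(u,w)\), completing the identification \(\Gamma_{d_{\rm min}(u,w)}(X_{w_0w},X^u)=w^{-1}.X^{wu}\).
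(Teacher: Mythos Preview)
Your approach is essentially the same as the paper's: compute the action of \(w\) on the basis, identify each target \(w^{-1}.X^{wu}\) as an explicit incidence locus, determine \(d\) by checking emptiness in smaller degrees, and then prove the two inclusions via Ker/Span and an explicit construction. The arguments for the forward inclusions and for the value of \(d\) are correct.

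The one genuine issue is your treatment of the reverse inclusion, which you flag as ``the main obstacle'' requiring a uniform handling of degenerate positions of \(V_z\). This perceived difficulty is illusory. Both \(\Gamma_d(X_{w_0w},X^u)\) and \(w^{-1}.X^{wu}\) are closed irreducible subvarieties of \(X\) (the former as the image of the irreducible variety \(M_d(X_{w_0w},X^u)\), the latter as a translate of a Schubert variety), so it suffices to check the reverse inclusion for \emph{general} \(z\) in the target. This is precisely what the paper does, in two or three lines per case. In case~(1), for general \(z\) pick a basis \((a,b)\) of \(V_z\) with \(a\in V_z\cap(E_n\oplus E^{p_1})\), decompose \(a=a_1+a_2\) and \(b=b_1+b_2\) along \(E_n\oplus E^{p_2}\), and set \(V_x=\langle a_1,b_1\rangle\), \(V_y=\langle a_2,b_2\rangle\); both are isotropic because \(E_n\) and \(E^{p_2}\subseteq E^n\) are Lagrangian, and \(a_2\in E^{p_1}\) gives \(y\in X^u\). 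In case~(2), pick \((a,b)\) with \(a\in V_z\cap(E_n\cap E^{p_2})\), decompose \(b=b_1+b_2\) along \(E_n\oplus E^{p_1}\), observe \(\omega(a,b_2)=\omega(a,b)-\omega(a,b_1)=0\) since \(V_z\) is isotropic and \(a,b_1\in E_n\), and set \(V_x=\langle a,b_1\rangle\), \(V_y=\langle a,b_2\rangle\). No case analysis of special positions, and nothing like the machinery of Sections~\ref{sec:d=2}--\ref{sec:d=1}, is required.
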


\begin{proof}
Note that we have $w^{-1} = w$ and that $w$ acts as follows on $(e_i)_{i \in [1,2n]}$:
$$w(e_i) = \left\{ 
\begin{array}{ll}
e_{i+n} & \textrm{ for $i \leq n$} \\ 
e_{i- n} & \textrm{ for $i > n$.}\\
\end{array}\right.$$
Note also that $d = d_{\rm min}(u,w) = \min\{d \ | \ \Gamma_d(X_{w_0w},X^u) \neq \emptyset\}$.

(1) Assume $p_2 \leq n$, note that this implies $\delta_p = 0$. For $e \leq 1$ and for any $z \in \Gamma_e(X_{w_0w},X^u)$, we have $V_z \cap E_n \cap E^{p_2} \neq 0$. But the condition $p_2 \leq n$ implies the vanishing $E_n \cap E^{p_2} = 0$ proving the equality $\Gamma_e(X_{w_0w},X^u) = \emptyset$. 
For $z \in \Gamma_2(X_{w_0w},X^u)$ general, there exists $x \in X_{w_0w}$ and $y \in X^u$ such that $V_z \subset V_x + V_y$ {by Lemma~\ref{lem:deg2}}. In particular $V_z \subset E_n \oplus E^{p_2}$ and $V_z \cap (E_n \oplus E^{p_1}) \neq 0$. Since $w^{-1}. E^{n+p_1} = E_n \oplus E^{p_1}$ and $w^{-1}. E^{n+p_2} = E_n \oplus E^{p_2}$, we get the inclusion $\Gamma_2(X_{w_0w},X^u) \subset w^{-1}.X^{wu}$. Conversely, for $z \in w^{-1}.X^{wu}$ general, then $V_z \subset E_n \oplus E^{p_2}$ and $V_z \cap (E_n \oplus E^{p_1}) \neq 0$. Pick a basis $(a,b)$ of $V_z$ such that $a \in V_z  \cap (E_n \oplus E^{p_1})$. Write $a = a_1 + a_2$ and $b = b_1 + b_2$ with $a_1,b_1 \in E_n$, $a_2 \in E^{p_1}$ and $b_2 \in E^{p_2}$. Then setting $V_x = \langle a_1,b_1 \rangle$ and $V_y = \langle a_2,b_2 \rangle$, we have $x \in X_{w_0w}$, $y \in X^u$ and $V_z \subset V_x + V_y$, proving the result.

(2) Assume $p_1 \leq n < p_2$. If $z \in X_{w_0w} \cap X^u$, then $V_z \cap E_n \cap E^{p_1} \neq 0$, but since $E_n \cap E^{p_1} = 0$, we get $\Gamma_0(X_{w_0w},X^u) = \emptyset$. For $z \in \Gamma_1(X_{w_0w},X^u)$ general,  there exists $x \in X_{w_0w}$ and $y \in X^u$ such that $\dim(V_x + V_y) = 3$ and $V_x \cap V_y \subset V_z \subset V_x + V_y$. In particular $V_z \subset E_n \oplus E^{p_1}$ and $0 \neq V_z \cap V_x \cap V_y \subset V_z \cap (E_n \cap  E^{p_2})$. Since $w^{-1}. E^{n+p_1} = E_n \oplus E^{p_1}$ and $w^{-1}. E^{p_2-n} = E_n \cap E^{p_2}$, we get the inclusion $\Gamma_1(X_{w_0w},X^u) \subset w^{-1}.X^{wu}$. 

Conversely, for $z \in w^{-1}.X^{wu}$ general, we have $V_z \subset E_n \oplus E^{p_1}$ and $V_z \cap (E_n \cap E^{p_2}) \neq 0$. Pick a basis $(a,b)$ of $V_z$ such that $a \in V_z  \cap (E_n \cap E^{p_2})$. Write $b = b_1 + b_2$ with $b_1 \in E_n$ and $b_2 \in E^{p_1}$. We have $\omega(a,b_2) = \omega(a,b) - \omega(a,b_1)$ and since $a,b \in V_z$ and $a,b_1\in E_n$, both terms vanish. Setting $V_x = \langle a,b_1 \rangle$ and $V_y = \langle a,b_2 \rangle$, we have $x \in X_{w_0w}$, $y \in X^u$, and $V_x \cap V_y \subset V_z \subset V_x + V_y$, proving the result.

(3) We have $\Gamma_0(X_{w_0w},X^u) = X_{w_0w}^u$. An element $z\in X$ lies in $X_{w_0w}^u$ if and only if we have $V_z \subset E_n \cap E^{p_2}$ and $V_z \cap (E_n \cap E^{p_1})\neq 0$. The result follows since $w^{-1}.E^{p_1-n} = E_n \cap E^{p_1}$ and $w^{-1}.E^{p_2-n} = E_n \cap E^{p_2}$.
\end{proof}
 
\begin{thm}
 \label{thm:seidel-Ktheo}
 Let $u \in W$, $w \in W^{\rm comin}$, then $\cO^u \star \cO^w = q^{d_{\rm min}(u,w)} \cO^{wu}$.
 \end{thm}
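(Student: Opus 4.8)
The plan is to reduce everything to a product of a Schubert class with an opposite Schubert class, and then feed the geometric identification of Theorem~\ref{thm:seidel-geom} into the pushforward machinery of Section~\ref{section:strategy}.

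First, for $w = 1$ the statement is trivial: $\cO^1 = [\cO_X] = 1$ is the unit, $d_{\min}(u,1) = 0$, and $wu = u$. So assume $w$ is the nontrivial element $w^{\LG(n,2n)}$ of $\pi_1(\Aut(X))$. I would begin by recognizing $\cO^w$ as a Schubert class: as recalled in the proof of Theorem~\ref{thm:seidel-geom}, $X^w = \{x \mid V_x \subset E^n\}$ and $X_{w_0w} = \{x \mid V_x \subset E_n\}$, and the latter equals $X_{n-1,n}$ because a $2$-plane inside the $n$-dimensional $E_n$ automatically meets $E_{n-1}$. Since $E_n$ and $E^n$ are isotropic of equal dimension they lie in one $\Sp_{2n}$-orbit, so $X^w$ is a $G$-translate of $X_{n-1,n}$; as $G$ is connected, $\cO^w = [\cO_{X^w}] = [\cO_{X_{n-1,n}}] = \cO_{n-1,n}$. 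Hence $\cO^u \star \cO^w = \cO_{n-1,n} \star \cO^u$, and I may apply Corollaries~\ref{cor:final-formula}, \ref{cor:dleq2} and~\ref{cor:notCd} with $(p_1,p_2) = (n-1,n)$ and $(q_1,q_2)$ the index of $u$. Crucially, (C$d$) never holds here: \eqref{eqn:deg1} requires $p_2 = 2n$ (but $p_2 = n$) and \eqref{eqn:deg2} requires $p_2 - p_1 \ge 2$ (but $p_2 - p_1 = 1$).

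Next I would pin down the $q^{d_0}$-term, $d_0 := d_{\min}(u,w)$. By Proposition~\ref{prop:conjecture} we are in exactly one of the cases (A) $q_1 > n$, (B) $q_1 \le n < q_2$, (C) $q_2 \le n$, with $d_0 = 0, 1, 2$ respectively, and in each case Theorem~\ref{thm:seidel-geom} identifies $\Gamma_{d_0}(X_{n-1,n},X^u)$ with $w^{-1}.X^{wu}$, a $G$-translate of an opposite Schubert variety; so it has rational singularities and $[\cO_{\Gamma_{d_0}(X_{n-1,n},X^u)}] = [\cO_{X^{wu}}] = \cO^{wu}$. For $d < d_0$, $\Gamma_d(X_{n-1,n},X^u) = \emptyset$ by definition of $d_{\min}$, hence so is $\Gamma_{d-1,1}(X_{n-1,n},X^u) \subseteq \Gamma_d(X_{n-1,n},X^u)$, and $(\cO_{n-1,n}\star\cO^u)_d = 0$. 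For $d = d_0$ I would first note $\Gamma_{d_0-1,1}(X_{n-1,n},X^u) = \emptyset$ — otherwise, adding a marked point on the degree-$(d_0-1)$ component of a $(d_0-1,1)$-chain would produce a point of $\Gamma_{d_0-1}(X_{n-1,n},X^u)$, contradicting minimality of $d_0$ — so in particular $M_{d_0-1,1}(X_{n-1,n},X^u) = \emptyset$. If $d_0 = 0$, then $\ev_3^0$ is an isomorphism onto $\Gamma_0 = X_{n-1,n}^u$ and $(\cO_{n-1,n}\star\cO^u)_0 = [\cO_{X_{n-1,n}^u}] = \cO^{wu}$. If $d_0 \in \{1,2\}$, the map $\ev_3^{d_0}\colon M_{d_0}(X_{n-1,n},X^u)\to\Gamma_{d_0}(X_{n-1,n},X^u)$ must be birational, since otherwise Theorem~\ref{thm:all-d}(3) would force $\Gamma_{d_0}(X_{n-1,n},X^u) = \Gamma_{d_0-1,1}(X_{n-1,n},X^u) = \emptyset$, contradicting $\Gamma_{d_0}\ne\emptyset$; since $M_{d_0}(X_{n-1,n},X^u)$ and $\Gamma_{d_0}(X_{n-1,n},X^u)$ have rational singularities, Theorem~\ref{thm:push-forward} then gives $(\ev_3^{d_0})_*[\cO_{M_{d_0}(X_{n-1,n},X^u)}] = \cO^{wu}$, and with $M_{d_0-1,1}(X_{n-1,n},X^u) = \emptyset$ we conclude $(\cO_{n-1,n}\star\cO^u)_{d_0} = \cO^{wu}$.

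Finally I would discard the higher-degree terms. For $d \ge 3$, $(\cO_{n-1,n}\star\cO^u)_d = 0$ by Corollary~\ref{cor:dleq2}. For $d_0 < d \le 2$, by Corollary~\ref{cor:notCd}(1) it suffices to check that $\ev_3^d\colon M_d(X_{n-1,n},X^u)\to\Gamma_d(X_{n-1,n},X^u)$ is not birational; this is a short computation using that in case (A) one has $q_1 \ge n+1$, $q_2 \ge n+2$, and in case (B) $q_1 \le n$, $q_2 \ge n+1$ with $q_1+q_2\ne 2n+1$. For $d = 1$ (which only arises in case (A)), the birationality criterion of Section~\ref{sec:d=1}, condition \eqref{eqn:deg1_birat}, reads $q_1 \le n$ here (as $p_2 = n \ne 2n$ and $\delta_p = 0$) and so fails. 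For $d = 2$, by Lemma~\ref{lemma:deg2-birat} birationality would force $p_2 + q_1 \le 2n$ and $p_1 + q_2 \le 2n$, i.e.\ $q_1 \le n$ and $q_2 \le n+1$; in case (A) both fail, and in case (B) it forces $p_1 + q_2 = 2n$, which is only compatible with condition (2) of Lemma~\ref{lemma:deg2-birat} and hence requires $\max(\delta_p,\delta_q) = 1$ — but then $q_2 = n+1$ and $q_1+q_2\ne 2n+1$ give $q_1 \le n-1$, so $q_1+q_2 < 2n+1$ and $\delta_p = \delta_q = 0$, a contradiction; case (C) needs no argument. Summing the graded pieces gives $\cO^u\star\cO^w = \cO_{n-1,n}\star\cO^u = q^{d_0}\cO^{wu} = q^{d_{\min}(u,w)}\cO^{wu}$.

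I expect the main obstacle to be purely organizational: carefully reconciling the several indexing conventions — in particular the identification $\cO^w = \cO_{n-1,n}$ and the clash between the $(p,q)$-labels of Lemma~\ref{lemma:deg2-birat} and those of Proposition~\ref{prop:conjecture} — and verifying that (C$d$) and the birationality dichotomies of Theorems~\ref{thm:all-d}, \ref{thm:d=2} and~\ref{thm:d=1} degenerate in the expected way for the divisor-type class $\cO_{n-1,n}$. All the genuine geometry already sits in Theorem~\ref{thm:seidel-geom}; what remains is to thread it through the $K$-theoretic formalism and to eliminate the spurious higher powers of $q$.
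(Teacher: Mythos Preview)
Your proposal is correct and follows essentially the same route as the paper's proof: both rewrite $\cO^w$ as $\cO_{w_0w}=\cO_{n-1,n}$, observe that conditions \eqref{eqn:deg1} and \eqref{eqn:deg2} are never satisfied for $(p_1,p_2)=(n-1,n)$, feed Theorem~\ref{thm:seidel-geom} into the pushforward formula of Corollary~\ref{cor:final-formula}, and use the vanishing $\Gamma_{d_{\min}-1}(X_{n-1,n},X^u)=\emptyset$ to isolate the $q^{d_{\min}}$-term. The only real difference is tactical: the paper kills the remaining degrees by invoking that the powers of $q$ in $\cO^u\star\cO^w$ coincide with those in the quantum cohomology product $[X_{w_0w}]\star[X^u]$ (where Seidel's result in $\QH$ already says only $d_{\min}$ survives), whereas you instead check case-by-case, via the explicit criteria \eqref{eqn:deg1_birat} and Lemma~\ref{lemma:deg2-birat}, that $\ev_3^d$ is not birational for $d_0<d\le 2$. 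Your version is longer but more self-contained, since it does not appeal to the cohomological Seidel representation of \cite{chaput.manivel.ea:affine}.
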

 
 \begin{proof}
Note that $\cO_{w_0w} = \cO^w$ and that the maps $\ev^d_3(w_0w,u) : M_d(X_{w_0w},X^u) \to \Gamma_d(X_{w_0w},X^u)$ and $\ev_3^{d-1,1}(w_0w,u) : M_{d-1,1}(X_{w_0w},X^u) \to \Gamma_{d-1,1}(X_{w_0w},X^u)$ have general fibers which are either empty or rationally connected. {Moreover, whenever the former map is not birational, we have \(\Gamma_d(X_{w_0w},X^u)=\Gamma_{d-1,1}(X_{w_0w},X^u)\) and that they have rational singularities (Theorem~\ref{thm:all-d}). 
These and Corollary~\ref{cor:final-formula} imply that }the powers of $q$ appearing in $\cO^{u} \star \cO^w$ are the same as those appearing in $[X_{w_0w}] \star [X^u]$. That is to say, {\(\cO^u \star \cO^w=q^d(\cO^u \star \cO^w)_d\) for \(d=d_{\rm min}(u,w)\). }
The result then follows from {Theorem~\ref{thm:seidel-geom} and }the fact that $\Gamma_{d_{\rm min}(u,w)-1,1} (X_{w_0w},X^u) = \emptyset$ since $\Gamma_{d_{\rm min}(u,w)-1} (X_{w_0w},X^u) = \emptyset$ by Proposition~\ref{prop:conjecture}.
\end{proof}

\section{Chevalley formula}
\label{section:chevalley}

In this section we obtain a quantum Chevalley formula for $\QK(X)$ (see Theorem \ref{thm_quantum_chevalley}). We begin by recalling the classical Chevalley formula.

\subsection{Classical Chevalley formula}

{Proposition~\ref{prop_classical_chevalley} below is a specialization of \cite{lenart.postnikov:affine}, where {we extend the notation \(\cO_{a,b}\)
by setting 
\begin{equation*}
  \cO_{a,b}=
  \begin{cases}
    [\cO_{X_{a,b}}] & 1\leq a < b\leq 2n \text{ and } a + b \neq 2n+1 \\
    0 & \text{otherwise}.
  \end{cases}
\end{equation*}}}

\begin{prop}[Classical Chevalley formula]
\label{prop_classical_chevalley}
The product \(\cO_{2n-2,2n}\cdot\cO_{q_1,q_2}\) in \(K(X)\) is equal to:
\begin{enumerate}
  \item \(\cO_{q_1-1,q_2}\) if \(q_1=q_2-1\);
  \item \(\cO_{q_1-1,q_2}+\cO_{q_1,q_2-1}-\cO_{q_1-1,q_2-1}\) if \(q_1<q_2-1\) and \(q_1+q_2\neq 2n+2, 2n+3\);
  \item \(\cO_{q_1-1,q_2}+\cO_{q_1,q_2-1}-\cO_{q_1-1,q_2-2}-\cO_{q_1-2,q_2-1}+\cO_{q_1-2,q_2-2}\) if \(q_1<q_2-1\) and \(q_1+q_2=2n+3\);
  \item \(2\cdot\cO_{q_1-1,q_2-1}+\cO_{q_1-2,q_2}+\cO_{q_1,q_2-2}-2\cdot\cO_{q_1-2,q_2-1}-2\cdot\cO_{q_1-1,q_2-2}+\cO_{q_1-2,q_2-2}\) when \(q_1+q_2=2n+2\) and {\(q_1\neq n\)};
  \item \(2\cdot\cO_{q_1-1,q_2-1}+\cO_{q_1-2,q_2}-2\cdot\cO_{q_1-2,q_2-1}-\cO_{q_1-1,q_2-2}+\cO_{q_1-2,q_2-2}\) when \(q_1=n\) and \(q_2=n+2\).
\end{enumerate}
\end{prop}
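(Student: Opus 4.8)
The plan is to obtain Proposition~\ref{prop_classical_chevalley} by specializing the $K$-theoretic Chevalley formula of Lenart--Postnikov~\cite{lenart.postnikov:affine} to $X=\IG(2,2n)=\Sp_{2n}/P$, with $P$ the maximal parabolic omitting the second simple root of type $C_n$. The first task is to set up the combinatorial dictionary: identify $W^X$ with the poset of admissible pairs $(p_1,p_2)$ (with $1\le p_1<p_2\le 2n$ and $p_1+p_2\ne 2n+1$), spell out the bijection with minimal length coset representatives, recall $\ell(p_1,p_2)=p_1+p_2-3-\delta_p$, and --- most importantly --- describe the covering relations of the Bruhat order on $W^X$. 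Away from the forbidden diagonal $p_1+p_2=2n+1$ the covers of $(p_1,p_2)$ are just $(p_1-1,p_2)$ and $(p_1,p_2-1)$; but on the row $p_1+p_2=2n+2$ both of these land on the diagonal, and the actual covers become $(p_1-1,p_2-1)$, $(p_1-2,p_2)$ and $(p_1,p_2-2)$, with the ``long'' reflection $(p_1,p_2)\to(p_1-1,p_2-1)$ occurring with weight $2$, while just above, on the row $p_1+p_2=2n+3$, the two ordinary covers survive but their further descendants reorganise around the gap. I would also identify $\cO_{2n-2,2n}$ with the unique Schubert divisor $D\subset X$, note that $\cO_X(D)$ generates $\Pic(X)$, so that $[\cO_D]=1-[\cO_X(-1)]$ in $K(X)$, and hence it suffices to expand $[\cO_X(-1)]\cdot\cO_{q_1,q_2}$ in the basis $\{\cO_w\}$.

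Next, I would apply the Lenart--Postnikov rule, translated to the Schubert-variety basis used here via the involution $w\mapsto w^\vee$: it expresses the product as a signed sum $\sum_J (-1)^{\ell(q_1,q_2)-\ell(\mathrm{end}(J))-1}\,\cO_{\mathrm{end}(J)}$ over the admissible subsets $J$ of a fixed $\lambda$-chain attached to the ample generator of $\Pic(X)$, where $\mathrm{end}(J)$ runs over the elements of $W^X$ reachable from $(q_1,q_2)$ by the descending chains recorded by $J$. Because we multiply by a divisor class, the only $J$ contributing to the coefficient of a given $\cO_w$ have $\ell(q_1,q_2)-\ell(w)\le 3$, so one only needs the sub-poset of $W^X$ formed by the elements lying at most three covers below $(q_1,q_2)$. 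The coefficient of $\cO_w$ is then $(-1)^{\ell(q_1,q_2)-\ell(w)-1}$ times the number of admissible subsets with $\mathrm{end}(J)=w$, and the $\ell(q_1,q_2)-1$ layer of the answer is exactly the cohomological Chevalley formula for $\IG(2,2n)$, which already carries the weight-$2$ covering relation. In the region far from the diagonal this enumeration yields $\cO_{q_1-1,q_2}+\cO_{q_1,q_2-1}-\cO_{q_1-1,q_2-1}$ (case~(2)), and when $q_1=q_2-1$ the descending direction through $p_2$ disappears, leaving the single term of case~(1).

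The core of the argument, and what I expect to be the main obstacle, is carrying out this enumeration carefully in the degenerate regimes near the diagonal: $q_1+q_2=2n+3$ (case~(3)), $q_1+q_2=2n+2$ with $q_1\ne n$ (case~(4)), and the further degeneration $q_1=n$, $q_2=n+2$ (case~(5)). Here some would-be terms vanish because their index hits $p_1+p_2=2n+1$; the relevant sub-poset of $W^X$ is no longer of product-of-chains shape but has the reflected, triple-cover structure described above; the chosen $\lambda$-chain acquires repeated roots near the diagonal; and when $q_1=n$ even one of the three covers, $(q_1,q_2-2)=(n,n)$, disappears, so the multiplicities on the corrections involving the class $\cO_{n-1,n}$ drop from $2$ to $1$. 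Obtaining the precise count of admissible subsets --- rather than naively counting descending chains, of which there may be strictly more to a given endpoint --- is the delicate point, and I would do it by drawing the rank-$\le 4$ sub-poset explicitly in each regime and reading off the contributions, simplifying with the convention $\cO_{a,b}=0$ for non-admissible $(a,b)$. As consistency checks I would confirm that the leading layer reproduces the classical cohomological Chevalley formula for $\IG(2,2n)$, and that every coefficient obeys the sign-alternation with codimension, compatibly with Theorem~\ref{thm:brion} and Corollary~\ref{cor:schub_expand}.
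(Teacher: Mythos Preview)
Your approach is correct in principle and is in fact the one the paper itself cites as the original source: the statement of Proposition~\ref{prop_classical_chevalley} is introduced precisely as a specialization of \cite{lenart.postnikov:affine}. However, the paper then gives an independent geometric proof, so your route is genuinely different from theirs.

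The paper's argument bypasses the $\lambda$-chain combinatorics entirely. Instead it uses Brion's Theorem~\ref{thm:brion}: since $\cO_{2n-2,2n}\cdot\cO_{q_1,q_2}=\cO_{X_u^v}$, one expands $[\cO_{X_u^v}]$ by computing the Euler characteristics $\chi_Z(\cO_Z)$ for $Z=X_u^v\cap gX^w$ with $X^w=X^{r_1,r_2}$ and $g$ general. They give explicit linear-algebra conditions on $(r_1,r_2)$ for $Z$ to be non-empty, then show $Z$ is birational to (an open subset of) a variety $\cW$ of pairs $(v_1,v_2)\in F_1\times F_2$ with $v_1+v_2$ in a fixed hyperplane and $\omega(v_1,v_2)=0$. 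A short case analysis on $\dim F_i$ and on whether the isotropy condition is vacuous shows $Z$ is unirational (so $\chi_Z=1$) except in the single boundary case $q_1+q_2=r_1+r_2=q_1+r_2=q_2+r_1=2n+2$, where $Z$ is a quadric in $\bP^1$ and $\chi_Z=2$. Corollary~\ref{cor:schub_expand} then converts these Euler characteristics into the displayed formulas.

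What each approach buys: yours is purely combinatorial and self-contained once the Lenart--Postnikov machinery is set up, but the delicate step you flag---correctly enumerating admissible subsets rather than saturated chains near the forbidden diagonal, and tracking the multiplicity-$2$ cover---is exactly where mistakes are easy to make, and your proposal stops short of actually carrying it out. The paper's geometric argument avoids that bookkeeping: the only place a coefficient $2$ can arise is isolated as a single explicit geometric degeneration (a pair of points on $\bP^1$), and all remaining cases are handled uniformly by unirationality. Their method also dovetails with the techniques used throughout the paper (Brion's theorem, Euler characteristics of intersections), whereas yours would import an external combinatorial formalism not used elsewhere.
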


\begin{proof}
We sketch a geometric proof different from the proof in \cite{lenart.postnikov:affine}. Let \(X_u=X_{2n-2,2n}\) and \(X^v=X^{q_1,q_2}\). Since \(\cO_u\cdot\cO^v\)=\(\cO_u^v\), we apply Theorem \ref{thm:brion} to \(Y:=X_u^v\). Let \(g\in G\) be general, $X^w = X^{r_1,r_2}$, and \(Z=Y\cap g X^w=X_u\cap X^v\cap g X^w\). Assume \(Z\) is non-empty, then the dimension of \(X^v\cap gX^w\) must be at least one, which implies 
\begin{equation}\label{eqn:non-empty}
  \text{either }\begin{cases}
    q_1+r_2\geq 2n+2\\
    q_2+r_1\geq 2n+1
  \end{cases}
  \text{ or }\begin{cases}
    q_1+r_2\geq 2n+1\\
    q_2+r_1\geq 2n+2
  \end{cases}.
\end{equation}

Let \(F_1=E^{q_1}\cap gE^{r_2}\) and \(F_2=E^{q_2}\cap gE^{r_1}\). For \(x\in Z\) general, we have \[\dim\left(V_x\cap (F_1+F_2)\cap E_{2n-2}\right)=1\text{ and }V_x\cap F_1\cap F_2=V_x\cap F_1\cap E_{2n-2}=V_x\cap F_2\cap E_{2n-2}=0.\] Therefore, the vector space \(V_x\) is spanned by nonzero vectors \(v_1\in F_1\) and \(v_2\in F_2\) such that \(v_1+v_2\in V:=E_{2n-2}\cap (F_1+F_2)\setminus(F_1\cup F_2)\) and \(\omega(v_1,v_2)=0\). Consider the map given by addition
\[
  p: F_1\times F_2\to F_1+F_2.  
\]
Then \(Z\) is dominated by 
\[
  \mathcal{W}:=\{(v_1,v_2)\in p^{-1}(V)\mid \omega(v_1,v_2)=0\}.
\]

Note that if \(\dim F_1\geq 3\), then for any \(v_2\in F_2\setminus F_1\), we have \(\dim ({v_2}^\perp\cap F_1)\geq 2\) and \(\left(\langle v_2\rangle+({v_2}^\perp\cap F_1)\right)\cap E_{2n-2}\neq 0\). This implies that \(\cW\) is a locally trivial fibration over \(F_2\setminus F_1\) with the fiber being an open subset of the set of solutions to a system of linear equations. Hence, \(Z\) is unirational. Similarly, \(Z\) is unirational if \(\dim F_2\geq 3\).

Now assume \(\dim F_i\leq 2\) for \(i=1,2\), which implies \(q_1+r_1\leq 2n+1\). The remaining cases are as follows.

If \(q_1+r_1=2n+1\), then \(q_2+r_2=2n+3\). This implies \(q_2-q_1=r_2-r_1=1\) and \(\min\{\delta_q,\delta_r\}=0\). Hence, the condition \(\omega(v_1,v_2)=0\) is trivial and \(\cW\) is a locally trivial fibration over \(F_2\setminus F_1\) with the fiber being an open subset of the set of solutions to a system of linear equations. Hence, \(Z\) is unirational.

If \(q_1+r_1\leq 2n\) and \(\min\{\delta_q,\delta_r\}=0\), then $F_1 \cap F_2=\emptyset$, which implies that $p(\mathcal{W}) \cong \mathcal{W}$; then \(Z\) is birational to \(\PP\left(E_{2n-2}\cap(F_1+F_2) \right)\setminus\left(\PP(F_1)\cup\PP(F_2)\right)\). 

Otherwise, \(q_1+q_2=r_1+r_2=q_1+r_2=q_2+r_1=2n+2\). In this case, the condition \(\omega(v_1,v_2)=0\) is non-trivial and \(Z\) is birational to the vanishing set of a quadric on $\PP^1\cong \PP\left(E_{2n-2}\cap(F_1+F_2) \right)$.

In summary, \(\chi_Z(\cO_Z)=0\) if \eqref{eqn:non-empty} does not hold; assuming \eqref{eqn:non-empty}, we have 
\begin{eqnarray*}
  \chi_Z(\cO_Z)=
  \begin{cases}
    2 & \text{if }q_1+q_2=r_1+r_2=q_1+r_2=q_2+r_1=2n+2\\
    1 & \text{otherwise}.
  \end{cases}
\end{eqnarray*}
The rest follows from Corollary \ref{cor:schub_expand}.
\end{proof}

\subsection{Geometric properties of stable curves meeting a Schubert divisor}

Let \(\Gamma_{\underline{d}}(X^v)=\ev_3(\ev_2^{-1}(X^v))\), 
where \(\ev_1,\ev_2,\ev_3: M_{\underline{d}}\to X\) are the 
evaluation maps. From now on $X_u=X_{{2n-2,2n}}$ will be a Schubert divisor and $X^v=X^{q_1,q_2}$.

\begin{prop}\label{prop:chev-nbhds}
Let $X_u$ be a Schubert divisor. We have
\begin{enumerate}
\item\label{item:Gamma_d} $\Gamma_d(X_u,X^v) = \Gamma_d(X^v)$ for all $d > 0$.
\item\label{item:Gamma01} $\Gamma_{0,1}(X_u,X^v) = \Gamma_1(X_u^v)$.
\item\label{item:X} $\Gamma_{d-1,1}(X_u,X^v) = \Gamma_d(X_u,X^v) = X$ for all $d > 1$.
\item\label{item:rat-conn} The general fibers of the maps $\ev_3^d(u,v) : M_d(X_u,X^v) \to \Gamma_d(X_u,X^v)$  and $\ev_3^{d-1,1}(u,v) : M_{d-1,1}(X_u,X^v) \to \Gamma_{d-1,1}(X_u,X^v)$ are rationally connected for all $d\geq 0$ except for $\ev_3^{0,1}(u,v)$ when $q_1=2$ and $q_2=2n$, whose general fibers consist of two points.
\end{enumerate}
\end{prop}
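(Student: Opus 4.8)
The plan is to treat the four items essentially separately, since the hypothesis that $X_u = X_{2n-2,2n}$ is a Schubert divisor makes the geometry very rigid. First I would dispose of item~\eqref{item:Gamma_d}. The inclusion $\Gamma_d(X_u,X^v)\subseteq\Gamma_d(X^v)$ is automatic. For the reverse, note that $X_u = \{x\in X\mid V_x\subseteq E_{2n-2}\}$, which is a (huge) Schubert subvariety; given $z\in\Gamma_d(X^v)$ general, realized by a degree-$d$ stable map meeting $X^v$ at $y$ and passing through $z$, I would produce a point $x\in X_u$ also meeting this curve. Concretely, for $d\geq1$ one can deform/extend the curve: a degree-$d$ rational curve through a general point $z$ sweeps out a curve neighborhood large enough that one of its points $x$ satisfies $V_x\subseteq E_{2n-2}$ (since $E_{2n-2}$ has codimension $2$, the condition $V_x\subseteq E_{2n-2}$ cuts a divisor, and for $d\geq1$ the relevant incidence variety of curves meeting $X^v$ and $z$ has enough dimension). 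The cleanest route is to invoke Lemma~\ref{lem:split-deg3} / Lemma~\ref{lem:deg2} type arguments together with the fact that the $G$-orbit of $X_u\times X^v$ is dense in $X^2$, which already underlies Corollary~\ref{cor_strategy}; then $\Gamma_d(X_u,X^v)$ is the closure of $\{z : (x,y,z)\text{ with }x\in \oX_u, y\in\oX^v\}$, and one checks the $x$-constraint is satisfied for a general such triple over a general $z\in\Gamma_d(X^v)$.

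For item~\eqref{item:Gamma01}: a degree-$(0,1)$ stable map to $X$ is a line with two of the marked points collapsed to a point $x\in X$ and the third point $z$ lying on a line through $x$. Requiring $\ev_1=\ev_2\in X_u$ and ($=x$) and $\ev_2\in X^v$... rather, with $\underline d=(0,1)$ the first two marked points lie on the degree-$0$ component, i.e.\ $\ev_1=\ev_2=x$, so the constraint $x\in X_u\cap X^v=X_u^v$ is forced, and $z$ ranges over lines through $x$; hence $\Gamma_{0,1}(X_u,X^v)=\bigcup_{x\in X_u^v}\Gamma_1(x)=\Gamma_1(X_u^v)$ by definition of the degree-$1$ curve neighborhood. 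For item~\eqref{item:X}: by item~\eqref{item:Gamma_d} and Lemma~\ref{lem:split-deg4} (applied after the density-of-$G$-orbit reduction), $\Gamma_d(X_u,X^v)=\Gamma_d(X^v)=X$ once $d\geq\nobreak$ a small bound; I would check that the bound is in fact $d>1$ here because $X_u$ is a divisor (so it imposes essentially no constraint), computing $\Gamma_2(X^v)$ and $\Gamma_2(X_u^v)=\Gamma_{1,1}(X_u,X^v)$ explicitly via Lemma~\ref{lem:deg2} and the explicit description $\Gamma_1(X^v)=\{W_2\mid \dim(W_2\cap E^{q_2})\geq1\}$ from Lemma~\ref{lem_gamma_1_xv}: two lines chained give all of $X$ because $E^{q_2}+E^{q_2}$... one must be slightly careful, but iterating $\Gamma_1$ twice starting from $X^v$ fills $X$ since $q_2\geq 2$. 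So $\Gamma_{d-1,1}(X_u,X^v)=\Gamma_1(\cdots\Gamma_1(\Gamma_{0,1}(X_u,X^v))\cdots)$ is also $X$ for $d>1$.

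Item~\eqref{item:rat-conn} is the main obstacle and where I expect the real work. For $d\geq2$, by items \eqref{item:Gamma_d} and \eqref{item:X} the target is $X$ (or $\Gamma_d(X^v)$), and I would feed this into Theorem~\ref{thm:all-d}/Theorem~\ref{thm:d=3} together with Proposition~\ref{prop:replace}: since $X_u$ is a divisor the relevant conditions \eqref{eqn:deg1}, \eqref{eqn:deg2} can be checked never to hold for $(p_1,p_2)=(2n-2,2n)$ except precisely in the degenerate cases $q_1+q_2=2n+2$ with $q_1\in\{2,n\}$-type boundary; for $d=1$ the condition \eqref{eqn:deg1} reads $p_1+q_1=2n=p_2=q_2$, i.e.\ $q_2=2n$ and $q_1=2$ (as $p_1=2n-2$), which is exactly the stated exceptional case, and Theorem~\ref{thm:d=1}\eqref{item:01_2to1} then gives that $\ev_3^{0,1}(u,v)$ is generically finite of degree~$2$. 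In all other cases I would cite Theorems~\ref{thm:d=1}, \ref{thm:d=2}, \ref{thm:d=3} (and Proposition~\ref{prop:fibre-d-geq4}) to get $\GRCF$; for $d=0$ the map $\ev_3^0(u,v)$ is birational onto $X_u^v$. The delicate point is matching the abstract exceptional conditions (C1), (C2) against the specific divisor $(2n-2,2n)$ and confirming that the \emph{only} surviving non-rationally-connected instance is $(q_1,q_2)=(2,2n)$ for the $(0,1)$-map — this requires a short case analysis of when $\ev_3^{d}(u,v)$ fails to be birational (using Lemma~\ref{lemma:deg2-birat} and condition~\eqref{eqn:deg1_birat}), but no genuinely new geometry beyond what Sections~\ref{sec:d=2} and \ref{sec:d=1} already provide.
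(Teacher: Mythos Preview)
Your proposal is largely on the right track for items~\eqref{item:Gamma01} and~\eqref{item:rat-conn}, but you are overcomplicating item~\eqref{item:Gamma_d} and, as a consequence, item~\eqref{item:X}. The key observation you are missing is this: since $\Pic(X)\cong\Z$ and $X_u$ is the Schubert divisor, the class $[X_u]$ is ample, so \emph{every} curve of positive degree in $X$ must meet $X_u$. Thus if $z\in\Gamma_d(X^v)$ is witnessed by a degree-$d$ curve $C$ meeting $X^v$, that same curve $C$ already meets $X_u$, giving $z\in\Gamma_d(X_u,X^v)$ directly. There is no need for deformation arguments, dimension counts, or $G$-orbit density; the paper's proof of \eqref{item:Gamma_d} is literally one sentence. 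Your proposed route via ``the incidence variety has enough dimension'' could perhaps be made to work for generic $z$ and then closed up, but it is both harder and less transparent.

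The same trick streamlines \eqref{item:X}: for $d>1$ one has $\Gamma_{d-1,1}(X_u,X^v)\supseteq\Gamma_{1,1}(X_u,X^v)\supseteq\Gamma_{1,1}(X^v)$, where the last inclusion again uses that the degree-$1$ component automatically meets the divisor $X_u$; then $\Gamma_{1,1}(X^v)=\Gamma_2(X^v)=X$ by Lemma~\ref{lem:split-deg2} (not Lemma~\ref{lem:split-deg4} or explicit iteration of $\Gamma_1$ as you suggest). Your sketch here (``$E^{q_2}+E^{q_2}\ldots$ one must be slightly careful'') signals you have not found the clean route.

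For \eqref{item:rat-conn}, your analysis is correct in substance but overstated in difficulty: with $(p_1,p_2)=(2n-2,2n)$, condition~\eqref{eqn:deg2} requires $p_2+q_1=2n$, i.e.\ $q_1=0$, which is impossible, so \eqref{eqn:deg2} \emph{never} holds; and \eqref{eqn:deg1} holds exactly when $q_1=2$, $q_2=2n$. Then Theorems~\ref{thm:all-d} and~\ref{thm:cd} give the result immediately. There is no ``delicate point'' or extended case analysis needed.
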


\begin{proof}
Part~\eqref{item:Gamma_d} follows from the fact that when \(d>0\), a degree \(d\) curve must meet the divisor \(X_u\). For part~\eqref{item:Gamma01}, note that for a degree \(0\) curve meeting \(X_u\) and \(X^v\) is a point in \(X_u\cap X^v=X_u^v\). For Part~\eqref{item:X}, note that when \(d>1\), \(\Gamma_{d-1,1}(X_u,X^v)\supseteq\Gamma_{1,1}(X_u,X^v)\supseteq\Gamma_{1,1}(X^v)\), where the last containment follows from the fact that a degree \(1\) curve must meet \(X_u\); by \eqref{item:Gamma_d}, we have \(\Gamma_d(X_u,X^v) = \Gamma_d(X^v)\supseteq\Gamma_2(X^v)\). Finally, \(\Gamma_{1,1}(X^v)=\Gamma_2(X^v)=X\) by Lemma~\ref{lem:split-deg2}. Notice that since $X_u=X_{2n,2n-2}$, \eqref{eqn:deg2} is never satisfied and \eqref{eqn:deg1} is satisfied exactly when $q_1=2$ and $q_2=2n$; then part~\eqref{item:rat-conn} follows from Theorem \ref{thm:all-d}.
\end{proof}

Note that \(\dim M_1(X_u,X^v)=\dim X+2n-1-1-\codim X^v=\dim X+2n-1-1-(4n-2-q_1-q_2+\delta_q)=\dim X-2n+q_1+q_2-\delta_q\), and \(\dim\Gamma_1(X_u,X^v)=\dim\Gamma_1(X^v)=\dim X-(4n-2-q_2-2n+1)=\dim X-2n+1+q_2\) if \(q_2\leq 2n-1\), and \(\dim\Gamma_1(X_u,X^v)=\dim X\) if \(q_2=2n\). Therefore, as expected, \(\dim M_1(X_u,X^v)=\dim\Gamma_1(X_u,X^v)\) if and only if \eqref{eqn:deg1_birat} holds, i.e. if and only if
\begin{equation}\label{item:deg1birat}
  q_1=1,\ q_2\leq 2n-1
\end{equation}

\begin{lemma}\label{lemma:Gamma01}
$\Gamma_1(X_u^v)=\Gamma_1(X^v)$ unless $q_1=1$, in which case $\Gamma_1(X_u^v)=\{W_2\in X \mid \dim(W_2 \cap E^{q_2} \cap(E_{2n-2}\oplus E^1))\geq 1\}$.
\end{lemma}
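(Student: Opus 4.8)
The plan is to use the degree-one Ker/Span principle: two points $a,b\in X$ lie on a common line of $X$ exactly when $\dim(V_a\cap V_b)\geq1$ (as in the proof of Lemma~\ref{lem_gamma_1_xv}). Thus $\Gamma_1(X_u^v)$ is the closure of $\{z\mid\exists\,x\in X_u^v,\ V_z\cap V_x\neq0\}$, an irreducible closed subvariety of $X$. Since $X_u=X_{2n-2,2n}$ the inclusion $V_x\subseteq E_{2n}$ is automatic, so $X_u^v=\{x\mid V_x\cap E_{2n-2}\neq0,\ V_x\cap E^{q_1}\neq0,\ V_x\subseteq E^{q_2}\}\subseteq X^v$, whence $\Gamma_1(X_u^v)\subseteq\Gamma_1(X^v)=\{z\mid V_z\cap E^{q_2}\neq0\}$ by Lemma~\ref{lem_gamma_1_xv}. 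As both sides are irreducible and closed, it suffices to show a general point of $\Gamma_1(X^v)$ lies in $\Gamma_1(X_u^v)$ when $q_1\geq2$, and to compute $\Gamma_1(X_u^v)$ directly when $q_1=1$.

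For $q_1\geq2$ I would argue as follows. Take a general $z\in\Gamma_1(X^v)$ and a general nonzero $v\in V_z\cap E^{q_2}$ (a general $v\in V_z$ if $q_2=2n$); since $V_x\subseteq E^{q_2}$ for $x\in X_u^v$, it is enough to produce $x\in X_u^v$ with $v\in V_x$, i.e.\ a vector $w$ with $\langle v,w\rangle$ isotropic, contained in $E^{q_2}$, and meeting $E^{q_1}$ and $E_{2n-2}$. For general $v$ one has $v\notin E^{q_1}\cup E_{2n-2}$, so this reduces to finding $w\in D\cap v^\perp\setminus\langle v\rangle$, where $D:=(E^{q_1}+\langle v\rangle)\cap(E_{2n-2}+\langle v\rangle)$ (the constraint $w\in E^{q_2}$ being automatic since $E^{q_1}\subseteq E^{q_2}\ni v$). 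The key input is $E^{q_1}+E_{2n-2}=\C^{2n}$ for $q_1\geq2$, which forces $\dim D=q_1$. If $q_1\geq3$ then $\dim(D\cap v^\perp)\geq q_1-1\geq2$, so $w$ exists. The delicate case is $q_1=2$: then $q_2\neq2n-1$, and writing $v=v'+v''$ along $\C^{2n}=E_{2n-2}\oplus E^2$ one finds $D=\langle v,v'\rangle$; when $q_2\leq2n-2$ the vector $v'$ lies in $E^{q_2}\cap E_{2n-2}\subseteq\langle e_3,\dots,e_{2n-2}\rangle\subseteq(E^2)^\perp$, so $v'\perp v$, $D\subseteq v^\perp$, and $w=v'$ (with $V_x=\langle v',v''\rangle$) works; the only leftover possibility $q_1=2$, $q_2=2n$ is condition \eqref{eqn:deg1}, where $\Gamma_1(X_u^v)=\Gamma_{0,1}(X_u,X^v)=X=\Gamma_1(X^v)$ by Proposition~\ref{prop:chev-nbhds}~\eqref{item:Gamma01} and Lemma~\ref{lem_twotoone_lines}. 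In every case a general point of $\Gamma_1(X^v)$ lies in $\Gamma_1(X_u^v)$, hence equality.

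For $q_1=1$ I would first simplify $X_u^v$. Here $E^{q_1}=E^1=\langle e_{2n}\rangle$ and $q_2\leq2n-1$, so $e_1\notin E^{q_2}$ and every $2$-plane $V_x$ with $\langle e_{2n}\rangle\subseteq V_x\subseteq E^{q_2}$ is automatically isotropic; adding $V_x\cap E_{2n-2}\neq0$, which for such planes means $V_x\subseteq E_{2n-2}\oplus E^1$, gives $X_u^v=\{V_x\mid\langle e_{2n}\rangle\subseteq V_x\subseteq F\}$ with $F:=E^{q_2}\cap(E_{2n-2}\oplus E^1)$. The curve neighborhood of this linear family is then computed as in Lemma~\ref{lem_gamma_1_xv}: clearly $\Gamma_1(X_u^v)\subseteq\{z\mid V_z\cap F\neq0\}$, and for general $z$ with $\langle v'\rangle=V_z\cap F$ and $v'\notin\langle e_{2n}\rangle$ the plane $\langle e_{2n},v'\rangle$ lies in $X_u^v$ and meets $V_z$; since $E^1=E^{q_1}$ this is the asserted description.

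The main obstacle will be the $q_1=2$ subcase above: a naive count only yields $\dim(D\cap v^\perp)\geq1$, so one genuinely needs the identity $D=\langle v,v'\rangle$ together with the orthogonality $v'\perp v$, plus the separate treatment of $q_2=2n$ via \eqref{eqn:deg1}. This is also exactly where the hypothesis $q_1\neq1$ enters: the sum $E^{q_1}+E_{2n-2}$ equals $\C^{2n}$ precisely when $q_1\geq2$, and its failure when $q_1=1$ is what produces the strictly smaller neighborhood.
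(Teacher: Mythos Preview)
Your proof is correct. For the case $q_1=1$ and for the special case $(q_1,q_2)=(2,2n)$ you proceed exactly as the paper does. The genuine difference is in the generic case $q_1\geq2$: the paper does not argue directly but invokes the general identifications $\Gamma_1(X_u^v)=\Gamma_{0,1}(X_u,X^v)$ (Proposition~\ref{prop:chev-nbhds}\eqref{item:Gamma01}) together with Lemma~\ref{lem_eq_gammas_lines}, which already computed $\Gamma_{0,1}(X_u,X^v)=\{z\mid V_z\cap E_{p_2}\cap E^{q_2}\neq0\}$ for arbitrary $(p_1,p_2)$; specializing to $p_2=2n$ this is $\Gamma_1(X^v)$. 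Your argument instead builds, for a general $z\in\Gamma_1(X^v)$ and a general $v\in V_z\cap E^{q_2}$, an explicit $x\in X_u^v$ with $v\in V_x$ by intersecting $D=(E^{q_1}+\langle v\rangle)\cap(E_{2n-2}+\langle v\rangle)$ with $v^\perp$, using $E^{q_1}+E_{2n-2}=\C^{2n}$ to control $\dim D$. This is more hands-on and self-contained for the divisor case, at the cost of the extra bookkeeping you flag when $q_1=2$ (splitting $v=v'+v''$ and checking $v'\perp v$ via $E^{q_2}\cap E_{2n-2}\subseteq(E^2)^\perp$); the paper's route avoids that case analysis by reusing the machinery already set up in Section~\ref{sec:d=1}.
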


\begin{proof}
The fact that $\Gamma_1(X_u^v)=\Gamma_1(X^v)$ is a consequence of Lemma \ref{lem_eq_gammas_lines} when $q_2\neq 2n$ and $q_1>1$ or $q_2=2n$ and $q_1>2$, and of Lemma \ref{lem_twotoone_lines} when $q_2=2n$ and $q_1=2$. Therefore, let us assume that $q_1=1$ and $q_2\leq 2n-1$. Note that 
  \[
    X_u^v=\{V_2\in X\ |\ \dim(V_2\cap E_{2n-2})\geq 1,\ E^1\subset V_2\subseteq E^{q_2}\}.
  \] 
 Set \(A=
  \{W_2\in X\ |\ \dim(W_2\cap (E_{2n-2}\oplus E^{1})\cap E^{q_2})\geq 1\}\). To see that \(\Gamma_1(X_u^v)\subseteq A\), let \(W_2\in \Gamma_1(X_u^v)\); then \(\dim(W_2\cap V_2)\geq 1\) for some \(V_2\in X_u^v\). Note that \(V_2=E^{1}\oplus(V_2\cap E_{2n-2}\cap E^{q_2})\subseteq (E_{2n-2}\oplus E^{1})\cap E^{q_2}\) and it follows that \(\Gamma_1(X_u^v)\subseteq A\).
  
  For \(\Gamma_1(X_u^v)\supseteq A\), let \(W_2\in A\setminus X_u^v\) and \(w\in W_2\cap (E_{2n-2}\oplus E^{1})\cap E^{q_2}\setminus 0\). If \(w\in E^{1}\), then let \(z\in E_{2n-2}\cap E^{q_2}\setminus 0\), and \(L=\{V_2\ |\ \langle w\rangle \subset V_2\subset W_2\oplus\langle z\rangle\}\) is a line in \(X\) containing \(W_2\) and the point \(\langle w,z \rangle\) is in \(X_u^v\). If \(w\not\in E^{1}\), we can write \(w=w_1+w_2\), where \(w_1\in E^{1}\) and \(w_2\in E_{2n-2}\cap E^{q_2}\). Since $q_1=1$ and $q_2\leq 2n-1$, \(w_1\perp w_2\). Therefore, \(L=\{V_2\ |\ \langle w\rangle\subset V_2\subset W_2+\langle w_1,w_2\rangle\}\) is a line in X containing \(W_2\) and the point \(\langle w_1, w_2\rangle\) is in \(X_u^v\).
\end{proof}

\begin{remark}
The above lemma implies that $[\cO_{\Gamma_1(X_u^v)}]=[\cO_{{q_2-1,2n}}]$ when $q_1=1$.
\end{remark}

\subsection{Quantum Chevalley formula}

By Corollary~\ref{cor:final-formula}, Theorem~\ref{thm:push-forward} and Proposition~\ref{prop:chev-nbhds}, 
\begin{equation*}
  (\cO_u\star\cO^v)_d=
  \begin{cases}
    0 & d\geq 2\\
    \cO_u^v & d=0
  \end{cases}
\end{equation*}
We thus only need to compute $(\cO_u\star\cO^v)_d$ when $d=1$. When \eqref{eqn:deg1} holds, i.e. when $q_1=2$ and $q_2=2n$, we can use Propositions \ref{prop:2-to-1} and \ref{prop_classical_chevalley} to deduce:
$$\cO_{2n-2,2n} \star \cO_{2,2n} =  2\cdot \cO_{1,2n-1} + \cO_{2,2n-2} - 2\cdot\cO_{1,2n-2} - q\cO_{2n-1,2n} + q\cO_{2n-2,2n}.$$

If \eqref{eqn:deg1_birat} holds, i.e. if \(q_1=1,q_2\leq 2n-1\), because of Propositions~\ref{prop_classical_chevalley}, \ref{prop:chev-nbhds} and Lemmas~\ref{lem_gamma_1_xv}, \ref{lemma:Gamma01}, we have 
\[
  \cO_{2n-2,2n}\star\cO_{1,q_2}=\cO_{1,q_2-1}+ q\cO_{q_2,2n}-q\cO_{q_2-1,2n}.
\]

In all other cases, \((\cO_u\star\cO^v)_1=0\).

Let us denote by
\[
  \cO_{a,b}=
  \begin{cases}
    [\cO_{X_{a,b}}] & 1\leq a<b\leq 2n \text{ and } a < b,\\
    q[\cO_{X_{b,2n}}] & a=0,\ b<2n,\\
    0 & \text{otherwise}.
  \end{cases}
\]

\begin{thm}[Quantum Chevalley formula]
\label{thm_quantum_chevalley} In \(QK(X)\),
the product \(\cO_{2n-2,2n}\star\cO_{q_1,q_2}\) equals:
\begin{enumerate}
  \item \(\cO_{q_1-1,q_2}\) if \(q_1=q_2-1\);
  \item \(\cO_{q_1-1,q_2}+\cO_{q_1,q_2-1}-\cO_{q_1-1,q_2-1}\) if \(q_1<q_2-1\) and \(q_1+q_2\neq 2n+2, 2n+3\);
  \item \(\cO_{q_1-1,q_2}+\cO_{q_1,q_2-1}-\cO_{q_1-1,q_2-2}-\cO_{q_1-2,q_2-1}+\cO_{q_1-2,q_2-2}\) if \(q_1<q_2-1\) and \(q_1+q_2=2n+3\);
  \item \(2\cdot\cO_{q_1-1,q_2-1}+\cO_{q_1-2,q_2}+\cO_{q_1,q_2-2}-2\cdot\cO_{q_1-2,q_2-1}-2\cdot\cO_{q_1-1,q_2-2}+\cO_{q_1-2,q_2-2}\) when \(q_1+q_2=2n+2\) and \(q_1\neq 2,\ n\);
  \item[(4.1)] \(2\cdot\cO_{q_1-1,q_2-1}+\cO_{q_1,q_2-2}-2\cdot\cO_{q_1-1,q_2-2}-\cO_{q_1-2,q_2-1}+\cO_{q_1-2,q_2-2}\) when \(q_1=2,\ q_2=2n\);
  \item \(2\cdot\cO_{q_1-1,q_2-1}+\cO_{q_1-2,q_2}-2\cdot\cO_{q_1-2,q_2-1}-\cO_{q_1-1,q_2-2}+\cO_{q_1-2,q_2-2}\) when \(q_1=n,\ q_2=n+2\).
\end{enumerate}
\end{thm}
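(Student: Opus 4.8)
The plan is to assemble the product from its graded pieces, all of which have been determined above, and then to repackage the answer in the extended notation $\cO_{a,b}$ (so that the convention $\cO_{0,b}=q[\cO_{X_{b,2n}}]$ absorbs the quantum corrections). Indeed, Corollary~\ref{cor:final-formula}, Theorem~\ref{thm:push-forward} and Proposition~\ref{prop:chev-nbhds} give $(\cO_u\star\cO^v)_d=0$ for $d\geq 2$ and $(\cO_u\star\cO^v)_0=\cO_u^v$, while the degree-one piece was computed just before the statement: it vanishes unless $(q_1,q_2)=(2,2n)$ (condition \eqref{eqn:deg1}) or $q_1=1$ (condition \eqref{eqn:deg1_birat}), in which cases it equals $\cO_{2n-2,2n}-\cO_{2n-1,2n}$, respectively $\cO_{q_2,2n}-\cO_{q_2-1,2n}$. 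Thus
$$\cO_{2n-2,2n}\star\cO_{q_1,q_2}=\cO_u^v+q\,(\cO_u\star\cO^v)_1,$$
and all that is left is to substitute the classical Chevalley formula of Proposition~\ref{prop_classical_chevalley} for $\cO_u^v$ and to simplify, which I would organize according to the value of $q_1$.

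If $q_1\geq 2$ and $(q_1,q_2)\neq(2,2n)$, there is no degree-one term; moreover, using $q_2\leq 2n$ one checks that no index $0$ occurs in the relevant case of Proposition~\ref{prop_classical_chevalley} (for instance case~(3), the one featuring $\cO_{q_1-2,\cdot}$, forces $q_1+q_2=2n+3$ and hence $q_1\geq 3$), so the classical and extended readings of $\cO_{a,b}$ coincide term by term and the quantum product is literally the classical one; this yields cases (1)--(3), case (4) for $q_1\neq 2$, and case (5), the clause ``$q_1\neq 2$'' in quantum case~(4) being exactly what excises the overlap with \eqref{eqn:deg1}. If $q_1=1$, then automatically $q_2\leq 2n-1$ and $q_1+q_2\notin\{2n+2,2n+3\}$, so only classical cases (1) (when $q_2=2$) and (2) apply, and their $\cO_{0,\cdot}$ summands vanish classically; adding $q(\cO_{q_2,2n}-\cO_{q_2-1,2n})$ and invoking $\cO_{0,q_2}=q[\cO_{X_{q_2,2n}}]$, $\cO_{0,q_2-1}=q[\cO_{X_{q_2-1,2n}}]$ turns the sum into $\cO_{0,q_2}+\cO_{1,q_2-1}-\cO_{0,q_2-1}$, i.e.\ the right-hand side of cases (1)/(2) read in the $\cO_{a,b}$ convention. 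Finally, for $(q_1,q_2)=(2,2n)$ the class $\cO_u^v$ is the classical case-(4) expression (valid since $q_1=2\neq n$ for $n\geq 3$, the case $n=2$ being a classical quadric threefold), whose $\cO_{0,\cdot}$ terms again vanish, leaving $2\cO_{1,2n-1}+\cO_{2,2n-2}-2\cO_{1,2n-2}$; adding $q\cO_{2n-2,2n}-q\cO_{2n-1,2n}$ and rewriting these as $\cO_{0,2n-2}$, $\cO_{0,2n-1}$ gives exactly case~(4.1).

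The whole computation is bookkeeping, and the single point genuinely requiring attention — the reason case~(4.1) must be stated separately rather than folded into case (4) — is the pair $(q_1,q_2)=(2,2n)$: the classical case-(4) formula carries the summand $-2\cO_{0,2n-1}$, which under the extended convention would read $-2q[\cO_{X_{2n-1,2n}}]$, whereas the true degree-one correction supplies only $-q[\cO_{X_{2n-1,2n}}]$. I would display this discrepancy explicitly; the remaining verifications are routine substitutions.
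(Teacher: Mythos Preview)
Your proof is correct and follows essentially the same approach as the paper: the discussion preceding the theorem statement already establishes that $(\cO_u\star\cO^v)_d=0$ for $d\geq 2$, $(\cO_u\star\cO^v)_0=\cO_u^v$, and computes the degree-one piece in the two exceptional cases, so the theorem amounts to substituting the classical Chevalley formula and absorbing the $q$-terms into the extended notation $\cO_{a,b}$. Your explicit explanation of why case~(4.1) cannot be folded into case~(4)---the mismatch between the coefficient $-2$ that the extended reading of the classical formula would predict and the actual coefficient $-1$ coming from the degree-one correction---is a nice clarification that the paper leaves implicit.
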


\footnotesize

\bigskip 
Universit\'e C\^ote d'Azur, CNRS, Laboratoire J.-A. Dieudonn\'e, Parc Valrose, F-06108 Nice Cedex 2, {\sc France}.

{\it Email address}: {\tt vladimiro.benedetti@univ-cotedazur.fr}

\smallskip 

Centre de Mathématiques Laurent Schwartz (CMLS), CNRS, École polytechnique, Institut Polytechnique de Paris, 91120 Palaiseau, {\sc France}.

{\it Email address}: {\tt  nicolas.perrin.cmls@polytechnique.edu}

\smallskip 

Department of Mathematics, 225 Stanger Street, McBryde Hall, Virginia Tech University, Blacksburg, VA 24061 {\sc USA}.

{\it Email address}: {\tt weihong@vt.edu}


\begin{thebibliography}{BCMP18b}

\bibitem[ACT22]{anderson.chen.ea:on}
D.~Anderson, L.~Chen, and H.-H.~Tseng.
\newblock On the finiteness of quantum K-theory of a homogeneous space.(English summary)
\newblock {\em IMRN}, no.2, 1313--1349, 2022.

\bibitem[AGM11]{anderson.griffeth.ea:positivity} 
D.~Anderson, S.~Griffeth and E.~Miller.
\newblock Positivity and Kleiman transversality in equivariant  K -theory of homogeneous spaces.
\newblock {\em J. Eur. Math. Soc. (JEMS)} 13 (2011), no. 1, 57--84.

\bibitem[Bri02]{brion:positivity}
M.~Brion.
\newblock Positivity in the {G}rothendieck group of complex flag varieties.
\newblock volume 258, pages 137--159. 2002.
\newblock Special issue in celebration of Claudio Procesi's 60th birthday.

\bibitem[Buc02]{buch:littlewood}
A.~S. Buch.
\newblock A Littlewood-Richardson rule for the  K -theory of Grassmannians.
\newblock {\em Acta Math.} 189 (2002), no. 1, 37--78.

\bibitem[Buc03]{buch:quantum}
A.~S. Buch.
\newblock Quantum cohomology of {G}rassmannians.
\newblock {\em Compositio Math.}, 137(2):227--235, 2003.

\bibitem[BCMP13]{buch.chaput.ea:finiteness}
A.~S. Buch, P.-E. Chaput, L.~C. Mihalcea, and N.~Perrin.
\newblock Finiteness of cominuscule quantum {$K$}-theory.
\newblock {\em Ann. Sci. \'{E}c. Norm. Sup\'{e}r. (4)}, 46(3):477--494 (2013),
  2013.
  
  \bibitem[BCMP18]{buch.chaput.ea:chevalley}
A.~S. Buch, P.-E. Chaput, L.~C. Mihalcea, and N.~Perrin.
\newblock A {C}hevalley formula for the equivariant quantum {$K$}-theory of
  cominuscule varieties.
\newblock {\em Algebr. Geom.}, 5(5):568--595, 2018.
  
  \bibitem[BCMP22]{buch.chaput.ea:positivity}
  A.~S. Buch, P.-E. Chaput, L.~C. Mihalcea, and N.~Perrin.
\newblock Positivity of minuscule quantum {$K$}-theory.
\newblock Preprint {\em arXiv:2205.08630}.

\bibitem[BCP23]{buch.chaput.ea:seidel}
A.~S. Buch, P.-E. Chaput, and N. Perrin.
\newblock Seidel and Pieri products in cominuscule quantum K-theory.
\newblock Preprint arXiv:2308.05307.

\bibitem[BKT03]{buch.kresch.ea:gromov-witten}
A.~S. Buch, A.~Kresch, and H.~Tamvakis.
\newblock Gromov-{W}itten invariants on {G}rassmannians.
\newblock {\em J. Amer. Math. Soc.}, 16(4):901--915, 2003.

\bibitem[BKT09]{buch.kresch.ea:quantum}
A.~S. Buch, A.~Kresch, and H.~Tamvakis.
\newblock {Quantum Pieri rules for isotropic Grassmannians.}
\newblock {\em Invent. Math.}, 178:345--405, 2009.

\bibitem[BM11]{buch.mihalcea:quantum}
A.~S. Buch and L.~C. Mihalcea.
\newblock Quantum {$K$}-theory of {G}rassmannians.
\newblock {\em Duke Math. J.}, 156(3):501--538, 2011.

\bibitem[BM15]{buch.mihalcea:curve}
A.~S. Buch and L.~C. Mihalcea.
\newblock Curve neighborhoods of {S}chubert varieties.
\newblock {\em J. Differential Geom.}, 99(2):255--283, 2015.

\bibitem[CMP07]{chaput.manivel.ea:hidden}
P.-E. Chaput, L.~Manivel, and N.~Perrin.
\newblock  Quantum cohomology of minuscule homogeneous spaces. II. Hidden symmetries.
\newblock {\em IMRN} (2007), no. 22, Art. ID rnm107, 29 pp.

\bibitem[CMP09]{chaput.manivel.ea:affine}
P.-E. Chaput, L.~Manivel, and N.~Perrin.
\newblock Affine symmetries of the equivariant quantum cohomology ring of
  rational homogeneous spaces.
\newblock {\em Math. Res. Lett.}, 16(1):7--21, 2009.

\bibitem[CP11]{chaput.perrin:on}
P.-E. Chaput and N.~Perrin.
\newblock On the quantum cohomology of adjoint varieties.
\newblock {\em Proc. Lond. Math. Soc.}, no. 2, 103(3):294–330,  2011.

\bibitem[CP23]{chaput.perrin:affine}
P.-E. Chaput and N.~Perrin.
\newblock Affine symmetries in quantum cohomology: corrections and new results.
\newblock {\em Math. Res. Lett.},
30(2):341--374, 2023.

\bibitem[Elk78]{Elkik}
R.~Elkik.
\newblock Singularités rationnelles et deformations.
\newblock {\em Inventiones mathematicae}, no. 47:139–147,  1978.

\bibitem[FP97]{fulton.pandharipande:notes}
W.~Fulton and R.~Pandharipande.
\newblock Notes on stable maps and quantum cohomology.
\newblock In {\em Algebraic geometry---{S}anta {C}ruz 1995}, volume~62 of {\em
  Proc. Sympos. Pure Math.}, pages 45--96. Amer. Math. Soc., Providence, RI,
  1997.
  
\bibitem[Giv00]{givental:wdvv}
A.~Givental.
\newblock On the {WDVV} equation in quantum {$K$}-theory.
\newblock {\em Michigan Math. J.}, 48:295--304, 2000.
\newblock Dedicated to William Fulton on the occasion of his 60th birthday.

\bibitem[GHS03]{graber.harris.starr}
T.~Graber and J.~Harris and J.~Starr.
\newblock {\em Journal of the American Mathematical Society}, no. {1}, {57--67}, {2003}.

\bibitem[Gra01]{graham:positivity}
W.~Graham.
\newblock Positivity in equivariant Schubert calculus.
\newblock {\em Duke Math. J.} 109 (2001), no. 3, 599--614.

\bibitem[GK09]{graham.kumar:positivity}
W.~Graham and S.~Kumar.
\newblock On positivity in  T -equivariant  K -theory of flag varieties.
\newblock {\em IMRN} (2008), Art. ID rnn 093, 43 pp.

\bibitem[Kat18]{kato:loop}
S.~Kato.
\newblock Loop structure on equivariant K-theory of semi-infinite flag manifolds.
\newblock Preprin arXiv:1805.01718v5. 

\bibitem[KP01]{kim.pandharipande:the}
B.~Kim, R.~Pandharipande
\newblock The connectedness of the moduli space of maps to homogeneous spaces.
\newblock {\em The Symplectic geometry and mirror symmetry} (Seoul, 2000), 187--201. World Scientific Publishing Co., Inc., River Edge, NJ, 2001.

\bibitem[Kol86]{kollar:higher}
J.~Koll{\'a}r.
\newblock Higher direct images of dualizing sheaves. {I}.
\newblock {\em Ann. of Math. (2)}, 123(1):11--42, 1986.

\bibitem[LM06]{lenart.maeno:quantum}
C.~Lenart and T.~Maeno.
\newblock Quantum {G}rothendieck polynomials.
\newblock math.CO/0608232, 2006.

\bibitem[LP07]{lenart.postnikov:affine}
C.~Lenart and A.~Postnikov
\newblock Affine Weyl groups in $K$-theory and representation theory
\newblock {\em IMRN}, no.9, 2007.

\bibitem[Mih06]{mihalcea:positivity}
L.~C. Mihalcea.
\newblock Positivity in equivariant quantum Schubert calculus.
\newblock {\em Amer. J. Math.} 128 (2006), no. 3, 787--803.

\bibitem[Mih22]{mihalcea:lectures}
L.~C. Mihalcea.
\newblock Lectures on quantum $K$-theory for flag manifolds.
\newblock Lecture notes accessed on 08.09.2023, \verb|https://personal.math.vt.edu/lmihalce/QKlectures.pdf|.

\bibitem[Ric92]{richardson:intersections}
R.~W. Richardson.
\newblock Intersections of double cosets in algebraic groups.
\newblock {\em Indag. Math. (N.S.)}, 3(1):69--77, 1992.

\bibitem[Ros22]{rosset:quantum}
S.~Rosset.
\newblock Quantum K theory for flag varieties
\newblock Preprint arXiv:2202.00773.

\bibitem[Sei97]{seidel:pi_1} 
P. Seidel.
\newblock $\pi_1$ of symplectic automorphism groups and invertibles in quantum homology rings.
\newblock {\em Geom. Funct. Anal.}, 7(6): 1046--1095, 1997.

\bibitem[Smi21]{Sm21}
M.~Smirnov.
\newblock On the derived category of the adjoint {G}rassmannian of type {F}.
\newblock Available at
  \href{https://arxiv.org/abs/2107.07814}{arXiv:2107.07814}.

\bibitem[Tar23]{tarigradschi:curve} 
M. Ţarigradschi
\newblock Curve neighborhoods of Seidel products in quantum cohomology
\newblock Preprint arXiv:2309.05985.

\bibitem[Tho98]{thomsen:irreducibility}
J.~F.~Thomsen
\newblock Irreducibility of $\overline{M}_{0,n}(G/P,\beta)$.
\newblock {\em Internat. J. Math}, 9(3): 367--376, 1998.

\bibitem[Xu21]{xu:quantum} 
W.~Xu
\newblock Quantum K-theory of Incidence Varieties
\newblock Preprint arXiv: 2112.13036.

\end{thebibliography}
\end{document}